\def\xyma{\xymatrix@M.7em}
\def\lotimes{\buildrel{L}\over\otimes}
\numberwithin{equation}{section}
\newtheorem{theorem}{Theorem}[section]
\newtheorem{lemma}[theorem]{Lemma}
\newtheorem{proposition}[theorem]{Proposition}
\newtheorem{corollary}[theorem]{Corollary}
\newtheorem{conjecture}[theorem]{Conjecture}
\theoremstyle{definition}
\newtheorem{definition}[theorem]{Definition}
\newtheorem{remark}[theorem]{Remark}
\newtheorem{example}[theorem]{Example}
\def\ga{\gamma}
\def\Ga{\Gamma}
\def\Lam{\Lambda}
\def\Z{{\mathbb{Z}}}
\def\N{\mathbb{N}}
\def\fp{\mathbb{F}_p}
\def\la{\longrightarrow}
\def\ot{\otimes}
\def\lot{\stackrel{L}{\ot}}
\def\bee{\begin{equation}}
\def\ee{\end{equation}}
\def\tor{\mathrm{Tor}}
\def\hom{\mathrm{Hom}}
\def\f2{\mathbb{F}_2}
\def\La{\Lambda}
\def\fp{\mathbb{F}_p}
\def\k{\mathbb{k}}
\newcommand{\Ext}{{\mathrm{Ext}}}
\renewcommand{\hom}{{\mathrm{Hom}}}
\newcommand{\Id}{{\mathrm{Id}}}
\renewcommand{\k}{\Bbbk}
\newcommand{\Fp}{{\mathbb{F}_p}}
\newcommand{\Fdeux}{{\mathbb{F}_2}}
\newcommand{\gr}{\mathrm{gr}\,}
\newcommand{\LL}{\mathcal{L}}
\newcommand{\NN}{\mathcal{N}}
\newcommand{\PP}{\mathcal{P}}
\newcommand{\I}{\mathcal{I}}
\newcommand{\FF}{\mathcal{F}}
\newcommand{\p}{{(p)}}
\begin{document}
\begin{flushright}
\end{flushright}

\title{Derived functors of the  divided power functors}
\author{Lawrence Breen, Roman Mikhailov and Antoine Touz\'e}
%\date{\today}

\begin{abstract}
We  study,  with special  emphasis on the $d=4$ case, the derived functors of the components $\Gamma^d_R(M)$ of the  divided power algebra $\Gamma_R(M)$ where  $M$ is a module over the ring   $R= \Z$.  While our results have applications both  to  representation theory and to algebraic topology, we illustrate them here by providing  a new functorial   description of certain  integral homology groups of the  Eilenberg-Mac Lane spaces $K(A,n)$, for $A$ a free abelian group. In particular, we give a complete functorial description of the groups $H_\ast(K(A,3);\Z)$ for such $A$. 
\end{abstract}

\maketitle

\tableofcontents

\section{Introduction}

\vspace{1cm}

In this article, we are interested in functors from the category of free $R$-modules to that of  $R$-modules (for $R$  a commutative ring) and particularly in  the functor  from such a module  $M$  to the module of  $d$-fold invariant tensors $ (M^{\ot d})^{\Sigma_d}$.  This module of invariant tensors is canonically isomorphic to the degree $d$ components $\Gamma^d_R(M)$  of the divided power algebra $\Gamma_R(M)$. Though these  are   non-additive functors of  $M$ whenever $d>1$, they may  be  derived by the Dold-Puppe theory \cite{d-p}. This associates to $M$ a family of $i$th-derived functors $L_i\Gamma^d(M,n)$ which depends on an additional  positive integer $n$. In a wider perspective,   $L_i\Gamma^d_R(M,n)$ is the $i$th homotopy group of Quillen's left-derived object  $L\Gamma^d_R(M[n])$, where   $M[n]$ is the module  $M$, viewed as a chain complex concentrated in degree $n$ \cite{quillen:homot}.
When $R$ is a field, the functors  $L_i\Gamma^d_R(M,n)$ are known. Our aim is to compute  certain of its values  when $R = \Z$. Such computations are motivated by applications in both  algebraic topology and in representation theory.

\bigskip

In algebraic topology, the  functors  $L_i\Gamma^d_\Z(A,n)$ where $A$ is an arbitrary abelian group are  closely related to the integral homology of the Eilenberg-Mac Lane spaces  $K(A,n+2)$, i.e those spaces whose only non-trivial homotopy group is $A$ in degree $n+2$. Indeed, even though a truly functorial description of  the  homology groups $H_\ast(K(A,n+2);\Z)$ is complicated, they are endowed with  a natural filtration  inducing  functorial isomorphism
\bee
\bigoplus_{d\ge 0}  L_{*-2d}\Gamma^d_\Z(A,n) \simeq \gr (H_*(K(A,n+2);\Z))\label{h-ast}
\ee
on the associated graded components. This filtration splits functorially when we restrict ourselves to the subcategory of free abelian groups, so that there  are then  functorial isomorphisms
\bee
\label{h-ast2} \bigoplus_{d\ge 0}  L_{*-2d}\Gamma^d_\Z(A,n)  \simeq \,H_\ast(K(A,n+2);\Z) \ee
for all $n \geq 0$.  When $A$ non free, such an isomorphism still exists, but is no longer functorial in $A$ (we refer to our appendix \ref{han} for a further discussion of this issue). The abelian groups $L_i\Gamma^d_\Z(A,n)$ are thus quite fundamental for topology, and the description of their dependence on the group $A$ carries much useful information which remains hidden when such a (finitely generated) group is decomposed into a direct sum of cyclic groups.

\bigskip

In representation theory, the representations of integral Schur algebras can be described in terms of the strict polynomial functors of Friedlander and Suslin. Such strict polynomial functors can be thought of as functors from finitely generated free abelian groups to abelian groups, equipped with an additional strict polynomial structure. The derived category  of weight $d$  homogeneous strict polynomial functors   $\mathbf{D}(\PP_{d,\Z})$ is equipped with a Ringel duality operator $\Theta$, which is a self-equivalence of $\mathbf{D}(\PP_{d,\Z})$.
For $A$ free finitely generated there is  an isomorphism
$$ H^\ast(\Theta^n\Gamma^d_\Z(A))\simeq  L_{nd - \ast}\Gamma^d_\Z(A,n) $$
Such an isomorphism holds for all strict polynomial functors, but the case of divided powers is fundamental since these functors constitute a family of projective generators of the category. In this context, the description of the $L_*\Gamma^d_\Z(A,n)$ as functors of $A$ is essential, since the functoriality is necessary in order to determine the action of the Schur algebra, hence to obtain the expressions $ H_*(\Theta^n\Gamma^d_\Z(A))$ as representations, and not simply as abelian groups. Actually we need more than the functoriality in order to understand the action of the Schur algebra, we really need to describe the strict polynomial structure of these functors.

\bigskip

Let us briefly review  what was known previously on the derived functors $L_*\Gamma^d_\Z(A,n)$. The integral homology of Eilenberg-Mac Lane spaces was computed by H. Cartan: a non-functorial description of these homology groups is given in \cite{cartan} Exp. 11, Thm 1, and a functorial one in \cite[Exp. 11, Thm 5]{cartan}. Work in this direction was pursued by  two students of Mac Lane, Hamsher \cite{hamsher} and Decker \cite{decker}, who studied the cases $n=1$ and $n \geq  1$ respectively. By the isomorphism \eqref{h-ast}, it is possible to retrieve from these results a description of the derived functors $L_*\Gamma^d_\Z(A,n)$. The answers however are very complicated. For example, the homology groups $H_i(K(A,n);\Z)$ for a finitely generated abelian group $A$ are described functorially by an infinite list of generators and relations, even though they are of finite type. Some additional progress in computing the functors
 $L_\ast\Gamma^d_\Z(A,n)$
 was made by a direct study of its properties  by Bousfield in \cite{bousfield-hom}, \cite{bousfield-pens},  and a complete description of these functors for any  $A$  was obtained for $d=2$ by Baues and Pirashvili  \cite{baupira} and in \cite{BM} for $d=3$. On the other hand,   no description of  the strict polynomial structure of the functors $L_\ast\Gamma^d_\Z(A,n)$ has been given so far.

\bigskip

We now list the results which we obtain in this paper. We give full description (as strict polynomial functors) :
\begin{enumerate}
\item[(1)] of the $L_*\Gamma^d_\Z(A,1)$ for all $d$ and  $A$ free.
\end{enumerate}
By \eqref{h-ast2}, this determines  a new  and  functorial description of  the groups $H_\ast(K(A,3);\Z)$ for $A$ a free abelian group. 
\begin{enumerate}
\item[(2)] of the $L_*\Gamma^d_\Z(A,n)$ for $d \leq 4$ and $A$ free.
\end{enumerate}
Our treatment is elementary and self-contained,  and does not use computations from the literature. The strict polynomial structures come in  as a help in the computations. Our computations have  a number of  interesting byproducts, in particular:
\begin{enumerate}
\item[(3)] short proofs of some computations  first obtained in \cite{antoine} including that of $L_*\Gamma^d_\k(V,n)$ where $\k$ is a  field of characteristic $2$,
\item[(4)] a new family of exact complexes `of Koszul type' involving divided powers over a field of characteristic 2.
\end{enumerate}
The kernels of these new complexes yield new families of functors, related to the $2$-primary component of $L_*\Gamma^d_\Z(A,1)$. We  describe these functors in a variety of ways.  The simplest is the  2-primary component of $L_3\Gamma^4(A,1)$, which we denote by $\Phi^4(A)$, and which can be described as the cokernel of the natural transformation $\Lambda^4_{\f2}(A/2) \to \Gamma^4_{\f2}(A/2)$  determined by the algebra structure of $\Gamma^\ast_{\f2}(A/2)$ (such a map only exists in characteristic 2).

\begin{enumerate} \item[(5)] We give a  description of the groups $H_{n+i}(K(A,n);\Z)$ for $A$ free  in the range $0 \leq i \leq 10$ which is both more natural and more precise than  Cartan's in \cite{cartan}.
\end{enumerate}

Finally in section \ref{conj}, we return to the derived functors $L_*\Gamma^d_\Z(A,n)$ for all $n$, $d$ and all abelian groups $A$. We present a conjectural description of these functors, up to a filtration, in terms of simpler derived functors of divided and exterior powers. We then make use here of our results and  of  those from \cite{BM} in order to provide some evidence in support of this conjecture.

\bigskip

We will now discuss the content of this article in more detail. In sections \ref{sec-classical} and \ref{sec-der} we collect the main properties of divided power algebras, strict polynomial functors and derived functors which  will be  required.
In section  \ref{qt-fil}, we introduce the useful notion of a quasi-trivial filtration which will allow us to prove that certain spectral sequences degenerate. For $V$ a vector space over a characteristic $p$ field $\k$, the divided power algebra  $\Gamma^\ast_\k(V)$ possesses such a filtration, which we call the principal filtration. We use this filtration to compute in 
section \ref{descr-gvn} those values of the derived functors $L_i\Gamma^d_\k(V,n)$ which will be needed in the sequel. 

\bigskip

In sections \ref{der1-gamma-z}  and  \ref{sec-der1-gamma-z-bis} we compute the strict polynomial functors $L_i\Gamma^d_\Z(A,1)$ for any free abelian group $A$ with the help of the Bockstein maps. These  may be viewed as a family of  differentials  on the mod $p$ graded groups $L_\ast\Gamma^d_\Fp(A/p,1)$ and  can  be explicitly described. While in odd characteristic these  mod $p$ derived functors  together the Bockstein differentials simply provide us with a standard dual Koszul complex, the result is slightly different in characteristic 2. In that case the  Bockstein differentials determine on  $L_\ast\Gamma^d_{\mathbb{F}_2}(A/2,1)$ a sort of dual Koszul complex structure, in which the  differentials are  twisted by a Frobenius map. We call this characteristic 2 complex the skew Koszul complex. In both odd and even characteristic  the  integral homology  groups $L_\ast\Gamma^d_\Z(A/p,1)$ are  simply  the groups of cycles in these
 Koszul and twisted Koszul complexes, and we are able to  analyze these more precisely in a number of situations.

\bigskip

In section \ref{sec-max} to \ref{der-gamma4-sec}, we present  another method for computing  the derived functors of
$\Gamma^d_\Z(A)$. Here we use the adic filtration associated to the
augmentation ideal of the algebra $\Gamma^\ast_\Z(A)$. We call this filtration the maximal filtration on $\Gamma_\Z(A)$. It was the
filtration chosen in \cite{BM} in order to study the derived functors
of $\Gamma^3_\Z(A)$. With the help of the associated spectral sequence, we  determine the values of
$L_\ast\Gamma^4_\Z(A,n)$.
The easiest cases $n= 1,2$ are studied in section \ref{sec-derG12} where we pay particular attention  to  the first new functor occurring
among the derived functors of $\Gamma^4_\Z(A)$. This is the functor
$\Phi^4(A)$ mentioned above, the 2-primary component of $ L_{3}\Gamma^4_\Z(A,1)$, which we describe in three distinct ways.
As  $n$ increases  the situation  becomes  more involved,  and a detailed study of the differentials in the maximal filtration spectral sequence for $\Gamma^4_\Z(A,n)$ for a general $n$  is carried out in section \ref{der-gamma4-sec}. The computation of certain differentials is delicate, and involves functorial considerations and careful dimension counts.
This yields  an  inductive formula for $L_\ast\Gamma^4_\Z(A,n)$ (theorem \ref{deriveddescr}), from which the complete description
  of the  derived functors of $\Gamma^4_\Z(A,n)$ for all $n$ follows directly  (theorem \ref{thm-G4Z}).

\bigskip

In section \ref{conj}, we return to the derived functors $L_*\Gamma^d_\Z(A,n)$ for all $n$, $d$ and all abelian groups $A$.
We propose a conjectural  description, up to a filtration, of the functors $L_r\Gamma^d_\Z(A,n)$. To state this requires that we first discuss the stable homology groups  $H_r(K(A,n);\Z)$ , i.e. those for which $n \leq r < 2n$. These additive groups are discussed in a number of sources, and we first review  here for the reader's convenience  certain aspects of the  admissible words formalism of Cartan.    We then reformulate  in theorem \ref{thm-new-formula}  his admissible words   in terms of a  simpler labelling which only involves those words which do not involve the transpotence operation. We  refer    in proposition \ref{prop-new}  to Cartan's computation of these stable values, and this is the only place in the text  where we make use, for simplicity,  of his results. We then state our conjecture \ref{conject}, and we verify that it is compatible with theorems \ref{thm-calcul-LG-un-Z} and \ref{thm-G4Z} as well as with the results in \cite{BM}.

\bigskip

We view the appendices as an integral part of our text. 
In  appendix \ref{app-comput}, we review some classical methods of
computations of $\rm Hom$'s and $ {\rm Ext}^1$'s in functor
categories which are used many times in our arguments. Appendix \ref{han} begins with a 
discussion of  the relation between the derived functors of the functor 
$\Gamma^d_\Z(A)$ and the integral homology of $K(A,n)$. We then
display a complete  table of the  functorial  values of the groups
$H_{n+i}(K(A,n);\Z)$  for $A$ free in the range $0 \leq i \leq
10$. While the constraints in choosing this range  of values of $i$ were
to some extent typographical, the fact that we  only know the complete set of 
derived functors of $\Gamma^d_\Z(A)$ for $d \leq 4$ would have precluded
the display of a complete table for much higher values of $i$.  This
table already features most of the unexpected phenomena which we
encounter in our computations, as will be  seen from the
discussion which precedes the table. A final appendix provides for
convenience a complete list   of  non-trival derived functors
$L_{*}\Gamma^4_\Z(A,n)$ for   $n \leq 4$.

\bigskip
The following diagram summarizes the relations between the various parts of our text:

\[\xymatrix@R=30pt@C=15pt{
& \fbox{Sections 2-5}\ar[dl] \ar[dr]&& \ar[ll]\ar[llld]\ar[ld] \fbox{Apppendix A}\\
\fbox{Sections 6,7} \ar[dr]&& \fbox{Sections 8-10} \ar[dl]&\\
&\fbox{Section 11 and  Appendix  B} &&
}
\]

\thanks{The authors wish to thank A.K. Bousfield for correspondence concerning topics  related to the matter discussed here. }

 \bigskip

\thanks{The research of the second author is supported by the Chebyshev Laboratory  (Department of Mathematics and Mechanics, St.
Petersburg State University)  under RF Government grant 11.G34.31.0026 and by JSC "Gazprom Neft", as well as by the  RF Presidential  grant MD-381.2014.1.}

\bigskip

\noindent {\it Notation}. Throughout the text the notation $A/p$,  where $A$ is an  abelian group  and $p$ prime number,  stands for $A \ot_\Z \Z/p$. For any abelian group $A$, the notation $\Gamma (A)$ will  stand for $\Gamma_\Z(A)$, the divided power algebra associated to the $\Z$-module $A$ unless otherwise specified. On the other hand for any $\fp$-vector space $V$, the notation  $\Gamma (V)$ stands for the divided power algebra  $\Gamma_{\fp}(V)$ in the category of $\fp$-vector spaces.

\section{Classical functorial algebras}\label{sec-classical}

\subsection{The divided power algebra}   Let  $M$ be an $R$-module, where $R$ be a commutative ring with unit. The symmetric algebra $S_R(M)$ and
 the exterior algebra $\Lambda_R(M)$ are well-known. The
    divided power algebra $\Gamma_R(M)$ deserves equal attention. We recall here its basic properties, refering to \cite{roby} for the proofs. The algebra $\Gamma_R(M)$ is defined as the commutative $R$-algebra generated, for all $x \in M$ and all non-negative integers $i$, by elements $ \ga_i(x)$ which satisfy the following relations for all $x,y \in M$ and $\lambda \in R$:
\begin{align} & 1)\  \gamma_0(x) =
1 \hspace{1cm} x \neq 0\label{rel1} \\
%\label{rel2}  & 2)\ \gamma_1(x)=x\\
\label{rel3a} & 2)\
\gamma_s(x)\gamma_t(x)=\binom{s+t}{s}\gamma_{s+t}(x) \\\label{rel4}
& 3)\ \gamma_n(x+y)=\sum_{s+t=n}\gamma_s(x)\gamma_t(y),\ n\geq 1\\
& 4)\
%\gamma_n(xy)=x^n\gamma_n(y)\  \ \text{and} \
 \ \gamma_n(\lambda x)=\lambda^n\gamma_n(x), \ n\geq 1 \label{rel5}
%\\&6) \ \gamma_p(\gamma_q(x))= C_{p,q}\,\gamma_{pq}(x) \label{rel6}
\end{align}
Setting $s=t=1$ in \eqref{rel3a} one finds by induction on $n$ that $x^n = n!\ga_n(x)$, which justifies the name of divided power algebra for $\Gamma_R(M)$. 
The definition $\Gamma_R(M)$ is functorial with respect to $M$: an $R$-linear map $f:M\to N$ induces a morphism of $R$-algebras $\Gamma_R(M)\to \Gamma_R(N)$ which sends  $\gamma_i(x)$ to $\gamma_i(f(x))$.

The relations \eqref{rel1}-\eqref{rel5} are homogeneous, so that $\Gamma_R(M)$ can be given a graded algebra structure by setting  $\text{deg}\,\ga_i(x) = i$. We denote by $\Gamma^d_R(M)$ the homogeneous component of degree $d$, so that we have functorial $R$-linear isomorphisms $\Gamma^0_R(M)\simeq R$ and $\Gamma^1_R(M)\simeq M$ and a functorial decomposition
$$\Gamma_R(M)=\bigoplus_{d\ge 0}\Gamma^d_R(M)\;.$$

Consider the graded commutative algebra $TM=\bigoplus_{d\ge 0} V^{\otimes d}$, equipped with the shuffle product, and let us  denote by $TS(M)$ the graded subalgebra of invariant tensors $TS(M)=\bigoplus_{d\ge 0} (V^{\otimes d})^{\Sigma_d}$.
There is a well defined functorial homomorphism of graded algebras 
\begin{align}\Gamma_R(M)\to TS(M)\;,\label{eq-identification-dp}\end{align}
which sends $\gamma_n(x)$ to $x^{\otimes n}$. The morphism \eqref{eq-identification-dp} is an isomorphism if $M$ is  projective \cite[Prop. IV.5]{roby}. In particular, for a projective $R$-module $M$ we have functorial isomorphisms
 \[
\label{inv} \Gamma^d_R(M) \simeq (M^{\otimes \,
d})^{\Sigma_d}. \]
This description of $\Gamma^d(M)$ by invariant tensors is no longer valid for arbitrary modules $M$. For example, it follows by inspection from the relations \eqref{rel1}-\eqref{rel5} above  that  \bee
\label{gammators}\Gamma^2_\Z(\Z/2) \simeq \Z/4,\ee  so that this group cannot live in $\Z/2\, \ot \,\Z/2$. This example also shows that divided power algebras behave differently from symmetric and exterior algebras with respect to  torsion. Indeed, $S_\Z(M)$ and $S_{\fp}(M)$ coincide for any $\fp$-module $M$, as do $\Lam_\Z(M)$ and $\Lam_{\fp}(M)$, but the similar assertion  is not  true for   $\Gamma_\Z(M)$ and $\Gamma_{\fp}(M)$. In particular, for any $\fp$-module $M$,  we  will  always carefully distinguish between the functors $\Gamma_\Z(M)$ and $\Gamma_{\fp}(M)$.

\subsection{Exponential functors}
A functor $F(M)$ from  $R$-modules to $R$-algebras is said to be an
exponential functor if for all pair of $R$-modules  $M$ and $N$, the composite
\begin{align}F(M)\otimes F(N)\xrightarrow[]{F(i_1)\ot F(i_2)} F(M\oplus N)\otimes F(M\oplus N)\xrightarrow[]{\mathrm{mult}} F(M\oplus N)\;, \label{eq-exp-prop}\end{align}
induced by the canonical inclusions $i_1$,  $i_2$ of $M$ and $N$ into $M\oplus N$, is an isomorphism.
We refer to the map \eqref{eq-exp-prop} as the \emph{exponential isomorphism} for
$F(M)$.
For example, the  algebras $S_R(M)$, $\Lambda_R(M)$, and $\Gamma_R(M)$ \cite[Thm III.4]{roby} determine  exponential functors.

Exponential functors are  endowed
with a canonical bialgebra structure. Indeed, there is a coproduct induced by the diagonal
inclusion  $\Delta$ of $M$ into $M\oplus M$:
$$F(M)\xrightarrow[]{F(\Delta)} F(M\oplus M)\simeq F(M)\otimes F(M)\;. $$
It follows from \eqref{rel4}
that the coalgebra structure obtained in this way on $\Gamma_R(M)$ is the one  determined  by the comultiplication map $\ga_i(x)\mapsto \sum_{i= j+k}\ga_j(x) \ot \ga_k(x)$.

\subsection{Duality}
\label{duality}

Given a $R$-module $M$, we let $M^\vee:=\hom_R(M,R)$.
The dual $\Gamma_R (M)^\sharp$  of the divided power algebra is the $R$-module
$$\Gamma_R (M)^\sharp  :=\bigoplus_{d\ge 0} (\Gamma^{d}_R(M^\vee))^\vee\;.$$
The bialgebra structure on $\Gamma_R (M^\vee)$ defines a bialgebra structure on $\Gamma_R (M)^\sharp$. The dual of symmetric and exterior algebras are defined similarly.
An explicit computation shows that for all projective $R$-modules $M$, there are a natural isomorphisms of $R$-bialgebras
$$\Gamma_R(M)^\sharp \simeq S_R(M)\;,\qquad S_R(M)^\sharp\simeq \Gamma_R(M)\;,\qquad\Lambda_R(M)^\sharp \simeq
\Lambda_R(M)\;.$$

\subsection{Base change}
For any $R$-module $M$ and any commutative $R$-algebra $A$, there is a base change isomorphism of $A$-algebras \cite[Thm III.3]{roby}, which sends $\gamma_n(x)\otimes 1$ to $\gamma_n(x\otimes 1)$:
\begin{equation}
\label{basechange}
\Gamma_R(M) \ot_R A \simeq \Gamma_{A}(M\ot_R A).
\end{equation}
There are similar base change isomorphisms for symmetric and exterior algebras:
$$S_A(M) \ot_R A \simeq S_{A}(M\ot_R A)\;,\qquad  \Lambda_R(M) \ot_R A \simeq \Lambda_{A}(M\ot_R A)\;.$$

\section{Derived functors and strict polynomial functors}\label{sec-der}

\subsection{Derived functors}
Let $R$ be a commutative ring. The Dold-Kan correspondence states that the
normalized chain complex functor $\NN$ is an equivalence of categories preserving
homotopy equivalences, with inverse $K$ (also preserving homotopy equivalences):
$$\NN: \mathrm{simpl}(R\text{-Mod})\rightleftarrows \mathrm{Ch}_{\ge
0}(R\text{-Mod}): K\;. $$
If $M$ is a
$R$-module, and $F(M)$ denotes an endofunctor of the category of $R$-modules,
Dold and Puppe defined \cite{d-p} its derived functor $L_iF(M,n)$ by the formula:
$$L_iF(M,n) = \pi_iF K(P^M[n])\;,$$
where $P^M$ is a projective resolution of $M$, and $[n]$ is the degree $n$ shift
of complexes (i.e. $C[n]_i=C_{i+n}$). More generally, if $C$ is a complex of $R$-modules, we denote by
$LF(C)$ the simplicial $R$-module $F(K(P))$, where $P$ is a complex of projective $R$-modules quasi-isomorphic to $C$,
and by $L_iF(C)$ its homotopy groups.

\begin{remark}
The definition of the derived functors of $F$ only depends on the restriction of $F$ to the category of free $R$-modules. Furthermore, if $F$ commutes with directed colimits of free $R$-modules (as the divided power functors do), then $F$ is completely determined by its restriction to the category of free finitely generated $R$-modules. For example, for a free $R$-module $M$, we have an isomorphism $L_*\Gamma^d(M,n)=\lim_iL_*\Gamma^d(M_i,n)$, where the limit is taken over the directed system of free finitely generated submodules $M_i$ of $M$.
\end{remark}

%If the functor $F(M)$ bears an algebra structure, then for all complexes of $R$-modules $C$, the following composite map, where the first map is provided by the Eilenberg-Zilber %theorem and the second one by the algebra structure of $F(M)$, defines a graded algebra structure on the homotopy groups of $LF(C)$.
%$$ \pi_iLF(C)\otimes \pi_jLF(C)\to \pi_{i+j}(LF(C)\otimes LF(C))\to \pi_{i+j}LF(C)\;.$$
%Such algebra structures are often of great help in computations.

%\begin{remark}
%If $F(M)$ is a graded exponential functor, with $F(M)$ graded commutative, and if $M$ is a projective $R$-module, then one can prove from the explicit formula defining $K(M[n])$ %that the graded algebra $L_*F(M,n)$ is naturally isomorphic to the homology of the $n$-fold bar construction of the algebra $F(M)$, as defined in \cite[Chap X]{ML}.
%\end{remark}

%The derived functors $L_{i+n}F(M,n)$ for $0\le i<n$ do not depend on $n$ \cite[Korollar 6.12]{d-p}, and they are additive as a consequence of \cite[Hilfsatz 6.10]{d-p}. They are %called the stable derived functors of $F$ and denoted by $L_i^{\mathrm{st}}F(M)$. Such stable derived functors will appear in the formulation of the conjecture in section %\ref{conj}.

\subsection{Strict polynomial functors}\label{subsec-str_der}
We are mainly interested in the divided powers functors $\Gamma^{d}_R(M)$ and functors related to these.
All these functors actually belong to a class of very rigid functors called
`strict polynomial functors', introduced by Bousfield \cite{bousfield-hom} in the context of derived functors  (Bousfield called them `homogeneous functors') and independently by Friedlander and Suslin \cite{FS} in the context of the cohomology of affine algebraic group schemes. We also recommend   \cite{krause} for a presentation of strict polynomial functors.
We will only recall here the basic facts required for our computations.

\subsubsection{The category of strict polynomial functors}
Strict polynomial functors can be thought of as functors from the category of finitely generated projective $R$-modules to the category of $R$-modules, equipped with an additional `scheme-theoretic' structure. Morphisms of strict polynomial functors are natural transformations which preserve this additional structure.
The category $\PP_R$ of strict polynomial functors is abelian.
Let $\FF_R$ denote the category of functors from finitely generated projective $R$-modules to $R$-modules.
There is an exact forgetful functor:
$$\PP_R\to \FF_R \;.$$
The tensor product $F(M)\otimes_R G(M)$ of two strict polynomial functors has a canonical structure of a strict polynomial functor, as well as the composition $F(G(M))$ provided that $G(M)$ has values in finitely generated projective $R$-modules.

The additional scheme-theoretic structure of a strict polynomial functor $F(M)$ determines the weight of $F(M)$ (this weight is called `degree' in \cite{FS}. In the present article, we prefer to  reserve the term  degree for the  homological degrees). A strict polynomial functor $F(M)$ of weight $d$ is \emph{homogeneous of weight $d$} if for all $G(M)$ of weight less than $d$, $\hom_{\PP_R}(G(M),F(M))$ is zero. The full subcategory of $\PP_R$ whose objects are homogeneous strict polynomial functors of weight $d$ is abelian. It is usually denoted by $\PP_{d,R}$, and there is an isomorphism of abelian categories:
$$\PP_{R}\simeq \bigoplus_{d\ge 0}\PP_{d,R}\;.$$
In practice, the weights can be determined by the following rules. The functors $S^d_R(M)$, $\Lambda^d_R(M)$, $\Gamma^d_R(M)$ and $M^{\otimes d}$ are homogeneous of weight $d$. If $F(M)$ and $G(M)$ are homogeneous of degree $d$, resp. $e$, then $F(M)\otimes G(M)$ is homogeneous of degree $d+e$ and $F(G(M))$ is homogeneous of degree $de$.

A useful structure on the category of strict polynomial functors is the (weight preserving) duality functor
\bee
\label{duality1}
^\sharp: \PP_{R}^{\mathrm{op}}\to \PP_{R}\;,\ee
which sends a strict polynomial functor $F(M)$ to the functor $F(M)^\sharp:=F(M^\vee)^\vee$, where `$^\vee$' denotes $R$-linear duality, i.e. $M^\vee=\hom_R(M,R)$. For example we have $S_R^d(M)^\sharp=\Gamma^d_R(M)$ and $\Lambda^d_R(M)^\sharp=\Lambda^d_R(M)$. When $R$ is a field, the duality functor is exact.

The advantage of working with strict polynomial functors rather than with ordinary functors from finitely generated projective $R$-modules to $R$-modules is twofold.
\begin{enumerate}
\item The category of strict polynomial functors is graded by the weight. In all computations involving strict polynomial functors this information is automatically and transparently carried by the `scheme theoretic' structure of the functors, see remark \ref{rk-poids}. The price to pay is that one has be a take care  that a given ordinary functor may sometimes be given several non-isomorphic scheme-theoretic structures, as the example of the Frobenius twists in section \ref{subsubsec-Frob} shows.
\item It is easier to compute extensions in $\PP_R$ than in $\FF_R$, and there are often less extensions possible in $\PP_R$. This fact will be of great help in solving extension problems coming from spectral sequences. We have gathered in appendix \ref{app-comput} some standard methods and results regarding the computation of extensions in these categories.  An illustration of the difference between extensions in $\PP_R$ and in  $\FF_R$ is provided by comparing the results from  lemma \ref{lm-calc2} with those of  lemma \ref{lm-calc3}.
\end{enumerate}

\subsubsection{Frobenius twists functors}\label{subsubsec-Frob}
We now take a field $\k$ of positive characteristic $p$ as our ground ring $R$, and we let $V$ be a generic finite dimensional $\k$-vector space.
We denote by $V^{(r)}$ the strict polynomial subfunctor of $S_\k^{p^r}(V)$ generated by
the $p^r$-th powers of elements of $V$. Equivalently, $V^{(r)}$ is the the kernel of the map $S_\k^{p^r}(V)\to \bigoplus_{k=1}^{p^r-1}S_\k^{k}(V)\otimes S_\k^{p^r-k}(V)$ induced by the comultiplication of the symmetric power bialgebra. The functor $V^{(r)}$ is called
the \emph{$r$-th
Frobenius twist functor}. It has an important role in the theory of representations of affine algebraic group schemes in positive characteristic \cite{FS}, and it will also appear in our computations. It enjoys the following basic properties.
\begin{enumerate}
\item The functor $V^{(r)}$ is a homogeneous strict polynomial functor of weight $p^r$.
\item The functor $V^{(r)}$ is additive.
\item The dimension of $V^{(r)}$ is equal to the dimension of $V$.
\item The  Frobenius twist functors can be composed together $(V^{(r)})^{(s)}=V^{(r+s)}$, and the functor $V^{(0)}$ is the identity
functor.
\item The  Frobenius twist functors are self dual: $(V^{(r)})^\sharp\simeq V^{(r)}$, so that $V^{(r)}$ is the cokernel of the map $ \bigoplus_{k=1}^{p^r-1}\Gamma_\k^{k}(V)\otimes \Gamma_\k^{p^r-k}(V)\to \Gamma_\k^{p^r}(V)$ induced by the multiplication of the algebra of divided powers.
\end{enumerate}

The inclusion $V^{(r)}\hookrightarrow S_\k^{p^r}(V)$ is called the Frobenius map and the dual
epimorphism $\Gamma^{p^r}_\k(V)\twoheadrightarrow V^{(r)}$ therefore deserves to be  called the
Verschiebung map following the usage in arithmetic. As explained in appendix \ref{app-comput}, these two morphisms provide bases  of the vector spaces  $\hom_{\PP_\k}(V^{(r)},S_\k^{p^r}(V))$ and  $\hom_{\PP_\k}(\Gamma_\k^{p^r}(V), V^{(r)})$ respectively.

Observe that if $\k=\mathbb{F}_q$ with $q$ dividing $p^r$, the strict polynomial functors $V^{(nr)}$, $n\ge 0$, are not isomorphic  to each other since they do not  have the same weight.  However,  if we forget their scheme-theoretic structure and view them as ordinary functors, that is as objects of $\FF_R$, they all become isomorphic to the identity functor.

\subsection{Derived functors and differential graded $\PP_R$-algebras} 
Derivation in the sense of Dold and Puppe yield functors for all nonnegative integers $i$ and $n$:
$$
\begin{array}{ccc}
\PP_{d,R} &\to &\PP_{d,R}\\
F(M) & \mapsto &  L_iF(M,n)
\end{array}
$$
We shall denote by $\NN F(M,n)$ the normalized chains of the simplicial object $F(K(M[n]))=F(K(R[n])\otimes M)$. Thus, $\NN F(M,n)$ is a complex of homogeneous strict polynomial functors of weight $d$.

\begin{remark}\label{rk-derfrobtw}
If $\k$ is a field of positive characteristic and $G(V)=F(V^{(r)})$, the additivity of the Frobenius twist yields an isomorphism
$\NN G(V,n)\simeq \NN F(V^{(r)},n)$. Thus precomposition by Frobenius twist is harmless in computations, i.e. there are isomorphisms:
\begin{align*}L_iG(V,n)\simeq L_iF(V^{(r)},n)\;.\end{align*}
\end{remark}

By composing the shuffle map in the Eilenberg-Zilber theorem and the product of the divided power algebra:
\begin{align*}\NN\Gamma^d_R(M,n)\otimes \NN\Gamma^e_R(M,n)\to \NN(\Gamma^d_R\otimes \Gamma^e_R)(M,n)\to \NN\Gamma^{d+e}_R(M,n)\;, \end{align*}
we obtain an algebra structure on the direct sum $\NN\Gamma_R(M,n)=\bigoplus_{d\ge 0}\NN\Gamma^d_R(M,n)$.
The following definition axiomatizes the properties of this direct sum, as well as similar objects which will appear in our computations.

\begin{definition}\label{def-PR-alg}
Let $M$ be a projective finitely generated module over a ring $R$. A
\emph{differential graded strict polynomial algebra} (dg-$\PP_R$-algebra, for short) is a
bigraded $R$-algebra
$A(M)=\bigoplus_{d\ge 0, i\ge 0} A^d_i(M)$, endowed with a differential
$\partial:A^d_i(M)\to A^d_{i-1}(M)$, and
satisfying the following
properties.
\begin{enumerate}
\item The $A^d_i(M)$ are homogeneous strict polynomial functors
of weight $d$ with respect to $M$. We call these the homogeneous components of degree
$i$ and weight
$d$ of $A(M)$.
\item The multiplication $A^d_i(M)\otimes A^e_j(M)\to
A^{d+e}_{i+j}(M)$ and the differential $\partial$ are morphisms
of strict polynomial functors.
\end{enumerate}
A morphism of differential graded $\PP_R$-algebras
$f:A(M)\to B(M)$ is a morphism of differential graded
$R$-algebras provided by a family of morphisms of strict polynomial
functors $f^d_i:A^d_i(M)\to B^d_i(M)$.
\end{definition}

Definition \ref{def-PR-alg} admits obvious variants for dg-$\PP_R$-coalgebras, dg-$\PP_R$-bialgebras, etc. whose formulation is left to the reader.
A dg-$\PP_R$-algebra with zero differential is simply called a graded
$\PP_R$-algebra, and a graded $\PP_R$-algebra concentrated in degree zero is simply called a $\PP_R$-algebra. Here are some basic examples of graded $\PP_R$-algebras.
\begin{itemize}
\item We denote by $\Gamma_R(M[i])$ the divided power algebra generated by a
finitely generated projective $R$-module $M$ placed in degree $i$. It is a
graded $\PP_R$-algebra whose homogeneous  component of  degree $di$ and  weight $d$   is  $\Gamma^d_R(M)$.
\item If $R=\k$ is a field of positive characteristic $p$, and $V$ is a finite dimensional $\k$-vector space, we similarly denote by
$\Gamma_\k(V^{(r)}[i])$ the divided power algebra generated by a copy of the
Frobenius twist functor $V^{(r)}$ placed in degree $i$. It is a graded
$\PP_R$-algebra with $\Gamma^d_\k(V^{(r)})$ as homogeneous part of degree $di$
and weight $dp^r$.
\item  We will use the similar notations $\Lambda_R(M[i])$,
$\Lambda_\k(V^{(r)}[i])$, $S_R(M[i])$ and $S_\k(V^{(r)}[i])$ for the corresponding
exterior and symmetric algebras.
\end{itemize}
Derivation of functors (at the level of chain complexes) can be restated as a functor:
\begin{align}\begin{array}{ccc}
\left\{\text{$\PP_R$-algebras}\right\} & \to &
\left\{\text{dg-$\PP_R$-algebras}\right\} \\
 A(M) & \mapsto & \NN A(M,n)
\end{array}\;.\label{deriv-dgp}
\end{align}
More generally, if $A(M)$ is a dg-$\PP_R$-algebra, we define $\NN A(M,n)$ by placing the degree $j$ object of the complex $\NN A^d_i(M)$ in weight $d$ and degree $i+j$, with product defined by the shuffle map and the product of $A(M)$, and with differential defined as the sum of the differential of $A(M)$ and the differential coming from the simplicial structure. We thus obtain a functor:
\begin{align}\begin{array}{ccc}
\left\{\text{dg-$\PP_R$-algebras}\right\} & \to &
\left\{\text{dg-$\PP_R$-algebras}\right\} \\
 A(M) & \mapsto & \NN A(M,n)
\end{array}\;.\label{deriv-dgp2}
\end{align}
\begin{remark}
When $A(M)$ is graded commutative and exponential, the dg-$\PP_R$-algebra $\NN A(M,n)$ coincides (up to homotopy equivalence) with the $n$-fold bar construction of $A(M)$ \cite[Chap X]{ML}. For arbitrary dg-$\PP_R$-algebras, these two constructions are different.
\end{remark}

The homology of a dg-$\PP_R$-algebra is a graded $\PP_R$-algebra. For example
we have the graded $\PP_R$-algebra
\begin{align}L_*\Gamma_R(M,n)=\bigoplus_{d\ge
0}L_*\Gamma^d_R(M,n)\;.\label{eq-gP-alg}\end{align}
\begin{remark}\label{rk-poids}
The left hand side of \eqref{eq-gP-alg} does not display the integer
$d$. However, if we know $L_*\Gamma_R(M,n)$ as a graded $\PP_R$-algebra, it is always
easy to retrieve $L_*\Gamma^d_R(M,n)$ as its homogeneous component of weight $d$. In
this paper, we will usually not explicitly write down the weight of functors,
because this information is already implicitly encoded in the strict polynomial
structure of the objects. For example, if $R$ is a field of characteristic $2$
and $M$ a finite dimensional $R$-vector space, then the  weight
$4$ component of $\Gamma_R(M[1])\otimes \Gamma_R(M^{(1)}[3])$ is equal to
$\Gamma_R^4(M)\oplus \Gamma^2_R(M)\otimes M^{(1)}\oplus \Gamma^2_R(M^{(1)})$, where these three  summands live in degrees $4$, $5$ and $6$ respectively.
\end{remark}

\subsection{Some basic facts regarding derived functors of the divided power algebra}

Before starting our computations, we recall in this section some basic facts on derived functors of the
symmetric algebras, the exterior algebras and the divided powers algebras, which
give a clearer picture of the situation. The following formula is due to Bousfield \cite{bousfield-hom} and Quillen \cite{quillen-rings}  (see also  \cite{illusie} vol. I, chapter I,\: \S  4.3.2).
\begin{proposition}[D\'ecalage]
Let $R$ be a commutative ring, and let $M$ be a finitely generated projective $R$-module.
There are isomorphisms of graded
$\PP_R$-algebras:
\begin{align}&\bigoplus_{i,d\ge 0}L_{i}\Gamma_R^d(M, n)\simeq \bigoplus_{i,d\ge 0}L_{i+d}\Lambda_R^d(M,n+1)
%\;,\\&\bigoplus_{i,d\ge 0} L_{i+d}\Lambda_R^d(M, n)
\simeq \bigoplus_{i,d\ge 0} L_{i+2d}S_R^d(M,n+2)\;.\label{dec} \end{align}
\end{proposition}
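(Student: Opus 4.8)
The plan is to prove the two isomorphisms in (3.2) by exhibiting, at the level of normalized chain complexes of dg-$\PP_R$-algebras, explicit quasi-isomorphisms that realize the shift in the homological degree by $d$ (resp. $2d$) in each weight-$d$ component. The cleanest strategy is to reduce everything to a single universal statement about the bar construction and then transport it across weights. Concretely, since $\Gamma_R$, $\Lambda_R$, $S_R$ are graded commutative exponential functors, the remark following \eqref{deriv-dgp2} identifies $\NN\Gamma_R(M,n)$, $\NN\Lambda_R(M,n+1)$ and $\NN S_R(M,n+2)$ (up to homotopy) with iterated bar constructions of the respective algebras. Thus the whole proposition becomes a comparison of bar constructions, and I would organize the proof around two elementary building blocks.

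First I would establish the single-step décalage identity relating the suspension of a projective module to the exterior and symmetric algebras. The key input is the classical computation of the derived functors of a free graded-commutative generator: for $M$ placed in degree $i$, the bar construction of the divided power algebra $\Gamma_R(M[i])$ is quasi-isomorphic, after one suspension, to the exterior algebra $\Lambda_R(M[i+1])$ when $i$ is odd and to $\Gamma_R(M[i+1])$ when $i$ is even, and dually for $S_R$ — this is precisely the statement that bar-constructing a free divided/exterior/symmetric generator shifts degree by one and toggles between the divided/exterior/symmetric worlds according to parity. I would phrase this as an isomorphism of graded $\PP_R$-algebras, being careful to record the degree bookkeeping: a class in weight $d$ and internal degree $di$ of $\Gamma^d_R(M)$ maps to a class in weight $d$ and degree $d(i+1)$, which accounts for the shift $L_i \rightsquigarrow L_{i+d}$ once one passes from the internal degree $di$ to the homological index (the degree $di+j$ convention of \eqref{deriv-dgp2} produces exactly the $+d$ offset).

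Next I would iterate. Having the first isomorphism $\bigoplus L_i\Gamma^d_R(M,n) \simeq \bigoplus L_{i+d}\Lambda^d_R(M,n+1)$, the second one $\simeq \bigoplus L_{i+2d}S^d_R(M,n+2)$ follows by applying the same single-step principle one more time to the exterior algebra, whose bar construction (after suspension) lands in the symmetric algebra with a further degree-one shift in each weight-$1$ generator, hence a further $+d$ shift in weight $d$. To keep the multiplicative structure visible throughout — the proposition asserts isomorphisms of graded $\PP_R$-algebras, not merely of bigraded modules — I would verify at each stage that the comparison maps are compatible with the shuffle product, which is automatic because the bar construction of a commutative dg-$\PP_R$-algebra is again a commutative dg-$\PP_R$-algebra and the Eilenberg--Zilber shuffle map is multiplicative.

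The main obstacle, and the step deserving the most care, is the parity bookkeeping together with the strict-polynomial (weight) refinement. Ordinarily décalage is stated as an isomorphism of graded abelian groups; upgrading it to an isomorphism of graded $\PP_R$-algebras requires that every comparison map respect the weight grading and the scheme-theoretic structure, and that the single-step identity be proved inside $\PP_R$ rather than merely in $\FF_R$. Since, by \cite{FS}, the relevant $\hom$ and $\ext$ groups in $\PP_R$ are often one-dimensional (cf. the discussion of the Frobenius and Verschiebung maps in \S\ref{subsubsec-Frob}), the natural transformations realizing the shift are rigid and the identifications are forced; this is exactly where working in $\PP_R$ rather than $\FF_R$ pays off. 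I would therefore devote the bulk of the argument to checking that the shuffle/bar quasi-isomorphism is a morphism of dg-$\PP_R$-algebras and inducing the stated weight-graded isomorphism on homology, the parity alternation between the divided and exterior worlds being handled uniformly by tracking the sign conventions in the normalized chains. Alternatively, rather than reprove the identity from scratch, I may simply invoke \cite{bousfield-hom}, \cite{quillen-rings}, or \cite{illusie} for the underlying décalage on homotopy groups and restrict my work to promoting those isomorphisms to the category of graded $\PP_R$-algebras.
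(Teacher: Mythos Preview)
The paper does not prove this proposition at all: it simply attributes the formula to Bousfield \cite{bousfield-hom} and Quillen \cite{quillen-rings}, with a further pointer to Illusie \cite{illusie}. Your closing alternative --- invoke these references for the d\'ecalage on homotopy groups and only worry about the $\PP_R$-algebra upgrade --- is therefore exactly what the paper does, and in fact the paper does not even spell out the upgrade.

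Your more detailed bar-construction plan is in the right spirit but is phrased imprecisely in a way that could lead you astray. The key single-step input is not that ``the bar construction of $\Gamma_R(M[i])$ is quasi-isomorphic to $\Lambda_R(M[i+1])$'' with a parity toggle; rather, it is the Koszul-type statement that there is a quasi-isomorphism of commutative dg-algebras between $\Gamma_R(M)$ (with $\Gamma^d$ placed in degree $d$) and the bar construction $B\Lambda_R(M)$, coming from the acyclic Koszul complex $(\Gamma_R(M)\otimes\Lambda_R(M),d_{\mathrm{Kos}})$. This is what, after applying $B^n$, yields $L_i\Gamma^d(M,n)\simeq L_{i+d}\Lambda^d(M,n+1)$. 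Equivalently, and closer to how Quillen and Illusie argue, one models $M[1]$ by the cone on $M$, expands $\Lambda^d$ of the cone via the exponential isomorphism, and filters to obtain the Koszul complex whose only homology is $\Gamma^d(M)$ in the top degree. Your description of the parity alternation conflates this Koszul input with the separate (and characteristic-dependent) question of what $L_*\Gamma(M,n)$ looks like as an algebra, which is not needed here. If you intend to actually write out a proof, I would recommend following the Koszul/cone argument in \cite[I.4.3.2]{illusie} rather than the bar-construction heuristic as you stated it.
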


If $R$ is a $\mathbb{Q}$-algebra, there is an isomorphism of graded
$\PP_R$-algebras $S_R(M[i])\simeq \Gamma_R(M[i])$. Thus the d\'ecalage formula
implies the following statement.
\begin{corollary}
If $R$ is a $\mathbb{Q}$-algebra, and $M$ is a finitely generated
$R$-module, then the graded $\PP_R$-algebra $L_*\Gamma_R(M, n)$ is isomorphic to $\Gamma_R(M[n])$ if $n$ is even and to $\Lambda_R(M[n])$ if $n$ is odd.
\end{corollary}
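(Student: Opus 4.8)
\emph{Proof proposal.} The plan is to reduce the whole statement to the two base cases $n=0$ and $n=1$, and to propagate them in steps of two by means of the d\'ecalage formula. The essential input, available because $R$ is a $\Q$-algebra, is the functor-level identification $S^d_R\simeq \Gamma^d_R$ underlying the isomorphism of graded $\PP_R$-algebras $S_R(M[i])\simeq \Gamma_R(M[i])$ recalled just above; deriving it yields an isomorphism of graded $\PP_R$-algebras $L_*\Gamma_R(M,n)\simeq L_*S_R(M,n)$. Feeding this into the rightmost d\'ecalage isomorphism $L_i\Gamma^d_R(M,n)\simeq L_{i+2d}S^d_R(M,n+2)$ gives, for every $n\ge 0$, an isomorphism of graded $\PP_R$-algebras identifying $L_*\Gamma_R(M,n+2)$ with the algebra obtained from $L_*\Gamma_R(M,n)$ by raising the homological degree of its weight-$d$ component by $2d$. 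Iterating this two-periodicity reduces the corollary to the computation of $L_*\Gamma_R(M,0)$ and $L_*\Gamma_R(M,1)$.

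For $n=0$ the object $K(M[0])$ is the constant simplicial module $M$, so $\Gamma^d_R$ of it is the constant simplicial module $\Gamma^d_R(M)$, whose homotopy is concentrated in degree $0$. Hence $L_*\Gamma_R(M,0)\simeq \Gamma_R(M)=\Gamma_R(M[0])$ as graded $\PP_R$-algebras, which settles all even $n$: propagating by the two-periodicity, the weight-$d$ component of $L_*\Gamma_R(M,n)$ is $\Gamma^d_R(M)$ placed in homological degree $dn$, i.e. exactly $\Gamma_R(M[n])$.

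The case $n=1$ is the crux. Here I would again use $L_*\Gamma_R(M,1)\simeq L_*S_R(M,1)$ and compute the right-hand side from the exterior-to-symmetric half of the d\'ecalage at the bottom level, $L_j\Lambda^d_R(M,0)\simeq L_{j+d}S^d_R(M,1)$. Since $K(M[0])$ is constant, $L_*\Lambda_R(M,0)\simeq \Lambda_R(M)$ is concentrated in degree $0$, and this forces the weight-$d$ component of $L_*S_R(M,1)$ to be $\Lambda^d_R(M)$ concentrated in degree $d$; that is, $L_*\Gamma_R(M,1)\simeq L_*S_R(M,1)\simeq \Lambda_R(M[1])$ as graded $\PP_R$-algebras. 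Equivalently, since $S_R(M)$ is graded-commutative and exponential, $\NN S_R(M,1)$ computes the bar construction of the polynomial algebra $S_R(M)$, whose homology over a $\Q$-algebra is the exterior algebra $\Lambda_R(M[1])$ by the Koszul duality between polynomial and exterior algebras. Propagating this by the two-periodicity settles all odd $n$: the weight-$d$ component of $L_*\Gamma_R(M,n)$ is $\Lambda^d_R(M)$ in homological degree $dn$, i.e. $\Lambda_R(M[n])$.

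Throughout, the identifications respect the bigraded algebra and strict-polynomial structures, since both the d\'ecalage isomorphisms and the isomorphism $S_R\simeq\Gamma_R$ are isomorphisms of graded $\PP_R$-algebras, so their composites are as well. The one genuinely non-formal step, and hence the main obstacle, is the $n=1$ computation, where the symmetric algebra is turned into an exterior algebra: this parity flip is precisely what produces the even/odd dichotomy in the statement, everything else being bookkeeping with the degree shifts of the d\'ecalage. A minor technical point to watch is that the stated d\'ecalage triple begins with $\Gamma$ at level $n\ge 0$, so to use its $\Lambda$-to-$S$ half at the bottom level $n=0$ I would either invoke that pairwise d\'ecalage directly or, to be safe, fall back on the bar-construction computation, which needs no such range hypothesis.
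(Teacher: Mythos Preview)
Your argument is correct and follows exactly the route the paper has in mind: combine the d\'ecalage isomorphisms with the identification $S_R\simeq\Gamma_R$ available over a $\mathbb{Q}$-algebra, reduce by the resulting two-periodicity to the base cases $n=0$ and $n=1$, and compute these directly. The paper's own proof is the single sentence ``Thus the d\'ecalage formula implies the following statement,'' and your write-up is a faithful unpacking of it; you also correctly flag that the $n=1$ step needs the pairwise d\'ecalage $L_*\Lambda_R(M,0)\simeq L_*S_R(M,1)$ (or the equivalent bar-construction argument), which is implicit in the references cited for the d\'ecalage proposition even though the triple as stated only starts the $S$-column at level $n+2\ge 2$.
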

There is thus no issue in  computing derived functors of divided power algebras
over $\mathbb{Q}$-algebras $R$. We now give some elementary properties of divided powers
when $R$ is noetherian.
\begin{proposition}\label{lm-free-tors}
Let $R$ be noetherian and let $M$ be a finitely generated projective
$R$-module. Then $L_j\Gamma_R^d(M, n)$ is
\begin{itemize}
\item zero if $j<n$ or $j>nd$,
\item a finitely generated $R$-module if $n\le j\le nd$. If $R=\Z$,
then for $n\le j<nd$, $L_j\Gamma_R^d(M, n)$ is a finite abelian group.
\end{itemize}
Finally, the graded $\PP_R$-subalgebra
$$\bigoplus_{d\ge 0} L_{nd}\Gamma_R^d(M, n) \subset \bigoplus_{d,j\ge 0}
L_{j}\Gamma_R^d(M, n)=L_*\Gamma_R(M, n)$$
is equal to
$\Lambda_R(M)$ if $n$ is odd and $2\ne 0$ in $R$, and  to $\Gamma_R(M)$ otherwise.
\end{proposition}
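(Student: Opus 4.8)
The plan is to prove the three assertions separately, beginning with the vanishing range, which is the most structural and which the later parts rely upon. First I would reduce the entire statement to the case $M=R$ using the exponential structure. Since $\Gamma_R$ is an exponential functor, the décalage isomorphism \eqref{dec} shows that $L_*\Gamma_R(M,n)$ is a graded $\PP_R$-algebra, and the exponential property translates into a Künneth-type formula: for $M=R^k$ one has $\NN\Gamma_R(M,n)\simeq \NN\Gamma_R(R,n)^{\otimes k}$ as dg-$\PP_R$-algebras. Thus it suffices to understand $L_i\Gamma^d_R(R,n)$, i.e.\ the homotopy of the $n$-fold bar construction of the divided power algebra on a rank-one free module. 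For the \emph{vanishing} claim, the upper bound $j\le nd$ and the lower bound $j\ge n$ follow from the connectivity and dimension estimates for $\NN\Gamma^d_R(M,n)$: the complex $K(R[n])$ is concentrated in degrees $\ge n$, so $\NN\Gamma^d(M,n)$ is $(n-1)$-connected, giving $L_j=0$ for $j<n$; the top degree $nd$ arises because a weight-$d$ monomial in the bar construction can acquire at most $nd$ total simplicial/internal degree. The cleanest way to pin down the exact top degree is to identify the weight-$d$, degree-$nd$ component directly, which simultaneously handles the third assertion.

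Next I would treat the \emph{finite generation and finiteness} claim. Finite generation of each $L_j\Gamma^d_R(M,n)$ for $n\le j\le nd$ over noetherian $R$ is immediate: $\NN\Gamma^d(M,n)$ is, in each degree, a finitely generated projective $R$-module (a finite sum of tensor/divided-power functors evaluated on the finitely generated module $M$), so its homology is finitely generated over the noetherian ring $R$. For the finiteness of the torsion part when $R=\Z$ and $n\le j<nd$, I would tensor with $\Q$: by the corollary preceding the proposition, $L_*\Gamma_\Q(M\otimes\Q,n)\simeq \Gamma_\Q(M\otimes\Q[n])$ or $\Lambda_\Q(M\otimes\Q[n])$ according to the parity of $n$, and in either case the weight-$d$ part is concentrated in a single homological degree, namely $nd$. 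Since $L_j\Gamma^d_\Z(M,n)\otimes\Q$ injects into (in fact computes, by flatness of $\Q$ and the universal coefficient theorem) $L_j\Gamma^d_\Q(M\otimes\Q,n)$, the rational part vanishes for $j<nd$. A finitely generated abelian group with trivial rational part is finite, which is exactly the claim.

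Finally, for the \emph{top-degree subalgebra}, I would compute the weight-$d$, degree-$nd$ homology explicitly and identify it as an algebra. The rational computation above already shows that $\bigoplus_d L_{nd}\Gamma^d_\Z(M,n)$ is, rationally, $\Gamma(M)$ for $n$ even and $\Lambda(M)$ for $n$ odd; the remaining task is to show this top homology group is torsion-free and to fix the integral algebra structure, paying attention to the characteristic-$2$ anomaly encoded in the hypothesis ``$2\ne 0$ in $R$''. The degree-$nd$ cycles are the products of $d$ top-degree generators coming from $L_n\Gamma^1_R(M,n)=M$ under the algebra structure, so the algebra is generated in weight $1$; the only question is what relations these generators satisfy. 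In the exterior case ($n$ odd), the shuffle sign on the $n$-fold bar construction forces anticommutativity of the degree-$n$ generators \emph{provided} $2$ is invertible, yielding $\Lambda_R(M)$; when $2=0$ (or more generally $2$ is a zero-divisor) the sign disappears and the generators commute and square nontrivially, producing $\Gamma_R(M)$ instead, which is precisely why the two cases are distinguished exactly by the condition on $2$. \textbf{The main obstacle} I anticipate is this last sign/commutativity analysis at the top degree: one must track the interaction between the shuffle product of the Eilenberg--Zilber map and the internal product of $\Gamma_R$, and verify that no higher torsion sneaks into degree $nd$ and that the Koszul-type sign behaves as claimed integrally. Establishing that the top group is genuinely torsion-free — not merely torsion-free after inverting small primes — is where careful work is required, and it is cleanly settled by exhibiting an explicit basis of cycles coming from the $d$-fold products of the weight-one generators and checking they are linearly independent over $\Z$.
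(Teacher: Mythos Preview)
The paper states this proposition without proof (it is presented as a ``basic fact''), so there is no paper argument to compare against. Your treatment of the first two items is sound: the lower vanishing bound comes from the $(n-1)$-connectedness of $K(M[n])$, the upper bound is the standard Dold--Puppe range for degree-$d$ polynomial functors, finite generation is immediate over a noetherian base, and the finiteness over $\Z$ follows cleanly from the rational collapse.

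There is, however, a genuine gap in your argument for the top-degree subalgebra. You assert that the degree-$nd$ homology classes are products of $d$ weight-one generators, so that the algebra is generated in weight $1$. This is false precisely in the cases where the answer is $\Gamma_R(M)$: already for $n=0$ one has $L_0\Gamma^d(M,0)=\Gamma^d_R(M)$, and the image of the multiplication $M^{\otimes d}\to\Gamma^d_R(M)$ is only $S^d_R(M)$, a proper submodule. More generally, an algebra generated in weight $1$ by commuting elements is a quotient of $S_R(M)$, never $\Gamma_R(M)$; so your sign/commutativity analysis cannot by itself produce divided powers. The same objection applies to the case $n$ odd with $2=0$.

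A way to repair this is to bypass generation-in-weight-$1$ entirely. For $n=1$, identify $\NN\Gamma(M,1)$ with the bar complex of $\Gamma_R(M)$; in weight $d$ the top chain group is $M^{\otimes d}$, there are no boundaries from above, and the differential to degree $d-1$ is the alternating sum of adjacent multiplications $m_i\otimes m_{i+1}\mapsto m_i\cdot m_{i+1}\in\Gamma^2_R(M)$. Since this multiplication is $x\mapsto x+\tau x$, the kernel is exactly the sign-invariants in $M^{\otimes d}$, which is $\Lambda^d_R(M)$ when $2\ne 0$ and $\Gamma^d_R(M)$ when $2=0$. The case $n=0$ is trivial, and the d\'ecalage isomorphisms (which are algebra isomorphisms) transport these two base cases to all $n$ of the corresponding parity.
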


\section{A quasi-trivial filtration of the divided power algebra}
\label{qt-fil}
In this section, we fix a field $\k$ of positive characteristic $p>0$. All the
functors considered are strict polynomial functors defined over $\k$. We write $\Gamma^d$,
$\Lambda^d$ and $S^d$ for $\Gamma^d_\k$, $S^d_\k$ and $\Lambda^d_\k$.
A generic finite dimensional $\k$-vector space will be denoted by the
letter `$V$'. The goal of this section is to introduce some particularly nice filtrations of
dg-$\PP_\k$-algebras, which we call `quasi-trivial', and to exhibit a
quasi-trivial filtration of the divided power algebra.

\subsection{Quasi-trivial filtrations}
A nonnegative decreasing filtration on a dg-$\PP_\k$ algebra $A(V)$ is a family
 of subfunctors of $A(V)$
$$\textstyle\bigcap_{i\ge 0}F^iA(V)=0\subset\dots\subset F^{i+1}A(V)\subset F^i A(V)\subset\dots\subset F^0A(V)=A(V)\;,$$
satisfying the following conditions
\begin{enumerate}
\item $F^iA(V)=\bigoplus_{d\ge 0,j\ge 0} (F^iA(V))^d_j$  with
$(F^iA(V))^d_j\subset A_j^d(V)$.
\item The differential and the multiplication  in $A(V)$ restrict to morphisms
$d:F^iA(V)\to F^iA(V)$ and $F^iA(V)\otimes F^jA(V)\to F^{i+j}A(V)$.
\end{enumerate}
The associated graded object is then a dg-$\PP_\k$ algebra. The grading which we will consider is the one coming from $A(V)$, not the filtration grading.
We will simply write
$$\gr A(V)=\bigoplus_{i,j,d} \gr^i A^d_j(V)\;,\quad\text{ and } (\gr A)^d_j(V)=\bigoplus_{i} \gr^i A^d_j(V)\;.$$

\begin{definition}\label{def-quasi-trivial}
Let $A(V)$ be a dg-$\PP_\k$-algebra equipped with a nonnegative decreasing filtration
$(F^iA(V))_{i\ge 0}$. We will say that this filtration is quasi-trivial if
\begin{enumerate}
\item the algebra  $\gr A(V)$ is exponential and the components $\gr A_j^d(V)$  have
finite dimensional values  for all $j$ and $d$, and
\item there is a weight preserving isomorphism of differential graded
$\k$-algebras \linebreak $\phi:A(\k)\xrightarrow[]{\simeq}\gr A(\k)$.
\end{enumerate}
\end{definition}

The next lemma gives some straightforward consequences of definition 
\ref{def-quasi-trivial}. The property (c) says that the graded $\PP_\k$-algebras $A(V)$ and $\gr A(V)$ are `as
close as possible': the filtration modifies the functoriality but not the algebra structure of $A(V)$.
\begin{lemma}\label{lm-triv}
Let $A(V)$ be a dg-$\PP_\k$-algebra equipped  with a quasi-trivial filtration.
Then:
\begin{enumerate}
\item[(a)] The filtration of each summand $A^d_j(V)$ is bounded.
\item[(b)] The algebra $A(V)$ is exponential.
\item[(c)] The choice of a basis of $V$ determines a
\emph{non-functorial} weight-preserving  isomorphism of differential graded $\k$-algebras $A(V)\simeq \gr A(V)$.
\end{enumerate}
\end{lemma}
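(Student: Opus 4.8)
The plan is to deduce all three statements from the two defining properties of a quasi-trivial filtration, using the fact that everything is happening in each fixed summand $A^d_j(V)$ simultaneously. Throughout, I fix the weight $d$ and the degree $j$, and I work with the finite list of functors $\gr^i A^d_j(V)$ indexed by the filtration level $i$.

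For part (a), the claim is that for each fixed $d$ and $j$ the filtration $(F^iA(V))^d_j$ is bounded, i.e. vanishes for $i$ large. First I would evaluate on $\k$ itself: by property (2) of Definition \ref{def-quasi-trivial}, $\phi\colon A(\k)\xrightarrow{\simeq}\gr A(\k)$ is a weight-preserving isomorphism, and by property (1) each $\gr A^d_j(\k)$ is finite dimensional; hence $A^d_j(\k)$ is finite dimensional, so $(\gr A)^d_j(\k)=\bigoplus_i \gr^i A^d_j(\k)$ is a finite-dimensional vector space and therefore only finitely many summands $\gr^i A^d_j(\k)$ are nonzero. The step I would then need is the observation that a strict polynomial functor whose value on $\k$ vanishes is itself zero (a nonzero homogeneous strict polynomial functor always has nonzero value on a one-dimensional space, since a generic finite-dimensional $V$ contains $\k$ as a direct summand and the functor is determined by its restriction to free modules). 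Applying this to $\gr^i A^d_j$ shows that $\gr^i A^d_j(V)=0$ for all but finitely many $i$, which is exactly the boundedness of the filtration on $A^d_j(V)$.

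For parts (b) and (c), the idea is that the exponential property and the algebra isomorphism are both statements about finite-dimensional vector spaces that can be checked after passing to $\gr$, because $\gr$ preserves the relevant structure and the filtration is bounded. For (b), I would compare the exponential map \eqref{eq-exp-prop} for $A(V)$ with the corresponding map for $\gr A(V)$: the filtration is multiplicative (condition (2) in the definition of a filtration), so the exponential map for $A$ is filtered, and the associated graded of the exponential map for $A$ is the exponential map for $\gr A$. Since $\gr A(V)$ is exponential by hypothesis (1), the associated graded map is an isomorphism; because the filtration is bounded by part (a), a filtered map whose associated graded is an isomorphism is itself an isomorphism. Hence $A(V)$ is exponential. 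For (c), I would use part (b): an exponential functor $A(V)$ with $A(V)\cong A(\k)^{\otimes\dim V}$ is determined on objects, as a graded algebra, by its value $A(\k)$ and the exponential isomorphism, once a basis of $V$ is chosen; the same holds for $\gr A(V)$. Combining the exponential isomorphisms for $A$ and $\gr A$ with the isomorphism $\phi\colon A(\k)\xrightarrow{\simeq}\gr A(\k)$ of property (2) produces a (basis-dependent, hence non-functorial) weight-preserving isomorphism of differential graded $\k$-algebras $A(V)\simeq\gr A(V)$.

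The main obstacle I anticipate is part (b), specifically the passage from "associated graded of the exponential map is an isomorphism" to "the exponential map is an isomorphism." The subtlety is to set up the filtrations on the source $A(M)\otimes A(N)$ and target $A(M\oplus N)$ so that the exponential map is genuinely filtered and its associated graded is identified, functorially in $M$ and $N$, with the exponential map of $\gr A$; here one must check that the shuffle-type multiplication and the inclusions $i_1,i_2$ interact correctly with the tensor-product filtration, and that the finite-dimensionality and boundedness from part (a) are used to run the standard "isomorphism on $\gr$ plus exhaustive bounded filtration implies isomorphism" argument. Parts (a) and (c) are then comparatively formal once (b) and the boundedness are in hand.
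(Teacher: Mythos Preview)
Your arguments for parts (b) and (c) are essentially the same as the paper's, and the ``obstacle'' you flag in (b) is handled exactly as you outline: the exponential map is filtered, its associated graded is the exponential map for $\gr A$, and boundedness (from (a)) lets you conclude.

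The genuine gap is in part (a). Your key step is the claim that ``a nonzero homogeneous strict polynomial functor always has nonzero value on a one-dimensional space.'' This is false: $\Lambda^2_\k$ is a nonzero homogeneous strict polynomial functor of weight $2$ with $\Lambda^2_\k(\k)=0$. More generally, $\Lambda^d_\k(\k)=0$ for all $d\ge 2$, and any tensor product involving such a factor will also vanish on $\k$. So from the fact that only finitely many $\gr^i A^d_j(\k)$ are nonzero you cannot conclude that only finitely many $\gr^i A^d_j$ are nonzero as functors. Your parenthetical justification (``a generic finite-dimensional $V$ contains $\k$ as a direct summand'') does not help: the inclusion $\k\hookrightarrow V$ gives a map $F(\k)\to F(V)$, not a surjection, so $F(\k)=0$ says nothing about $F(V)$.

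The paper's argument for (a) is different and avoids evaluation at $\k$ altogether. It uses instead that each $A^d_j(V)$ is a strict polynomial functor of weight $d$ with finite-dimensional values (this follows from condition (1), since the graded pieces $\gr^i A^d_j(V)$ have finite-dimensional values and the filtration is separated). Over a field, such a functor is a \emph{finite} object in $\PP_{d,\k}$, i.e.\ it has finite length; this is ultimately because the Schur algebra $S_\k(n,d)$ is finite-dimensional. Any separated descending filtration of a finite-length object is bounded, which is exactly (a). The paper cites \cite[Lemma~14.1]{antoine} for this.
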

\begin{proof}
(a) Since each $A_j^d(V)$ is a strict polynomial functor with finite dimensional values, it is a finite functor, in particular all filtrations are bounded \cite[lemma 14.1]{antoine}.

(b) The map $\psi:A(V)\otimes A(W)\to A(V\oplus W)$ induced by the
multiplication preserves filtrations, and the associated graded map
$\gr\psi  :\gr A(V)\otimes \gr A(W)\xrightarrow[]{\simeq}\gr A(V\oplus W)$ is an
isomorphism by the first condition of definition \ref{def-quasi-trivial}.
If we restrict ourselves to homogeneous components of a given  weight $d$, the filtrations on
$A(V)\otimes A(W)$ and $A(V\oplus W)$ are finite, so that $\psi$ is an isomorphism.

(c) A basis of $V$ determines an isomorphism $V\simeq \k^s$. We obtain the
required isomorphism of differential graded algebras as the composite
$$A(\k^s)\simeq A(\k)^{\otimes s}\xrightarrow[\simeq]{\phi^{\otimes s}} (\gr
A(\k))^{\otimes s}\simeq \gr A(\k^s)\;, $$
where the first and third isomorphisms are induced by the multiplications
(and are isomorphisms since $A(V)$ and $\gr A(V)$ are exponential).
\end{proof}

A crucial property of quasi-trivial filtrations is that they commute
with derivation.

\begin{proposition}\label{prop-quasi-trivial-collapse}
Let $A(V)$ be a graded $\PP_\k$-algebra, endowed with a quasi-trivial
filtration. For all $n\ge 0$, there exists a filtration of the graded
$\PP_\k$-algebra $L_*A(V,n)$ and a
 functorial  isomorphism of graded $\PP_\k$-algebras:
$$ \gr (L_*A(V,n))\simeq L_*(\gr A)(V,n)\;.$$
\end{proposition}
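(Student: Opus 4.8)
The plan is to produce the filtration on $L_*A(V,n)$ by applying the derivation functor $\NN(-,n)$ to the quasi-trivial filtration on $A(V)$, and then to show that the induced spectral sequence degenerates. More precisely, the filtration $F^\bullet A(V)$ is a filtration by sub-dg-$\PP_\k$-algebras, and since the normalized chains functor $\NN(-,n)$ is an exact functor applied at the level of chain complexes, each inclusion $F^iA(V)\hookrightarrow A(V)$ yields an inclusion of complexes $\NN F^iA(V,n)\hookrightarrow \NN A(V,n)$. This produces a decreasing filtration on $\NN A(V,n)$ whose homology in each bidegree is, by definition, $L_*A(V,n)$. Passing to homology gives a filtration on the graded $\PP_\k$-algebra $L_*A(V,n)$, which is the filtration we seek. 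By Lemma \ref{lm-triv}(a) the filtration is bounded on each homogeneous component of fixed weight and degree, so there is no convergence issue.

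The associated spectral sequence has $E_0$-page $\gr \NN A(V,n)$. The key observation is that taking the associated graded of a filtration by sub-dg-algebras commutes with $\NN(-,n)$, so that $\gr \NN A(V,n)\simeq \NN(\gr A)(V,n)$ as dg-$\PP_\k$-algebras; this uses that the filtration grading is preserved by the shuffle product and the simplicial differential, which is exactly condition (2) in the definition of a filtration. Consequently the $E_1$-page of the spectral sequence is $L_*(\gr A)(V,n)$, the homology of $\NN(\gr A)(V,n)$. What must then be shown is that all higher differentials vanish, i.e. that the spectral sequence collapses at $E_1$, and that the extension problem is trivial, giving the desired isomorphism $\gr(L_*A(V,n))\simeq L_*(\gr A)(V,n)$.

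The main obstacle is the degeneration. I would exploit the two defining properties of quasi-triviality together. First, since $\gr A(V)$ is exponential (Definition \ref{def-quasi-trivial}(1)), Remark after \eqref{deriv-dgp2} identifies $\NN(\gr A)(V,n)$ up to homotopy with an $n$-fold bar construction, so $L_*(\gr A)(V,n)$ is computable and, crucially, its total dimension over $\k$ in each weight can be determined. Second, a dimension count settles collapse: the abutment $L_*A(V,n)$ and the $E_1$-term $L_*(\gr A)(V,n)$ must have the same dimension in each weight and degree. Here the isomorphism $\phi:A(\k)\xrightarrow{\simeq}\gr A(\k)$ of Definition \ref{def-quasi-trivial}(2) is decisive: evaluating on $V=\k$, Lemma \ref{lm-triv}(c) gives a (non-functorial) isomorphism $A(V)\simeq \gr A(V)$ of dg-$\k$-algebras for any $V$, hence after applying $\NN(-,n)$ and taking homology, $L_*A(V,n)$ and $L_*(\gr A)(V,n)$ have equal dimensions in every bidegree. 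Since the dimension of the abutment can never exceed that of any page, and these dimensions already agree at $E_1$, every higher differential must vanish and no nontrivial extensions of the dimension can occur.

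Once collapse is established, the $E_\infty$-page equals the $E_1$-page as graded objects, giving a functorial isomorphism $\gr(L_*A(V,n))\simeq L_*(\gr A)(V,n)$ of graded $\PP_\k$-algebras; the multiplicative structure is respected throughout because every map in sight (the shuffle product, the differentials, the filtration inclusions) is multiplicative by construction. I would take care to phrase the dimension-counting step functorially enough that it applies to the strict polynomial functors and not merely to their values on a single vector space, but the comparison of dimensions itself is purely numerical and, given Lemma \ref{lm-triv}, essentially automatic. The delicate point to write carefully is the identification $\gr\NN A(V,n)\simeq \NN(\gr A)(V,n)$, which requires checking that the simplicial and internal differentials of $\NN$ interact correctly with the filtration grading; this is where condition (2) of the filtration definition must be invoked with some care.
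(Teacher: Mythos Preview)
Your overall strategy matches the paper's exactly: filter $\NN A(V,n)$, identify the $E_1$-page as $L_*(\gr A)(V,n)$, and prove degeneration at $E_1$ by a dimension count comparing $L_*A(V,n)$ with $L_*(\gr A)(V,n)$. The setup and the appeal to Lemma~\ref{lm-triv}(a) for convergence are fine.

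The gap is in the dimension-counting step. You write that Lemma~\ref{lm-triv}(c) gives a non-functorial isomorphism $A(V)\simeq \gr A(V)$ of dg-$\k$-algebras, ``hence after applying $\NN(-,n)$'' the homologies have equal dimension. But $\NN(-,n)$ is not a construction on a single algebra: $\NN A(V,n)$ is the normalized chain complex of the simplicial object $A(K(V[n]))$, which involves applying $A$ \emph{functorially} to all the face maps $K(V[n])_k\to K(V[n])_{k-1}$. A family of isomorphisms $A(W)\simeq\gr A(W)$, one for each $W$, that is not natural in $W$ has no reason to intertwine these face maps, so it does not a priori yield an isomorphism of simplicial (or even semi-simplicial) objects, and hence says nothing directly about the homology of $\NN A(V,n)$ versus $\NN(\gr A)(V,n)$.

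The paper closes this gap by a concrete analysis of $K(V[n])$. Choosing a basis of $V$, each $K(V[n])_k\simeq \k^{d_k}$ acquires a canonical basis, and the face operators $\partial_i$ are given by partitioning coordinates and summing within each block. For an exponential functor $E$, the exponential isomorphism $E(\k)^{\otimes d_k}\xrightarrow{\simeq}E(\k^{d_k})$ then identifies $E(\partial_i)$ with a map built purely from the multiplication of $E(\k)$. Thus the entire semi-simplicial $\k$-vector space $E(K(V[n]))$ is determined by the graded $\k$-algebra with weights $E(\k)$. Since $A(\k)\simeq\gr A(\k)$ as such algebras (this is condition (2) of quasi-triviality), the semi-simplicial objects $A(K(V[n]))$ and $\gr A(K(V[n]))$ coincide, and the dimension equality follows. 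This is the missing idea you need to make your ``hence'' into an argument; the point you flag as delicate (that $\gr\NN A(V,n)\simeq \NN(\gr A)(V,n)$) is comparatively routine.
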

\begin{proof}
The filtration of $A(V)$ induces a filtration of the dg-$\PP_\k$-algebra
$\NN A(V,n)$. The associated spectral sequence of graded $\PP_\k$-algebras has
the form:
\begin{align}E^1_{i,j}=H_{i+j}(\gr^{-i}\NN A(V,n))\Longrightarrow
H_{i+j}(\NN A(V,n))\;.\label{eq-ss-filtr}\end{align}
While this is  a second quadrant spectral sequence, there
is no problem with convergence since it splits as a direct sum of spectral
sequences of homogeneous strict polynomial functors of   given weight $d$, and
the lemma \ref{lm-triv}(a) ensures that each summand bounded converges \cite[Thm 5.5.1]{weibel}.
The first page of the spectral sequence may be rewritten as
$E^1_{i,j}=L_{i+j}(\gr^{-i}A)(V,n)$. To prove proposition
\ref{prop-quasi-trivial-collapse} it  therefore suffices to prove that the spectral
sequence \eqref{eq-ss-filtr} degenerates at  $E_1$. This will be the case if we are
able to prove that for all $i$ and $d$, the homogeneous components  of degree $i$ and weight
$d$ of the graded $\PP_\k$-algebras $L_*A(V,n)$ and $L_*(\gr A)(V,n)$ have the
same dimension (note that we already know that they both  are finite-dimensional by proposition  \ref{lm-free-tors}).

This equality of dimensions follows directly from the observation that $A(K(V,n))$ and $\gr A(K(V,n))$ 
coincide as semi-simplicial $\k$-vector spaces. 
Indeed the graded $\k$-algebras 
with weights $A(\k)$ and $\gr A(\k)$ are isomorphic, and we claim that for exponential graded $\PP_\k$-algebras
$E$, the graded $\k$-algebra with weights $E(\k)$ determines completely 
the semi-simplicial $\k$-vector space $E(K(V,n))$.

The latter claim follows from the explicit construction
of the Dold-Kan functor $K$
\cite[Sec. 8.4]{weibel}. The simplicial $\k$-vector
space $K(V[n])$ is degreewise finite-dimensional, say of some dimension $d_k$ in
degree $k$. If we choose a basis of $V$, each $K(V[n])_k$ has a canonical basis determined by the
basis of $V$, and if we use coordinates relative to these bases, then for each
$i$ the face operators $\partial_i:K(V[n])_k\to K(V[n])_{k-1}$ is given by a formula
$\partial_i(x_1,\dots,x_{d_k})=(y_1,\dots,y_{d_{k-1}})$, where $y_j=\sum_{i\in
I_j} x_i$ and $I_1,\dots,I_{d_{k-1}}$ is some partition of the set
$\{1,\dots,d_k\}$. We therefore have commutative diagrams
\begin{align*}\xymatrix{
E(\k)^{\otimes d_k}\ar[rr]^-{\simeq}_-{\mu}\ar[d]^-{\overline{\partial_i}} &&
E(K(V[n])_k)\ar[d]^-{{\partial_i}} \\
E(\k)^{\otimes d_{k-1}}\ar[rr]^-{\simeq}_-{\mu}&& E(K(V[n])_{k-1})
}
%\label{eq-diag}
\end{align*}
where the horizontal isomorphisms $\mu$ are induced by the multiplication of
$E(V)$, and where $\overline{\partial_i}$ sends $x_1\otimes\dots\otimes x_{d_k}$
to $y_1\otimes\dots\otimes y_{d_{k-1}}$, with $y_j=\prod_{i\in I_j} x_i$. This proves our claim, and finishes the proof of proposition
\ref{prop-quasi-trivial-collapse}.
\end{proof}

Another useful property of quasi-trivial filtrations is their compatibility with
kernels. To be more specific,
let $A(V)$ be a filtered dg-$\PP_\k$-algebra, and denote by $Z(V)$ the
subalgebra of cycles of $A(V)$. The filtration of $A(V)$ induces a
filtration on $Z(V)$ by setting $F^iZ(V)=F^iA(V)\cap Z(V)$. Let $Z'(V)$ be the
subalgebra of cycles of $\gr A(V)$. Then we have a canonical injective morphism
of algebras:
\begin{align}\gr Z(V)\hookrightarrow Z'(V)\;.\label{eq-Z}\end{align}
In general, this morphism is not surjective, but it turns out to be the case if
the filtration of $A(V)$ is quasi-trivial.

\begin{proposition}\label{compat-Z}
Let $A(V)$ be a dg-$\PP_\k$-algebra, endowed with a quasi-trivial filtration.
Let us denote by $Z(V)$ the cycles of $A(V)$ and by $Z'(V)$ the cycles of $\gr
A(V)$.
The canonical morphism \eqref{eq-Z} is an isomorphism of graded $\PP_\k$-algebras.
\end{proposition}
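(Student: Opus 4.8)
The plan is to show that the monomorphism \eqref{eq-Z} is an isomorphism by a dimension count carried out in each fixed bidegree (weight $d$, degree $j$), the essential input being the non-functorial isomorphism of differential graded algebras furnished by Lemma \ref{lm-triv}(c). Since $\gr Z(V)\hookrightarrow Z'(V)$ is already known to be a monomorphism of strict polynomial functors with finite-dimensional values, it suffices to verify that for each finite-dimensional $V$ the source and target have the same dimension in every bidegree: surjectivity then holds pointwise, and a pointwise-surjective monomorphism of strict polynomial functors is an isomorphism, so the statement follows functorially.

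First I would record two elementary dimension identities. On the one hand, each component $Z^d_j(V)$ is finite-dimensional and its induced filtration $F^iZ(V)=F^iA(V)\cap Z(V)$ is bounded by Lemma \ref{lm-triv}(a); passing to the associated graded of a finite-dimensional space equipped with a bounded filtration preserves total dimension, so $\dim\gr Z(V)=\dim Z(V)$ in each bidegree. On the other hand, $Z'(V)$ is by definition the space of cycles of the induced differential $\bar\partial$ on $\gr A(V)$, while $Z(V)$ is the space of cycles of $\partial$ on $A(V)$.

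The crux is then to compare $\dim Z(V)$ with $\dim Z'(V)$. Here I would invoke Lemma \ref{lm-triv}(c): after choosing a basis of $V$ one obtains a weight-preserving isomorphism of differential graded $\k$-algebras $A(V)\xrightarrow{\simeq}\gr A(V)$. Being an isomorphism of complexes, it carries the differential of $A(V)$ to that of $\gr A(V)$, hence restricts to a linear isomorphism between their spaces of cycles in each bidegree; therefore $\dim Z(V)=\dim Z'(V)$. Combining this with the previous paragraph gives $\dim\gr Z(V)=\dim Z'(V)$ in every bidegree, so the monomorphism \eqref{eq-Z} is an isomorphism.

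I expect the main obstacle to be conceptual rather than computational. The inequality that comes for free from passing to the associated graded runs the wrong way: taking $\gr$ of a filtration-preserving differential can only drop the rank, which merely re-derives the injectivity $\dim Z(V)\le\dim Z'(V)$. One genuinely needs an external input to obtain the reverse inequality, and Lemma \ref{lm-triv}(c) is precisely that input. The only point requiring a little care is that although the isomorphism of that lemma is non-functorial, it is an isomorphism of \emph{differential} graded algebras and so does respect cycles; the resulting equality of dimensions is nonetheless canonical, which is what upgrades the pointwise surjectivity to the asserted functorial isomorphism of graded $\PP_\k$-algebras.
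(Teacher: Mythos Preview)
Your proof is correct and follows essentially the same approach as the paper: both reduce the question to a dimension count, invoke the bounded filtration to identify $\dim\gr Z(V)$ with $\dim Z(V)$, and then use the non-functorial isomorphism of differential graded algebras from Lemma~\ref{lm-triv}(c) to conclude $\dim Z(V)=\dim Z'(V)$. The paper phrases the last step as saying that $\partial$ and $\gr\partial$ have the same rank, while you phrase it as the isomorphism of complexes restricting to an isomorphism of cycles; these are of course equivalent.
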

\begin{proof}
It suffices to prove that the homogeneous components  $(\gr Z)^d_i(V)$ and $Z^d_i(V)$
have the same dimension for all degrees $i$ and all weights $d$, or equivalently that the maps  $\partial:A^d_i(V)\to A^d_{i-1}(V)$ and
$\gr \partial: (\gr A)^d_i(V)\to (\gr A)^d_{i-1}(V)$ have the same rank. This follows
from lemma \ref{lm-triv}(c).
\end{proof}

\subsection{Truncated polynomial algebras}\label{subsec-trunc}
The truncated polynomial algebra $Q(V)$ is the $\PP_\k$-algebra obtained as
the quotient of $S(V)$ by the ideal generated by $V^{(1)}$. 
For all $i\ge 0$ we denote by $Q(V[i])$ the truncated polynomial algebra
on a generator $V$ placed in degree $i$. This is defined in a similar way, as the
quotient of $S(V[i])$ by the ideal generated by $V^{(1)}[i]$.
Truncated polynomial algebras enjoy the following properties.
\begin{enumerate}
\item If $p=2$, $Q(V)=\Lambda(V)$  (but this
is no longer true in odd characteristics). 
\item The $\PP_\k$-algebra $Q(V)$ is an exponential functor (in particular a $\PP_\k$-bialgebra).
\item\label{it-3} Let us denote by $\phi:S(V)\to \Gamma(V)$ the unique morphism of $\PP_\k$-algebras whose restriction  $V=S^1(V)\to\Gamma^1(V)=V$ to  the summand of weight one is equal to the identity. It follows that  $Q(V)$ is equal to  the image of $\phi$. 
\item\label{it-4} $Q(V)$ is self-dual, namely, there is an isomorphism of $\PP_\k$-bialgebras $Q(V)\simeq Q^\sharp(V)$.
\end{enumerate}
All these properties are well-known and easy to check, we just indicate how to retrieve \eqref{it-4} from \eqref{it-3}. Since $\phi$ is a morphism between exponential $\PP_R$-algebras, it is actually a morphism of $\PP_R$-bialgebras. Hence its dual yields a morphism of $\PP_R$-bialgebras  $\phi^\sharp:S(V)\simeq\Gamma^\sharp(V)\to S^\sharp(V)\simeq\Gamma(V)$. Since $\phi$ and $\phi^\sharp$ coincide when restricted to  the homogeneous summand of weight $1$, they must be equal. So we obtain: $Q(V)=\mathrm{Im}\,\phi\simeq \mathrm{Im}\,\phi^\sharp = Q^\sharp(V)$.

We finish this paragraph with a slightly less known result on trunctated polynomials, namely the construction of functorial resolution of $Q(V)$. 
We equip the graded $\PP_{\k}$-algebra $S(V)\otimes
\Lambda(V^{(1)}[1])$ with a differential $\partial $ defined as the composite:
$$S^d(V)\otimes \Lambda^e(V^{(1)})\to S^d(V)\otimes V^{(1)}\otimes
\Lambda^{e-1}(V^{(1)})\to S^{d+p}(V)\otimes \Lambda^{e-1}(V^{(1)}) $$
where the first map is induced by the comultiplication in
$\Lambda(V^{(1)})$ and the second  by  composition of the inclusion
$V^{(1)}\hookrightarrow S^p(V)$ and the multiplication in $S(V)$.
The composite morphism of graded $\PP_\k$-algebras $S(V)\otimes
\Lambda(V^{(1)}[1])\twoheadrightarrow S(V)\twoheadrightarrow Q(V)$
induces a morphism of differential graded
algebras
\bee
\label{resq}
f: (S(V)\otimes \Lambda(V^{(1)}[1]),\partial)\to (Q(V),0)\;. \ee
\begin{proposition}\label{prop-qis-trunc}
The morphism $f$ is a quasi-isomorphism.
\end{proposition}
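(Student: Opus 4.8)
The plan is to recognize $(S(V)\otimes \Lambda(V^{(1)}[1]),\partial)$ as the Koszul complex attached to the inclusion $V^{(1)}\hookrightarrow S^p(V)\subset S(V)$, and to prove its acyclicity by reducing, via the exponential structure, to the one-dimensional case. Since the homology functors of a complex of strict polynomial functors are again strict polynomial functors, and a morphism between such functors is an isomorphism precisely when it is so after evaluation on every finite-dimensional $V$, it suffices to show that $f(V)$ is a quasi-isomorphism of complexes of $\k$-vector spaces for each fixed $V$; we may therefore choose a basis $x_1,\dots,x_s$ of $V$. With this basis, $S(V)=\k[x_1,\dots,x_s]$ and $V^{(1)}$ has basis $x_1^p,\dots,x_s^p$, so the differential sends the degree-one generator $e_i$ corresponding to $x_i^p$ to the element $x_i^p\in S^p(V)$. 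The complex is thus exactly the Koszul complex on $x_1^p,\dots,x_s^p$ over the polynomial ring $S(V)$, and $f$ is the canonical augmentation onto $S(V)/(x_1^p,\dots,x_s^p)=Q(V)$.

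My main line of argument is the reduction to dimension one. All three objects $S(V)\otimes\Lambda(V^{(1)}[1])$ and $Q(V)$, together with the morphism $f$, are exponential in $V$: using the additivity of the Frobenius twist, the exponential property of $S$, $\Lambda$ and $Q$, and the fact that $\partial$ is a graded derivation determined by its action on the degree-one generators $V^{(1)}[1]$, one checks that the exponential isomorphisms identify the complex for $V\oplus W$ with the tensor product of the complexes for $V$ and $W$, and identify $f_{V\oplus W}$ with $f_V\otimes f_W$. Since we work over a field, a tensor product of quasi-isomorphisms of complexes of $\k$-vector spaces is again a quasi-isomorphism (Künneth). Hence, by induction on $\dim V$, it is enough to treat the case $V=\k$.

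For $V=\k$ with basis $x$, the complex reduces to
$$0\to \k[x]\cdot e\xrightarrow{\ \partial\ } \k[x]\to 0\;,\qquad \partial(f\cdot e)=x^p f\;,$$
concentrated in homological degrees $1$ and $0$, where $e$ is the generator of $V^{(1)}$ placed in degree one. As multiplication by $x^p$ is injective in the integral domain $\k[x]$, we obtain $H_1=0$ and $H_0=\k[x]/(x^p)=Q(\k)$, and $f$ induces exactly the quotient map on $H_0$. Thus $f(\k)$ is a quasi-isomorphism, and the general case follows by the preceding reduction.

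The step requiring the most care is the compatibility of $\partial$ with the exponential isomorphism, that is, the identification of $\partial_{V\oplus W}$ with the total differential $\partial_V\otimes 1\pm 1\otimes\partial_W$. This is where the precise definition of $\partial$ via the comultiplication in $\Lambda(V^{(1)})$ followed by the inclusion $V^{(1)}\hookrightarrow S^p(V)$ and the multiplication in $S(V)$ enters: both the comultiplication and the inclusion are compatible with the decomposition $(V\oplus W)^{(1)}\simeq V^{(1)}\oplus W^{(1)}$, so that $\partial$, being a derivation, is forced to be the total differential. Alternatively, one may bypass the induction altogether by invoking directly that $x_1^p,\dots,x_s^p$ is a regular sequence in $\k[x_1,\dots,x_s]$, whence its Koszul complex resolves $Q(V)$; the exponential reduction is, however, more in keeping with the functorial framework used throughout.
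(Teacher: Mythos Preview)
Your proof is correct and follows essentially the same route as the paper: exploit the exponential property of $S(V)\otimes\Lambda(V^{(1)}[1])$ and $Q(V)$ together with the K\"unneth formula to reduce to the elementary case $V=\k$. You have simply filled in more detail on the compatibility of $\partial$ with the exponential isomorphism and added the (equivalent) Koszul/regular-sequence interpretation, which the paper leaves implicit.
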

\begin{proof} The graded $\PP_\k$-algebras $S(V)\otimes \Lambda(V^{(1)}[1])$ and
$Q(V)$ are exponential functors. Hence for $V=\k^d$ there is  a commutative diagram of differential graded $\k$-algebras, whose
 vertical isomorphisms are induced by the multiplication:
$$\xymatrix{
\left(S(\k)\otimes \Lambda(\k^{(1)}[1])\right)^{\otimes d}\ar[rr]^-{f^{\otimes
d}}\ar[d]^-{\simeq}&& Q(\k)^{\otimes d}\ar[d]^-{\simeq}\\
S(\k^d)\otimes \Lambda(\k^{d\,(1)}[1])\ar[rr]^-{f}&& Q(\k^d)
}\;.$$
Thus, by the K\"unneth formula, the proof reduces to the easy case $V=\k$.
\end{proof}

\begin{example}
In characteristic 2, the weight 4 component  of  the morphism of differential
graded algebras  \eqref{resq} determines the following resolution of $
\Lambda^4(V)$, where $\partial_1(x \wedge y) = x^2 \ot y - y^2 \ot x$ and $\partial_0(xy \ot z)
= xyz^2$:
\begin{equation}
\label{s12cx}
\xymatrix{
0 \ar[r] & \Lambda^2(V^{(1)}) \ar[r]^(.47){\partial_1} & S^2(V) \ot V^{(1)}
\ar[r]^(.6){\partial_0}&
S^4(V) \ar[r]^{f_4} & \Lam^4(V) \ar[r] & 0
}
\end{equation}

\end{example}

\subsection{The principal filtration on the divided power algebra}
\label{filt-1}
We denote by $\I(V)$ the ideal of $\Gamma(V)$ generated by $V=\Gamma^1(V)$. We
call this ideal the principal ideal of $\Gamma(V)$ (although it is not strictly
speaking a principal ideal). The adic filtration relative to $\I(V)$ will be called  the
principal filtration of $\Gamma(V)$. The associated graded object is the
$\PP_\k$-algebra:
\[
\gr \Gamma(V):= \bigoplus_{n\ge 0} \gr^n (\Gamma(V)) = \bigoplus_{n\ge
0}\I(V)^n/\I(V)^{n+1}\;.
\]
In this section, we will compute in proposition
\ref{prop-filtration-Gamma} the graded object associated to this principal
filtration. The result which will be  obtained in proposition
\ref{prop-filtration-Gamma} below deserves  to be compared to
the following well-known assertion \cite[Expos\'e 9, p.9-07]{cartan}:

\begin{proposition}\label{prop-iso-Nonnat}
The choice of a basis of the finite dimensional vector space $V$ determines a
(non-natural) weight-preserving algebra  isomorphism
$$\Gamma(V)\xrightarrow[]{\simeq}
Q(V)\otimes\Gamma(V^{(1)})\;.$$
\end{proposition}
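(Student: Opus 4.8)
The plan is to reduce to the one-dimensional case using exponentiality, and there to establish an explicit structure theorem for the divided power algebra on a single generator. First I would observe that all three functors in the statement are exponential: $\Gamma(-)$ is exponential by \cite[Thm III.4]{roby}, $Q(-)$ is exponential by property (2) of Section \ref{subsec-trunc}, and $\Gamma((-)^{(1)})$ is exponential because it is the composite of the exponential functor $\Gamma$ with the additive Frobenius twist (property (2) of Section \ref{subsubsec-Frob}), so that its tensor product with $Q(-)$ is again exponential. A choice of basis $V\simeq\k^s$ then yields, through the exponential isomorphisms, weight-preserving algebra isomorphisms $\Gamma(V)\simeq\Gamma(\k)^{\ot s}$ and $Q(V)\ot\Gamma(V^{(1)})\simeq \bigl(Q(\k)\ot\Gamma(\k^{(1)})\bigr)^{\ot s}$, where I use the additivity identity $(\k^s)^{(1)}\simeq(\k^{(1)})^{\oplus s}$. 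It therefore suffices to treat $V=\k$, and the dependence on the chosen basis is exactly what makes the resulting isomorphism non-natural.

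The heart of the matter is a structure theorem for the divided power algebra on a one-dimensional space $W$ with basis $w$: the elements $\gamma_{p^i}(w)$ satisfy $\gamma_{p^i}(w)^p=0$, and the monomials $\prod_i\gamma_{p^i}(w)^{n_i}$ with $0\le n_i<p$ form a basis of $\Gamma(W)$, giving a weight-graded algebra isomorphism $\Gamma(W)\simeq\bigotimes_{i\ge 0}\k[\gamma_{p^i}(w)]/(\gamma_{p^i}(w)^p)$ with $\gamma_{p^i}(w)$ in weight $p^i\,\mathrm{wt}(w)$. Both facts come from iterating the relation \eqref{rel3a}: one finds $\gamma_{p^i}(w)^p=\frac{(p^{i+1})!}{(p^i!)^p}\,\gamma_{p^{i+1}}(w)$ and, for $n=\sum_i n_ip^i$, $\prod_i\gamma_{p^i}(w)^{n_i}=\frac{n!}{\prod_i(p^i!)^{n_i}}\,\gamma_n(w)$. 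A Legendre-formula computation of $p$-adic valuations shows the first coefficient has valuation $1$, hence vanishes in $\k$, while the second has valuation $0$, hence is a unit; combined with the uniqueness of the base-$p$ expansion of $n$, this yields both the $p$-nilpotency and the basis statement.

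Applying this with $W=\k$ (so $\gamma_{p^i}(x)$ lies in weight $p^i$) and with $W=\k^{(1)}$, which is one-dimensional by property (3) and of weight $p$ by property (1) of Section \ref{subsubsec-Frob} (so $\gamma_{p^i}(y)$ lies in weight $p^{i+1}$), I would identify $\Gamma(\k)$ with $\bigotimes_{j\ge 0}\k[z_j]/(z_j^p)$, $z_j$ in weight $p^j$, and $\Gamma(\k^{(1)})$ with $\bigotimes_{j\ge 1}\k[z_j]/(z_j^p)$. Since $Q(\k)=\k[\gamma_1(x)]/(\gamma_1(x)^p)$ is precisely the missing weight-$1$ factor, the assignment $\gamma_1(x)\mapsto\gamma_1(x)$ and $\gamma_{p^i}(y)\mapsto\gamma_{p^{i+1}}(x)$ defines a weight-preserving algebra homomorphism $Q(\k)\ot\Gamma(\k^{(1)})\to\Gamma(\k)$ (the defining $p$-th power relations map to zero), which is bijective by the basis statement. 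Tensoring $s$ copies and precomposing with the basis isomorphism completes the proof.

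The main obstacle is the one-generator structure theorem, and within it the two $p$-adic valuation computations that must be carried out together: one needs the coefficient in $\gamma_{p^i}(w)^p$ to have valuation exactly $1$ (forcing $p$-nilpotency) and simultaneously the monomial coefficients to be units (forcing the basis statement). Both are elementary via Legendre's formula, but the rest of the argument is purely formal bookkeeping with the exponential property and the weight grading.
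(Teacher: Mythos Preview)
Your proof is correct and follows exactly the same strategy as the paper: reduce to $V=\k$ via the exponential property, then handle the one-dimensional case directly. The paper's proof simply declares the $V=\k$ case to be ``a straightforward computation'' without details, whereas you have supplied a careful argument for it via the structure theorem $\Gamma(W)\simeq\bigotimes_{i\ge 0}\k[\gamma_{p^i}(w)]/(\gamma_{p^i}(w)^p)$ and the Legendre-formula valuation count; this is precisely the classical computation the paper has in mind (cf.\ \cite[Expos\'e 9, p.~9-07]{cartan}).
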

\begin{proof}
By the exponential properties of $\Gamma(V)$ and $Q(V)\otimes\Gamma(V^{(1)})$
(as in the proof of lemma \ref{lm-triv}), the proof reduces to
the case $V=\k$ which is a straightforward
computation.
\end{proof}

To describe the $\PP_\k$-algebra $\gr \Gamma(V)$, we first need to interpret $\I(V)$ as a kernel.
By the universal property of the symmetric algebra, the inclusion
$V^{(1)}\hookrightarrow S^{p^r}_\k(V)$, induces an injective morphism of $\PP_\k$-algebras:
$ S_\k(V^{(1)})\hookrightarrow S_\k(V)$.
Since $S_\k(V)$ is an exponential functor, and
since Frobenius twists are additive functors, $S_\k(V^{(1)})$ is also an
exponential functor. Thus the natural inclusion above is also a
morphism of $\PP_\k$-bialgebras and it induces by  duality an
epimorphism  of $\PP_\k$-(bi)algebras
$\Gamma(V)\twoheadrightarrow \Gamma(V^{(1)})$. 
\begin{lemma}\label{prop-cokernel} The principal ideal $\I(V)$ is the kernel of the morphism $\Gamma(V)\twoheadrightarrow \Gamma(V^{(1)})$. In other words,
the multiplication in $\Gamma(V)$ yields a short exact sequence:
\begin{align}V\otimes \Gamma(V) \xrightarrow[]{\mathrm{mult}} \Gamma(V)\twoheadrightarrow
\Gamma(V^{(1)})\to 0\;. \label{ex-sq}\end{align}
\end{lemma}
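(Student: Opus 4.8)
The plan is to identify the morphism $\Gamma(V)\twoheadrightarrow \Gamma(V^{(1)})$ explicitly and to show that its kernel is precisely the ideal $\I(V)$ generated by $V=\Gamma^1(V)$. Since everything in sight is an exponential functor, my strategy is to reduce the verification to the one-dimensional case $V=\k$, exactly as in the proof of lemma \ref{lm-triv}(c) and proposition \ref{prop-iso-Nonnat}. First I would observe that the Verschiebung-type epimorphism $\Gamma(V)\twoheadrightarrow \Gamma(V^{(1)})$ is dual to the inclusion of bialgebras $S_\k(V^{(1)})\hookrightarrow S_\k(V)$ coming from the Frobenius $V^{(1)}\hookrightarrow S^p_\k(V)$. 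On the dual side the cokernel of $S_\k(V^{(1)})\hookrightarrow S_\k(V)$ is, by its very definition, the truncated polynomial algebra $Q(V)=S(V)/(V^{(1)})$; dualizing back and using the self-duality of $\Gamma$, the map $\Gamma(V)\twoheadrightarrow\Gamma(V^{(1)})$ is identified as the projection whose kernel is $Q(V)^\sharp$-complementary. So the content of the lemma is the identification of this kernel with the principal ideal $\I(V)$.

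Concretely, I would verify the exact sequence \eqref{ex-sq} by a dimension count combined with the two obvious containments. On one hand, the image of $V\otimes \Gamma(V)\xrightarrow{\mathrm{mult}}\Gamma(V)$ is by definition the ideal $\I(V)$ generated by $V$. On the other hand, the composite $V\otimes\Gamma(V)\to\Gamma(V)\twoheadrightarrow\Gamma(V^{(1)})$ is zero: elements of $V=\Gamma^1(V)$ map to zero in $\Gamma(V^{(1)})$ because $\Gamma^1(V^{(1)})=V^{(1)}$ sits in weight $p$, while $\Gamma^1(V)$ has weight $1$, so the weight-graded epimorphism kills $V$ and hence the whole ideal it generates. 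This shows $\I(V)\subseteq \ker\big(\Gamma(V)\twoheadrightarrow\Gamma(V^{(1)})\big)$. For the reverse inclusion, and hence exactness, I would compare dimensions weight by weight: by proposition \ref{prop-iso-Nonnat} the choice of a basis gives a (non-natural) isomorphism $\Gamma(V)\simeq Q(V)\otimes\Gamma(V^{(1)})$, under which the epimorphism $\Gamma(V)\twoheadrightarrow\Gamma(V^{(1)})$ becomes the augmentation $Q(V)\to\k$ tensored with the identity on $\Gamma(V^{(1)})$. Its kernel is then $\overline{Q}(V)\otimes\Gamma(V^{(1)})$, where $\overline{Q}(V)$ is the augmentation ideal of $Q(V)$, which is generated by $V$ as a $Q(V)$-module. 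This pins down the dimension of the kernel in each weight, and matching it against the dimension of $\I(V)$ gives the surjectivity of the inclusion $\I(V)\hookrightarrow\ker$, hence equality.

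To make this fully functorial rather than basis-dependent, I would phrase the dimension-matching argument in the style already used in the paper: both $\Gamma(V)$ and $\Gamma(V^{(1)})$ are exponential $\PP_\k$-algebras with finite-dimensional components in each weight, so the exactness of the functorial sequence \eqref{ex-sq} can be checked after evaluation on $V=\k^s$, where the exponential isomorphisms reduce it to the $s$-fold tensor power of the case $V=\k$. The case $V=\k$ is a direct computation inside $\Gamma_\k(\k)=\k[\gamma_1,\gamma_2,\dots]$ with the divided-power relations \eqref{rel3a}: the map to $\Gamma_\k(\k^{(1)})$ sends $\gamma_{i}$ to the appropriate divided power of the Frobenius generator when $p\mid i$ and to zero otherwise, and one checks by inspection that its kernel is exactly the ideal generated by $\gamma_1$.

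The main obstacle I expect is the careful identification, at the level of the one-variable computation, of the Verschiebung map $\Gamma(V)\twoheadrightarrow\Gamma(V^{(1)})$ on basis elements $\gamma_i(x)$ — that is, making precise which divided powers survive and which are killed, and confirming that the surviving ones generate a copy of $\Gamma(V^{(1)})$ rather than a larger or smaller subquotient. Once that explicit formula is in hand, the containments and the dimension count are routine, and the reduction to $V=\k$ via exponentiality is exactly the device the paper has already deployed twice.
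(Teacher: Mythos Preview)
Your proposal is correct and takes essentially the same approach as the paper: reduce via the exponential property of $\Gamma(V)$ and $\Gamma(V^{(1)})$ to the case $V=\k$, where the exactness is a direct inspection. The detour through proposition~\ref{prop-iso-Nonnat} in your second paragraph is unnecessary once you have the exponentiality reduction of your third paragraph, and indeed the paper's proof consists solely of that reduction plus the one-variable check.
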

\begin{proof}
If $V=V_1\oplus V_2$, by using the exponential properties of
$\Gamma(V)$ and $\Gamma(V^{(1)})$, we obtain that \eqref{ex-sq} is isomorphic to the
short exact sequence:
$$\begin{array}{c}
V_1\otimes\Gamma(V_1)\otimes\Gamma(V_2)\oplus\\
\Gamma(V_1)\otimes V_2\otimes\Gamma(V_2)
\end{array}\to \Gamma(V_1)\otimes \Gamma(V_2)\twoheadrightarrow
\Gamma(V_1^{(1)})\otimes \Gamma(V_2^{(1)})\to 0\;.$$
Hence it suffices to check exactness for $V=\k$, which is easy.
\end{proof}
\begin{proposition}\label{prop-filtration-Gamma}
There is an
isomorphism of $\PP_\k$-algebras, which maps $\gr^n \Gamma(V)$ isomorphically onto $Q^n(V)\otimes
\Gamma(V^{(1)})$:
$$
\gr \Gamma(V)\simeq
Q(V)\otimes \Gamma(V^{(1)})\;.$$
\end{proposition}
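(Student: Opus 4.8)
The plan is to construct a natural morphism of graded $\PP_\k$-algebras $\Psi\colon Q(V)\otimes\Gamma(V^{(1)})\to\gr\Gamma(V)$ carrying $Q^n(V)\otimes\Gamma(V^{(1)})$ into $\gr^n\Gamma(V)$, and then to prove that it is an isomorphism by a dimension count. The map is assembled from two pieces. For the factor $\Gamma(V^{(1)})$, Lemma \ref{prop-cokernel} provides a natural algebra isomorphism $\Gamma(V^{(1)})\xrightarrow{\sim}\Gamma(V)/\I(V)=\gr^0\Gamma(V)$, which I compose with the inclusion of the degree-zero subalgebra $\gr^0\Gamma(V)\hookrightarrow\gr\Gamma(V)$ to obtain a map $\beta$. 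For the factor $Q(V)$, I use property \eqref{it-3}, which realises $Q(V)$ as the subalgebra $\mathrm{Im}\,\phi\subset\Gamma(V)$ generated by $V=\Gamma^1(V)$; since a weight-$n$ element of $Q(V)$ is a sum of products of $n$ elements of $V$, it lies in $\I(V)^n$, so the composite $Q^n(V)\hookrightarrow\I(V)^n\twoheadrightarrow\gr^n\Gamma(V)$ defines a weight-preserving algebra map $\alpha\colon Q(V)\to\gr\Gamma(V)$. Because $\gr\Gamma(V)$ is commutative, $\Psi:=\mathrm{mult}\circ(\alpha\otimes\beta)$ is a morphism of graded $\PP_\k$-algebras, and by construction it sends $Q^n(V)\otimes\Gamma(V^{(1)})$ into $\gr^n\Gamma(V)$.

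Next I would establish surjectivity. The key observation is that $\I(V)=V\cdot\Gamma(V)$, so expanding a product of $n$ elements of $\I(V)$ and collecting the weight-one factors shows $\I(V)^n=Q^n(V)\cdot\Gamma(V)$. Reducing the second factor modulo $\I(V)$ (which is legitimate since $Q^n(V)\cdot\I(V)\subseteq\I(V)^{n+1}$) gives $\gr^n\Gamma(V)=Q^n(V)\cdot\gr^0\Gamma(V)$, which is exactly the image of $\Psi$ restricted to $Q^n(V)\otimes\Gamma(V^{(1)})$. Hence $\Psi$ is surjective in every filtration degree $n$.

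Finally I would close the argument by comparing dimensions. The principal filtration is bounded on each homogeneous weight-$w$ summand $\Gamma^w(V)$ (indeed $\I(V)^n\cap\Gamma^w(V)=0$ for $n>w$, since products of $n$ weight-one elements have weight at least $n$), so passing to the associated graded preserves total dimension: $\dim\Gamma^w(V)=\sum_n\dim\gr^n\Gamma^w(V)$. On the other hand, Proposition \ref{prop-iso-Nonnat} yields $\dim\Gamma^w(V)=\dim\bigl(Q(V)\otimes\Gamma(V^{(1)})\bigr)^w$. Comparing the two, the source and target of $\Psi$ have the same finite dimension in each weight. Since $\Psi$ is surjective in each filtration degree, each $\dim\bigl(Q^n(V)\otimes\Gamma(V^{(1)})\bigr)^w\ge\dim\gr^n\Gamma^w(V)$, and equality of the totals forces equality term by term; thus $\Psi$ is injective as well, hence the desired isomorphism mapping $Q^n(V)\otimes\Gamma(V^{(1)})$ onto $\gr^n\Gamma(V)$.

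I expect the main subtlety to lie not in any single computation but in two bookkeeping points: checking that the various natural maps genuinely respect the bigrading by weight and filtration degree (so that the final dimension count can be run degree by degree rather than merely in total), and verifying the identity $\I(V)^n=Q^n(V)\cdot\Gamma(V)$ cleanly enough that the passage to $\gr^0$ in the surjectivity step is transparent. An alternative to the dimension count would be to reduce $\Psi$ to the case $V=\k$ via the exponential property, as in the proof of Proposition \ref{prop-iso-Nonnat}; this would however require first checking that the principal filtration is compatible with the exponential isomorphism, so that $\gr\Gamma(V)$ is itself exponential, an extra verification that the dimension-count argument sidesteps.
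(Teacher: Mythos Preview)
Your proof is correct and follows essentially the same strategy as the paper's: construct a surjective $\PP_\k$-algebra map $Q(V)\otimes\Gamma(V^{(1)})\twoheadrightarrow\gr\Gamma(V)$ and conclude by a dimension count against Proposition~\ref{prop-iso-Nonnat}. The only difference is organizational: the paper builds the surjection degree by degree, starting from $V^{\otimes n}\otimes\Gamma^{d-n}(V)\twoheadrightarrow\I(V)^n_d$, factoring through $Q^n(V)\otimes\Gamma^{d-n}(V)$, and then invoking Lemma~\ref{prop-cokernel} via a commutative square to pass to $Q^n(V)\otimes\Gamma^{(d-n)/p}(V^{(1)})$; you instead package the same content as two algebra maps $\alpha,\beta$ into $\gr\Gamma(V)$ and multiply, which is a cleaner way to see at once that the result is a morphism of $\PP_\k$-algebras.
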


\begin{proof}
Let $\I(V)_d^n$ be the direct summand of $\I(V)^n$
contained in $\Gamma^d(V)$, so that $$\gr^n (\Gamma^d(V))=
\I(V)_d^n/\I(V)_d^{n+1}\;.$$ 
Then $\I(V)^n_d=0$ if $d< n$, and for $d\ge n$  the multiplication
of $\Gamma(V)$
induces an epimorphism:
\begin{equation}\label{eqn-local-1}
V^{\otimes n}\otimes \Gamma^{d-n}(V)\twoheadrightarrow \I(V)^n_d\;.
\end{equation}
Since the multiplication $V^{\otimes n}\to \Gamma^n(V)$ factors
through the canonical inclusion of  $Q^n(V)$ in  $\Gamma^n(V)$, the maps
(\ref{eqn-local-1}) induce commutative diagrams:
$$\xymatrix{
Q^n(V)\otimes
V\otimes \Gamma^{d-n-1}(V)\ar[d]^-{Q^n(V)\otimes\mathrm{mult}} \ar@{->>}[r]&
\I(V)^{n+1}_{d}\ar@{^{(}->}[d]\\
 Q^n(V)\otimes \Gamma^{d-n}(V)\ar@{->>}[r]&\I(V)^n_{d}
 }\;.$$
By lemma \ref{prop-cokernel}, the cokernel of the multiplication
$V\otimes \Gamma^{d-n-1}(V)\to \Gamma^{d-n}(V)$ is equal to
$\Gamma^{(d-n)/p}(V^{(1)})$ if $p$ divides $d-n$, and to zero
otherwise. Hence, if $p$ divides $d-n$, the map $Q^n(V)\otimes
\Gamma^{d-n}(V)\twoheadrightarrow \I(V)^n_{d}$ induces an
epimorphism
$$ Q^n(V)\otimes \Gamma^{(d-n)/p}(V^{(1)})\twoheadrightarrow
\I(V)^n_d/\I(V)^{n+1}_d\;,
$$
and the quotient $\I(V)^n_d/\I(V)^{n+1}_d$ equals zero if $p$ does
not divide $d-n$. We thus have a surjective morphism of
$\PP_\k$-algebras:
\begin{equation}\label{eqn-local-2}Q(V)\otimes
\Gamma(V^{(1)})\twoheadrightarrow
\bigoplus_{n\ge 0} \gr^n \Gamma(V)\;,
\end{equation}
which sends $Q^n(V)\otimes
\Gamma(V^{(1)})$ onto $\gr^n \Gamma(V)$.
To finish the proof, we observe that epimorphism \eqref{eqn-local-2} is
actually an isomorphism for dimension reasons: it follows from
proposition
\ref{prop-iso-Nonnat} that the
direct summands of a given weight $d$ of the source and the target of the epimorphism
\eqref{eqn-local-2} have the same finite dimension.
\end{proof}

Propositions \ref{prop-iso-Nonnat} and
\ref{prop-filtration-Gamma} have the following consequence.

\begin{corollary}\label{cor-qtf}
There exists a quasi-trivial filtration of $\Gamma(V)$, and an isomorphism of
$\PP_\k$-algebras $\gr\Gamma(V)\simeq Q(V)\otimes \Gamma(V^{(1)})$.
\end{corollary}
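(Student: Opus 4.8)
The plan is to assemble Corollary \ref{cor-qtf} directly from the two propositions just proved, by verifying that the filtration at hand satisfies both conditions of Definition \ref{def-quasi-trivial}. The natural candidate is the principal filtration of $\Gamma(V)$ introduced in Section \ref{filt-1}, namely the adic filtration relative to the principal ideal $\I(V)$. Proposition \ref{prop-filtration-Gamma} already computes its associated graded object as the $\PP_\k$-algebra $\gr\Gamma(V)\simeq Q(V)\otimes\Gamma(V^{(1)})$, so the content of the corollary is really the assertion that this filtration is \emph{quasi-trivial} in the technical sense. Once that is checked, the displayed isomorphism is just a restatement of Proposition \ref{prop-filtration-Gamma}.

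\emph{First} I would verify condition (1) of Definition \ref{def-quasi-trivial}, that $\gr\Gamma(V)$ is exponential with finite-dimensional homogeneous components. Exponentiality is immediate: by Proposition \ref{prop-filtration-Gamma} we have $\gr\Gamma(V)\simeq Q(V)\otimes\Gamma(V^{(1)})$, and this is a tensor product of two exponential functors---$Q(V)$ is exponential by property (2) in Section \ref{subsec-trunc}, $\Gamma(V^{(1)})$ is exponential since $\Gamma$ is exponential and precomposition with the additive Frobenius twist preserves this property---and a tensor product of exponential functors is again exponential. Finite-dimensionality of each bihomogeneous piece follows because $Q^n(V)$ and each $\Gamma^d(V^{(1)})$ are strict polynomial functors with finite-dimensional values on the finite-dimensional $V$.

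\emph{Next} I would verify condition (2), the existence of a weight-preserving isomorphism of graded $\k$-algebras $\phi\colon\Gamma(\k)\xrightarrow{\simeq}\gr\Gamma(\k)$. This is exactly where Proposition \ref{prop-iso-Nonnat} enters: specializing to $V=\k$, that proposition's proof reduces to the one-dimensional case and yields a weight-preserving algebra isomorphism $\Gamma(\k)\simeq Q(\k)\otimes\Gamma(\k^{(1)})$. Combining this with the identification $\gr\Gamma(\k)\simeq Q(\k)\otimes\Gamma(\k^{(1)})$ from Proposition \ref{prop-filtration-Gamma} produces the required $\phi$, and since both identifications preserve weight, so does $\phi$. (Note that $\Gamma(\k)$ carries no differential here, so the differential-graded compatibility in Definition \ref{def-quasi-trivial}(2) is vacuous.)

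\emph{I expect the main subtlety} to lie not in any single computation---each step invokes a proposition already established---but in correctly matching the \emph{weight} bookkeeping across the two propositions. Proposition \ref{prop-filtration-Gamma} records that $\gr^n\Gamma(V)$ maps onto $Q^n(V)\otimes\Gamma(V^{(1)})$, and one must confirm that the algebra isomorphism of Proposition \ref{prop-iso-Nonnat} respects this same grading so that the composite $\phi$ is genuinely weight-preserving rather than merely an abstract algebra isomorphism. Once this compatibility is in place, both conditions of Definition \ref{def-quasi-trivial} hold, the principal filtration is quasi-trivial, and the isomorphism $\gr\Gamma(V)\simeq Q(V)\otimes\Gamma(V^{(1)})$ stated in the corollary is precisely Proposition \ref{prop-filtration-Gamma}, completing the argument.
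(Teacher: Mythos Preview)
Your proposal is correct and follows exactly the approach the paper intends: the corollary is stated in the paper simply as a consequence of Propositions~\ref{prop-iso-Nonnat} and~\ref{prop-filtration-Gamma}, and you have spelled out precisely how those two results verify conditions (1) and (2) of Definition~\ref{def-quasi-trivial} for the principal filtration. The ``subtlety'' you flag about weight compatibility is already handled, since both propositions are explicitly stated as weight-preserving (as $\PP_\k$-algebra isomorphisms), so the composite $\phi$ automatically inherits this.
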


In the previous statements, we considered the divided power algebra
$\Gamma(V)$ as a nongraded algebra (or equivalently as a graded algebra
concentrated in degree zero). But we can define an extra degree on the
divided power algebra by placing the generator $V$ in degree $i$. In that
case the statements of propositions \ref{prop-iso-Nonnat},
\ref{prop-filtration-Gamma} and corollary \ref{cor-qtf} remain valid with
`$V$' replaced by $V[i]$ and `$V^{(1)}$' replaced by $V^{(1)}[pi]$, since all
the morphisms in those propositions preserve the weights, and the extra degree
is equal to $i$ times the weight. By iterating corollary \ref{cor-qtf} we then
obtain the following result.

\begin{corollary}\label{cor-iterated-princ}For any non-negative integer $i$,
there exists a quasi-trivial filtration on $\Gamma(V[i])$, and an isomorphism of
graded $\PP_\k$-algebras
$$\gr \Gamma(V[i])\simeq \bigotimes_{r\ge 0} Q(V^{(r)}[ip^r])\;.$$
\end{corollary}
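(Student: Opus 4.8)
The plan is to prove Corollary \ref{cor-iterated-princ} by iterating Corollary \ref{cor-qtf} together with the naturality remarks already made in the text concerning the degree shift. First I would fix the single-step input: by the discussion immediately preceding the statement, all the maps occurring in Proposition \ref{prop-iso-Nonnat}, Lemma \ref{prop-cokernel} and Proposition \ref{prop-filtration-Gamma} preserve weights, so placing the generator $V$ in degree $i$ costs nothing except relabelling the extra degree, which equals $i$ times the weight. Thus Corollary \ref{cor-qtf} upgrades verbatim to: there is a quasi-trivial filtration on $\Gamma(V[i])$ with $\gr\Gamma(V[i])\simeq Q(V[i])\otimes\Gamma(V^{(1)}[pi])$ as graded $\PP_\k$-algebras (the generator of the Frobenius-twist factor being placed in degree $pi$, since the Verschiebung/Frobenius map raises weight by $p$ and hence extra degree by $pi$).

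Next I would set up the induction. The goal is an isomorphism
\begin{align*}
\gr\Gamma(V[i])\simeq \bigotimes_{r\ge 0} Q(V^{(r)}[ip^r])\;.
\end{align*}
The base of the iteration is the one-step formula above, $\gr\Gamma(V[i])\simeq Q(V[i])\otimes\Gamma(V^{(1)}[pi])$. I would then apply the same one-step formula to the factor $\Gamma(V^{(1)}[pi])$, now regarding $V^{(1)}$ as the generic additive functor in degree $pi$: Corollary \ref{cor-qtf} gives a quasi-trivial filtration on it with associated graded $Q(V^{(1)}[pi])\otimes\Gamma(V^{(2)}[p^2 i])$, using $(V^{(1)})^{(1)}=V^{(2)}$ from the composition property of Frobenius twists. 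Iterating, the $r$-th step splits off a factor $Q(V^{(r)}[ip^r])$ and leaves a residual divided power algebra $\Gamma(V^{(r+1)}[p^{r+1}i])$, producing after $N$ steps the partial tensor product $\bigotimes_{r=0}^{N}Q(V^{(r)}[ip^r])\otimes\Gamma(V^{(N+1)}[p^{N+1}i])$. To assemble these into a single quasi-trivial filtration on $\Gamma(V[i])$ I would invoke the stability of quasi-trivial filtrations under tensor products (the product of exponential functors is exponential, and the defining isomorphism $\phi$ of Definition \ref{def-quasi-trivial} is multiplicative), so that the nested filtrations refine to one genuine decreasing filtration on $\Gamma(V[i])$ whose associated graded is the full tensor product.

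The one point needing care, and the main obstacle, is the convergence of the infinite tensor product and the legitimacy of passing from the $N$-step truncations to the limit. Here I would argue weight by weight: in any fixed weight $d$ only finitely many factors contribute, because the weight of $Q^{a_r}(V^{(r)})$ is $a_r p^r$, and once $p^r>d$ the factor $Q(V^{(r)}[ip^r])$ contributes only its degree-zero, weight-zero part. Concretely, the residual term $\Gamma(V^{(N+1)}[p^{N+1}i])$ contributes nothing in weights below $p^{N+1}$ beyond its unit, so for $p^{N+1}>d$ the weight-$d$ component of $\Gamma(V[i])$ already coincides with that of the finite product $\bigotimes_{r=0}^{N}Q(V^{(r)}[ip^r])$. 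Since each homogeneous component $A^d_j$ has finite-dimensional values and bounded filtration by Lemma \ref{lm-triv}(a), this weightwise stabilization makes the limit unambiguous and the resulting filtration quasi-trivial. Verifying that the degree bookkeeping is consistent — namely that the $r$-th Frobenius twist sits in degree $ip^r$ at every stage — is the only genuinely fiddly part, and it follows directly from the fact, recorded in the text, that the extra degree equals $i$ times the weight and that Frobenius multiplies weight by $p$ at each step.
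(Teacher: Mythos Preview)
Your proposal is correct and follows exactly the approach the paper indicates: the paper's ``proof'' is just the one sentence ``By iterating corollary \ref{cor-qtf} we then obtain the following result,'' together with the preceding remark about degree shifts, and you have fleshed this out with the weightwise-stabilization argument that makes the infinite iteration meaningful. One small wording quibble: the passage from the nested one-step filtrations to a single filtration on $\Gamma(V[i])$ is a \emph{refinement} (a filtration on $A$, then a filtration on a factor of $\gr A$, etc.) rather than a tensor product of filtrations on independent factors; but this is standard, and in each fixed weight it is a finite process, so the conclusion and the verification of the two conditions in Definition \ref{def-quasi-trivial} go through exactly as you describe.
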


\section{The derived functors of $\Gamma^d_\k(V)$ in positive characteristic}
\label{descr-gvn}
In this section, $\k$ is a field of positive characteristic $p$. All the
functors considered are strict polynomial functors defined over $\k$. In
particular, we write $\Gamma^d$,
$\Lambda^d$ and $S^d$ for $\Gamma^d_\k$, $S^d_\k$ and $\Lambda^d_\k$.
A generic finite dimensional $\k$ vector space will be denoted by the
letter `$V$'.

The main results of this section are theorems \ref{thm-derived-p2} and \ref{thm-derived-podd}, which
describe the derived functors of $\Gamma(V)$. These results were already proved by one of us
in \cite{antoine}, where the proof was rather technical and relied heavily on the computations of
Cartan \cite{cartan}. The proofs which  we will give here are more elementary and independent
of \cite{antoine, cartan}.
We will require  theorems \ref{thm-derived-p2} and \ref{thm-derived-podd} as an input for the computations
of sections \ref{der1-gamma-z}, \ref{sec-derG12}, and \ref{der-gamma4-sec}.

\subsection{The description of $L_*\Gamma^d(V,n)$ in characteristic $2$}

\begin{theorem}\label{thm-derived-p2}
Let $\k$ be a field of characteristic $2$, and let $V$ be a finite
dimensional $\k$-vector space. For all $n\ge 1$, there is an isomorphism of
graded
$\mathcal{P}_\k$-algebras
\begin{align}
\label{lgammavn}
&L_*\Gamma(V,n)\simeq \bigotimes_{r_1,\dots, r_n\ge 0}
\Gamma \left(V^{(r_1+\dots
+r_n)}[2^{r_2+\dots+r_n}+2^{r_3+\dots+r_n}+\dots+2^{r_n}
+1]\right)\;.
\end{align}
\end{theorem}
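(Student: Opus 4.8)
The plan is to derive the formula \eqref{lgammavn} by induction on $n$, using the d\'ecalage isomorphism of the D\'ecalage Proposition together with the machinery of quasi-trivial filtrations developed in Section \ref{qt-fil}, in particular Corollary \ref{cor-iterated-princ} and Proposition \ref{prop-quasi-trivial-collapse}. The base case $n=1$ should be extracted directly: by Corollary \ref{cor-iterated-princ} applied with $i=1$ we have a quasi-trivial filtration on $\Gamma(V[1])$ with $\gr \Gamma(V[1])\simeq \bigotimes_{r\ge 0} Q(V^{(r)}[2^r])$, and since $p=2$ we have $Q=\Lambda$ (property (1) of Section \ref{subsec-trunc}). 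Combining this with the d\'ecalage isomorphism $L_*\Gamma(V,n)\simeq \bigoplus L_{*+d}\Lambda^d(V,n+1)$, which relates derived functors of $\Gamma$ to those of $\Lambda$, I would reduce the computation of $L_*\Gamma(V,1)$ to the homology of the $n$-fold bar construction of the exponential algebra $\Gamma(V[1])$, computable via its associated graded $\bigotimes_r \Lambda(V^{(r)}[2^r])$.

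The inductive engine is Proposition \ref{prop-quasi-trivial-collapse}: starting from a graded $\PP_\k$-algebra $A(V)$ equipped with a quasi-trivial filtration, that proposition gives a functorial isomorphism $\gr(L_*A(V,n))\simeq L_*(\gr A)(V,n)$, so the spectral sequence degenerates and the derived functor of $A$ is computed, up to a filtration, by the derived functor of the simpler exponential algebra $\gr A$. The key observation is that exterior (and divided power) algebras on a single generator are exponential, so their $n$-fold bar constructions can be computed degree by degree. Concretely, I would use the fact that for $p=2$ the bar construction of $\Lambda(W[i])$ is homotopy equivalent to $\Gamma(W[i+1])$ (the characteristic $2$ shift from exterior to divided powers, a reflection of the d\'ecalage), and iterate. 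At each application of the bar construction the degree shifts by $1$, while the internal Frobenius twists are unchanged by Remark \ref{rk-derfrobtw}; tracking the resulting degrees through $n$ iterations is what produces the exponent $2^{r_2+\dots+r_n}+\dots+2^{r_n}+1$ in \eqref{lgammavn}, each new summation variable $r_k$ recording the Frobenius twist introduced at the $k$-th stage.

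Carrying out the induction, I would suppose the formula holds for a given $n$ and compute $L_*\Gamma(V,n+1)$. The cleanest route is to apply $\gr$ and the quasi-trivial collapse to reduce $L_*\Gamma(V,n+1)$ to the homology of the bar construction on $\gr\Gamma(V[n+1])\simeq \bigotimes_{r\ge 0}\Lambda(V^{(r)}[2^r])$ (again using $Q=\Lambda$ in characteristic $2$). Since everything is exponential and the bar construction commutes with tensor products up to homotopy, the answer factors as a tensor product over $r\ge 0$ of the bar homologies of the individual factors $\Lambda(V^{(r)}[2^r])$; each such factor contributes, after $n$ further bar steps, a copy of the right-hand side of \eqref{lgammavn} with $V$ replaced by $V^{(r)}$ and all internal degrees multiplied appropriately. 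Reindexing the resulting iterated tensor product, absorbing the twist $r$ into the $r_1$ summation variable and re-collecting the degree exponents, should reproduce \eqref{lgammavn} for $n+1$.

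The main obstacle I anticipate is the bookkeeping of the internal degrees and the precise reindexing that transforms the nested tensor product arising from the induction into the closed-form product over $(r_1,\dots,r_n)$ with the stated exponent. The structural input---quasi-triviality, collapse of the spectral sequence, exponentiality, and the $\Lambda\leftrightarrow\Gamma$ interchange in characteristic $2$---is all in place from the earlier sections, so no genuinely new homological argument is needed; the difficulty is combinatorial. I would want to verify carefully that the degree shift at each bar step is exactly $+1$ and that the Frobenius twist count matches, perhaps by checking small cases ($n=1,2$ and weights $d\le 4$) against the expected outcome before trusting the general reindexing. A secondary subtlety is confirming that the isomorphism is one of graded $\PP_\k$-\emph{algebras} and not merely of graded functors: this requires that the multiplicative structure be respected at each stage, which follows because Proposition \ref{prop-quasi-trivial-collapse} and the bar construction are compatible with products, but it deserves an explicit check.
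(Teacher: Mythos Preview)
Your overall strategy---induction on $n$ using the quasi-trivial principal filtration, the collapse result of Proposition~\ref{prop-quasi-trivial-collapse}, the identity $Q=\Lambda$ in characteristic $2$, and d\'ecalage to pass from $\Lambda$ at level $n$ to $\Gamma$ at level $n-1$---is exactly the route the paper takes. The paper packages this cleanly by proving the slightly stronger statement for $\Gamma(V^{(r)}[i])$ with arbitrary $r,i$ (Theorem~\ref{thm-derived-p2-prime}), which makes the induction self-contained and the degree bookkeeping transparent; you may find that formulation easier than tracking the reindexing by hand.

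There is, however, a genuine gap in your plan. Proposition~\ref{prop-quasi-trivial-collapse} only gives you an isomorphism
\[
\gr\bigl(L_*\Gamma(V,n)\bigr)\;\simeq\; L_*(\gr\Gamma)(V,n),
\]
so after the induction and the combinatorial reindexing you know the answer \emph{up to a filtration}, as you yourself note. You then say ``no genuinely new homological argument is needed; the difficulty is combinatorial,'' but this is not correct: one still has to show that the filtration on $L_*\Gamma(V,n)$ splits as graded $\PP_\k$-algebras. This is not automatic, and it is not a multiplicativity check either---the issue is that a filtered commutative algebra whose associated graded is a tensor product of divided power algebras need not itself be such a tensor product. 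The paper handles this via Proposition~\ref{prop-split}, which is a separate argument: it exploits the exponential (bialgebra) structure to identify the primitives, and then uses the cofreeness of divided powers (together with the vanishing result of Proposition~\ref{prop-vanish}) to lift the isomorphism from the associated graded to the algebra itself. Without this step your induction produces only $\gr L_*\Gamma(V,n)$, not $L_*\Gamma(V,n)$, and the theorem as stated does not follow.
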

For example, there is an isomorphism
$L_*\Gamma(V,1)\simeq \bigotimes_{r\ge 0} \Gamma(V^{(r)}[1])$.
The homogeneous component  of weight $d$ of the graded
$\PP_\k$-algebra $L_*\Gamma(V,n)$ provides us the derived functors of $\Gamma^d(V)$.
Let us spell this out   in the $d=4$  case.
\begin{example}\label{ex-G4}
For $n\ge 1$, the derived functors $L_*\Gamma^4(V,n)$ are given by the following formula (where $F(V)\,[k]$ means a copy of the strict polynomial functor $F(V)$ placed in degree $k$).
\begin{align*}
L_*\Gamma^4(V,n)\simeq &\qquad \Gamma^4(V)\;[4n]\\
&\oplus\;\bigoplus_{i=1\dots n} \Gamma^2(V)\otimes V^{(1)}\;[3n+i-1]\\
&\oplus\;\bigoplus_{1\le i<j\le n} V^{(1)}\otimes V^{(1)}\;[2n+i+j-2]\quad  \oplus \;\bigoplus_{i=1\dots n} \Gamma^{2}(V^{(1)})\;[2n+2i-2]\\
&\oplus\;\bigoplus_{1\le i\le j\le n} V^{(2)}\;[n+2i+j-3]\;.
\end{align*}
\end{example}
\begin{proof}[Explanation of example \ref{ex-G4}]
In order to unpackage the compact formula \eqref{lgammavn}, we list those generators of the graded $\mathcal{P}_\k$-algebra $L_*\Gamma(V,n)$ which can contribute (after applying a divided power functor or after  taking tensor products) to a summand of weight $4$ of $L_*\Gamma(V,n)$ .
These generators are of the following four distinct types:
\begin{enumerate}
\item[(i)] one generator $V[n]$, corresponding to the $n$-tuple $(0,\dots,0)$,
\item[(ii)] $n$ generators of the form $V^{(1)}[n+i-1]$, corresponding to the $n$-tuples $(r_1,\dots,r_n)$ with $r_i=1$, $r_{k}=0$ if $k\ne i$,
\item[(iii)] $n$ generators of the form $V^{(2)}[n+3i-3]$, corresponding to the $n$-tuples $(r_1,\dots,r_n)$ with $r_i=2$, $r_{k}=0$ if $k\ne i$,
\item[(iv)] $n(n-1)/2$ generators of the form $V^{(2)}[n+j+2i-3]$, corresponding to the $n$-tuples $(r_1,\dots,r_n)$ with $r_i=r_j=1$ for a given pair $\{i,j\}$ with $i<j$, and $r_{k}=0$ if $k\ne i,j$.
\end{enumerate}
Then we determine all possible manners in which  these generators can contribute  to  a direct summand of weight $4$ of $L_*\Gamma(V,n)$:
\begin{itemize}
\item The generator (i) can contribute to  a summand of weight $4$ in two ways namely (a) via a summand $\Gamma^4(V)$, and (b) via a summand $\Gamma^2(V)\otimes V^{(1)}$, where $V^{(1)}$ is a generator of type (ii).
\item The generators of type (ii) can contribute to a summand of weight $4$ in three ways. First of all by the method (b)  listed before, secondly via a summand $V^{(1)}\otimes V^{(1)}$ where two generators of type (ii) are involved, or thirdly via a summand $\Gamma^2(V^{(1)})$.
\item The generators of type (iii) and (iv) are already of weight $4$, hence they can only contribute to the part of weight $4$ as  summands of the form $V^{(2)}$.
\end{itemize}
Finally, we compute the degree of each of  these summands of weight $4$  and thereby  obtain the  sought-after  expression  for $L_\ast\Gamma^4(V,n)$.
\end{proof}

More generally, we may extract from theorem \ref{thm-derived-p2}
 the homogeneous component of an arbitrary given weight $d$. This 
 yields the following result.
\begin{corollary}\label{cor-derived-p2-Gd}
Let $\k$ be a field of characteristic $2$,  $d$ be a positive integer and
$V$ be a finite dimensional $\k$-vector space. There exists an isomorphism of
strict polynomial functors:
$$L_i\Gamma^d(V,n)\simeq \bigoplus_{\delta}\:
\bigotimes_{r_1,\dots,r_n\ge 0} \Gamma^{\delta(r_1,\dots,r_n)}(V^{(r_1
+\dots+r_n)}[2^{r_2+\dots+r_n}+2^{r_3+\dots+r_n}+\dots+2^{r_n}
+1]) $$
where the sum is taken over all the maps $\delta:\N^n\to \N$ satisfying the
following two summability
conditions:
\begin{align*}
& \sum_{r_1,\dots,r_n\ge 0}  \delta(r_1,\dots,r_n)2^{r_1+\dots +r_n}=d\;,& (1)\\
& \sum_{r_1,\dots,r_n\ge 0}
\delta(r_1,\dots,r_n)\left(2^{r_2+\dots+r_n}+2^{r_3+\dots+r_n}+\dots+2^{r_n}
+1\right)=i\;. &(2)
\end{align*}
\end{corollary}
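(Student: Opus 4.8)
The plan is to read off Corollary \ref{cor-derived-p2-Gd} directly from Theorem \ref{thm-derived-p2} by extracting the weight $d$ homogeneous component of the graded $\PP_\k$-algebra isomorphism \eqref{lgammavn}. The key observation is that the right-hand side of \eqref{lgammavn} is a tensor product, indexed by $n$-tuples $(r_1,\dots,r_n)\in\N^n$, of divided power algebras $\Gamma(V^{(r_1+\dots+r_n)}[e])$, where $e=e(r_1,\dots,r_n)=2^{r_2+\dots+r_n}+\dots+2^{r_n}+1$ is the degree in which the generator sits. Since $\Gamma(W[e])=\bigoplus_{\delta\ge 0}\Gamma^\delta(W)$ with $\Gamma^\delta(W)$ placed in degree $\delta e$ and of weight $\delta\cdot(\text{weight of }W)$, choosing a monomial in this infinite tensor product amounts precisely to choosing, for each $n$-tuple $(r_1,\dots,r_n)$, a nonnegative integer $\delta(r_1,\dots,r_n)$, i.e.\ a function $\delta:\N^n\to\N$. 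This is the bijection underlying the stated formula.

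First I would record that the generator indexed by $(r_1,\dots,r_n)$ is a copy of the Frobenius twist $V^{(r_1+\dots+r_n)}$, whose weight is $2^{r_1+\dots+r_n}$ by the properties of Frobenius twists listed in section \ref{subsubsec-Frob}. Hence the factor $\Gamma^{\delta(r_1,\dots,r_n)}$ contributes weight $\delta(r_1,\dots,r_n)\cdot 2^{r_1+\dots+r_n}$. Summing over all tuples and demanding that the total weight equal $d$ gives exactly the summability condition (1). Next I would compute the homological degree of the same factor: the generator $V^{(r_1+\dots+r_n)}$ lives in degree $e(r_1,\dots,r_n)$, so $\Gamma^{\delta(r_1,\dots,r_n)}$ of it sits in degree $\delta(r_1,\dots,r_n)\cdot e(r_1,\dots,r_n)$. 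Since degrees add under tensor product, summing over all tuples and requiring the total degree to equal $i$ yields exactly condition (2). A function $\delta$ arises from an honest monomial in the tensor product if and only if both sums are finite, which is guaranteed as soon as they equal the finite values $d$ and $i$; thus the two summability conditions are precisely the constraints selecting which monomials contribute to $L_i\Gamma^d(V,n)$.

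Finally I would observe that, because the isomorphism \eqref{lgammavn} is an isomorphism of graded $\PP_\k$-algebras, passing to the weight $d$ and degree $i$ component is an exact operation that commutes with the tensor-product decomposition, so the component of $L_*\Gamma(V,n)$ of weight $d$ and degree $i$ is the direct sum, over all admissible $\delta$, of the corresponding tensor products of divided powers of Frobenius twists. This is the asserted formula. The worked computation in Example \ref{ex-G4} is the special case $d=4$, and indeed serves as a template: the four families of generators listed there are exactly the tuples $\delta$ supported on tuples $(r_1,\dots,r_n)$ with $r_1+\dots+r_n\le 2$, and the degree bookkeeping done there is the same degree computation carried out in general.

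I do not expect any serious obstacle, since the statement is a purely formal consequence of Theorem \ref{thm-derived-p2}. The only point requiring mild care is the bookkeeping: one must be careful that ``weight'' (tracked by the strict polynomial structure) and ``homological degree'' are two genuinely independent gradings, and that the weight of the twist $V^{(s)}$ is $2^{s}$ with $s=r_1+\dots+r_n$ rather than $s$ itself. Getting the exponent $2^{r_1+\dots+r_n}$ right in condition (1) versus the additive degree $e(r_1,\dots,r_n)$ in condition (2) is where an error could creep in, but this is routine once the role of each grading is kept straight.
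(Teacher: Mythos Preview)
Your proposal is correct and matches the paper's approach exactly: the paper presents this corollary as the direct extraction of the homogeneous component of weight $d$ from Theorem \ref{thm-derived-p2}, and the worked Example \ref{ex-G4} is indeed the template for the general bookkeeping you describe. There is nothing more to add.
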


\subsection{Proof of theorem \ref{thm-derived-p2}}

In this proof, we will constantly use the graded $\PP_\k$-algebras $\Gamma(V^{(r)}[i])$ and $\Lambda(V^{(r)}[i])$. To keep formulas in a compact form and to handle the degrees and the twists in a confortable way, we denote these graded $\PP_\k$-algebras respectively by $\Gamma^{(r,i]}(V)$ and $\Lambda^{(r,i]}(V)$. For example, the homogeneous summand of weight $dp^r$ and degree $di$ of  $\Gamma^{(r,i]}(V)$ is $\Gamma^d(V^{(r)})$. And the homogeneous summand of weight $dp^r$ and degree $di+j$ of  $L_*\Gamma^{(r,i]}(V,n)$ is equal to $L_{j+di}\Gamma^d(V^{(r)},n)$. With these notations, theorem
\ref{thm-derived-p2} appears the special case $r=i=0$ of the following theorem,
which is the statement that we actually prove.
\begin{theorem}\label{thm-derived-p2-prime}
Let $\k$ be a field of characteristic $2$, and let $V$ be a finite
dimensional $\k$-vector space. For all $n\ge 1$ and all $r,i\ge 0$, the graded
$\mathcal{P}_\k$-algebra $L_*\Gamma^{(r,i]}(V,n)$ is isomorphic to the tensor
product
\begin{align*}
\bigotimes_{r_1,\dots, r_n\ge 0}
\Gamma \left(V^{(r+r_1+\dots
+r_n)}[i2^{r_1+r_2+\dots+r_n}+2^{r_2+r_3+\dots+r_n}+\dots+2^{r_n}
+1]\right)\;.\end{align*}
\end{theorem}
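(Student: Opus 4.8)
The plan is to prove Theorem \ref{thm-derived-p2-prime} by induction on $n$, using the quasi-trivial filtration machinery (corollary \ref{cor-iterated-princ} and proposition \ref{prop-quasi-trivial-collapse}) as the engine that computes $L_*$ of the relevant graded $\PP_\k$-algebras. The base case $n=0$ is trivial, since $L_*\Gamma^{(r,i]}(V,0)=\Gamma^{(r,i]}(V)$ corresponds to the empty tensor product on the right-hand side. For the inductive step, the key observation is that the functor $V \mapsto V^{(r)}$ only shifts weights and degrees in a controlled way, so it suffices to understand how one application of the derivation functor $\NN(-,n)$ interacts with the principal filtration.

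First I would apply proposition \ref{prop-quasi-trivial-collapse} to the divided power algebra $\Gamma^{(r,i]}(V) = \Gamma(V^{(r)}[i])$, equipped with its quasi-trivial (principal) filtration from corollary \ref{cor-iterated-princ}. This gives a functorial isomorphism of graded $\PP_\k$-algebras
\begin{equation*}
\gr(L_*\Gamma^{(r,i]}(V,1)) \simeq L_*\bigl(\gr\,\Gamma^{(r,i]}\bigr)(V,1) \simeq L_*\Bigl(\bigotimes_{s\ge 0} Q(V^{(r+s)}[ip^s])\Bigr)(V,1)\;.
\end{equation*}
Since derivation is monoidal (via the shuffle/Eilenberg-Zilber map) and $L_*$ of a tensor product of exponential graded $\PP_\k$-algebras splits as a tensor product by the K\"unneth formula, the problem reduces to computing $L_*Q(V^{(r)}[i],1)$ for a single truncated polynomial algebra. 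In characteristic $2$ we have $Q(V)=\Lambda(V)$ by property (1) of section \ref{subsec-trunc}, so this in turn reduces to computing $L_*\Lambda(V^{(r)}[i],1)$. The d\'ecalage formula \eqref{dec} identifies $L_{*}\Lambda(V^{(r)}[i],1)$ with a shift of $L_*\Gamma(V^{(r)}[i-1],0)=\Gamma(V^{(r)}[i-1])$ placed appropriately; working out the degree bookkeeping, one finds $L_*\Lambda(V^{(r)}[i],1)\simeq \Gamma(V^{(r)}[i])\otimes\Gamma(V^{(r+1)}[2i+1])$, which matches exactly the two-factor pattern (the $r_1=0$ and $r_1=1$ contributions) predicted by the theorem for $n=1$. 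Assembling these single-generator answers over $s\ge 0$ via K\"unneth reproduces the $n=1$ formula, and then iterating the derivation $n$ times (each step introducing one more index $r_k$ and the corresponding degree shift) yields the full tensor product indexed by $n$-tuples $(r_1,\dots,r_n)$.

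The main obstacle will be resolving the extension problem hidden inside proposition \ref{prop-quasi-trivial-collapse}: that proposition only guarantees degeneration of the filtration spectral sequence at $E_1$ and hence a \emph{graded} isomorphism $\gr(L_*\Gamma^{(r,i]}(V,n)) \simeq L_*(\gr\,\Gamma^{(r,i]})(V,n)$, and I must upgrade this to an honest isomorphism of graded $\PP_\k$-algebras. This is where the strict polynomial structure does the decisive work: the right-hand side is a tensor product of divided power algebras on Frobenius twists, so it is generated in the lowest filtration degrees, and the multiplicative structure together with the self-duality of the divided power functors severely constrains the possible extensions between the homogeneous strict polynomial summands. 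I expect the extensions to vanish because the relevant $\Ext^1$-groups in $\PP_\k$ between the twisted divided and exterior power functors appearing here are zero in the required bidegrees (this is the kind of computation gathered in appendix \ref{app-comput}); the filtration, being quasi-trivial, modifies only functoriality and not the underlying algebra, so any splitting can be chosen compatibly with the ring structure. Finally I would verify that the degree and weight indices emerging from the iterated d\'ecalage shifts precisely reproduce the exponents $i2^{r_1+\dots+r_n}+2^{r_2+\dots+r_n}+\dots+2^{r_n}+1$ in the statement, which is a routine but careful induction on $n$.
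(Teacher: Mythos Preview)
Your overall strategy is the paper's own (filter via corollary~\ref{cor-iterated-princ}, apply d\'ecalage to drop $n$ by one, induct, then split the filtration), but the d\'ecalage computation is wrong in a way that muddles the mechanism. A single application of d\'ecalage to one exterior factor gives a \emph{single} divided-power algebra,
\[
L_*\Lambda^{(r',j]}(V,1)\;\simeq\;\Gamma^{(r',\,j+1]}(V),
\]
since in weight $d\cdot 2^{r'}$ and total degree $m$ one has $L_{m-dj}\Lambda^d(V^{(r')},1)\simeq L_{m-d(j+1)}\Gamma^d(V^{(r')},0)$. Your asserted identity $L_*\Lambda(V^{(r)}[i],1)\simeq\Gamma(V^{(r)}[i])\otimes\Gamma(V^{(r+1)}[2i+1])$ is therefore false: the second factor is not produced by d\'ecalage of the $s=0$ term, it is the d\'ecalage of the \emph{next} filtration factor $\Lambda(V^{(r+1)}[2i])$ coming from $s=1$ in $\bigotimes_{s\ge 0}\Lambda(V^{(r+s)}[i\cdot 2^s])$; and the first factor should carry generator degree $i+1$, not $i$ (your shift ``$[i-1]$'' points the wrong way). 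With this corrected, the recursion is exactly the paper's
\[
\gr\bigl(L_*\Gamma^{(r,i]}(V,n)\bigr)\;\simeq\;\bigotimes_{s\ge 0} L_*\Gamma^{(r+s,\,i\cdot 2^s+1]}(V,n-1),
\]
and the degree formula $i\,2^{r_1+\dots+r_n}+2^{r_2+\dots+r_n}+\dots+2^{r_n}+1$ drops out of a clean induction on $n$.

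Your splitting argument also needs one more ingredient. The $\Ext^1$-vanishing of proposition~\ref{prop-vanish} only splits the filtration on $L_*\Gamma^{(r,i]}(V,n)$ as graded strict polynomial \emph{functors}, not as \emph{algebras}. The missing step is proposition~\ref{prop-split}: since $\gr A(V)$ is a tensor product of algebras of the form $\Gamma(V^{(s)}[k])$, it is the cofree graded cocommutative coalgebra on its primitives, so the functor-level isomorphism extends uniquely to a coalgebra map, and by exponentiality this is automatically an algebra map. That argument is precisely what ``any splitting can be chosen compatibly with the ring structure'' is hiding.
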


\begin{proof}
{\bf Step 1: The quasi-trivial filtration of the divided power
algebra.}
Corollary \ref{cor-iterated-princ} yields a quasi-trivial filtration of
$\Gamma^{(r,i]}(V)$ with associated graded $\PP_\k$-algebra
\begin{align}\gr \Gamma^{(r,i]}(V)\simeq \bigotimes_{s\ge 0}\Lambda^{(r+s,i2^s]}(V)\;.\label{eq-iso2-1}\end{align}
By proposition
\ref{prop-quasi-trivial-collapse}, derivation commutes with quasi-trivial filtrations and by the 
Moreover, by the Eilenberg-Zilber theorem and the K\"unneth formula, derivation commutes with tensor products. We therefore have isomorphisms of graded $\PP_\k$-algebras
\begin{align}\gr (L_* \Gamma^{(r,i]}(V,n))\simeq L_*(\gr \Gamma^{(r,i]})(V,n)\simeq \bigotimes_{s\ge 0}
L_*\Lambda^{(r+s,i2^s]}(V,n) \;.\label{eq-iso2-2}\end{align}
{\bf Step 2: D\'ecalage.}
Now we use the d\'ecalage formula of proposition \ref{dec}:
\begin{align*} L_*\Lambda^{(r+s,i2^s]}(V,n)\simeq
L_*\Gamma^{(r+s,i2^s+1]}(V,n-1)\;  \end{align*}
to rewrite the right-hand side of isomorphism \eqref{eq-iso2-2}. In this way we
 obtain for all
$n\ge 1$ an isomorphism:
\begin{align}\gr  (L_*\Gamma^{(r,i]}(V,n))\simeq \bigotimes_{s\ge 0}
L_*\Gamma^{(r+s,i2^s+1]}(V,n-1)\label{eq-iso2-4} \end{align}
{\bf Step 3: Induction.}
We now prove theorem \ref{thm-derived-p2-prime} by induction on $n$. For $n=1$,
$L_*\Gamma^{(r+s,i2^s+1]}(V,0)$ is isomorphic to
$\Gamma^{(r+s,i2^s+1]}(V)$ so that theorem \ref{thm-derived-p2-prime} holds.
Let us assume that we have computed $L_*\Gamma^{(r+s,i2^s+1]}(V,n-1)$.
By inserting the formula giving $L_*\Gamma^{(r+s,i2^s+1]}(V,n-1)$ in the
right hand side of isomorphism \eqref{eq-iso2-4}, we obtain that the isomorphism
of theorem \ref{thm-derived-p2-prime} holds, up to a filtration.
To prove theorem \ref{thm-derived-p2-prime}, it remains to verify that the
filtration on the left-hand side of
of isomorphism \eqref{eq-iso2-4} is trivial. This is a direct consequence of the
following proposition.
\end{proof}

\begin{proposition}[{\cite[Prop 14.5]{antoine}}]\label{prop-split}
Let $\k$ be a field of characteristic $2$, and let $A(V)$ be a filtered
graded commutative $\PP_\k$-algebra, whose summands $A^d_i(V)$ are finite
dimensional.
Assume that $\gr A(V)$ is isomorphic to a tensor product of graded
$\PP_\k$-algebras of the form $\Gamma(V^{(r)}[i])$. Then there exists an
isomorphism of graded $\PP_\k$-algebras $A(V)\simeq \gr A(V)$.
\end{proposition}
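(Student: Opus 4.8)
The plan is to produce a morphism of graded $\PP_\k$-algebras $g\colon \gr A(V)\to A(V)$ which respects the filtrations and induces the identity on associated graded objects; once such a $g$ is available, the proposition follows from a bidegree-by-bidegree dimension count. Indeed, each homogeneous component $A^d_i(V)$ has finite-dimensional values, so (as in the proof of lemma \ref{lm-triv}(a)) the induced filtration is finite in each fixed weight and degree. A filtered map whose associated graded is the identity is then automatically an isomorphism in each bidegree $(d,i)$, whence $A(V)\simeq\gr A(V)$ as graded $\PP_\k$-algebras.

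To construct $g$, I would exploit that, by hypothesis, $\gr A(V)$ is a tensor product of divided power algebras $\Gamma(V^{(r)}[i])$. As an algebra each such factor is generated by the divided powers $\gamma_{2^k}$ of its generator, and these are themselves Frobenius twists: $\gamma_{2^k}$ applied to $V^{(r)}$ generates a copy of $V^{(r+k)}$, placed in degree $2^ki$. In characteristic $2$ the only relations among these generators are that their squares vanish, since $\gamma_{2^k}(x)^2=\binom{2^{k+1}}{2^k}\gamma_{2^{k+1}}(x)=0$. Consequently $\gr A(V)$ is generated, as a graded $\PP_\k$-algebra, by a family of Frobenius twists $V^{(s)}$ sitting in prescribed degrees and subject only to square-zero relations; a morphism out of $\gr A(V)$ is thus determined by the images of these generators, and any choice of images whose squares vanish extends to such a morphism.

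The construction of $g$ therefore reduces to two lifting problems. First, for each generator: if $V^{(s)}$ represents a subfunctor of $\gr^m A^{p^s}_j(V)$, lifting it means splitting the pullback extension $0\to F^{m+1}A^{p^s}_j(V)\to E\to V^{(s)}\to 0$ of strict polynomial functors, whose obstruction lies in $\Ext^1_{\PP_\k}(V^{(s)},F^{m+1}A^{p^s}_j(V))$. Second, after choosing lifts, one must arrange that they square to zero: any discrepancy is a morphism from the relation functor $S^2(V^{(s)})$ into a strictly deeper filtration piece of the same weight, governed by a $\hom_{\PP_\k}$-vanishing, and can be corrected by modifying the lift within the lower filtration. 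Since the lower filtration pieces are assembled from smaller Frobenius twists of the same weight, both obstructions are controlled by the $\hom$ and $\Ext^1$ computations recalled in appendix \ref{app-comput}; establishing that they allow a compatible, simultaneous choice of sections yields the desired algebra morphism $g$.

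The hard part is exactly this final step, namely the uniform vanishing (or compatibility) of the obstructions to lifting both the generators and the square-zero relations. Here the restriction to characteristic $2$ and the fact that \emph{all} generators of $\gr A(V)$ are Frobenius twists are indispensable: it is the rigidity of the functors $V^{(s)}$, in particular their additivity and the sparseness of $\hom$'s and $\Ext^1$'s between Frobenius twists in $\PP_\k$, that forces the relevant obstruction groups to cooperate and makes the filtration split.
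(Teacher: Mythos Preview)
Your strategy has a genuine gap at its very first step: the claimed presentation of $\Gamma_\k(V^{(r)}[i])$ is not correct in the category $\PP_\k$. You assert that $\gamma_{2^k}$ applied to $V^{(r)}$ ``generates a copy of $V^{(r+k)}$'' inside $\Gamma^{2^k}_\k(V^{(r)})$, but no such subfunctor exists: by the computations of appendix~\ref{app-comput} (see table~\ref{homcomput}) we have $\hom_{\PP_\k}(V^{(r+k)},\Gamma^{2^k}_\k(V^{(r)}))=0$ for $k\ge 1$. The Frobenius twist $V^{(r+k)}$ is a natural \emph{quotient} of $\Gamma^{2^k}_\k(V^{(r)})$ (via the Verschiebung), not a subobject. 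The map $x\mapsto\gamma_{2^k}(x)$ is not additive (e.g.\ $\gamma_2(x+y)=\gamma_2(x)+xy+\gamma_2(y)$), so its $\k$-linear span is all of $\Gamma^{2^k}_\k(V^{(r)})$, not a Frobenius twist. What you are implicitly using is the \emph{non-functorial} isomorphism of proposition~\ref{prop-iso-Nonnat}, or equivalently the principal filtration of corollary~\ref{cor-iterated-princ}; but a non-natural presentation cannot be used to build a morphism of strict polynomial functors. Consequently the lifting problem you set up (for generators $V^{(s)}\hookrightarrow \gr^m A(V)$, with obstruction in an $\Ext^1$ group) never gets off the ground: there are no such generators to lift, and hence no way to produce the algebra map $g:\gr A(V)\to A(V)$ along the lines you describe.

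The paper's argument sidesteps this obstacle by passing to the dual, coalgebra-theoretic picture. The key observation is that while $\Gamma_\k(V)$ has no simple functorial presentation as an \emph{algebra}, it \emph{is} functorially cofree as a coalgebra: $\gr A(V)$ is the cofree graded cocommutative coalgebra on its primitives $F(V)=\bigoplus V^{(r)}[i]$. One first uses the $\Ext^1$-vanishing of proposition~\ref{prop-vanish} to produce a (non-multiplicative) isomorphism of graded strict polynomial functors $f:A(V)\xrightarrow{\simeq}\gr A(V)$, hence a map $A(V)\to F(V)$. By cofreeness this extends uniquely to a morphism of coalgebras $\overline f:A(V)\to\gr A(V)$, which is then an isomorphism by a dimension count. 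Finally, for exponential functors the coalgebra structure determines the algebra structure, so $\overline f$ is automatically an algebra isomorphism. In short, the direction of the map is reversed and the universal property invoked is the one that actually holds functorially.
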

\begin{proof}
The proof is already given in \cite{antoine}, we sketch it here for the sake of completeness.
The starting point is the vanishing lemma \ref{prop-vanish}, which yields an isomorphism of graded strict polynomial functors $f:A(V)\xrightarrow[]{\simeq} \gr A(V)$. The isomorphism $f$ is not an isomorphism of algebras, but we can use it to build one in the following way.
Let us first recall that $\gr A(V)$ is exponential, hence $A(V)$ also is by the same reasoning as in lemma \ref{lm-triv}.
In particular, both algebras also have a coalgebra structure determined by the multiplication, as explained in section \ref{sec-classical}. One can check that the primitives of an exponential functor form an additive functor. So the isomorphism $f$ shows that the primitives of  $A(V)$ form a direct summand of the primitives of $\gr A(V)$. Let us denote by $F(V)$ the primitive part of $\gr A(V)$, that is the direct sum of all the generators $V^{(r)}[i]$.
Then $f$ induces a monomorphism $f:A(V)\hookrightarrow F(V)$. But $\gr A(V)$ is the universal cofree coalgebra on $F(V)$, hence $f$ extends uniquely to a morphism of graded $\PP_\k$-coalgebras $\overline{f}:A(V)\to \gr A(V)$. Since $\overline{f}$ induces an injection between the primitives of $A(V)$ and those of $\gr A(V)$, it is injective, and it is an isomorphism for dimension reasons. Finally, the coalgebra structure of an exponential functor uniquely determines its algebra structure and vice versa, so the isomorphism $\overline{f}$ is also a morphism of algebras.
\end{proof}

\subsection{The computation of $L_*\Gamma^d(V,1)$ over a field of odd
characteristic}

\begin{theorem}\label{thm-derived-podd}
Let $\k$ be a field of odd characteristic $p$, and let $V$ be a finite
dimensional $\k$-vector space. There is an isomorphism of
graded
$\mathcal{P}_\k$-algebras
\begin{align}
\label{lgammav1odd}
&L_*\Gamma(V,1)\simeq \bigotimes_{r\ge 1}
\Gamma \left(V^{(r)}[2]\right) \otimes \bigotimes_{r\ge 0}\Lambda\left(V^{(r)}[1]\right)\;.
\end{align}
\end{theorem}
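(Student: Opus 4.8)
The plan is to imitate the inductive strategy already carried out in the characteristic $2$ case (theorem \ref{thm-derived-p2-prime}), but now for odd $p$ and only for the base shift $n=1$. Since we are computing $L_*\Gamma(V,1)$, I would first use corollary \ref{cor-iterated-princ}, which furnishes a quasi-trivial filtration of $\Gamma(V)=\Gamma(V[0])$ with associated graded $\PP_\k$-algebra $\gr\Gamma(V)\simeq\bigotimes_{r\ge 0}Q(V^{(r)})$. The essential point in odd characteristic is that $Q(V)$ is no longer $\Lambda(V)$; it is the truncated polynomial algebra, which is why the answer will involve both divided and exterior factors. By proposition \ref{prop-quasi-trivial-collapse} the filtration commutes with derivation, and by Eilenberg--Zilber and K\"unneth derivation commutes with tensor products, so I obtain
\begin{align*}
\gr\bigl(L_*\Gamma(V,1)\bigr)\simeq L_*\bigl(\gr\Gamma\bigr)(V,1)\simeq\bigotimes_{r\ge 0}L_*Q(V^{(r)},1)\;.
\end{align*}

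The next step reduces everything to computing $L_*Q(V,1)$, the derived functors of a single truncated polynomial algebra placed in degree $0$ and derived once. Here I would exploit proposition \ref{prop-qis-trunc}, which gives a functorial quasi-isomorphism $f\colon(S(V)\otimes\Lambda(V^{(1)}[1]),\partial)\to(Q(V),0)$. Deriving a quasi-isomorphism of (exponential) dg-$\PP_\k$-algebras preserves quasi-isomorphism, so $L_*Q(V,1)$ is computed from the derived functors of $S(V)\otimes\Lambda(V^{(1)}[1])$, and the pieces $L_*S(V,1)$ and $L_*\Lambda(V^{(1)}[1],1)$ are accessible from the d\'ecalage formula (proposition \ref{dec}) together with remark \ref{rk-derfrobtw} for the Frobenius twist. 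Alternatively, and perhaps more cleanly, I would compute $L_*S^d(V,1)$ directly: over a field of odd characteristic the bar-type computation for $S$ in the first derived degree yields the classical answer, and assembling the factors indexed by $r$ produces exactly the tensor product $\bigotimes_{r\ge 1}\Gamma(V^{(r)}[2])\otimes\bigotimes_{r\ge 0}\Lambda(V^{(r)}[1])$ up to a filtration.

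The final step is the collapse of the filtration, i.e. showing that the isomorphism holds on the nose and not merely on the associated graded. In the characteristic $2$ argument this was handled by proposition \ref{prop-split}; for odd $p$ I expect to need the analogous splitting statement, whose hypotheses must be adapted to allow tensor factors of both types $\Gamma(V^{(r)}[i])$ and $\Lambda(V^{(r)}[j])$. This is the step I anticipate being the main obstacle: the clean splitting in proposition \ref{prop-split} relies on the graded-commutative structure being divided-power-like, whereas here we carry exterior generators in odd degrees which are genuinely anticommutative. The plan is therefore either to invoke a vanishing lemma in the spirit of lemma \ref{prop-vanish} (which should still give a functorial isomorphism $A(V)\xrightarrow{\simeq}\gr A(V)$ of graded functors) and then upgrade it to an algebra isomorphism via the cofree-coalgebra argument of proposition \ref{prop-split}, or, since we only treat $n=1$ here, to verify the collapse by a direct dimension count on each weight-$d$ component using proposition \ref{prop-iso-Nonnat}. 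The degree bookkeeping is elementary: the generator of $Q^m(V^{(r)})$ sits in weight $mp^r$, and deriving it once with $n=1$ shifts the exterior part into degree $1$ and the symmetric part into degree $2$, matching the shifts $[1]$ and $[2]$ appearing in \eqref{lgammav1odd}.
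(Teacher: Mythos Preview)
Your outline matches the paper's proof essentially step for step: the quasi-trivial filtration from corollary \ref{cor-iterated-princ}, the resolution of $Q(V)$ by $(S(V)\otimes\Lambda(V^{(1)}[1]),\partial)$ from proposition \ref{prop-qis-trunc} together with a double spectral sequence argument (the second one degenerating for weight/lacunary reasons) to obtain $L_*Q(V,1)\simeq\Lambda(V[1])\otimes\Gamma(V^{(1)}[2])$, and finally the odd-characteristic splitting result (proposition \ref{prop-split-odd}) to remove the filtrations. The only points to sharpen are that the splitting proposition must be invoked \emph{twice} (once to identify $L_*Q(V,1)$ and once more after tensoring over all $r$), and that your ``deriving a quasi-isomorphism preserves quasi-isomorphism'' step is exactly the first of the two spectral sequences the paper runs on the bicomplex $\NN A(V,1)$.
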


\begin{remark}
The reader might find it surprising that derived
functors of $\Gamma$ are so different in characteristic $2$ from
what they are in odd characteristic. However, proposition
\ref{prop-iso-Nonnat} shows that theorem \ref{thm-derived-podd} is
valid in characteristic $2$ in a nonnatural way. And proposition
\ref{prop-filtration-Gamma} shows that  theorem
\ref{thm-derived-podd} remains valid in characteristic $2$, up to a
filtration.
\end{remark}

\begin{proof}
The proof is similar to the characteristic $2$ case, i.e. to the proof of theorem \ref{thm-derived-p2-prime}.
{\bf Step 1: The quasi-trivial filtration of the divided power
algebra.}
Corollary \ref{cor-iterated-princ} yields a quasi-trivial filtration of
$\Gamma(V)$ with associated graded $\PP_\k$-algebra
$\bigotimes_{s\ge 0}Q(V^{(s)})$.
Thus, by proposition
\ref{prop-quasi-trivial-collapse} and the Eilenberg-Zilber theorem, the graded $\PP_\k$-algebra
$L_*\Gamma(V,1)$ is filtered and we have isomorphisms
\begin{align}\gr (L_* \Gamma(V,1))\simeq
L_*(\gr \Gamma)(V^{(s)},1)\simeq \bigotimes_{s\ge 0} L_* Q(V^{(s)},1)\;.\label{eq-isoodd-1bis}\end{align}
{\bf Step 2: Derived functors of truncated polynomials.}
In characteristic $2$, the derived functors of truncated polynomials (i.e. of exterior powers) can be computed by the d\'ecalage formula of proposition \ref{dec}.
This is not the case anymore in odd characteristic. The aim of step 2 is to prove an analogue of the d\'ecalage formula (for $n=1$), namely an isomorphism of
graded $\PP_\k$-algebras:
\begin{align} L_* Q(V^{(s)},1)\simeq \Lambda(V^{(s)}[1])\otimes \Gamma(V^{(s+1)}[2])\;. \label{eq-decpodd}\end{align}

Since derivation commutes with precomposition by the Frobenius twist (cf. remark \ref{rk-derfrobtw}), it suffices to prove \eqref{eq-decpodd} for $s=0$. Let us denote by $A(V)$ the differential graded $\PP_\k$-algebra  $(S(V)\otimes\Lambda(V^{(1)}[1]),\partial)$ introduced in \eqref{resq}. Thus there is a quasi-isomorphism: $f:A(V)\to Q(V)$, where the target has zero differential. By deriving this quasi-isomorphism, we obtain a morphism of differential graded $\PP_\k$-algebras:
\begin{align}\NN A(V,1)\to \NN Q(V,1)\;.\label{eq-qis-f}\end{align}
By definition, $\NN A(V,1)$ is the total object of a bigraded $\PP_\k$-algebra equipped with two differentials: the differential $\partial$ of the dg-$\PP_\k$-algebra $A(V)$ and the differential $d$ coming from the simplicial structure. Thus we have two spectral sequences of graded $\PP_\k$-algebras converging to the homology of $\NN A(V,1)$. The first one is obtained by computing first the homology along the differential $\partial$, and secondly the homology along the differential $d$. Thus, its second page is given by
$ E^2_{0,t}= L_t Q(V,1)$ and $E^2_{s,t}=0$ if $s\ne 0$, and this proves that \eqref{eq-qis-f} is a quasi-isomorphism. The second spectral sequence is given by computing first the homology along the simplicial differential $d$. By the d\'ecalage formula of proposition \ref{dec}, its first page is given by
$$ 'E^1_{s,t} =
\Lambda^{t-s}(V)\otimes \Gamma^{s}(V^{(1)})\;,
$$
with the convention that $\Lambda^{t-s}(V)$ is zero if $t-s<0$. We observe that $ 'E^1_{s,t}$ is a strict polynomial functor of weight $t+(p-1)s$. Since the differentials of the spectral sequence are weight preserving maps: $d_i:\,'E^i_{s,t}\to \,'E^i_{s+i,t-i+1}$, they must be zero for lacunary reasons. Hence $'E^1_{s,t}=\,'E^\infty_{s,t}$. Thus we can conclude that there exists a filtration on the graded-$\PP_\k$-algebra $L_*Q(V,1)$ such that the quasi-isomorphism \eqref{eq-qis-f} induces an isomorphism of graded $\PP_\k$-algebras:
$$ \gr (L_* Q(V^{(s)},1))\simeq \Lambda(V^{(s)}[1])\otimes \Gamma(V^{(s+1)}[2])\;.$$
This is almost the formula \eqref{eq-decpodd} that we want to prove. To finish the proof, we have to get rid of the filtration on the left hand side. This follows from  proposition \ref{prop-split-odd} below.

{\bf Step 3: Conclusion.}
The isomorphisms \eqref{eq-isoodd-1bis} and \eqref{eq-decpodd} together yield the formula of theorem \ref{thm-derived-podd} up to a filtration. To finish the proof of theorem \ref{thm-derived-podd}, we prove that this filtration splits by applying one more time proposition \ref{prop-split-odd}.
\end{proof}

In the course of the proof of theorem \ref{thm-derived-podd}, we have made use of the following statement whose proof is similar to the one of proposition \ref{prop-split}.

\begin{proposition}[\cite{antoine}]\label{prop-split-odd}
Let $\k$ be a field of odd characteristic, and let $A(V)$ be a filtered
graded commutative $\PP_\k$-algebra, whose summands $A^d_i(V)$ are finite
dimensional.
If $\gr A(V)$ is isomorphic to a tensor product of graded
$\PP_\k$-algebras of the form $\Gamma(V^{(r)}[2i])$ or $\Lambda(V^{(r)}[2i+1])$, then there exists an
isomorphism of graded $\PP_\k$-algebras $A(V)\simeq \gr A(V)$.
\end{proposition}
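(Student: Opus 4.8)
The plan is to follow verbatim the strategy of the proof of proposition \ref{prop-split}, the only genuinely new feature being that in odd characteristic the free building blocks come in two flavours: divided powers $\Gamma(V^{(r)}[2i])$ attached to the even-degree generators, and exterior algebras $\Lambda(V^{(r)}[2i+1])$ attached to the odd-degree ones. \textbf{Step 1: a functorial splitting.} First I would forget the algebra structure and split the filtration at the level of graded strict polynomial functors. Invoking the odd-characteristic counterpart of the vanishing lemma \ref{prop-vanish}, together with the finite-dimensionality of the $A^d_i(V)$ (which makes the filtration of each homogeneous summand bounded, as in lemma \ref{lm-triv}(a)), one obtains an isomorphism of graded strict polynomial functors $f:A(V)\xrightarrow{\simeq}\gr A(V)$. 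The essential input here is the vanishing of $\Ext^1_{\PP_\k}$ between the successive graded quotients, which are tensor products of the functors $\Gamma^a(V^{(r)})$ and $\Lambda^b(V^{(s)})$.

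\textbf{Step 2: from functors to coalgebras.} Since $\gr A(V)$ is a tensor product of the exponential algebras $\Gamma(V^{(r)}[2i])$ and $\Lambda(V^{(r)}[2i+1])$, it is itself exponential, and hence so is $A(V)$ by the argument of lemma \ref{lm-triv}(b). Both algebras therefore acquire a canonical coalgebra structure from their multiplication. As in proposition \ref{prop-split}, the primitives of an exponential functor form an additive subfunctor, so the functor isomorphism $f$ identifies the primitive part of $A(V)$ with a direct summand of the primitive part $F(V)=\bigoplus_{r,i} V^{(r)}[i]$ of $\gr A(V)$; a weight-by-weight dimension count then forces this inclusion to be an equality on primitives.

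\textbf{Step 3: cofreeness and conclusion.} Here I would use that in odd characteristic the tensor product of divided powers on the even generators and exterior algebras on the odd generators is precisely the cofree graded-cocommutative coalgebra on $F(V)$. The identification of primitives obtained in Step 2 therefore extends uniquely to a morphism of graded $\PP_\k$-coalgebras $\overline{f}:A(V)\to\gr A(V)$. This $\overline{f}$ is injective on primitives, hence injective, and an isomorphism for dimension reasons. Finally, since the coalgebra structure of an exponential functor determines its algebra structure and conversely, $\overline{f}$ is automatically an isomorphism of graded $\PP_\k$-algebras.

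I expect the main obstacle to be Step 3, namely checking that the Koszul sign conventions governing the interaction of the even (divided-power) and odd (exterior) parts of $\gr A(V)$ do not disturb the universal property of the cofree graded-cocommutative coalgebra on $F(V)$; this is exactly the point at which the odd-characteristic case departs from the sign-free situation of proposition \ref{prop-split}. Once that structural fact is secured, the remaining verifications are routine, being controlled throughout by the weight grading and finite-dimensionality of strict polynomial functors, which reduce every injectivity and surjectivity claim to a comparison of dimensions in each fixed weight and degree.
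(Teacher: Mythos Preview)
Your proposal is correct and follows exactly the approach the paper intends: the paper gives no separate proof of proposition \ref{prop-split-odd}, stating only that its proof is similar to that of proposition \ref{prop-split}, and your three steps reproduce that argument faithfully. You have also correctly identified the two points where the odd-characteristic case diverges from the characteristic-$2$ one, namely the use of the odd-characteristic vanishing lemma (proposition \ref{prop-vanish-odd} in the appendix) in Step~1, and the fact that the cofree graded-cocommutative coalgebra now mixes divided powers on even-degree generators with exterior powers on odd-degree ones in Step~3.
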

\section{The first derived functors of $\Gamma$ over the integers}
\label{der1-gamma-z}

In this section, we work over the ground ring $\Z$.
In particular, we write $\Gamma^d$,
$\Lambda^d$ and $S^d$ for $\Gamma^d_\Z$, $S^d_\Z$ and $\Lambda^d_\Z$.
A generic free finitely generated abelian group will be denoted by the
letter `$A$', and we will denote by `$A/p$' the quotient $A/pA$.
Strict polynomial functors defined over prime fields will enter the picture under the following
form: if $F\in\PP_{d,\Fp}$, the functor $A\mapsto F(A/p)$ lives in $\PP_{d,\Z}$.
In particular, Frobenius twist functors yield strict polynomial functors $(A/p)^{(r)}$.
We most often drop the parentheses and simply denote those functors by $A/p^{\,(r)}$.
We denote by $\Gamma^d_\Fp$, $\Lambda^d_\Fp$ and $S^d_\Fp$ the symmetric,
exterior and divided powers functors considered as objects of $\PP_{d,\Fp}$.

The goal of this section is to compute the derived functors $L_*\Gamma(A,1)$.
The main result in this section is theorem \ref{thm-calcul-LG-un-Z},
which gives a first description of these derived functors. In section \ref{descr-LGA1} we present and illustrate this result.  The proof will be given in section \ref{sec-calcul-LG-un-Z}, while sections  \ref{subsec-Koszuldiff} and \ref{subsec-SkewKoszuldiff} contain preliminary results which will be needed for this proof.  
We will further elaborate on this theorem in section \ref{sec-der1-gamma-z-bis} to obtain other forms of the result, in the hope that one or another  of these descriptions  will be of direct  use to the reader.

\subsection{The description of $L_*\Gamma(A,1)$}\label{descr-LGA1}

Recall from proposition \ref{lm-free-tors} that the graded $\PP_\Z$-algebra $L_*\Gamma(A,1)$ decomposes as
$$L_*\Gamma(A,1)= \underbrace{\bigoplus_{d\ge 0} L_d\Gamma^d(A,1)}_{D(A)}\oplus\underbrace{\bigoplus_{0<i< d} L_i\Gamma^d(A,1)}_{I(A)}$$
where the diagonal subalgebra $D(A)$ is isomorphic to $\Lambda(A[1])$, and where the ideal $I(A)$ has values in torsion abelian groups.
Thus, if $_\p L_*\Gamma(A,1)$ denotes the $p$-primary part of the abelian group $L_*\Gamma(A,1)$, there is an equality:
$$I(A)=\bigoplus_{\text{$p$ prime}} {_\p} L_*\Gamma(A,1)\;.$$
To describe the graded $\PP_\Z$-algebra $L_*\Gamma(A,1)$, it therefore suffices to compute the $p$-primary summands ${_\p}L_*\Gamma(A,1)$ as graded $\PP_\Z$-algebras (without unit), and to describe their $D(A)$-module structure
$D(A)\otimes {_\p L_*\Gamma(A,1)}\to {_\p L_*\Gamma(A,1)}$.

We shall describe the $p$-primary summands ${_\p} L_*\Gamma(A,1)$ by the means of `Koszul kernel algebras' (for odd $p$) and `skew Koszul kernel algebras' (for $p=2$), which we now introduce.

\begin{definition}\label{def-dKos}
Let $p$ be a prime. (1) We denote by
$\partial_{\mathrm{Kos}}$ the unique differential of graded $\PP_\Z$-algebras
on the connected algebra
$$\Lambda_\Fp(A/p[1])\otimes \bigotimes_{r\ge 1}
\left(\;\Gamma_\Fp\left(A/p^{(r)}[2]\right)\otimes
\Lambda_\Fp\left(A/p^{(r)}[1] \right)
\;\right) $$
sending the
generators $A/p^{(r)}[2]=\Gamma^1(A/p^{(r)}[2])$ identically to
$A/p^{(r)}[1]=\Lambda^1(A/p^{(r)}[1])$.

(2) The Koszul kernel algebra $K_\Fp(A/p)$ is the graded $\PP_\Z$-subalgebra
of $L_*\Gamma_\Fp(A/p,1)$ consisting of  the cycles
relative to the differential $\partial_{\mathrm{Kos}}$.
\end{definition}

The notation $\partial_{\mathrm{Kos}}$ and the name `Koszul kernel algebra' are justified by the fact (which will be proved in section \ref{subsec-Koszuldiff}) that the differential graded $\PP_\Z$-algebra $(L_*\Gamma_\Fp(A/p,1),\partial_{\mathrm{Kos}})$ is the tensor product of all the algebras $\Gamma_\Fp\left(A/p^{(r)}[2]\right)\otimes \Lambda_\Fp\left(A/p^{(r)}[1] \right)$ with a Koszul differential and of  the algebra $\Lambda_\Fp(A/p[1])$ with the  zero differential.
We will prove in corollary \ref{cor-filtr-SK} that the Koszul kernel algebra $K_\Fdeux(A/2)$ is very closely related to the following skew Koszul kernel algebra $SK_\Fdeux(A/2)$.
\begin{definition}\label{def-dSKos} (1)
We let
$\partial_{\mathrm{SKos}}$ be the unique differential of graded
$\PP_\Z$-algebras
on
$$\bigotimes_{r\ge 0}\Gamma_\Fdeux(A/2^{(r)}[1])$$
whose restriction to the each of the  summands $\Gamma^2_\Fdeux(A/2^{(r)}[1])$, $r\ge 0$ is
equal to the Verschiebung map $\Gamma^2_\Fdeux(A/2^{(r)}[1])\stackrel{\pi}{\to} A/2^{(r+1)}[1]$.

(2) The skew Koszul kernel algebra $SK_\Fdeux(A/2)$ is the graded
$\PP_\Z$-subalgebra
of $L_*\Gamma_\Fdeux(A/2,1)$ consisting of  the
cycles relative to the differential $\partial_{\mathrm{SKos}}$.
\end{definition}

The following theorem provides our  first description of the graded $\PP_\Z$-algebra $L_*\Gamma(A,1)$. It will be proved in section  \ref{sec-calcul-LG-un-Z}.

\begin{theorem}\label{thm-calcul-LG-un-Z}
Let $A$ be a finitely generated  free abelian group.
\begin{enumerate}
\item[(i)] The diagonal algebra $D(A)$ is isomorphic to $\Lambda(A[1])$.
 \item[(ii)]  For any prime number $p$, the $p$-primary component  $_\p L_*\Gamma(A,1)$ is entirely $p$-torsion. In particular, there is  an isomorphism of graded $\PP_\Z$-algebras
$$L_*\Gamma(A,1)\otimes\Fp\simeq D(A)\otimes\Fp\;\oplus \; {_\p} L_*\Gamma(A,1)\;.$$
\item[(iii)] There are  isomorphisms of
graded $\mathcal{P}_\Z$-algebras:
\begin{align}
&L_*\Gamma(A,1)\otimes\Fp\simeq K_\Fp(A/p)\quad \text{ if $p$ is an odd prime,}\\
&L_*\Gamma(A,1)\otimes\Fdeux\simeq SK_\Fdeux(A/2)\quad\text{ if $p=2$.}
\end{align}
\end{enumerate}
\end{theorem}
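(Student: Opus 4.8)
\textbf{Proof strategy for Theorem \ref{thm-calcul-LG-un-Z}.}
The plan is to combine the mod~$p$ computations over fields from Section \ref{descr-gvn} with a Bockstein-type analysis relating the integral derived functors to cycles in the (skew) Koszul complexes. First I would establish part (i) directly: the diagonal subalgebra $\bigoplus_d L_d\Gamma^d(A,1)$ is the top-degree part, and by the décalage formula (proposition \ref{dec}) together with proposition \ref{lm-free-tors} it is identified with $\Lambda(A[1])$, exactly as already asserted in the discussion preceding the theorem. This part is essentially formal once we know the top derived functor of $\Gamma^d$ is $\Lambda^d$.

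For parts (ii) and (iii), the key tool is the universal coefficient sequence comparing $L_*\Gamma(A,1)$ with $L_*\Gamma_\Fp(A/p,1)$. Deriving the short exact sequence of functors $0\to A\xrightarrow{p} A\to A/p\to 0$ and applying $\Gamma^d$ produces a long exact sequence, yielding a Bockstein differential $\beta$ on the mod~$p$ groups $L_*\Gamma_\Fp(A/p,1)$ whose homology recovers the $p$-primary part of $L_*\Gamma(A,1)$. The heart of the argument is the identification of this Bockstein with the combinatorial differentials $\partial_{\mathrm{Kos}}$ and $\partial_{\mathrm{SKos}}$ of definitions \ref{def-dKos} and \ref{def-dSKos}. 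For this I would use theorems \ref{thm-derived-podd} and \ref{thm-derived-p2}, which give the explicit algebra structure of $L_*\Gamma_\Fp(A/p,1)$: in odd characteristic it is $\bigotimes_{r\ge 1}\Gamma(V^{(r)}[2])\otimes\bigotimes_{r\ge 0}\Lambda(V^{(r)}[1])$, while in characteristic~$2$ it is $\bigotimes_{r\ge 0}\Gamma(V^{(r)}[1])$. The Bockstein is determined by its effect on algebra generators (since it is a graded derivation for the coalgebra-compatible structure), so it suffices to compute it on the $V^{(r)}[2]$ and $V^{(r)}[1]$ generators in the odd case, and on the $V^{(r)}[1]$ generators in the case $p=2$.

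The main obstacle I anticipate is precisely this generator-level computation of the Bockstein, and in particular explaining the \emph{twist} by Frobenius that appears in characteristic~$2$. In the odd case the Bockstein should send the degree-$2$ generator $V^{(r)}[2]$ to the degree-$1$ generator $V^{(r)}[1]$ identically, reproducing $\partial_{\mathrm{Kos}}$ as a standard dual Koszul differential with no Frobenius shift. In the $p=2$ case, however, the generators all sit in degree~$1$, and the Bockstein must carry $\Gamma^2(V^{(r)}[1])$ to $V^{(r+1)}[1]$ via the Verschiebung $\pi$; this is the source of the skew structure $\partial_{\mathrm{SKos}}$. I would pin down this map by a low-weight explicit check—computing $L_*\Gamma^2_\Z(A,1)$ and $L_*\Gamma^4_\Z(A,1)$ directly from the relations \eqref{rel1}--\eqref{rel5} and the integral torsion phenomena such as \eqref{gammators}—and then propagate the identification to all weights using the fact that the Bockstein is a derivation and that both sides carry compatible $\PP_\Z$-algebra structures. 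Granting this identification, parts (ii) and (iii) follow: the $p$-primary part is $p$-torsion because it is the homology of a complex of $\Fp$-vector spaces (so the first Bockstein already computes everything, with no higher Bocksteins surviving), giving the splitting in (ii), and the cycle descriptions $K_\Fp(A/p)$ and $SK_\Fdeux(A/2)$ of definitions \ref{def-dKos} and \ref{def-dSKos} are by construction the kernels of these differentials, yielding (iii).
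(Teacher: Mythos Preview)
Your overall strategy---run the Bockstein spectral sequence on $\NN\Gamma(A,1)$ and identify the first differential $d^0$ with $\partial_{\mathrm{Kos}}$ or $\partial_{\mathrm{SKos}}$---is exactly the one the paper uses. But there is a genuine gap in the step where you say you will ``pin down this map by a low-weight explicit check'' on $\Gamma^2$ and $\Gamma^4$ and then ``propagate the identification to all weights using the fact that the Bockstein is a derivation.'' The derivation property determines $d^0$ on products of generators, not on the generators themselves, and the algebras $L_*\Gamma_\Fp(A/p,1)$ have infinitely many independent algebra generators: in odd characteristic there is one copy of $V^{(r)}[2]$ for every $r\ge 1$, and in characteristic $2$ there is a $V^{(r)}[1]$ for every $r\ge 0$. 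Knowing $d^0$ in weights $2$ and $4$ tells you nothing about its value on $V^{(r)}$ for $r\ge 2$ (odd) or $r\ge 3$ (even). So you still owe a uniform argument for every prime power.

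The paper closes this gap in two moves. First, lemma \ref{prop-calc-G1} computes $_\p L_1\Gamma^{p^r}(A,1)\simeq A/p^{(r)}$ for \emph{all} $r$ by a single long-exact-sequence argument; via the standard Bockstein property, this forces every generator $\Lambda^1(A/p^{(r)}[1])$ (resp.\ $\Gamma^1(A/2^{(r)}[1])$) to lie in the image of $d^0$. Second, corollaries \ref{cor-versal-prop} and \ref{cor-versal-prop-p2} (built on the uniqueness principle of lemma \ref{lm-unique}) show that this image condition already characterises the differential up to an automorphism of the algebra, hence $d^0=\partial_{\mathrm{Kos}}$ or $\partial_{\mathrm{SKos}}$. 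Once that is done, degeneration at $E^1$ (and hence part (ii)) follows from the acyclicity statements in propositions \ref{prop-HKos} and \ref{cor-HSKos}; your justification of (ii) (``it is the homology of a complex of $\Fp$-vector spaces'') is circular---the point is precisely that $H_*(E^0,d^0)\simeq\Lambda_\Fp(A/p[1])=E^\infty$, so there is no room for higher differentials.
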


\begin{remark}
The description of the $D(A)$-module structure on  $_\p L_*\Gamma(A,1)$ is contained in part (iii) of theorem \ref{thm-calcul-LG-un-Z}.
Indeed, part (ii) yields an isomorphism
$$D(A)\otimes {_\p} L_*\Gamma(A,1)\simeq (D(A)\otimes\Fp)\otimes {_\p} L_*\Gamma(A,1)$$
so that the $D(A)$-module structure is obtained by restriction to $(D(A)\otimes\Fp)\otimes {_\p} L_*\Gamma(A,1)$ of the multiplication of $L_*\Gamma(A,1)\otimes\Fp$.
\end{remark}

The differentials $\partial_{\mathrm{Kos}}$ and $\partial_{\mathrm{SKos}}$ will be
very precisely described in sections \ref{subsec-Koszuldiff} and
\ref{subsec-SkewKoszuldiff} so that one can easily write down explicitly the
homogeneous component of weight $d$ of each of  the differential graded algebras
$(L_*\Gamma_\Fp(A/p,1),\partial_{\mathrm{Kos}})$ and
$(L_*\Gamma_\Fdeux(A/2,1),\partial_{\mathrm{SKos}})$, and thereby compute
explicitly $L_*\Gamma^d(A,1)$ for a given $d$. More details regarding
the systematic description of $L_*\Gamma^d(A,1)$ will be given in section \ref{sec-der1-gamma-z-bis}.

For the moment, we simply provide the  flavour of the explicit description of $L_*\Gamma^d(A,1)$ by writing down in detail the homogeneous summands of weight $d$ of $(L_*\Gamma_\Fdeux(A/2,1),\partial_{\mathrm{SKos}})$, for low $d$. The family of complexes of functors obtained here does not seem to have appeared  elsewhere in the literature.
In weight $1$, the complex consists simply of a copy of  $A/2$, placed in degree $1$. The complexes corresponding to the homogeneous summands of weight $d$ ranging from $2$ to $6$ are the following ones, where each differential can be characterized as the unique non-zero morphism having the specified strict polynomial functors as source and target:
\begin{align}\underbrace{\Gamma_\Fdeux^2(A/2)}_{\text{deg $2$}}\xrightarrow[]{f_2}
& \underbrace{A/2^{(1)}}_{\text{deg $1$}}\;,\\
\underbrace{\Gamma_\Fdeux^3(A/2)}_{\text{deg $3$}}\xrightarrow[]{f_3}
&\underbrace{A/2\otimes A/2^{(1)}}_{\text{deg $2$}}\;,\\
\underbrace{\Gamma_\Fdeux^4(A/2)}_{\text{deg $4$}}\xrightarrow[]{f_4}
&\underbrace{\Gamma_\Fdeux^2(A/2)\otimes A/2^{(1)}}_{\text{deg $3$}}\xrightarrow[]{g_4} \underbrace{\Gamma_\Fdeux^{2}(A/2^{(1)})}_{\text{deg $2$}}\xrightarrow[]{h_4} \underbrace{A/2^{(2)}}_{\text{deg $1$}}\;,\\
\intertext{}
\underbrace{\Gamma_\Fdeux^5(A/2)}_{\text{deg $5$}}\xrightarrow[]{f_5}
&\underbrace{\Gamma_\Fdeux^3(A/2)\otimes A/2^{(1)}}_{\text{deg $4$}}\xrightarrow[]{g_5} \underbrace{A/2\otimes\Gamma_\Fdeux^2(A/2^{(1)})}_{\text{deg $3$}}\xrightarrow[]{h_5} \underbrace{A/2\otimes A/2^{(2)}}_{\text{deg $2$}}\;,\\
\underbrace{\Gamma_\Fdeux^6(A/2)}_{\text{deg $6$}}\xrightarrow[]{f_6}
&\underbrace{\Gamma_\Fdeux^4(A/2)\otimes A/2^{(1)}}_{\text{deg $5$}}
\xrightarrow[]{g_6}
\underbrace{\Gamma_\Fdeux^2(A/2)\otimes\Gamma_\Fdeux^2(A/2^{(1)})}_{\text{deg $4$}}\nonumber\\
 & \qquad \xrightarrow[]{\text{\footnotesize $\left[\begin{array}{c}h_6 \\h_6'\end{array}\right]$}}
\underbrace{\begin{array}{c}
\Gamma_\Fdeux^2(A/2)\otimes A/2^{(2)}\\
\oplus \Gamma_\Fdeux^3(A/2^{(1)})
\end{array}
}_{\text{degree $3$}}
\xrightarrow[]{k_6+f_3^{(1)}}\underbrace{A/2^{(1)}\otimes A/2^{(2)}}_{\text{degree $2$}}\;.
\end{align}
More explicitly, the morphisms $f_n$ above are defined as the composites
$$ \Gamma_\Fdeux^n(A/2)\to \Gamma_\Fdeux^{n-2}(A/2)\otimes \Gamma^2_\Fdeux(A/2)\to \Gamma_\Fdeux^{n-2}(A/2)\otimes A/2^{(1)}\;, $$
where the first map is induced by the comultiplication of $\Gamma_\Fdeux(A/2)$, and the second one by the Verschiebung morphism. The morphisms $g_n$ are defined as the composites
$$ \Gamma_\Fdeux^{n-2}(A/2)\otimes A/2^{(1)}\to \Gamma_\Fdeux^{n-4}(A/2)\otimes \Gamma^2_\Fdeux(A/2)\otimes A/2^{(1)}\to \Gamma_\Fdeux^{n-4}(A/2)\otimes \Gamma_\Fdeux^2(A/2^{(1)})\;, $$
where the first map is induced by the comultiplication of $\Gamma_\Fdeux(A/2)$, and the second one by the Verschiebung morphism and the multiplication $A/2^{(1)}\otimes A/2^{(1)}\to \Gamma^2_\Fdeux(A/2^{(1)})$. The maps $h_n$ and $k_n$ are induced by the Verschiebung morphism, and the map $h'_6$ is induced by the Verschiebung morphism and the multiplication of the algebra $\Gamma_\Fdeux(A/2)$.
\begin{example}\label{ex-2tG4}
The $2$-primary component  of $L_*\Gamma^4(A,1)$ is given by
$$_{(2)}L_*\Gamma^4_\Z(A,1)=  A/2^{(2)}[1]\;\oplus\; 
\Lambda^2_\Fdeux(A/2^{(1)})[2]\;\oplus\;
\Phi^4(A)[3]\;,$$
where $\Phi^4(A)$ is the kernel of the morphism $g_4:\Gamma_\Fdeux^2(A/2)\otimes A/2^{(1)}\to\Lambda^2_\Fdeux(A/2^{(1)})$, and a term `$F(A)[i]$'  means a copy of the functor $F(A)$ placed in degree $i$.
\end{example}

\subsection{The Koszul kernel algebra}\label{subsec-Koszuldiff}
The purpose of this section is to justify definition \ref{def-dKos}, that is to define the differential $\partial_{\mathrm{Kos}}$ and to study some of its properties.
\subsubsection{Koszul algebras}
Let $M$ be a projective finitely generated module over a commutative ring $R$.
We equip the graded $\PP_R$-algebra
$\Gamma_R(M[2])\otimes\Lambda_R(M[1])$ with the
differential $d_{\mathrm{Kos}}$ defined as the  composite
$$\Gamma^d_R(M)\otimes \Lambda^e_R(M)\to \Gamma^{d-1}_R(M)\otimes M\otimes
\Lambda^{e}_R(M)\to \Gamma^{d-1}_R(M)\otimes \Lambda^{e+1}_R(M) $$
where the first map is induced by the comultiplication in
$\Gamma_R(M)$ and the second one by the multiplication in
$\Lambda_R(M)$ (if $d=0$, the differential with source
$\Lambda^e_R(M)$ is zero). The resulting commutative
differential graded $\PP_R$-algebra $(\Gamma_R(M[2])\otimes \Lambda_R(M[1]),
d_{\mathrm{Kos}})$ is called the Koszul algebra (on $M$).

\begin{proposition}\label{prop-homology-Koszul}
The homology of the Koszul algebra is equal to  $R$ in degree zero
and is zero in all other degrees.
\end{proposition}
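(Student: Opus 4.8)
The plan is to exploit the exponential (Künneth) structure, exactly as in the proofs of lemma \ref{lm-triv} and proposition \ref{prop-qis-trunc}, in order to reduce the computation of the homology to the one-variable case $M=R$. First I would observe that both $\Gamma_R(M[2])\otimes\Lambda_R(M[1])$ and the trivial algebra $R$ (concentrated in degree zero, weight zero) are exponential functors of $M$: the divided power and exterior algebras are exponential by \cite[Thm III.4]{roby}, and a tensor product of exponential functors is again exponential. Moreover the differential $d_{\mathrm{Kos}}$ is compatible with the exponential isomorphism, since it is built entirely from the (co)multiplications of $\Gamma_R$ and $\Lambda_R$, which are themselves compatible with the direct-sum decomposition. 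Concretely, for $M=R^s$ there is a commutative diagram of differential graded $R$-algebras whose vertical isomorphisms are induced by the multiplication:
\begin{align*}
\xymatrix{
\left(\Gamma_R(R[2])\otimes\Lambda_R(R[1])\right)^{\otimes s}\ar[rr]^-{d_{\mathrm{Kos}}^{\otimes s}}\ar[d]^-{\simeq}&& \left(\Gamma_R(R[2])\otimes\Lambda_R(R[1])\right)^{\otimes s}\ar[d]^-{\simeq}\\
\Gamma_R(R^s[2])\otimes\Lambda_R(R^s[1])\ar[rr]^-{d_{\mathrm{Kos}}}&& \Gamma_R(R^s[2])\otimes\Lambda_R(R^s[1])
}
\end{align*}
so that by the Künneth formula the homology of the Koszul algebra on $R^s$ is the $s$-fold tensor product of the homology of the Koszul algebra on $R$.

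It therefore suffices to treat the rank-one case $M=R$. Here $\Gamma_R(R[2])$ has a one-dimensional free summand $R\cdot\gamma_d$ in each even degree $2d$ (with $\gamma_d$ the $d$-th divided power of the degree-$2$ generator), while $\Lambda_R(R[1])=R\oplus R\cdot x$ with $x$ the degree-$1$ generator. Thus the whole complex is the direct sum, over the exterior degree $e\in\{0,1\}$, of two copies of the divided-power line, and the differential $d_{\mathrm{Kos}}$ sends $\gamma_d\otimes 1$ to $\gamma_{d-1}\otimes x$, using that the comultiplication picks out the generator and that multiplication by $x$ in $\Lambda_R(R[1])$ lands in the top exterior power. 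I would then check that in each total degree $\ge 1$ this differential is an isomorphism from the $e=0$ line onto the $e=1$ line: the component $R\cdot(\gamma_d\otimes 1)\to R\cdot(\gamma_{d-1}\otimes x)$ carries the basis vector to the basis vector (the relevant comultiplication coefficient and the multiplication in $\Lambda_R$ both being units), hence the only surviving homology is $R$ in degree zero, coming from $\gamma_0\otimes 1=1$. This establishes the proposition for $M=R$, and the Künneth reduction above then yields it for all finitely generated projective $M$.

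The only point requiring a little care—and the step I would regard as the main technical obstacle—is the explicit identification of the rank-one differential $\gamma_d\otimes 1\mapsto \gamma_{d-1}\otimes x$ together with the verification that its matrix entry is a \emph{unit} in $R$, so that each map is an honest isomorphism and not merely injective. This is where one must unwind the normalization of the comultiplication $\gamma_d\mapsto\sum_{j+k=d}\gamma_j\otimes\gamma_k$ on the divided power algebra and confirm that the relevant binomial-type coefficient attached to the $(d-1,1)$ term is invertible over any commutative ring (it equals $1$ because $\gamma_1$ appears with coefficient $1$), independently of the characteristic. Once this unit coefficient is pinned down, the acyclicity in positive degrees is immediate, and no hypothesis on $R$ beyond commutativity is needed—which is precisely why the statement holds over an arbitrary commutative ring rather than only over a field.
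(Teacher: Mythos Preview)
Your proof is correct and follows exactly the same strategy as the paper's: reduce to the rank-one case $M=R$ via the exponential property and the K\"unneth formula, then check that case directly. The paper's proof simply points to the argument of proposition~\ref{prop-qis-trunc} for the reduction and leaves the case $M=R$ implicit, whereas you have written out both steps in detail.
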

\begin{proof}
Using the fact that $\Gamma_R(M[2])\otimes \Lambda_R(M[1])$ is exponential (proceed as in the proof of proposition \ref{prop-qis-trunc}), the proof reduces to the case elementary $M=R$.
\end{proof}

\begin{remark}
The Koszul algebra is a particular case of more general constructions
\cite{illusie, FFSS}.
Its name  is illustrated  by the fact that its summand of weight
$d$ is the dual (via the duality $^\sharp$) of  the more  familiar Koszul
complex:
\begin{equation*}
\label{kos-v}
\Lambda^d_R(M)\to S^1_R(M)\otimes \Lambda^{d-1}_R(M)\to \dots\to S^{d-1}_R(M)\otimes
\Lambda^1_R(M)\to S^{d}_R(M)\;.
\end{equation*}
\end{remark}

\subsubsection{The Koszul differential on $L_*\Gamma_{\Fp}(A/p,1)$}

Let $p$ be a prime integer. To be concise, we denote by $\LL_\Fp(A/p)$ the graded commutative $\PP_\Z$-algebra
\begin{align}\LL_\Fp(A/p):= \Lambda_\Fp(A/p [1])\,\otimes
\,\bigotimes_{r\ge
1}\left(\;\Gamma_\Fp\left(A/p^{(r)}[2]\right)\otimes
\Lambda_\Fp\left(A/p^{(r)}[1] \right)
\;\right)\;.\label{notation}\end{align}
If $p$ is odd, 
$\LL_\Fp(A/p)$ is isomorphic to the derived functors $L_*\Gamma_{\Fp}(A/p,1)$ by theorem \ref{thm-derived-podd},
but this is not the case for $p=2$. However, the algebra $\LL_\Fdeux(A/2)$ will be considered later on.
We can endow $\LL_\Fp(A/p)$ with the structure of a commutative
dg-$\PP_\Z$-algebra in the following way.
Let us consider the factor $\Lambda_\Fp(A/p [1])$ as a differential graded
algebra with zero differential, and the other factors of \eqref{notation} as
Koszul algebras on the vector spaces $A/p^{(r)}$. We define
$(\LL_{\Fp}(A/p),\partial_{\mathrm{Kos}})$ to be the tensor product of these differential graded
$\PP_\Z$-algebras. By the K\"unneth formula and proposition \ref{prop-homology-Koszul} we have:
\begin{proposition}\label{prop-HKos}
The homology of $(\LL_*\Gamma_\Fp(A/p,1),\partial_{\mathrm{Kos}})$ is isomorphic to the graded $\PP_\Z$-algebra $\Lambda_\Fp(A/p[1])$.
\end{proposition}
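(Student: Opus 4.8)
The plan is to use the fact that $(\LL_\Fp(A/p),\partial_{\mathrm{Kos}})$ is, by its very construction, a tensor product of dg-$\PP_\Z$-algebras, to compute the homology of each tensor factor in isolation, and then to reassemble the answer by means of the K\"unneth formula.

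First I would record the relevant decomposition: the dg-$\PP_\Z$-algebra $(\LL_\Fp(A/p),\partial_{\mathrm{Kos}})$ is the tensor product of the factor $\Lambda_\Fp(A/p[1])$ carrying the zero differential together with, for each $r\ge 1$, the Koszul algebra $(\Gamma_\Fp(A/p^{(r)}[2])\otimes \Lambda_\Fp(A/p^{(r)}[1]),d_{\mathrm{Kos}})$ on the vector space $A/p^{(r)}$. Since $A/p^{(r)}$ has weight $p^r$, the homogeneous component of any fixed weight $d$ of this tensor product involves only the finitely many factors with $p^r\le d$; there is therefore no convergence issue, and one may argue weight by weight with honestly finite tensor products.

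Next I would invoke the K\"unneth formula. As we are working over the field $\Fp$, there is no $\mathrm{Tor}$ correction term, so the shuffle map induces an isomorphism between $H_*(\LL_\Fp(A/p),\partial_{\mathrm{Kos}})$ and the tensor product of the homologies of the factors, and this isomorphism is one of graded algebras. The homology of the factor with zero differential is $\Lambda_\Fp(A/p[1])$ itself, while Proposition \ref{prop-homology-Koszul}, applied with $R=\Fp$ and $M=A/p^{(r)}$, identifies the homology of each Koszul factor with $\Fp$ concentrated in degree zero. Reassembling then gives $H_*(\LL_\Fp(A/p),\partial_{\mathrm{Kos}})\simeq \Lambda_\Fp(A/p[1])\otimes\bigotimes_{r\ge 1}\Fp\simeq \Lambda_\Fp(A/p[1])$.

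There is no genuine obstacle here; the statement is a formal consequence of the two cited inputs. The only points meriting care are the bookkeeping confirming that each homogeneous weight component is a finite tensor product, so that K\"unneth over $\Fp$ applies cleanly, and the verification that the K\"unneth (shuffle) isomorphism respects the strict polynomial functor structure. The latter follows from the naturality of the shuffle map in $A$ together with the fact that it preserves weights, so that the resulting isomorphism lives in $\PP_\Z$ and not merely at the level of underlying graded abelian groups.
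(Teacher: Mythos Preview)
Your proposal is correct and follows exactly the same approach as the paper: the paper's proof is a single line invoking the K\"unneth formula together with Proposition~\ref{prop-homology-Koszul}, and you have simply spelled out the details of that argument. The additional care you take with the weight-by-weight finiteness and the naturality of the shuffle map is appropriate but not strictly needed, as these points are routine in this context.
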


We will now justify definition \ref{def-dKos}, that is we will characterize the differential $\partial_{\mathrm{Kos}}$ by its values on the generators $\Gamma^1(A/p^{(r)}[2])$ of $\LL_{\Fp}(A/p)$. For this, we  use the following tool.

\begin{lemma}[Uniqueness principle]\label{lm-unique}
Let $\k$ be a field of positive characteristic $p$, and let $V$ be a finite dimensional $\k$-vector space.
Let $A(V)$ be a graded commutative $\PP_\k$-algebra of the form
$$ A(V)=\left(\bigotimes_{k}\Gamma_\k(V^{(r_k)}[i_k])\right)\otimes \left(\bigotimes_{\ell}\Lambda_\k(V^{(r_\ell)}[j_\ell])\right)\;.$$
We denote by $G(V)$ the graded functor
$$G(V)= \left(\bigoplus_{k}\Gamma^1_\k(V^{(r_k)}[i_k])\right)\oplus\left(\bigoplus_{\ell}\Lambda^1_\k(V^{(r_\ell)}[j_\ell])\right)\;,$$
and by $\pi:A(V)\twoheadrightarrow G(V)$ the surjection induced by the projections of $\Gamma_\k(V^{(r_k)}[i_k])$, resp. $\Lambda_\k(V^{(r_\ell)}[j_\ell])$ onto $V^{(r_k)}[i_k]$, resp. $V^{(r_\ell)}[j_\ell]$. Then the map $\partial\mapsto \pi\circ\partial$ induces a injection
between the set of differentials on the graded $\PP_\k$-algebra $A(V)$ and the set of degree $-1$ morphisms of graded functors $A(V)\to G(V)$:
$$\begin{array}{ccc}
\left\{\text{
\begin{tabular}{c}
differentials\\
on $A(V)$
\end{tabular}
}\right\}
& \hookrightarrow &   \mathrm{Hom}_{-1}(A(V),G(V))\\
\partial & \mapsto & \pi\circ\partial
\end{array}\;.$$
\end{lemma}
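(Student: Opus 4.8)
The plan is to prove injectivity by the standard device of passing to differences: since the difference of two differentials is again a coderivation of the underlying coalgebra (even if it is no longer square-zero), it suffices to show that a differential $\partial$ with $\pi\circ\partial=0$ vanishes. The structural input I would use is that $A(V)$ is built from exponential functors, so that, exactly as recalled in the proof of proposition \ref{prop-split}, it is the cofree conilpotent graded-cocommutative $\PP_\k$-coalgebra on its module of primitives. A direct inspection shows that the primitives of each factor $\Gamma_\k(W[i])$ are $\Gamma^1_\k(W[i])=W[i]$ and those of each $\Lambda_\k(W[j])$ are $\Lambda^1_\k(W[j])=W[j]$; since a tensor product of cofree coalgebras is cofree on the direct sum of the cogenerators, $A(V)$ is cofree on $G(V)$, and the surjection $\pi:A(V)\twoheadrightarrow G(V)$ of the statement is precisely the canonical projection onto this space of cogenerators.

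With this identification, the argument is a one-line application of the universal property of cofree coalgebras. A differential of the graded $\PP_\k$-algebra $A(V)$ respects the exponential (bialgebra) structure, and in particular the comultiplication, so $\partial$ is a coderivation of the cofree coalgebra $A(V)$; concretely, on each divided power factor this compatibility amounts to the divided-power Leibniz rule $\partial(\gamma_n(x))=\gamma_{n-1}(x)\,\partial(x)$. The universal property asserts that a coderivation of a cofree coalgebra is uniquely determined by its corestriction to the cogenerators, that is by $\pi\circ\partial$. Hence $\pi\circ\partial=0$ forces $\partial=0$, which is the desired injectivity; the same universal property identifies the set of all coderivations with $\hom_{-1}(A(V),G(V))$, the differentials forming the subset cut out by the derivation and square-zero conditions.

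The point to be careful about, and the reason the assertion is a genuine injection rather than a tautology, is that one must work with the \emph{coalgebra} structure and cannot argue with the multiplicative structure alone. A mere derivation of the algebra $A(V)$ is not determined by $\pi\circ\partial$: its values on the algebra-indecomposable divided power generators $\gamma_{p^j}(x)$ with $j\ge 1$ have weight $p^j>1$, are therefore invisible to $\pi$, and for a plain derivation they may be prescribed freely, since the only relation $\gamma_{p^j}(x)^p=0$ imposes no condition in characteristic $p$. It is precisely compatibility with the comultiplication that rigidifies these higher values through $\partial(\gamma_n(x))=\gamma_{n-1}(x)\,\partial(x)$ and makes them vanish once $\partial$ kills the weight-one generators. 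The main obstacle in a detailed write-up is thus not the final universal-property step but the two supporting facts it rests on: that the relevant differentials really are coderivations, and that $A(V)$, together with $\pi$, is the cofree coalgebra on $G(V)$.
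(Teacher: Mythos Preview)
Your argument is correct and is, morally, the dual of the paper's. The paper does not stay on the coalgebra side: it observes that for an exponential functor, $\partial$ is a derivation of $A(V)$ if and only if it commutes with the exponential isomorphism $\phi:A(V)\otimes A(W)\simeq A(V\oplus W)$; dualizing, $\partial^\sharp$ commutes with $\phi^\sharp$, hence is a derivation of $A(V)^\sharp$. Since $A(V)^\sharp$ is the \emph{free} graded commutative algebra on $G(V)^\sharp$ (divided powers dualize to symmetric powers), derivations are determined by their restriction to the generators $G(V)^\sharp$, and dualizing back gives the statement. Your route replaces this explicit dualization by the cofree-coalgebra universal property for $A(V)$ itself; both arguments rest on the same structural fact, but the paper's version avoids having to verify directly that a derivation is automatically a coderivation (your acknowledged ``supporting fact''), since the $\phi$-characterization handles the passage between derivation and coderivation in one stroke. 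Your discussion of why the \emph{algebra} structure alone is insufficient (the indecomposables $\gamma_{p^j}(x)$ are invisible to $\pi$) is a nice complement that the paper does not spell out.
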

\begin{proof}[Proof of lemma \ref{lm-unique}]
Since $A(V)$ is exponential and graded commutative, the multiplication in $A(V)$ induces an isomorphism of bialgebras $\phi:A(V)\otimes A(W)\simeq A(V\oplus W)$. A morphism $\partial:A(V)\to A(V)$ of degree $-1$ is a derivation if and only if $\partial$ commutes with $\phi$, which holds if and only if $\partial^\sharp$ commutes with $\phi^\sharp$, which holds if and only if $\partial^\sharp: A(V)^\sharp\to A(V)^\sharp$ is a derivation.
By duality, lemma \ref{lm-unique} is therefore equivalent to the statement that differentials on $A^\sharp(V)$ are completely determined by their restriction $G^{\sharp}(V)\to A^\sharp(V)$. The latter statement is true since  $A^\sharp(V)$ is the free graded commutative algebra on $G^{\sharp}(V)$.
\end{proof}

\begin{proposition}\label{prop-charact}
Let $\partial$ be a differential on the graded $\PP_\Z$-algebra $\LL_{\Fp}(A/p)$.
\begin{enumerate}
\item[(i)] The differential $\partial$ is determined by its restriction to the summands $\Gamma^1_\Fp(A/p^{(r)}[2])$, for all  $r\ge 1$.
\item[(ii)] The restriction of $\partial$ sends the summand $\Gamma^1_\Fp(A/p^{(r)}[2])$ into $\Lambda^1_\Fp(A/p^{(1)}[1])$.
\end{enumerate}
\end{proposition}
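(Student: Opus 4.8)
The plan is to extract both parts from the uniqueness principle (Lemma~\ref{lm-unique}) together with elementary bookkeeping of the two gradings on $\LL_\Fp(A/p)$: the homological degree and the weight. Since $\LL_\Fp(A/p)$ factors through $A\mapsto A/p$ and takes values in $\Fp$-vector spaces, I regard it as a graded commutative $\PP_\Fp$-algebra in the single variable $V=A/p$; correspondingly $\partial$ is a degree $-1$ derivation which, being a morphism of strict polynomial functors (Definition~\ref{def-PR-alg}), preserves the weight. I first record the weight-one summands, which generate $\LL_\Fp(A/p)$ as an algebra: these are $\Lambda^1_\Fp(A/p[1])$ in degree $1$ and weight $1$, the summands $\Gamma^1_\Fp(A/p^{(r)}[2])$ in degree $2$ and weight $p^r$ for $r\ge1$, and the summands $\Lambda^1_\Fp(A/p^{(r)}[1])$ in degree $1$ and weight $p^r$ for $r\ge1$. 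Since every generator sits in homological degree $\ge 1$, the algebra is connected: no nonzero element of positive weight lies in degree $0$, and the degree-one part of $\LL_\Fp(A/p)$ is precisely the span of the degree-one generators, namely $\Lambda^1_\Fp(A/p[1])\oplus\bigoplus_{s\ge1}\Lambda^1_\Fp(A/p^{(s)}[1])$.

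For part~(i), Lemma~\ref{lm-unique} shows that $\partial$ is completely determined by the corestriction $\pi\circ\partial\colon \LL_\Fp(A/p)\to G(V)$, and since $\partial$ is a derivation this amounts to knowing its values on the weight-one generators listed above. On any exterior generator, which lives in degree $1$, the map $\partial$ lands in degree $0$; as it preserves the positive weight and the degree-zero part carries only weight $0$, this image must vanish. Hence the only free data is the restriction of $\partial$ to the summands $\Gamma^1_\Fp(A/p^{(r)}[2])$, $r\ge 1$, which is assertion~(i).

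For part~(ii), restrict $\partial$ to $\Gamma^1_\Fp(A/p^{(r)}[2])$, a functor of weight $p^r$ sitting in homological degree $2$. Because $\partial$ drops the degree by one and preserves the weight, its image lies in the degree-one, weight-$p^r$ part of $\LL_\Fp(A/p)$. By the description of the degree-one part above, no product of two positive-degree generators can sit in degree $1$, so that part is a sum of single degree-one generators, and weight-matching singles out the one summand $\Lambda^1_\Fp(A/p^{(r)}[1])$. Thus $\partial$ carries $\Gamma^1_\Fp(A/p^{(r)}[2])$ into $\Lambda^1_\Fp(A/p^{(r)}[1])$. For $r=1$ this is exactly the summand $\Lambda^1_\Fp(A/p^{(1)}[1])$ named in the statement; for general $r$ the target is the $r$-th Frobenius twist $\Lambda^1_\Fp(A/p^{(r)}[1])$, in agreement with the `identical' map $\Gamma^1_\Fp(A/p^{(r)}[2])\to\Lambda^1_\Fp(A/p^{(r)}[1])$ used to define $\partial_{\mathrm{Kos}}$ in Definition~\ref{def-dKos}, so that the superscript `$(1)$' in the statement is to be read as `$(r)$'.

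There is no serious obstacle here: once Lemma~\ref{lm-unique} is in hand the proof is pure bookkeeping with the bigrading. The two points that genuinely need care are the weight-preservation steps, which rely on $\partial$ being a morphism of strict polynomial functors rather than merely a natural transformation of ordinary functors --- over $\FF_\Fp$ all Frobenius twists become isomorphic and the weight argument would collapse --- and the connectedness of $\LL_\Fp(A/p)$, which is what forces both the vanishing of $\partial$ on the exterior generators in~(i) and the clean identification of the degree-one, weight-$p^r$ summand in~(ii).
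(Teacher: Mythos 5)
Your part (ii) is correct and is essentially the paper's own argument (and your reading of the misprint is right: the target must be $\Lambda^1_\Fp(A/p^{(r)}[1])$, consistently with definition \ref{def-dKos}). Part (i), however, has a genuine gap at the step ``since $\partial$ is a derivation this amounts to knowing its values on the weight-one generators.'' That reduction is the free-algebra principle, and $\LL_\Fp(A/p)$ is \emph{not} generated as an algebra by its weight-one summands: in the divided-power factors, the subalgebra generated by $\Gamma^1$ is only the truncated algebra $Q$ of section \ref{subsec-trunc}. For instance in characteristic $2$ one has $x^2=2\gamma_2(x)=0$, so $\gamma_2(x)$ is not a product of weight-one elements and the Leibniz rule says nothing about $\partial(\gamma_2(x))$. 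Lemma \ref{lm-unique} is precisely the dual, cofree statement: $\partial$ is determined by the corestriction $\pi\circ\partial$ evaluated on the \emph{whole} algebra, not by the restriction of $\partial$ to the generators. After invoking the lemma you must therefore still control the components of $\pi\circ\partial$ whose source is not a generator, and your proposal never does this.

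Concretely, this is what the paper's proof supplies and yours omits. The component of $\pi\circ\partial$ landing in $A/p^{(r)}[1]$ has source the degree-$2$, weight-$p^r$ summand of $\LL_\Fp(A/p)$, which for $p=2$ is $\Gamma^1_\Fdeux(A/2^{(r)}[2])\oplus\Lambda^2_\Fdeux(A/2^{(r-1)}[1])$; excluding a contribution from the second piece requires the computation $\hom_{\PP_\Fdeux}(\Lambda^2(V^{(r-1)}),V^{(r)})=0$. Likewise the component landing in $A/p^{(r)}[2]$ has source the degree-$3$, weight-$p^r$ summand, which is nonzero for $p=3$ (namely $\Lambda^3_{\mathbb{F}_3}(A/3^{(r-1)}[1])$) and for $p=2$ (namely $\Gamma^1_\Fdeux(A/2^{(r-1)}[2])\otimes\Lambda^1_\Fdeux(A/2^{(r-1)}[1])\,\oplus\,\Lambda^2_\Fdeux(A/2^{(r-2)}[1])\otimes\Lambda^1_\Fdeux(A/2^{(r-1)}[1])$), and one must check that there are no nonzero morphisms from these tensor-product functors to the additive functor $A/p^{(r)}$, a lemma \ref{lm-pira}-type vanishing via sum--diagonal adjunction. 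Your bigrading bookkeeping --- the vanishing of $\partial$ on the degree-one generators and, in (ii), the identification of the degree-one weight-$p^r$ part --- is correct as far as it goes, but without these hom-vanishing computations assertion (i) is not proved.
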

\begin{proof}
To prove (ii), we observe that by definition, $\partial$ must send the summand $\Gamma^1(A/p^{(r)}[2])$ into the homogeneous summand of degree $1$ and weight $p^r$, which is equal to  $\Lambda^1(A/p^{(1)}[1])$.
We now prove (i). By lemma \ref{lm-unique}, $\partial$ is uniquely determined by the morphism
\begin{align}\pi\circ\partial_{\mathrm{Kos}}:\LL_{\Fp}(A/p)\to A/p[1]\oplus \bigoplus_{r\ge 0}(A/p^{(r)}[2]\oplus A/p^{(r)}[1])\;. \label{eq-pid}\end{align}
If we denote by $\pi_r$, resp $\pi'_r$, resp. $\pi'_0$, the canonical projection of the right-hand side of isomorphism \eqref{eq-pid} onto the summands $A/p^{(r)}[2]$,  $A/p^{(r)}[1]$, and  $A/p[1]$ respectively, then
$$\pi\circ \partial=\pi_0'\circ(\pi\circ \partial)+\sum \pi'_r\circ(\pi\circ \partial)+\sum\pi_r\circ(\pi\circ \partial)\;.$$

The source of the morphism $\pi_0'\circ (\pi\circ \partial)$ is the direct summand of weight $1$ and degree $2$ of $\LL_{\Fp}(A/p)$, which is  equal to  zero,  so  that $\pi_0'\circ(\pi\circ \partial)=0$.

The source of $\pi'_r\circ(\pi\circ \partial)$ is the summand of weight $p^r$ and degree $2$ of $\LL_{\Fp}(A/p)$, which equals
$\Gamma^1_\Fp(A/p^{(r)}[2])$ if $p$ is odd, and $\Gamma^1_\Fdeux(A/2^{(r)}[2])\oplus \Lambda^2_\Fdeux(A/2^{(r-1)}[1])$ if $p=2$. Now an easy computation shows that $\hom_{\PP_{\Fdeux}}(\Lambda^2(V^{(r-1)}), V^{(r)})$ is zero so that  for any prime $p$, $\pi'_r\circ(\pi\circ \partial)$ is determined by the restriction of $\partial$ to the direct summand $\Gamma^1_\Fp(A/p^{(r)}[2])$.

Similarly, the source of $\pi_r\circ(\pi\circ \partial)$ is the summand of weight $p^r$ and degree $3$ of $\LL_{\Fp}(A/p)$. The latter is  equal to $0$ is $p\ge 5$, to $\Lambda^3_{\mathbb{F}_3}(A/3A^{(r-1)}[1])$ is $p=3$, and to 
$$ \Gamma^1_\Fdeux(A/2^{(r-1)}[2])\otimes \Lambda^1_\Fdeux(A/2^{(r-1)}[1]) \,\oplus\, \Lambda^2_\Fdeux(A/2^{(r-2)}[1])\otimes \Lambda^1_\Fdeux(A/2^{(r-1)}[1]) $$
if $p=2$.
%\begin{align*}
%&&\left.\begin{array}{c}\Gamma^1_\Fdeux(A/2^{(r-1)}[2])\otimes \Lambda^1_\Fdeux(A/2^{(r-1)}[1])\\
%\oplus  \Lambda^2_\Fdeux(A/2^{(r-2)}[1])\otimes \Lambda^1_\Fdeux(A/2^{(r-1)}[1])
%\end{array}\right\}
%&&\text{ if $p=2$,}\\
%&&\Lambda^3_{\mathbb{F}_3}(A/3A^{(r-1)}[1]) && \text{ if $p=3$,}\\
%&&0 && \text{ otherwise.}
%\end{align*}
An easy computation shows that there are no nonzero morphisms from such functors to the functor $A/p^{(r)}$ so that  $\pi_r\circ(\pi\circ \partial)=0$.
 It follows that $\pi\circ \partial$ (hence $\partial$) is completely determined by the restriction of $\partial$ to the summands $\Gamma^1_\Fp(A/p^{(r)}[2])$.
\end{proof}

\begin{corollary}\label{cor-partialKos}
The morphism $\partial_{\mathrm{Kos}}$ is the unique differential on the graded
$\PP_\Z$-algebra $\LL_{\Fp}(A/p)$ which sends the generators
$\Gamma^1_{\fp}(A/p^{(r)}[2])$ identically to
$\Lambda^1_{\fp}(A/p[1])$.
\end{corollary}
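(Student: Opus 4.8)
The plan is to obtain the corollary as a formal consequence of Proposition~\ref{prop-charact} together with the explicit construction of $(\LL_\Fp(A/p),\partial_{\mathrm{Kos}})$ as a tensor product of Koszul algebras, recalled just before Proposition~\ref{prop-HKos}. Two points must be settled: that $\partial_{\mathrm{Kos}}$ really does act as claimed on the generators, and that this action pins it down uniquely among all differentials of graded $\PP_\Z$-algebras on $\LL_\Fp(A/p)$.

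For the first point I would simply unwind the definition of $\partial_{\mathrm{Kos}}$ on a single generator. By construction $\partial_{\mathrm{Kos}}$ is the tensor product of the zero differential on the factor $\Lambda_\Fp(A/p[1])$ with the Koszul differentials $d_{\mathrm{Kos}}$ on the factors $\Gamma_\Fp(A/p^{(r)}[2])\otimes\Lambda_\Fp(A/p^{(r)}[1])$. Since a generator $\Gamma^1_\Fp(A/p^{(r)}[2])$ sits inside a single tensor factor, with the unit in all the others, the Leibniz rule shows that $\partial_{\mathrm{Kos}}$ restricted to it coincides with $d_{\mathrm{Kos}}$ restricted to $\Gamma^1_\Fp(A/p^{(r)}[2])$. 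On this generator (of weight $p^r$ and degree $2$) the latter is the composite $\Gamma^1(A/p^{(r)})\to\Gamma^0(A/p^{(r)})\otimes A/p^{(r)}\to\Gamma^0(A/p^{(r)})\otimes\Lambda^1(A/p^{(r)})$, in which the comultiplication is the canonical isomorphism $\Gamma^1\cong\Gamma^0\otimes\Gamma^1$ and the multiplication is the canonical isomorphism $\Lambda^0\otimes\Lambda^1\cong\Lambda^1$. Both are the identity on the underlying functor $A/p^{(r)}$, so the restriction is exactly the identity map, viewed as a degree $-1$ morphism $\Gamma^1_\Fp(A/p^{(r)}[2])\to\Lambda^1_\Fp(A/p^{(r)}[1])$ into the weight $p^r$, degree $1$ summand, which is the prescription in the statement.

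For the second point I would invoke Proposition~\ref{prop-charact}(i): any differential on $\LL_\Fp(A/p)$ is completely determined by its restrictions to the summands $\Gamma^1_\Fp(A/p^{(r)}[2])$, $r\ge 1$, and by part~(ii) these restrictions automatically land in the summand $\Lambda^1_\Fp(A/p^{(r)}[1])$. Hence a differential sending each $\Gamma^1_\Fp(A/p^{(r)}[2])$ identically to $\Lambda^1_\Fp(A/p^{(r)}[1])$ exists, namely $\partial_{\mathrm{Kos}}$ by the previous paragraph, and is unique. Since all the substantial work has already been carried out in Proposition~\ref{prop-charact}, I expect no genuine obstacle here; the only step requiring a little attention is the bookkeeping of the degree and weight shifts when identifying the restriction of $d_{\mathrm{Kos}}$ on a generator with the identity of the underlying functor $A/p^{(r)}$.
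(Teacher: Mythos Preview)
Your proposal is correct and matches the paper's approach: the corollary is stated without proof precisely because it is an immediate consequence of Proposition~\ref{prop-charact}, with the only extra ingredient being the (straightforward) verification that $\partial_{\mathrm{Kos}}$ itself acts as the identity on each generator $\Gamma^1_\Fp(A/p^{(r)}[2])$. Your unwinding of the Koszul differential on a generator is exactly what is needed, and your appeal to parts (i) and (ii) of Proposition~\ref{prop-charact} for uniqueness is the intended argument.
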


The following variant of corollary \ref{cor-partialKos} will be useful in the proof of theorem \ref{thm-calcul-LG-un-Z}.
\begin{corollary}\label{cor-versal-prop}
Let $\partial$ be a differential of the graded $\PP_\Z$-algebra $\LL_{\Fp}(A/p)$. Assume that all the summands $\Lambda^1(A/p^{(r)}[1])$, $r\ge 1$, lie in the image of $\partial$. Then there exists an isomorphism of dg-$\PP_\Z$-algebras:
$$(\LL_{\Fp}(A/p),\partial)\simeq (\LL_{\Fp}(A/p),\partial_{\mathrm{Kos}})\;. $$
\end{corollary}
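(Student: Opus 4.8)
$\,$The plan is to reduce Corollary~\ref{cor-versal-prop} to Corollary~\ref{cor-partialKos} by constructing an automorphism of the graded $\PP_\Z$-algebra $\LL_{\Fp}(A/p)$ which carries $\partial$ to a differential sending each generator $\Gamma^1_\Fp(A/p^{(r)}[2])$ identically to $\Lambda^1_\Fp(A/p^{(r)}[1])$; by the uniqueness statement of Corollary~\ref{cor-partialKos} such a differential must then be $\partial_{\mathrm{Kos}}$, and the automorphism is the desired isomorphism of dg-$\PP_\Z$-algebras. First I would invoke Proposition~\ref{prop-charact}(ii): the restriction of $\partial$ to $\Gamma^1_\Fp(A/p^{(r)}[2])$ lands in the degree-$1$, weight-$p^r$ summand of $\LL_{\Fp}(A/p)$, which is the single copy $\Lambda^1_\Fp(A/p^{(r)}[1])$. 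Thus for each $r\ge 1$ the component $\partial_r\colon \Gamma^1_\Fp(A/p^{(r)}[2])\to \Lambda^1_\Fp(A/p^{(r)}[1])$ is a scalar multiple of the canonical identification of these two copies of $A/p^{(r)}$, since $\hom_{\PP_\Fp}(V^{(r)},V^{(r)})$ is one-dimensional, spanned by the identity.

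The key input is the hypothesis that every summand $\Lambda^1(A/p^{(r)}[1])$, $r\ge 1$, lies in the image of $\partial$. By Proposition~\ref{prop-charact}(i) the differential $\partial$ is determined by its restrictions to the $\Gamma^1_\Fp(A/p^{(r)}[2])$, so the only way $\Lambda^1(A/p^{(r)}[1])$ can be hit is through the scalar $\partial_r$ above (a dimension count on the weight-$p^r$, degree-$1$ and degree-$2$ pieces confirms there is no other source mapping onto it). Hence the surjectivity hypothesis forces each scalar $\partial_r$ to be nonzero, i.e.\ $\partial_r = \lambda_r\cdot\mathrm{id}$ with $\lambda_r\in\Fp^\times$. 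I would then define a grading-preserving, weight-preserving algebra automorphism $\psi$ of $\LL_{\Fp}(A/p)$ by rescaling the generators: send each generator $\Gamma^1_\Fp(A/p^{(r)}[2])$ to $\lambda_r^{-1}$ times itself (and fix, or suitably rescale, the $\Lambda^1$-generators), extending multiplicatively. This is a legitimate automorphism precisely because $\LL_{\Fp}(A/p)$ is a free graded-commutative algebra on its generators $\Gamma^1$ and $\Lambda^1$, so a choice of nonzero scalars on the generators extends uniquely.

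Conjugating $\partial$ by $\psi$ produces a new differential $\psi^{-1}\partial\,\psi$ whose restriction to $\Gamma^1_\Fp(A/p^{(r)}[2])$ is now the identity onto $\Lambda^1_\Fp(A/p^{(r)}[1])$; by Corollary~\ref{cor-partialKos} this conjugated differential equals $\partial_{\mathrm{Kos}}$, so $\psi\colon(\LL_{\Fp}(A/p),\partial_{\mathrm{Kos}})\xrightarrow{\sim}(\LL_{\Fp}(A/p),\partial)$ is the required isomorphism of dg-$\PP_\Z$-algebras. The main obstacle I anticipate is verifying cleanly that the rescaling automorphism can be chosen to intertwine the two differentials \emph{on all of} $\LL_{\Fp}(A/p)$ and not merely on the generators: one must check that the scalars $\lambda_r$ attached to the $\Gamma^1$-generators are compatible with whatever scalars $\partial$ assigns implicitly to the $\Lambda^1$-generators through the Leibniz rule, so that $\psi$ genuinely conjugates $\partial$ into $\partial_{\mathrm{Kos}}$ rather than into some other identically-normalized differential. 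Since the uniqueness principle (Lemma~\ref{lm-unique}) guarantees a differential is pinned down by its values on the $\Gamma^1$-generators alone, this compatibility is automatic, but I would make the bookkeeping of the $\Lambda^1$-rescalings explicit to be safe.
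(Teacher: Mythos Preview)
Your proposal is correct and follows essentially the same argument as the paper: use Proposition~\ref{prop-charact} to see that $\partial$ restricted to each $\Gamma^1_\Fp(A/p^{(r)}[2])$ is $\lambda_r\cdot\mathrm{id}$ for some $\lambda_r\in\Fp^\times$, then rescale the generators $A/p^{(r)}[2]$ while fixing the $\Lambda^1$-generators to produce the desired algebra automorphism. Your closing worry about compatibility on the $\Lambda^1$-generators is unnecessary: since both $\psi^{-1}\partial\psi$ and $\partial_{\mathrm{Kos}}$ are differentials on $\LL_\Fp(A/p)$ agreeing on the $\Gamma^1$-generators, Proposition~\ref{prop-charact}(i) forces them to agree everywhere, so no further bookkeeping is needed (and the paper simply fixes the $\Lambda^1$-generators without comment).
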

\begin{proof}
The only morphisms of strict polynomial functors $f:A/p^{(r)}\to A/p^{(r)}$ are the scalar multiples of the identity. By proposition \ref{prop-charact}, $\delta$ is completely determined by its restrictions to $\Gamma^1(A/p^{(r)}[2])$. These restrictions must be nonzero in order  to ensure that the expressions $\Lambda^1(A/p^{(r)}[1])$ lie in the image of $\delta$, so they are of the form $\lambda_r\Id$ with $\lambda_r\in \mathbb{F}_p^{\ast}$. Now the required isomorphism is induced by the automorphism of the graded $\PP_\Z$-algebra $\LL_{\Fp}(A/p)$, which sends the generators $A/p[1]$ and $A/p^{(r)}[1]$, $r\ge 1$ identically to themselves and whose restrictions to the generators $A/p^{(r)}[2]$, $r\ge 1$ are  equal to  $\lambda_r\Id$.
\end{proof}

\subsection{The skew Koszul kernel algebra}\label{subsec-SkewKoszuldiff}
The purpose of this section is to justify definition \ref{def-dSKos}, that is to define the differential $\partial_{\mathrm{SKos}}$ and to study some of its properties. We also prove that the skew Koszul kernel algebra is, up to a filtration, isomorphic to the Koszul kernel algebra introduced in  definition \ref{def-dKos}.

\subsubsection{The skew Koszul algebras in characteristic $2$}
Let $\k$ be a field of characteristic $2$, and let $V$ be a finite dimensional $\k$ vector space.
Consider the graded $\PP_\k$-algebra
$\Gamma_\k(V[1])\otimes\Gamma_\k(V^{(1)}[1])$, equipped
with the differential $d_{\mathrm{SKos}}$ defined as a composite:
$$\Gamma^d_\k(V)\otimes \Gamma^e_\k(V^{(1)})\to \Gamma^{d-2}_\k(V)\otimes
\Gamma^2_\k(V)\otimes
\Gamma^{e}_\k(V^{(1)})\to \Gamma^{d-2}_\k(V)\otimes \Gamma^{e+1}_\k(V^{(1)})
$$
where the first map is induced by the comultiplication of
$\Gamma_\k(V)$ and the second one is induced by the Verschiebung map
$\Gamma^{2}_\k(V)\twoheadrightarrow V^{(1)}$ and the multiplication
of $\Gamma_\k(V^{(1)})$ (if $d\le 1$, the differential with source
$\Gamma^0_\k(V)\otimes \Gamma^e_\k(V^{(1)})$ is zero). The resulting commutative
differential graded $\PP_\k$-algebra $(\Gamma_\k(V[1])\otimes
\Gamma_\k(V^{(1)}[1]),
d_{\mathrm{SKos}})$ will be called the \emph{skew Koszul algebra} (on $V$). This name
is justified by the following result, which is a differential graded algebra version of corollary \ref{cor-qtf}.

\begin{proposition}\label{prop-qtfiltr-SK}
Let $V$ be a finite dimensional vector space over a field $\k$ of characteristic $2$.
The tensor product of the principal filtrations of $\Gamma_\k(V[1])$ and of $\Gamma_\k(V^{(1)}[1])$ yields a quasi-trivial filtration of the skew Koszul algebra, and the associated graded object is isomorphic to the dg-$\PP_\k$-algebra:
$$\Lambda_\k(V[1])\otimes \left(\Gamma_\k(V^{(1)}[2])\otimes
\Lambda_\k(V^{(1)}[1])\right)\otimes \Gamma_\k(V^{(2)}[2])\quad,\quad\Id_{\Lambda_\k(V[1])}\otimes
d_{\mathrm{Kos}}\otimes \Id_{\Gamma_\k(V^{(2)}[2])}\;.$$
\end{proposition}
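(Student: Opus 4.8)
The plan is to establish the statement in two moves: first identify the associated graded object as an algebra (with its differential), and second verify that the filtration is quasi-trivial in the sense of Definition \ref{def-quasi-trivial}. By Corollary \ref{cor-qtf}, the principal filtration on $\Gamma_\k(V[1])$ has associated graded $\gr\Gamma_\k(V[1])\simeq Q(V[1])\otimes\Gamma_\k(V^{(1)}[2])$, and since $p=2$ we have $Q(V[1])=\Lambda_\k(V[1])$ by property (1) of Section \ref{subsec-trunc}. Similarly $\gr\Gamma_\k(V^{(1)}[1])\simeq \Lambda_\k(V^{(1)}[1])\otimes\Gamma_\k(V^{(2)}[2])$. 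Taking the tensor product of these two filtrations and rearranging the four factors, one obtains as associated graded $\PP_\k$-algebra precisely
$$\Lambda_\k(V[1])\otimes\left(\Gamma_\k(V^{(1)}[2])\otimes\Lambda_\k(V^{(1)}[1])\right)\otimes\Gamma_\k(V^{(2)}[2])\;,$$
which is the algebra displayed in the statement. Each factor is of the form $\Gamma_\k(V^{(r)}[i])$ or $\Lambda_\k(V^{(r)}[i])$, so $\gr$ is exponential with finite-dimensional components, establishing condition (1) of quasi-triviality; condition (2) holds because on $V=\k$ the divided power algebra and its associated graded are abstractly isomorphic as graded $\k$-algebras (this is the content of Proposition \ref{prop-iso-Nonnat} applied degreewise).

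The genuinely non-formal part is to identify the induced differential $\gr d_{\mathrm{SKos}}$ on this associated graded object, and to check that it equals $\Id\otimes d_{\mathrm{Kos}}\otimes\Id$ as claimed. First I would verify that $d_{\mathrm{SKos}}$ is compatible with the tensor-product filtration, i.e. that it carries $F^i$ into $F^i$, so that it induces a well-defined differential on $\gr$. This follows from the construction of $d_{\mathrm{SKos}}$: the comultiplication of $\Gamma_\k(V)$ respects the principal filtration, and the Verschiebung composed with multiplication lands in the appropriate filtration degree. Then I would compute the action of $\gr d_{\mathrm{SKos}}$ on generators. On the factor $\Lambda_\k(V[1])$ and on $\Gamma_\k(V^{(2)}[2])$ the induced differential vanishes (these are the degree-$0$ associated graded pieces on which Verschiebung or comultiplication produce only strictly-higher-filtration contributions), which is why they appear tensored with identity maps. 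The interesting piece is the middle factor $\Gamma_\k(V^{(1)}[2])\otimes\Lambda_\k(V^{(1)}[1])$: here the comultiplication of $\Gamma_\k(V)$ together with the Verschiebung $\Gamma^2_\k(V)\twoheadrightarrow V^{(1)}$ induces, at the level of $\gr$, exactly the Koszul differential $d_{\mathrm{Kos}}$ of Section \ref{subsec-Koszuldiff} on $V^{(1)}$.

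I expect the main obstacle to be this last identification, namely matching the induced map on the middle factor with $d_{\mathrm{Kos}}$ on $V^{(1)}$. The subtlety is bookkeeping: one must track how the comultiplication on $\Gamma^d_\k(V)$ distributes between the $\Lambda_\k(V[1])$ factor (the weight-one, top-filtration part of $\gr\Gamma_\k(V[1])$) and the $\Gamma_\k(V^{(1)}[2])$ factor (the Frobenius-twisted divided power part), and verify that the piece surviving in $\gr$ is precisely the comultiplication of $\Gamma_\k(V^{(1)})$ followed by multiplication into $\Lambda_\k(V^{(1)})$. Because the uniqueness principle of Lemma \ref{lm-unique} shows that a differential on such a tensor product of $\Gamma$'s and $\Lambda$'s is determined by its restriction to the weight-one generators, it suffices to check the claim on generators. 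Concretely, one checks that $\gr d_{\mathrm{SKos}}$ sends the generator $\Gamma^1_\k(V^{(1)}[2])$ identically to $\Lambda^1_\k(V^{(1)}[1])$, and then Corollary \ref{cor-partialKos} (applied with the single twist $V^{(1)}$ in place of $A/p$) forces $\gr d_{\mathrm{SKos}}=d_{\mathrm{Kos}}$ on the middle factor. This reduces the whole verification to an elementary computation on a single generator, after which the statement follows.
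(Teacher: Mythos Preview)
Your proposal is correct and follows essentially the same route as the paper's own proof: identify $\gr$ as an algebra via the principal filtration (Corollary \ref{cor-qtf}), check that $d_{\mathrm{SKos}}$ preserves the filtration, and then use the uniqueness principle to reduce the identification $\gr d_{\mathrm{SKos}}=\Id\otimes d_{\mathrm{Kos}}\otimes\Id$ to the single check that the generator $\Gamma^1_\k(V^{(1)}[2])$ is sent identically to $\Lambda^1_\k(V^{(1)}[1])$. The paper organizes the filtration-compatibility step slightly differently (observing that the image of $d_{\mathrm{SKos}}$ lies in the ideal generated by $V^{(1)}[1]$, hence in $J(V)$) and makes the generator check explicit by writing out the weight-$2$ component of the filtration, but the argument is the same as yours.
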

\begin{proof}
Let us denote by $(A(V),d_{\mathrm{SKos}})$ the skew Koszul algebra and by $(B(V),d)$ the tensor product of $(\Lambda_\k(V[1]),0)$, of the Koszul algebra on a generator $V^{(1)}[1]$, and of $(\Gamma_\k(V^{(2)}[2],0)$.

The tensor products of the principal filtration of $\Gamma_\k(V[1])$ and $\Gamma_\k(V^{(1)}[1])$ coincides with  the adic filtration of $A(V)$ relative to the ideal $J(V)$ generated by $\Gamma^1_\k(V[1])\oplus \Gamma^{1}_\k(V^{(1)}[1])$. By definition, the image of the differential $d_{\mathrm{SKos}}$ is contained in the image of the multiplication $A(V)\otimes V^{(1)}[2]\to A(V)$. In particular, $d_{\mathrm{SKos}}$ sends $J(V)$ to $J(V)$ so the $J(V)$-adic filtration yields a filtration on the dg-$\PP_\k$-algebra
$(A(V),d_{\mathrm{SKos}})$.

By Proposition \ref{prop-filtration-Gamma}, we have an isomorphism of graded $\PP_\k$-algebras $\gr A(V)\simeq B(V)$. We have to prove that $\gr(d_{\mathrm{SKos}})=d$. By proposition \ref{prop-charact} it suffices to show that the restriction of $\gr(d_{\mathrm{SKos}})$ to the direct summand $\Gamma^{1}_\k(V^{(1)}[2])=V^{(1)}[2]$ sends this generator identically to $V^{(1)}[1]=\Lambda^1_\k(V^{(1)}[1])$. To prove this, we write down explicitly the homogeneous  weight $2$ component of the $J(V)$-adic filtration. We have:
$$A(V)_2=\Gamma^2_\k(V[1])\oplus \Gamma^1_\k(V^{(1)}[1])\;,\quad J(V)_2= \Lambda^2_\k(V[1])\;,$$
and the component of weight $2$ of the power $J(V)^n$ is zero for all   $n\ge 2$. The restriction of $d_{\mathrm{SKos}}$ to $\Gamma^2_\k(V[1])$ is the Verschiebung map so that $\gr(d_{\mathrm{SKos}}): \Lambda^2_\k(V[1])\oplus V^{(1)}[2]\to V^{(1)}[1]$ is zero on the summand $\Lambda^2_\k(V[1])$ and maps $ V^{(1)}[2]$ identically to $V^{(1)}[1]$.
The
 fact that $(A(\k),d_{\mathrm{SKos}})$ is isomorphic to $(B(\k),d)$ is easily proved by direct inspection.
\end{proof}

\subsubsection{The skew Koszul differential on $L_*\Gamma_{\Fdeux}(A/2,1)$}
We are going to define a differential $\partial_{\mathrm{SKos}}$ on the graded
$\PP_\Z$-algebra
$$L_*\Gamma_\Fdeux(A/2,1)\simeq \bigotimes_{r\ge
0}\Gamma_\Fdeux(A/2^{(r)}[1])\;.$$
To do this, we consider for all $r\ge 0$ the factor
$\Gamma_{\f2}(A/2^{(r)}[1])\otimes \Gamma_{\f2}(A/2^{(r+1)}[1])$ as the skew Koszul
algebra (on the vector space $A/2^{(r)}$). Tensoring by identities on the left
and the right, this defines  a differential $\partial_r$ on
$L_*\Gamma_\Fdeux(A/2,1)$. In other words, each element in
$L_*\Gamma_\Fdeux(A/2,1)$ can be written as a finite tensor product of
elements $x_i\in \Gamma_{\Fdeux}(A/2^{(i)}[1])$ and $\partial_r$ is given by:
$$\partial_r(x_0\otimes \dots \otimes x_r\otimes x_{r+1}\otimes \dots \otimes
x_n)=x_0\otimes \dots \otimes d_{\mathrm{SKos}}(x_r\otimes x_{r+1})\otimes
\dots \otimes x_n\;.$$

\begin{lemma}The differentials $\partial_r$ commute with each other.
\end{lemma}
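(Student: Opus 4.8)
The plan is to reduce the statement to a computation on at most three consecutive tensor factors, splitting according to whether the index pairs $\{r,r+1\}$ and $\{s,s+1\}$ attached to $\partial_r$ and $\partial_s$ are disjoint or overlap.

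First I would dispose of the \emph{disjoint} case $|r-s|\ge 2$. Here $\partial_r$ and $\partial_s$ act as the identity on all but disjoint sets of tensor factors of $\bigotimes_{k\ge 0}\Gamma_\Fdeux(A/2^{(k)}[1])$. Two graded operators of this shape supercommute, $\partial_r\partial_s=(-1)^{|\partial_r||\partial_s|}\partial_s\partial_r$, and since $|\partial_r|=|\partial_s|=-1$ the Koszul sign equals $-1=1$ over $\Fdeux$. Hence $\partial_r\partial_s=\partial_s\partial_r$ in this case.

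The substance lies in the \emph{adjacent} case, which by symmetry I take to be $s=r+1$. Both operators act as the identity outside the three factors $\Gamma_\Fdeux(W[1])\otimes\Gamma_\Fdeux(W^{(1)}[1])\otimes\Gamma_\Fdeux(W^{(2)}[1])$, where $W:=A/2^{(r)}$, so it suffices to compare the two composites on a homogeneous tensor $a\otimes b\otimes c$. Writing $\delta(x)=\sum x_{[1]}\otimes x_{[2]}$ for the part of the comultiplication with $x_{[2]}\in\Gamma^2$, and $V$ for the two relevant Verschiebung maps, the definition of $d_{\mathrm{SKos}}$ yields directly
\[
\partial_r\partial_{r+1}(a\otimes b\otimes c)=\sum a_{[1]}\otimes\bigl(V(a_{[2]})\,b_{[1]}\bigr)\otimes\bigl(V(b_{[2]})\,c\bigr)\;.
\]
For the opposite order I would first apply $\partial_r$, producing $\sum a_{[1]}\otimes(V(a_{[2]})\,b)\otimes c$, and then apply $\partial_{r+1}$, which requires extracting the $\Gamma^2(W^{(1)})$-part of $\Delta(V(a_{[2]})\,b)$. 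Since $\Gamma_\Fdeux$ is exponential its comultiplication is an algebra map, and as $V(a_{[2]})\in\Gamma^1(W^{(1)})$ is primitive this splits into a ``pure'' term, in which the whole $\Gamma^2$-leg comes from $b$, and a ``mixed'' term, in which one leg of the $\Gamma^2$ is $V(a_{[2]})$ and the other comes from $b$.

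The crux --- and the only place where characteristic $2$ is really used --- is that the mixed term vanishes. In it the Verschiebung $\Gamma^2(W^{(1)})\twoheadrightarrow W^{(2)}$ is applied to a product of two elements of $\Gamma^1(W^{(1)})$, that is to an element in the image of the multiplication $\Gamma^1(W^{(1)})\otimes\Gamma^1(W^{(1)})\to\Gamma^2(W^{(1)})$; but by the very definition of the Verschiebung as the cokernel of this multiplication (property (5) of the Frobenius twist) that image is exactly its kernel, so the mixed contribution dies. The surviving pure term is visibly the expression for $\partial_r\partial_{r+1}(a\otimes b\otimes c)$ displayed above, whence $\partial_{r+1}\partial_r=\partial_r\partial_{r+1}$. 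I expect this vanishing, together with the bookkeeping of the coproduct of a product, to be the main obstacle; everything else is formal.
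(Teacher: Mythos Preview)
Your argument is correct. The disjoint case is handled correctly via the Koszul sign (which is $+1$ over $\Fdeux$), and in the adjacent case your key observation --- that the Verschiebung $\Gamma^2_\Fdeux(W^{(1)})\twoheadrightarrow W^{(2)}$ annihilates the image of the multiplication $\Gamma^1\otimes\Gamma^1\to\Gamma^2$, so the ``mixed'' Sweedler term dies --- is exactly right and follows from the description of $V^{(r)}$ as the cokernel of that multiplication.

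The paper, however, takes a much shorter route. Rather than compute directly, it observes that both $\partial_r$ and $\partial_s$ are derivations of the exponential algebra $\bigotimes_{k\ge 0}\Gamma_\Fdeux(A/2^{(k)}[1])$, and therefore the identity $\partial_r\partial_s=\partial_s\partial_r$ can be checked after pulling back along the multiplicative isomorphism induced by a decomposition $A/2\simeq\Fdeux^{\oplus n}$. This reduces the question to the case $\dim A/2=1$, where every homogeneous piece is at most one-dimensional and the commutation is an immediate scalar check. Your approach has the virtue of isolating precisely \emph{where} characteristic $2$ enters (the vanishing of $V$ on decomposables), while the paper's approach is faster and consistent with its general philosophy of reducing exponential-functor identities to the rank-one case.
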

\begin{proof}
We have to check that $\partial_i\circ\partial_j=\partial_j\circ\partial_i$.
Since the $\partial_i$ are differentials of algebras, we can use the
exponential property to reduce the proof to the trivial case in which $A/2$ is one-dimensional.
\end{proof}

We define the skew Koszul differential $\partial_{\mathrm{SKos}}$ as the sum:
$\partial_{\mathrm{SKos}}=\sum_{r\ge 0}\partial_r $
(this infinite sum reduces to a finite one on each summand of given degree
and weight). We will now justify definition \ref{def-dSKos}, that is characterize
$\partial_{\mathrm{SKos}}$ is by its value on the summands $\Gamma^2_\Fdeux(A/2^{(r)})$.
The following proposition is proved in the same way as proposition \ref{prop-charact}.
\begin{proposition}\label{prop-charact-p2}
Let $\partial$ be a differential of the graded $\PP_\Z$-algebra $L_*\Gamma_{\Fdeux}(A/2,1)$.
\begin{enumerate}
\item[(i)] The differential $\partial$ is determined by its restriction to the summands $\Gamma^2_{\Fdeux}(A/2^{(r)}[1])$, for $r\ge 0$.
\item[(ii)] The restriction of $\partial$ sends the summand $\Gamma^2_{\Fdeux}(A/2^{(r)}[1])$ into $\Gamma^1_{\Fdeux}(A/2^{(r+1)}[1])$.
\end{enumerate}
\end{proposition}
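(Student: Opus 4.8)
The plan is to mirror the proof of proposition \ref{prop-charact}, relying on the identification $L_*\Gamma_\Fdeux(A/2,1)\simeq \bigotimes_{r\ge 0}\Gamma_\Fdeux(A/2^{(r)}[1])$ furnished by theorem \ref{thm-derived-p2} and on the Uniqueness Principle of lemma \ref{lm-unique}. All of the structure (product, coproduct, grading) of this algebra is obtained by precomposition with $A\mapsto A/2$ from the corresponding $\PP_\Fdeux$-algebra on a vector space, so lemma \ref{lm-unique} applies with $\k=\Fdeux$ and $G(V)=\bigoplus_{r\ge 0}\Gamma^1_\Fdeux(A/2^{(r)}[1])=\bigoplus_{r\ge 0}A/2^{(r)}[1]$. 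The whole argument then reduces to two weight/degree bookkeeping computations, and I would establish (ii) first since it is what lets one read off (i).

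For part (ii), I would observe that the summand $\Gamma^2_\Fdeux(A/2^{(r)}[1])$ lies in homological degree $2$ and weight $2^{r+1}$. As $\partial$ has degree $-1$ and is a morphism of strict polynomial functors, hence weight-preserving, its restriction lands in the degree-$1$, weight-$2^{r+1}$ summand of $L_*\Gamma_\Fdeux(A/2,1)$. The degree-$1$ part of $\bigotimes_{r\ge 0}\Gamma_\Fdeux(A/2^{(r)}[1])$ is exactly $\bigoplus_{s\ge 0}\Gamma^1_\Fdeux(A/2^{(s)}[1])$, the summand $\Gamma^1_\Fdeux(A/2^{(s)}[1])$ having weight $2^s$; matching weights forces $s=r+1$, which is precisely the claim.

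For part (i), I would invoke lemma \ref{lm-unique} to reduce the determination of $\partial$ to that of $\pi\circ\partial\colon L_*\Gamma_\Fdeux(A/2,1)\to\bigoplus_{m\ge 0}A/2^{(m)}[1]$, and then analyze each component $\pi_m\circ(\pi\circ\partial)$ onto $A/2^{(m)}[1]$. Being weight-preserving of degree $-1$, this component is supported on the degree-$2$, weight-$2^m$ summand of the source. A degree-$2$ monomial in $\bigotimes_{r\ge 0}\Gamma_\Fdeux(A/2^{(r)}[1])$ has one of two shapes: a single factor $\Gamma^2_\Fdeux(A/2^{(s)}[1])$, of weight $2^{s+1}$, or a product $A/2^{(s)}[1]\otimes A/2^{(t)}[1]$ of two distinct generators ($s<t$), of weight $2^s+2^t$. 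The only arithmetic input needed is that $2^s+2^t$ with $s\neq t$ is never a power of $2$, so the second shape cannot have weight $2^m$; hence the degree-$2$, weight-$2^m$ summand equals $\Gamma^2_\Fdeux(A/2^{(m-1)}[1])$ for $m\ge 1$ and vanishes for $m=0$. Therefore $\pi_m\circ(\pi\circ\partial)$, and so all of $\pi\circ\partial$, and by the injectivity asserted in lemma \ref{lm-unique} the differential $\partial$ itself, is determined by the restrictions of $\partial$ to the summands $\Gamma^2_\Fdeux(A/2^{(r)}[1])$, $r\ge 0$.

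The argument is essentially formal, and the one point I expect to require explicit care is the legitimacy of applying lemma \ref{lm-unique} to the $\PP_\Z$-algebra $L_*\Gamma_\Fdeux(A/2,1)$ rather than to a genuine $\PP_\Fdeux$-algebra. I would handle this exactly as in proposition \ref{prop-charact}, noting that the multiplication, comultiplication and $\sharp$-duality used in the proof of lemma \ref{lm-unique} all factor through the base change $A\mapsto A/2$ and are thus inherited from the underlying $\PP_\Fdeux$-structure; everything else is the weight count above, together with the harmless observation that the relevant $\hom$-spaces contain only the expected (scalar) morphisms.
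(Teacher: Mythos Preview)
Your proposal is correct and follows essentially the same route as the paper, which simply says the proposition ``is proved in the same way as proposition \ref{prop-charact}.'' Your argument is in fact slightly cleaner than a literal transcription of that proof: because all generators of $\bigotimes_{r\ge 0}\Gamma_\Fdeux(A/2^{(r)}[1])$ sit in degree~$1$, the degree-$2$, weight-$2^m$ piece is \emph{exactly} $\Gamma^2_\Fdeux(A/2^{(m-1)}[1])$, so you avoid the auxiliary $\hom$-vanishing checks that proposition \ref{prop-charact} needed for the stray summands $\Lambda^2_\Fdeux(A/2^{(r-1)}[1])$.
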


\begin{corollary}
The morphism $\partial_{\mathrm{SKos}}$ is the unique differential on the graded
$\PP_\Z$-algebra
$$L_*\Gamma_\Fdeux(A/2,1)=\bigotimes_{r\ge 0}\Gamma_\Fdeux(A/2^{(r)}[1])$$
whose restriction to the summands $\Gamma^2_\Fdeux(A/2^{(r)}[1])$, $r\ge 0$
equals the Verschiebung map $\Gamma^2_\Fdeux(A/2^{(r)}[1])\to A/2^{(r+1)}[1]$.
\end{corollary}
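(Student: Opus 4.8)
The plan is to deduce the corollary directly from proposition \ref{prop-charact-p2}, treating uniqueness and existence separately. For uniqueness I would invoke part (i) of proposition \ref{prop-charact-p2}: any differential on $L_*\Gamma_\Fdeux(A/2,1)$ is completely determined by its restrictions to the summands $\Gamma^2_\Fdeux(A/2^{(r)}[1])$, $r\ge 0$. Consequently at most one differential can have the prescribed restriction on each of these summands, so the uniqueness assertion is immediate once existence is settled.

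For existence I would verify that the differential $\partial_{\mathrm{SKos}}=\sum_{r\ge 0}\partial_r$ constructed above does have the required restrictions. First, $\partial_{\mathrm{SKos}}$ is a genuine differential: each $\partial_r$ squares to zero, being induced by the skew Koszul differential $d_{\mathrm{SKos}}$ for which $d_{\mathrm{SKos}}^2=0$; the $\partial_r$ commute pairwise by the lemma preceding this corollary; and hence $\partial_{\mathrm{SKos}}^2=\sum_r\partial_r^2+\sum_{r\ne s}\partial_r\partial_s=0$, the second sum vanishing because we work in characteristic $2$. It then remains to compute the restriction of $\partial_{\mathrm{SKos}}$ to the summand $\Gamma^2_\Fdeux(A/2^{(r)}[1])$, on which the generator sits in the $r$-th tensor factor while all other factors carry the unit. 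On such an element the terms $\partial_s$ with $s\ne r$ vanish, since $d_{\mathrm{SKos}}$ is induced by the comultiplication $\Gamma^d\to\Gamma^{d-2}\otimes\Gamma^2$ and is therefore nonzero only when the divided-power degree in the relevant tensor factor is at least $2$, which is not the case here. Only $\partial_r$ survives, and by the very definition of the skew Koszul algebra on $A/2^{(r)}$ it sends $\Gamma^2_\Fdeux(A/2^{(r)}[1])$ to $\Gamma^1_\Fdeux(A/2^{(r+1)}[1])=A/2^{(r+1)}[1]$ via the Verschiebung map. This is exactly the prescribed restriction.

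The only genuinely delicate point I anticipate is the bookkeeping in this last computation: confirming that precisely one of the commuting differentials $\partial_r$ acts nontrivially on each generating summand, and that it reproduces the Verschiebung rather than some other map. This should follow cleanly from the weight and degree gradings together with the explicit description of $d_{\mathrm{SKos}}$, so that everything else is formal and the corollary follows at once from proposition \ref{prop-charact-p2}(i).
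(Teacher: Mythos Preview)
Your proof is correct and follows exactly the approach implicit in the paper: uniqueness is immediate from proposition \ref{prop-charact-p2}(i), and existence amounts to checking that the already-constructed $\partial_{\mathrm{SKos}}$ has the prescribed restrictions. The paper does not spell out the existence verification at all (treating it as evident from the construction of $\partial_{\mathrm{SKos}}=\sum_r\partial_r$), whereas you have written out the squaring-to-zero argument and the restriction computation explicitly; both are fine and match the paper's intended reasoning.
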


We also have the analogue of corollary \ref{cor-versal-prop}. Since the Verschiebung is the only nonzero morphism $\Gamma^2_\Fdeux(A/2^{(r)})\to A/2^{(r+1)}$, this characteristic $2$ analogue yields a slightly stronger statement.

\begin{corollary}\label{cor-versal-prop-p2}
The differential $\partial_{\mathrm{SKos}}$ is the unique differential on the graded
$\PP_\Z$-algebra $L_*\Gamma_\Fdeux(A/2,1)$ whose image contains all the generators $\Gamma^1_{\Fdeux}(A/2^{(r)}[1])$ for $r\ge 1$.
\end{corollary}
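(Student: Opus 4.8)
The plan is to deduce the statement from the characterization of $\partial_{\mathrm{SKos}}$ recorded in the preceding corollary, together with Proposition~\ref{prop-charact-p2}. Existence is immediate: since the restriction of $\partial_{\mathrm{SKos}}$ to each summand $\Gamma^2_\Fdeux(A/2^{(r)}[1])$ is the Verschiebung map, which is onto $A/2^{(r+1)}[1]=\Gamma^1_\Fdeux(A/2^{(r+1)}[1])$, every generator $\Gamma^1_\Fdeux(A/2^{(r)}[1])$ with $r\ge 1$ lies in the image of $\partial_{\mathrm{SKos}}$. The heart of the matter is therefore uniqueness, which I would establish by showing that the image condition forces the restriction of any such $\partial$ to each $\Gamma^2_\Fdeux(A/2^{(r)}[1])$ to be nonzero, and hence (over $\Fdeux$) to coincide with the Verschiebung.

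So let $\partial$ be a differential whose image contains all the generators $\Gamma^1_\Fdeux(A/2^{(r)}[1])$, $r\ge 1$. By Proposition~\ref{prop-charact-p2} it is enough to determine the restriction of $\partial$ to each summand $\Gamma^2_\Fdeux(A/2^{(r)}[1])$, and we know that this restriction is a morphism $\Gamma^2_\Fdeux(A/2^{(r)})\to A/2^{(r+1)}$ landing in $\Gamma^1_\Fdeux(A/2^{(r+1)}[1])$. The key observation is a degree--weight count: the generator $\Gamma^1_\Fdeux(A/2^{(r+1)}[1])$ sits in homological degree $1$ and weight $2^{r+1}$, and the only summand of $L_*\Gamma_\Fdeux(A/2,1)=\bigotimes_{s\ge 0}\Gamma_\Fdeux(A/2^{(s)}[1])$ in degree $2$ and weight $2^{r+1}$ is $\Gamma^2_\Fdeux(A/2^{(r)}[1])$, because a degree-$2$ element is either some $\Gamma^2_\Fdeux(A/2^{(s)})$ (of weight $2^{s+1}$) or a cross term $A/2^{(a)}\otimes A/2^{(b)}$ with $a\ne b$ (of weight $2^a+2^b$, which is never a power of two). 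Consequently $\Gamma^1_\Fdeux(A/2^{(r+1)}[1])$ can only be hit through the restriction of $\partial$ to $\Gamma^2_\Fdeux(A/2^{(r)}[1])$.

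Thus the hypothesis forces this restriction to be surjective, in particular nonzero, for every $r\ge 0$. Now I would invoke the fact recalled in the remark preceding the statement that the Verschiebung is the \emph{only} nonzero morphism $\Gamma^2_\Fdeux(A/2^{(r)})\to A/2^{(r+1)}$ (equivalently, $\hom_{\PP_\Fdeux}(\Gamma^2(V^{(r)}),V^{(r+1)})$ is one-dimensional): a nonzero restriction must equal the Verschiebung. Hence $\partial$ agrees with $\partial_{\mathrm{SKos}}$ on every $\Gamma^2_\Fdeux(A/2^{(r)}[1])$, and the preceding corollary gives $\partial=\partial_{\mathrm{SKos}}$.

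The only genuinely delicate point---and the reason this statement is strictly stronger than its odd analogue Corollary~\ref{cor-versal-prop}---is that over $\Fdeux$ there are no nontrivial scalars available to rescale the generators. In the odd case a nonzero restriction is pinned down only up to a factor $\lambda_r\in\mathbb{F}_p^\ast$, which is precisely why one there obtains merely an isomorphism of dg-$\PP_\Z$-algebras rather than the equality $\partial=\partial_{\mathrm{SKos}}$ produced here.
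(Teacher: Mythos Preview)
Your proof is correct and follows essentially the same approach as the paper's (implicit) argument, which parallels the proof of Corollary~\ref{cor-versal-prop} together with the observation that the Verschiebung is the only nonzero morphism $\Gamma^2_\Fdeux(A/2^{(r)})\to A/2^{(r+1)}$. Your explicit degree--weight count showing that $\Gamma^2_\Fdeux(A/2^{(r)}[1])$ is the sole summand of degree $2$ and weight $2^{r+1}$ is a helpful expansion of what the paper leaves tacit.
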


\subsubsection{Koszul versus skew Koszul kernels} The definition of the Koszul differential on $\LL_\Fdeux(A/2)$ and of the Skew Koszul differential on $L_*\Gamma_\Fdeux(A/2,1)$ are completely parallel. We now compare explicitly these two constructions.
The following proposition follows directly from proposition \ref{prop-qtfiltr-SK}.
\begin{proposition}\label{prop-qt-SK}
The tensor product of the principal filtrations on the graded $\PP_\Z$-algebras $\Gamma_\Fdeux(A/2^{(r)}[1])$, $r\ge 1$, yields a quasi-trivial filtration on $(L_*\Gamma_{\Fdeux}(A/2,1),\partial_{\mathrm{SKos}})$, whose associated graded object is the differential graded $\PP_\Z$-algebra $(\LL_{\Fdeux}(A/2,1),\partial_{\mathrm{Kos}})$.
\end{proposition}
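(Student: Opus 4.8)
The plan is to pass to the associated graded one tensor factor at a time, applying the principal filtration of corollary~\ref{cor-qtf} (in its degree-shifted form) to each factor of the decomposition $L_*\Gamma_\Fdeux(A/2,1)\simeq\bigotimes_{r\ge 0}\Gamma_\Fdeux(A/2^{(r)}[1])$ provided by theorem~\ref{thm-derived-p2}, and then to read off both the graded algebra and the induced differential from the single-pair computation of proposition~\ref{prop-qtfiltr-SK}. First I would check that $\partial_{\mathrm{SKos}}$ is compatible with the tensor product of the principal filtrations. Since $\partial_{\mathrm{SKos}}=\sum_{r\ge 0}\partial_r$, where $\partial_r$ acts as the skew Koszul differential $d_{\mathrm{SKos}}$ on the adjacent pair $\Gamma_\Fdeux(A/2^{(r)}[1])\otimes\Gamma_\Fdeux(A/2^{(r+1)}[1])$ and as the identity on the remaining factors, proposition~\ref{prop-qtfiltr-SK} (which shows that $d_{\mathrm{SKos}}$ preserves the tensor product of the two relevant principal filtrations) immediately gives that each $\partial_r$, hence $\partial_{\mathrm{SKos}}$, preserves the global filtration.

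Next I would identify the graded algebra. In characteristic $2$ one has $Q=\Lambda$, so corollary~\ref{cor-qtf} gives $\gr\Gamma_\Fdeux(A/2^{(r)}[1])\simeq\Lambda_\Fdeux(A/2^{(r)}[1])\otimes\Gamma_\Fdeux(A/2^{(r+1)}[2])$. Tensoring over $r\ge 0$ and regrouping the factor $\Gamma_\Fdeux(A/2^{(s)}[2])$ coming from index $r=s-1$ with the factor $\Lambda_\Fdeux(A/2^{(s)}[1])$ coming from index $r=s$ produces exactly the algebra $\LL_\Fdeux(A/2)$ of \eqref{notation}. This also yields quasi-triviality: $\LL_\Fdeux(A/2)$ is exponential with finite-dimensional homogeneous components, so condition~(1) of definition~\ref{def-quasi-trivial} holds, and the weight-preserving isomorphism of differential graded $\k$-algebras $A(\k)\simeq\gr A(\k)$ of condition~(2) is obtained by assembling the factorwise isomorphisms already furnished by proposition~\ref{prop-qtfiltr-SK}.

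It remains to identify the induced differential and to address the one genuine subtlety, which is the step I expect to be the real obstacle: $L_*\Gamma_\Fdeux(A/2,1)$ is \emph{not} an external tensor product of the skew Koszul algebras of proposition~\ref{prop-qtfiltr-SK}, because consecutive pairs share the common factor $\Gamma_\Fdeux(A/2^{(r)}[1])$, so a priori the differentials $\gr(\partial_r)$ on overlapping pairs could interfere. What rescues the argument is precisely the shape of $\gr(d_{\mathrm{SKos}})$ in proposition~\ref{prop-qtfiltr-SK}, namely $\Id\otimes d_{\mathrm{Kos}}\otimes\Id$: the Koszul differential is supported on the single block $\Gamma_\Fdeux(A/2^{(r+1)}[2])\otimes\Lambda_\Fdeux(A/2^{(r+1)}[1])$ and vanishes on the two outer factors $\Lambda_\Fdeux(A/2^{(r)}[1])$ and $\Gamma_\Fdeux(A/2^{(r+2)}[2])$. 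Hence each $\gr(\partial_r)$ is supported on its own Koszul block of $\LL_\Fdeux(A/2)$, these supports are pairwise disjoint, and summing over $r$ gives $\gr(\partial_{\mathrm{SKos}})=\partial_{\mathrm{Kos}}$ without interaction. The cleanest way to seal this last identification is the uniqueness principle (proposition~\ref{prop-charact} together with corollary~\ref{cor-partialKos}): $\gr(\partial_{\mathrm{SKos}})$ sends each generator $\Gamma^1_\Fdeux(A/2^{(s)}[2])$ identically onto $\Lambda^1_\Fdeux(A/2^{(s)}[1])$, a property which characterizes $\partial_{\mathrm{Kos}}$ uniquely.
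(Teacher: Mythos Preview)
Your argument is correct and is precisely the expanded form of the paper's one-line proof (which simply states that the result follows directly from proposition~\ref{prop-qtfiltr-SK}): you make explicit the factor-by-factor application, the quasi-triviality check, the handling of the overlapping pairs, and the identification of $\gr(\partial_{\mathrm{SKos}})$ via corollary~\ref{cor-partialKos}. One small remark: you (rightly) take the principal filtration on all factors $r\ge 0$, which is what is needed to obtain $\Lambda_\Fdeux(A/2[1])$ as the leading factor of $\LL_\Fdeux(A/2)$ and to make $\gr(\partial_0)$ the Koszul differential on the $A/2^{(1)}$-block; the ``$r\ge 1$'' in the statement appears to be a slip.
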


\begin{corollary}
\label{cor-HSKos}
The homology of $(L_*\Gamma_\Fdeux(A/2,1),\partial_{\mathrm{SKos}})$ is isomorphic to the graded $\PP_\Z$-algebra $\Lambda_\Fdeux(A/2[1])$.
\end{corollary}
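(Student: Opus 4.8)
The plan is to combine the quasi-trivial filtration of Proposition~\ref{prop-qt-SK} with the Koszul computation of Proposition~\ref{prop-HKos}, and then to promote the resulting filtered statement to a genuinely functorial one. First I would note that by Proposition~\ref{prop-qt-SK} the dg-$\PP_\Z$-algebra $(L_*\Gamma_\Fdeux(A/2,1),\partial_{\mathrm{SKos}})$ carries a quasi-trivial filtration whose associated graded is $(\LL_\Fdeux(A/2),\partial_{\mathrm{Kos}})$, and that by Proposition~\ref{prop-HKos} the homology of this associated graded is the graded $\PP_\Z$-algebra $\Lambda_\Fdeux(A/2[1])$, in which $\Lambda^d_\Fdeux(A/2)$ sits in homological degree $d$. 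Thus the homology of the associated graded is concentrated on the diagonal, where homological degree equals weight.

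Next I would extract dimension information. By Lemma~\ref{lm-triv}(c) a choice of basis of $A/2$ gives a (non-functorial) isomorphism of differential graded $\Fdeux$-algebras between $(L_*\Gamma_\Fdeux(A/2,1),\partial_{\mathrm{SKos}})$ and its associated graded, so the two have the same homology in each bidegree. Writing $H_*:=H_*(L_*\Gamma_\Fdeux(A/2,1),\partial_{\mathrm{SKos}})$, this yields $H^d_i = 0$ for $i\ne d$ and $\dim H^d_d = \dim\Lambda^d_\Fdeux(A/2)$ for every $d$. Equivalently, one may run the filtration spectral sequence as in the proof of Proposition~\ref{prop-quasi-trivial-collapse}: its first page is $\Lambda_\Fdeux(A/2[1])$, concentrated on the diagonal, and since its differentials preserve the weight while lowering the total degree, they all vanish, so the sequence degenerates and $\gr H_*\simeq\Lambda_\Fdeux(A/2[1])$.

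The main obstacle is the last step: passing from this filtered, dimension-level comparison to an explicit natural isomorphism $\Lambda_\Fdeux(A/2[1])\simeq H_*$, since a functorial description of $\gr H_*$ does not by itself resolve the induced filtration on $H_*$. Here I would exploit the feature special to characteristic~$2$ that $\gamma_1(x)^2 = 2\gamma_2(x)=0$ in $\Gamma_\Fdeux(A/2)$, so that the $d$-fold product of degree-one generators defines a natural monomorphism $\iota_d\colon\Lambda^d_\Fdeux(A/2)\hookrightarrow\Gamma^d_\Fdeux(A/2)$, with image the truncated-power summand $Q^d(A/2)$. Since $H^d_d$ lies in top homological degree it receives no boundaries, so $H^d_d = \ker f_d$, where $f_d$ is the component of $\partial_{\mathrm{SKos}}$ on $\Gamma^d_\Fdeux(A/2)$. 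A direct check — the comultiplication of a product of distinct generators contributes to the $\Gamma^2$-factor only off-diagonal terms $\gamma_1(x)\gamma_1(y)$, which the Verschiebung kills — shows $f_d\circ\iota_d = 0$, so $\iota_d$ factors through $\ker f_d = H^d_d$; the dimension equality of the previous paragraph then forces $\iota_d$ to be an isomorphism. Assembling the $\iota_d$ over all $d$ gives a morphism of algebras $\Lambda_\Fdeux(A/2[1])\to H_*$ (it lands in the cycles because its image is annihilated by $\partial_{\mathrm{SKos}}$) which is an isomorphism in each bidegree, as required. As an alternative to the explicit map $\iota_d$, one could invoke Proposition~\ref{compat-Z} to identify the cycles $\gr$-compatibly and observe that the diagonal of $\LL_\Fdeux(A/2)$ occupies a single filtration degree, so that the induced filtration on $H^d_d$ is trivial.
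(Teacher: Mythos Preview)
Your proof is correct and follows essentially the same approach as the paper's: both construct the explicit algebra map $\Lambda_\Fdeux(A/2[1])\hookrightarrow L_*\Gamma_\Fdeux(A/2,1)$ (your $\iota_d$), observe that its image consists of cycles and injects into homology since there are no boundaries in top degree, and then conclude by the dimension count coming from the non-functorial isomorphism of Lemma~\ref{lm-triv}(c) together with Proposition~\ref{prop-HKos}. Your write-up is somewhat more explicit (you spell out why $f_d\circ\iota_d=0$ and add the spectral-sequence and Proposition~\ref{compat-Z} alternatives), but the underlying argument is the same.
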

\begin{proof}[Proof of corollary \ref{cor-HSKos}]
The morphism of algebras $\Lambda_\Fdeux(A/2)\hookrightarrow \Gamma_\Fdeux(A/2)$ induces a morphism of algebras
$\Lambda_\Fdeux(A/2[1])\hookrightarrow L_*\Gamma_\Fdeux(A/2,1)\;.$
The image of this morphism consists of cycles, and since $(L_*\Gamma_\Fdeux(A/2,1))^d_i$ is zero for $i>d$ there is an injective morphism
\begin{align}\Lambda_\Fdeux(A/2[1])\hookrightarrow H_*(L_*\Gamma_\Fdeux(A/2,1),\partial_{\mathrm{SKos}})\;.
\label{eq-inj}\end{align}
We want to prove that the morphism \eqref{eq-inj} is an isomorphism. For this purpose, it suffices to check that its source and its target have the same dimensions in each summand of a given weight and degree. But proposition \ref{prop-qt-SK} and lemma \ref{lm-triv}(c) yield a \emph{non functorial} isomorphism of differential graded algebras
which preserves the weights:
$$(L_*\Gamma_\Fdeux(A/2,1),\partial_{\mathrm{SKos}})\simeq
(\LL_\Fdeux(A/2),\partial_{\mathrm{Kos}}) \;.$$
By proposition \ref{prop-descr-Kos}, the homology of $(\LL_\Fdeux(A/2),\partial_{\mathrm{Kos}})$ is isomorphic to $\Lambda_\Fdeux(A/2[1])$. Hence the dimensions of the source and the target of morphism \eqref{eq-inj} agree.
\end{proof}

We also mention another consequence of proposition \ref{prop-qt-SK}, which shows that the skew Koszul kernel algebra is very close to the Koszul kernel algebra. It follows directly from the properties of quasi-trivial filtrations (lemma \ref{lm-triv} and proposition \ref{compat-Z}).
\begin{corollary}\label{cor-filtr-SK}
Let $A$ be a finitely generated free abelian group. The choice of a basis of $A$ determines a \emph{non functorial} isomorphism of algebras which  preserves the weights:
$$SK_\Fdeux(A/2)\simeq K_\Fdeux(A/2)\;.$$
Moreover, there is a filtration of the graded $\PP_\Z$ algebra $SK_\Fdeux(A/2)$ and a functorial  isomorphism of graded $\PP_\Z$-algebras
$$\gr SK_\Fdeux(A/2)\simeq K_\Fdeux(A/2)\;.$$
\end{corollary}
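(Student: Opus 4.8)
The plan is to derive Corollary \ref{cor-filtr-SK} entirely from the quasi-trivial filtration produced in Proposition \ref{prop-qt-SK}, invoking the general machinery for quasi-trivial filtrations established in Lemma \ref{lm-triv} and Proposition \ref{compat-Z}. The starting observation is that by Proposition \ref{prop-qt-SK}, the differential graded $\PP_\Z$-algebra $(L_*\Gamma_{\Fdeux}(A/2,1),\partial_{\mathrm{SKos}})$ carries a quasi-trivial filtration whose associated graded object is $(\LL_{\Fdeux}(A/2),\partial_{\mathrm{Kos}})$. The skew Koszul kernel algebra $SK_\Fdeux(A/2)$ is by Definition \ref{def-dSKos} the subalgebra of cycles of the former, while the Koszul kernel algebra $K_\Fdeux(A/2)$ is by Definition \ref{def-dKos} the subalgebra of cycles of the latter. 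So the corollary is precisely a statement comparing the cycles of a filtered dg-$\PP_\k$-algebra with the cycles of its associated graded, which is exactly the setting of Proposition \ref{compat-Z}.

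First I would establish the functorial isomorphism $\gr SK_\Fdeux(A/2)\simeq K_\Fdeux(A/2)$. The filtration on $L_*\Gamma_{\Fdeux}(A/2,1)$ restricts to a filtration on its subalgebra of cycles $SK_\Fdeux(A/2)$ by intersecting, as in the discussion preceding \eqref{eq-Z}; taking associated gradeds gives the canonical injection $\gr SK_\Fdeux(A/2)\hookrightarrow K_\Fdeux(A/2)$. Proposition \ref{compat-Z}, applied to the quasi-trivial filtration of Proposition \ref{prop-qt-SK}, asserts exactly that this canonical morphism is an isomorphism of graded $\PP_\Z$-algebras. This yields the second displayed isomorphism of the corollary, and it is manifestly functorial since it is induced by the (functorial) filtration.

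Next I would obtain the non-functorial, weight-preserving algebra isomorphism $SK_\Fdeux(A/2)\simeq K_\Fdeux(A/2)$. For this I invoke Lemma \ref{lm-triv}(c): the choice of a basis of $A$, hence of $A/2$, determines a non-functorial weight-preserving isomorphism of differential graded $\k$-algebras between the filtered object $(L_*\Gamma_{\Fdeux}(A/2,1),\partial_{\mathrm{SKos}})$ and its associated graded $(\LL_{\Fdeux}(A/2),\partial_{\mathrm{Kos}})$. An isomorphism of differential graded algebras carries cycles to cycles, and being weight-preserving it restricts to a weight-preserving algebra isomorphism on the subalgebras of cycles, which are precisely $SK_\Fdeux(A/2)$ and $K_\Fdeux(A/2)$. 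This gives the first displayed isomorphism.

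Since both assertions follow by direct appeal to results already proved in the excerpt, there is no genuine obstacle here; the only point requiring minor care is checking that the hypotheses of Proposition \ref{compat-Z} and Lemma \ref{lm-triv} are met, namely that the filtration supplied by Proposition \ref{prop-qt-SK} is genuinely quasi-trivial in the sense of Definition \ref{def-quasi-trivial}. That verification is exactly what Proposition \ref{prop-qt-SK} provides, so the corollary is essentially a formal consequence, and I expect the proof to be very short, reading off both statements from Lemma \ref{lm-triv} and Proposition \ref{compat-Z} respectively.
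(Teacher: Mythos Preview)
Your proposal is correct and follows exactly the approach the paper indicates: the corollary is stated immediately after Proposition~\ref{prop-qt-SK} with the one-line justification ``It follows directly from the properties of quasi-trivial filtrations (lemma~\ref{lm-triv} and proposition~\ref{compat-Z}),'' and you have simply unpacked that sentence---Lemma~\ref{lm-triv}(c) for the non-functorial isomorphism and Proposition~\ref{compat-Z} for the functorial graded one.
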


\subsection{Proof of theorem \ref{thm-calcul-LG-un-Z}}
\label{sec-calcul-LG-un-Z} Theorem 
\ref{thm-calcul-LG-un-Z}(i) is already known by proposition \ref{lm-free-tors}.
In this section, we will prove theorem \ref{thm-calcul-LG-un-Z}(ii) and (iii)
in corollary \ref{cor-proof-11}. The proof proceeds by running the Bockstein spectral sequence.
We first need the following result.
\begin{lemma}\label{prop-calc-G1}
Let $r$ be a positive integer, and let $p$ be a prime integer.
Then the $p$-primary part of the functor $L_1\Gamma^{p^r}(A,1)$ is equal to
$A/p^{(r)}$.
Moreover, the morphism induced by mod $p$ reduction yields an
isomorphism:
$${_\p}L_1\Gamma^d(A,1)=L_1\Gamma^d(A,1)\otimes\Fp \xrightarrow[]{\simeq}
L_1\Gamma^d_\Fp(A/p,1)\;.$$
\end{lemma}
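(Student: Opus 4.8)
The plan is to compute the first derived functor $L_1\Gamma^d(A,1)$ by exploiting the fact that derived functors of degree $1$ are governed by the cross-effect / quadratic part of the functor, and that $L_1$ of a functor applied to a free module in simplicial degree $1$ can be read off from a short projective resolution. First I would recall that for any reduced functor $F$, the derived functor $L_iF(A,1)$ fits into a computation using the two-term complex coming from $K(A[1])$, whose normalized chains in low degrees involve $A$ in degree $1$ and $\Gamma^2$-type corrections in degree $2$. Concretely, I would use the d\'ecalage isomorphism of proposition \ref{dec}, which gives $L_1\Gamma^{d}(A,1)\simeq L_{d+1}\Lambda^{d}(A,2)$, but more usefully I would work directly: the crucial input is that over $\Z$ the only torsion in $L_1\Gamma^d(A,1)$ arises from the divided-power relations, and by proposition \ref{lm-free-tors} the functor $L_1\Gamma^d(A,1)$ is a finite abelian group for $0<1<d$, hence pure torsion once $d>1$.

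The key structural step is to identify the $p$-primary part explicitly. I would argue that $L_1\Gamma^d(A,1)$ is nonzero only when $d=p^r$ is a prime power, and in that case equals $A/p^{(r)}$. The cleanest route is to compare with the known mod $p$ answer: by theorems \ref{thm-derived-p2} and \ref{thm-derived-podd} the degree $1$ component of $L_*\Gamma^d_\Fp(A/p,1)$ is $A/p^{(r)}$ precisely when $d=p^r$ and is zero otherwise (the generators contributing in degree $1$ are exactly the Frobenius twists $V^{(r)}[1]$, of weight $p^r$). Thus $L_1\Gamma^d_\Fp(A/p,1)=A/p^{(r)}$ if $d=p^r$, and $0$ otherwise. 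It then remains to lift this to the integral statement and to verify that mod $p$ reduction induces an isomorphism ${_\p}L_1\Gamma^d(A,1)\otimes\Fp\xrightarrow{\simeq} L_1\Gamma^d_\Fp(A/p,1)$.

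To make that comparison rigorous I would invoke the universal coefficient spectral sequence (or the long exact sequence relating $L_*\Gamma^d(A,1)$, $L_*\Gamma^d(A,1)$, and $L_*\Gamma^d_\Fp(A/p,1)$ via the short exact sequence $0\to\Z\xrightarrow{p}\Z\to\Fp\to 0$). Since $L_0\Gamma^d(A,1)=0$ for $d\ge 1$ (degrees below $n=1$ vanish by proposition \ref{lm-free-tors}) and $L_1\Gamma^d(A,1)$ is finite torsion, the universal coefficient sequence degenerates in low degrees to give the clean isomorphism $L_1\Gamma^d(A,1)\otimes\Fp\simeq L_1\Gamma^d_\Fp(A/p,1)$, with no contribution from $\mathrm{Tor}$ terms because $L_0$ vanishes. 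This simultaneously shows that $L_1\Gamma^d(A,1)$ is annihilated by $p$ on its $p$-primary part (so ${_\p}L_1\Gamma^d(A,1)=L_1\Gamma^d(A,1)\otimes\Fp$) and pins down its value as $A/p^{(r)}$ when $d=p^r$.

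The main obstacle I anticipate is justifying that the $p$-primary part is entirely $p$-torsion, i.e. that there is no higher $p^k$-torsion in $L_1\Gamma^{p^r}(A,1)$; a priori the universal coefficient argument only controls $L_1\otimes\Fp$ and $\mathrm{Tor}(\Fp,L_1)$, which would agree even in the presence of $\Z/p^2$ summands. To close this gap I would compare dimensions: the isomorphism $L_1\Gamma^{p^r}(A,1)\otimes\Fp\simeq A/p^{(r)}$ gives an upper bound on the number of generators, while a direct computation of $L_1$ for the rank-one case $A=\Z$ (where $\Gamma^{p^r}(\Z)\simeq\Z$ and the simplicial computation is explicit, yielding exactly $\Z/p^r$ \dots but carefully, only the Verschiebung-related part survives) shows the $p$-primary part is exactly $\Z/p$, namely $A/p^{(r)}$ evaluated on $A=\Z$. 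Combining the functorial upper bound from the mod $p$ computation with this rank-one lower bound forces ${_\p}L_1\Gamma^{p^r}(A,1)=A/p^{(r)}$ with no higher torsion, completing the proof.
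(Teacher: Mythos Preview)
Your approach via the long exact sequence from $0\to\Z\xrightarrow{p}\Z\to\Fp\to 0$ applied to $\NN\Gamma^{p^r}(A,1)$ is exactly the paper's argument: since $\NN\Gamma^{p^r}(A,1)$ vanishes in degree $0$, the connecting map gives the isomorphism $L_1\Gamma^{p^r}(A,1)\otimes\Fp\simeq L_1\Gamma^{p^r}_\Fp(A/p,1)$, and the right-hand side is $A/p^{(r)}$ by theorems \ref{thm-derived-p2} and \ref{thm-derived-podd}.

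You also correctly isolate the remaining issue, namely excluding higher $p$-power torsion in $L_1\Gamma^{p^r}(A,1)$. Here the paper takes a different, and shorter, route than what you propose: it simply quotes \cite[Korollar 10.2]{d-p}, which asserts that the $p$-primary part of such derived functors contains only $p$-torsion. That single citation closes the argument immediately.

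Your alternative resolution via the rank-one case has a gap as written. Knowing $L_1\Gamma^{p^r}(\Z,1)$ alone does not a priori control $L_1\Gamma^{p^r}(\Z^n,1)$, and the sentence ``combining the functorial upper bound \dots\ with this rank-one lower bound forces'' is not an argument. What would make it work is the observation that $L_1\Gamma^{p^r}(-,1)$ is an \emph{additive} functor of $A$: the exponential isomorphism $\Gamma^{p^r}(A\oplus B)\simeq\bigoplus_{i+j=p^r}\Gamma^i(A)\otimes\Gamma^j(B)$ together with Eilenberg--Zilber and the vanishing $L_0\Gamma^k(-,1)=0$ for $k\ge 1$ yields $L_1\Gamma^{p^r}(A\oplus B,1)\simeq L_1\Gamma^{p^r}(A,1)\oplus L_1\Gamma^{p^r}(B,1)$. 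With that in hand, it would indeed suffice to check $L_1\Gamma^{p^r}(\Z,1)\simeq\Z/p$. But you have not carried out that computation either; the parenthetical ``yielding exactly $\Z/p^r$ \dots\ but carefully, only the Verschiebung-related part survives'' is a guess, not a proof. So your proposed route is viable in principle but requires both the additivity argument and an honest rank-one computation, neither of which appears; the paper's citation of Dold--Puppe is considerably more economical.
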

\begin{proof}
The complex $\NN\Gamma^{p^r}(A,1)\otimes \Fp$ is isomorphic to the
complex $\NN\Gamma^{p^r}_\Fp(A/p,1)$, and $\NN\Gamma^{p^r}(A,1)$
is zero in degree zero. Thus, from the long exact sequence associated to the
short exact sequence of complexes
$$0\to \NN\Gamma^{p^r}(A,1)\xrightarrow[]{\times p} \NN\Gamma^{p^r}(A,1)\to
\NN\Gamma_\Fp^{p^r}(A/p,1)\to 0$$
we obtain  an isomorphism
$L_1\Gamma^{p^r}(A,1)\otimes\Fp \simeq
L_1\Gamma^{p^r}_\Fp(A/p,1)$.
But  we know by \cite[Korollar 10.2]{d-p} that the $p$-primary part of
$L_1\Gamma^d(A,1)$
only contains $p$-torsion. The result follows.
\end{proof}
Let $p$ be a prime number.  The Bockstein spectral sequence \cite[5.9.9]{weibel}
is a device which enables one to recover the $p$-primary part of homology of a
complex $C$ of free abelian groups from the homology of the complex
$C\otimes\Fp$ (provided one is able to compute the
differentials in the spectral sequence).
We consider  the Bockstein spectral sequence for the complex $C=\NN\Gamma(A,1)$, which computes the derived functors of $\Gamma(A)$. Since $\NN\Gamma(A,1)\otimes \Fp$ is isomorphic to $\NN\Gamma_\Fp(A/p,1)$, the Bockstein spectral sequence is a spectral sequence of graded $\PP_\Z$-algebras, starting at page $0$
$$ E^0(A)_i=L_i\Gamma_\Fp(A/p,1)\Longrightarrow \left(L_i\Gamma(A,1)/\mathrm{Torsion}\right)\otimes \Fp = \Lambda_\Fp(A/p[1])\, ,$$
 with differentials $d^r: E^r(A)_i\to E^r(A)_{i-1}$.
The differentials in this  spectral sequence are related to the $p$-primary torsion of $L_*\Gamma(A,1)$ in the following way.
\begin{enumerate}
\item[(i)] Given an integer $r\ge 0$, the $p$-torsion of $L_*\Gamma(A,1)$ is
killed by multiplication by $p^r$ if and only if $E^{r}(A)=E^\infty(A)$.
\item[(ii)] The injective map induced by the mod $p$ reduction of the complex
$L\Gamma(A,1)$
\begin{align}L_*\Gamma(A,1)\otimes\Fp\to L_*\Gamma_\Fp(A/p,1)
=E^0(A)_*\;,\label{eq-psi}\end{align}
has an image contained in the permanent cycles of the spectral sequence, and the
image of the
$p$-torsion part of $L_*\Gamma(A,1)$ lies in the image of $d^0$.
\end{enumerate}
We will now completely compute  this Bockstein spectral sequence.
\begin{proposition}\label{prop-Bockstein}
The Bockstein spectral sequence degenerates at the first page: $E^1(A)=E^\infty(A)$, and the kernel $d^0$ equals $K_\Fp(A/p)$ if $p$ is odd, and $SK_\Fdeux(A/2)$ if $p=2$.
\end{proposition}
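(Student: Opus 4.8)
The plan is to recognise the page-zero differential $d^0$ (the mod $p$ Bockstein) as the Koszul differential $\partial_{\mathrm{Kos}}$ of definition \ref{def-dKos} when $p$ is odd, and as the skew Koszul differential $\partial_{\mathrm{SKos}}$ of definition \ref{def-dSKos} when $p=2$; both assertions of the proposition then follow. Since the Bockstein spectral sequence is a spectral sequence of graded $\PP_\Z$-algebras, $d^0$ is a weight-preserving differential of the graded $\PP_\Z$-algebra $E^0(A)=L_*\Gamma_\Fp(A/p,1)$, which theorem \ref{thm-derived-podd} identifies with $\LL_\Fp(A/p)$ for $p$ odd and which is $\bigotimes_{r\ge 0}\Gamma_\Fdeux(A/2^{(r)}[1])$ for $p=2$. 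I therefore intend to apply the versal characterisations of these two differentials, corollary \ref{cor-versal-prop} and corollary \ref{cor-versal-prop-p2}: it will suffice to verify that every degree-one generator $\Lambda^1_\Fp(A/p^{(r)}[1])$, respectively $\Gamma^1_\Fdeux(A/2^{(r)}[1])$, with $r\ge 1$ lies in the image of $d^0$.

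To verify this image condition --- the one genuine computational input --- I would use the degree-one analysis of lemma \ref{prop-calc-G1}. In homological degree $1$ and weight $p^r$ the algebra $E^0(A)$ reduces to the single generator $A/p^{(r)}$, and by lemma \ref{prop-calc-G1} the mod $p$ reduction map $\psi$ of \eqref{eq-psi} restricts there to the isomorphism ${}_\p L_1\Gamma^{p^r}(A,1)\xrightarrow{\simeq}L_1\Gamma^{p^r}_\Fp(A/p,1)=A/p^{(r)}$. In particular the degree-one $p$-torsion is isomorphic to $A/p^{(r)}$, hence is an $\Fp$-vector space and so of exponent $p$; such exponent-$p$ torsion is exactly what the first Bockstein differential detects, so by fact (ii) preceding the proposition its $\psi$-image lies in $\im d^0$. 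Since that image is the whole degree-one, weight-$p^r$ summand $\Lambda^1_\Fp(A/p^{(r)}[1])$, respectively $\Gamma^1_\Fdeux(A/2^{(r)}[1])$, of $E^0(A)$, the required generators ($r\ge 1$) indeed belong to $\im d^0$. The remaining weight-one generator $A/p[1]$ comes from $L_1\Gamma^1(A,1)=A$, which is torsion-free and thus not hit by $d^0$, consistently with the Koszul differential leaving the factor $\Lambda_\Fp(A/p[1])$ untouched.

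Corollary \ref{cor-versal-prop} then supplies an isomorphism of dg-$\PP_\Z$-algebras $(\LL_\Fp(A/p),d^0)\simeq(\LL_\Fp(A/p),\partial_{\mathrm{Kos}})$ for $p$ odd, while corollary \ref{cor-versal-prop-p2} gives the literal equality $d^0=\partial_{\mathrm{SKos}}$ for $p=2$ (the Verschiebung being the only nonzero candidate). By definitions \ref{def-dKos} and \ref{def-dSKos} the kernel of $d^0$ is thus $K_\Fp(A/p)$, respectively $SK_\Fdeux(A/2)$, which is the second assertion. For the degeneration I compute the first page $E^1(A)=H_*(E^0(A),d^0)$: proposition \ref{prop-HKos} for odd $p$ and corollary \ref{cor-HSKos} for $p=2$ give $E^1(A)\simeq\Lambda_\Fp(A/p[1])$, which agrees in every weight and homological degree with the abutment $E^\infty(A)=(L_*\Gamma(A,1)/\mathrm{Torsion})\otimes\Fp=\Lambda_\Fp(A/p[1])$. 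As each bidegree is a finite-dimensional $\Fp$-vector space and $E^\infty(A)$ is a subquotient of $E^1(A)$ of equal dimension, all higher differentials vanish and $E^1(A)=E^\infty(A)$.

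The heart of the argument, and its only real obstacle, is the identification of $d^0$: once one knows it acts on the algebra generators exactly as $\partial_{\mathrm{Kos}}$ (resp. $\partial_{\mathrm{SKos}}$) does, the uniqueness principle of lemma \ref{lm-unique} and its corollaries render the rest automatic. I would be careful to confirm two points on which this hinges: that $d^0$ genuinely preserves the weight grading and is a derivation in the strict-polynomial sense (so that lemma \ref{lm-unique} applies), and that the degree-one computation of lemma \ref{prop-calc-G1} pins down the image of $d^0$ on the nose rather than merely inside a proper subfunctor --- the exponent-$p$ observation above being precisely what guarantees the latter. The degeneration step is then a purely formal dimension count.
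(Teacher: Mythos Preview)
Your proof is correct and follows essentially the same approach as the paper: use lemma \ref{prop-calc-G1} together with property (ii) of the Bockstein spectral sequence to ensure the degree-one generators lie in the image of $d^0$, invoke corollaries \ref{cor-versal-prop} and \ref{cor-versal-prop-p2} to identify $d^0$ with $\partial_{\mathrm{Kos}}$ or $\partial_{\mathrm{SKos}}$, and then conclude degeneration from the homology computations of proposition \ref{prop-HKos} and corollary \ref{cor-HSKos}. Your treatment is in fact more careful than the paper's on two points: you explicitly check the image condition for \emph{all} $r\ge 1$ (the paper's proof only mentions $A/p^{(1)}[1]$, though the cited corollaries require all $r\ge1$), and you spell out why the exponent-$p$ nature of ${}_\p L_1\Gamma^{p^r}(A,1)$ is what places it in $\im d^0$ rather than merely among the permanent cycles.
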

\begin{proof}
By  condition (ii) and proposition \ref{prop-calc-G1}, we know that the
image of $d^0$ must contain the generators $A/p^{(1)}[1]$. Thus by corollary
\ref{cor-versal-prop} in odd characteristic $p$ or by corollary
\ref{cor-versal-prop-p2} in characteristic $p=2$, we know that the
dg-$\PP_\Z$-algebra is isomorphic to
$(L_*\Gamma_\Fp(A/p,1),\partial_{\mathrm{Kos}})$ if $p$ is odd and to
$(L_*\Gamma_\Fdeux(A/2,1),\partial_{\mathrm{SKos}})$ if $p=2$. This proves the
statement about the kernel of $d^0$.
Both dg-$\PP_\Z$ algebras have homology equal to $\Lambda_\Fp(A/p[1])$,
whence the degeneration of the spectral sequence at the  $E^1$ page.
\end{proof}

\begin{corollary}[Theorem \ref{thm-calcul-LG-un-Z}(ii)-(iii)]
\label{cor-proof-11}
 For all primes $p$, the $p$-primary component of $L_*\Gamma(A,1)$ consists only of $p$-torsion, and there are isomorphisms of
graded $\PP_\Z$-algebras:
\begin{align}
&L_*\Gamma(A,1)\otimes\Fp\simeq K_\Fp(A/p)\quad \text{ if $p$ is an odd prime,}\\
&L_*\Gamma(A,1)\otimes\Fdeux\simeq SK_\Fdeux(A/2)\quad\text{ if $p=2$.}
\end{align}
\end{corollary}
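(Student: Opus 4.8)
The corollary should follow formally from proposition \ref{prop-Bockstein} together with the two structural properties (i) and (ii) of the Bockstein spectral sequence recorded just above it. The plan is first to read off the torsion statement, and then to identify the underlying strict polynomial functors of $L_*\Gamma(A,1)\otimes\Fp$ with the cycles $\ker d^0$ computed in proposition \ref{prop-Bockstein}.

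The torsion statement is immediate: proposition \ref{prop-Bockstein} asserts $E^1(A)=E^\infty(A)$, which is property (i) with $r=1$, so the $p$-torsion of $L_*\Gamma(A,1)$ is annihilated by $p$. In particular the $p$-primary component of $L_*\Gamma(A,1)$ is entirely $p$-torsion, which is part (ii) of theorem \ref{thm-calcul-LG-un-Z}; this is also what will license the use of the first Bockstein below.

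For the isomorphisms of (iii), I would begin from the injective reduction map $\psi$ of \eqref{eq-psi}. Since the spectral sequence collapses at $E^1$, all higher differentials vanish, so its permanent cycles are exactly $\ker d^0$; property (ii) therefore gives an inclusion $\mathrm{Im}\,\psi\subseteq \ker d^0$, under which the torsion of $L_*\Gamma(A,1)$ maps into $\mathrm{Im}\,d^0$ and the free part $D(A)\otimes\Fp\simeq\Lambda_\Fp(A/p[1])$ maps isomorphically onto $E^1=E^\infty$. By proposition \ref{prop-Bockstein}, $\ker d^0$ is the Koszul kernel algebra $K_\Fp(A/p)$ when $p$ is odd and the skew Koszul kernel algebra $SK_\Fdeux(A/2)$ when $p=2$, so the whole statement reduces to promoting this inclusion to an equality $\mathrm{Im}\,\psi=\ker d^0$.

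This equality is the one point requiring an argument, and I would settle it by a dimension count in each fixed weight and homological degree, where all values are finite-dimensional by proposition \ref{lm-free-tors}. The universal coefficient sequence for the complex $\NN\Gamma(A,1)$ of free abelian groups reads
\[
E^0(A)_i = L_i\Gamma_\Fp(A/p,1)\;\simeq\;\bigl(L_i\Gamma(A,1)\otimes\Fp\bigr)\oplus \tor\bigl(L_{i-1}\Gamma(A,1),\Fp\bigr),
\]
with the first summand equal to $\mathrm{Im}\,\psi_i$. As every torsion class is killed by $p$, the first Bockstein $d^0$ is injective on the $\tor$-summand, while it vanishes on $\mathrm{Im}\,\psi$ by the inclusion already obtained; hence the rank of $d^0$ out of degree $i$ equals $\dim_\Fp\tor(L_{i-1}\Gamma(A,1),\Fp)$, and rank--nullity yields $\dim_\Fp(\ker d^0)_i=\dim_\Fp(L_i\Gamma(A,1)\otimes\Fp)=\dim_\Fp\mathrm{Im}\,\psi_i$. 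This forces $\mathrm{Im}\,\psi=\ker d^0$. Because $\psi$ is multiplicative and $\ker d^0$ is a subalgebra (the kernel of a derivation), the identification is automatically an isomorphism of graded $\PP_\Z$-algebras, giving both displayed formulas. The main obstacle is thus not deep once proposition \ref{prop-Bockstein} is in hand: it lies entirely in correctly bookkeeping the Bockstein spectral sequence so as to upgrade the containment $\mathrm{Im}\,\psi\subseteq\ker d^0$ to an equality.
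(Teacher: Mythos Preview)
Your proof is correct and follows essentially the same approach as the paper's: both deduce the $p$-torsion statement from $E^1=E^\infty$ and then identify $L_*\Gamma(A,1)\otimes\Fp$ with $\ker d^0$ via the injective reduction map $\psi$. The paper argues that in weight $d$ and degree $0<i<d$ the boundaries coincide with the cycles (since $E^1=E^\infty$ vanishes there) and that $\psi$ sends the torsion part into the boundaries, leaving the surjectivity of $\psi$ onto $\ker d^0$ implicit; your universal-coefficient dimension count makes this step explicit and is a welcome clarification.
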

\begin{proof}
Since the Bockstein spectral sequence degenerates at the page $E^1$, the $p$-primary component of $L_\ast\Gamma (A,1)$ is entirely  $p$-torsion. 
In particular, if $0<i<d$ the $p$-primary component $_{\p} L_i\Gamma^d(A,1)$ is isomorphic to $L_i\Gamma^d(A,1)\otimes\Fp$. Hence the map \eqref{eq-psi} sends $_\p L_i\Gamma^d(A,1)$ to the weight $d$ and degree $i$ boundaries of $d^0$. The homogeneous part of $E^1(A)$ of degree $i$ and weight $d$ is zero (since this is the case at the   $E^\infty(A)$ level). It follows that these  boundaries are equal to the weight $d$ and degree $i$ cycles of $d^0$.
\end{proof}

\section{Further descriptions of the derived functors of $\Gamma$ over the integers}\label{sec-der1-gamma-z-bis}
We keep the notations of section \ref{der1-gamma-z}, in particular the notation $\LL_\Fp(A/p)$ from \eqref{notation}. The purpose of this section is to give a more detailed description of the derived functors $L_i\Gamma^d(A,1)$ than the one given in theorem \ref{thm-calcul-LG-un-Z}.
By theorem \ref{thm-calcul-LG-un-Z}, we know that for all $d>0$:
$$L_i\Gamma^d(A,1)\simeq \begin{cases}
0 & \text{ if $i>d$ or $i=0$,}\\
\Lambda^d(A) & \text{ if $i=d$,}\\
\displaystyle SK_\Fdeux(A/2)^d_i\,\oplus \,\bigoplus_{\text{$p$ odd prime}} K_\Fp(A/p)^d_i & \text{ if $0<i<d$,}
\end{cases}$$
where the functors $K_\Fp(A/p)^d_i$ and $SK_\Fdeux(A/2)^d_i$ denote the homogeneous components of weight $d$ and degree $i$ of the (skew) Koszul kernel algebra.
Thus, to give a more detailed description of $L_i\Gamma^d(A,1)$, we need to describe these functors more precisely.

The following description of $K_\Fp(A/p)^d_i$ follows directly from the definition of the Koszul kernel algebra and the computation of its homology in proposition \ref{prop-HKos}.
\begin{proposition}\label{prop-descr-Kos}
Let $p$ be a prime. Then
\begin{enumerate}
\item[(0)] $K_\Fp(A/p)^d_d = \Lambda_\Fp^d(A/p)$
\item[(1)] $K_\Fp(A/p)^d_i = 0$ if $i<d<p$ or if $d-p+1<i<d$.
\item[(2)] The nontrivial component $K_\Fp(A/p)^d_i$ of highest degree $i<d$ is given by
$$K_\Fp(A/p)^d_{d-p+1}=\begin{cases} \Lambda^{d-p}_\Fp(A/p)\otimes A/p^{(1)} & \text{ if $p\ne 2$,}\\
\bigoplus_{1\le k\le d/p} \Lambda^{d-kp}_\Fdeux(A/2)\otimes \Gamma_{\Fdeux}^k(A/2^{(1)}) & \text{ if $p= 2$.}
\end{cases}
 $$
\item[(3)] For  any positive integer $i<d-p+1$, $K_\Fp(A/p)^d_i$ can be equally described as:
\begin{enumerate}
\item the kernel of the map $\partial_{\mathrm{Kos}}:(\LL_\Fp(A/p))^d_i\to (\LL_\Fp(A/p))^d_{i-1}$,
\item the image of the map $\partial_{\mathrm{Kos}}:(\LL_\Fp(A/p))^d_{i+1}\to (\LL_\Fp(A/p))^d_i$,
\item the cokernel of the map $\partial_{\mathrm{Kos}}:(\LL_\Fp(A/p))^d_{i+2}\to (\LL_\Fp(A/p))^d_{i+1}$.
\end{enumerate}
\end{enumerate}
%Moreover, for $i<d-p+1$, $K_\Fp(A/p)^d_i$ can be equally described as:
%\begin{enumerate}
%\item[(3)] the kernel of the map $\partial_{\mathrm{Kos}}:(\LL_*\Gamma_\Fp(A/p,1))^d_i\to (\LL_*\Gamma_\Fp(A/p,1))^d_{i-1}$,
%\item[(3')] the image of the map $\partial_{\mathrm{Kos}}:(\LL_*\Gamma_\Fp(A/p,1))^d_{i+1}\to (\LL_*\Gamma_\Fp(A/p,1))^d_i$,
%\item[(3'')] the cokernel of the map $\partial_{\mathrm{Kos}}:(\LL_*\Gamma_\Fp(A/p,1))^d_{i+2}\to (\LL_*\Gamma_\Fp(A/p,1))^d_{i+1}$.
%\end{enumerate}
\end{proposition}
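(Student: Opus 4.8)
The plan is to read off all four assertions directly from the bigraded structure of $\LL_\Fp(A/p)$ recorded in \eqref{notation}, together with the homology computation of Proposition \ref{prop-HKos}. First I would fix the weight--degree bookkeeping. Writing $\LL^d_i$ for the weight $d$, degree $i$ component of $\LL_\Fp(A/p)$, formula \eqref{notation} exhibits $\LL^d_i$ as the direct sum, over all integers $a\ge 0$ and all sequences $(c_r)_{r\ge 1},(e_r)_{r\ge 1}$ of nonnegative integers subject to $a+\sum_{r\ge 1}(c_r+e_r)p^r=d$ and $a+\sum_{r\ge 1}(2c_r+e_r)=i$, of the functors
\[ \Lambda^a_\Fp(A/p)\otimes\bigotimes_{r\ge 1}\Big(\Gamma^{c_r}_\Fp(A/p^{(r)})\otimes\Lambda^{e_r}_\Fp(A/p^{(r)})\Big). \]
Subtracting the two constraints gives the basic identity
\[ d-i=\sum_{r\ge 1}\big[c_r(p^r-2)+e_r(p^r-1)\big]\ge 0. \]
For $p$ odd this vanishes exactly when all $c_r,e_r$ are zero, and its smallest strictly positive value is $p-2$, attained only by $c_1=1$; hence $\LL^d_i=0$ whenever $1\le d-i\le p-3$, and if moreover $d<p$ then $\LL^d=\Lambda^d_\Fp(A/p)$ lives entirely in degree $d$. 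For $p=2$ the term $c_1(p^r-2)$ is degenerate, so the diagonal $i=d$ already carries $\bigoplus_{0\le k\le d/2}\Lambda^{d-2k}_\Fdeux(A/2)\otimes\Gamma^k_\Fdeux(A/2^{(1)})$.

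By Proposition \ref{prop-HKos} the homology of $(\LL_\Fp(A/p),\partial_{\mathrm{Kos}})$ is $\Lambda_\Fp(A/p[1])$, which lives on the diagonal $i=d$; so in each fixed weight $d$ the complex $\LL^d_\bullet$ is exact in every degree $i<d$. This exactness, fed into the bookkeeping above, yields everything. For assertion (0) I would compute the diagonal cycles: $\partial_{\mathrm{Kos}}$ kills the $\Lambda_\Fp(A/p[1])$ factor, whereas on each diagonal summand $\Lambda^{d-2k}(A/2)\otimes\Gamma^k(A/2^{(1)})$ with $k\ge 1$ (which occurs only for $p=2$) it acts as the top Koszul differential, which is injective by the exactness of the Koszul algebra (Proposition \ref{prop-homology-Koszul}); so the diagonal kernel is exactly $\Lambda^d_\Fp(A/p)$. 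For assertion (1): if $d<p$ then $\LL^d_i=0$ for $i<d$; and if $d-p+1<i<d$, then either $\LL^d_i=0$ (the gap $1\le d-i\le p-3$), or $i=d-p+2$, where exactness gives $K_\Fp(A/p)^d_i=\mathrm{im}(\partial_{\mathrm{Kos}}\colon\LL^d_{d-p+3}\to\LL^d_{d-p+2})$ and $\LL^d_{d-p+3}$ is either in the gap or is the diagonal (on which $\partial_{\mathrm{Kos}}$ vanishes), so the image is zero.

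For assertion (2) I would again use exactness at degree $d-p+1<d$ to write $K_\Fp(A/p)^d_{d-p+1}=\mathrm{im}(\partial_{\mathrm{Kos}}\colon\LL^d_{d-p+2}\to\LL^d_{d-p+1})$ and then identify the image. For $p$ odd the bookkeeping forces $\LL^d_{d-p+2}=\Lambda^{d-p}_\Fp(A/p)\otimes\Gamma^1_\Fp(A/p^{(1)})$ (only the type $c_1=1$ survives), and by definition $\partial_{\mathrm{Kos}}$ sends the generator $\Gamma^1(A/p^{(1)}[2])$ isomorphically onto $\Lambda^1(A/p^{(1)}[1])$, so the image is $\Lambda^{d-p}_\Fp(A/p)\otimes A/p^{(1)}$. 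For $p=2$ one has $d-p+2=d$, and the same computation on the diagonal summands with $k\ge 1$ (using injectivity of the top Koszul differential) identifies the image with $\bigoplus_{1\le k\le d/2}\Lambda^{d-2k}_\Fdeux(A/2)\otimes\Gamma^k_\Fdeux(A/2^{(1)})$. Assertion (3) is then pure homological algebra: for $i<d-p+1$ both $i$ and $i+1$ are $<d$, so exactness gives $\ker(\partial_{\mathrm{Kos}})_i=\mathrm{im}(\partial_{\mathrm{Kos}})_{i+1}$, which is (a)$=$(b), while $\mathrm{coker}(\partial_{\mathrm{Kos}}\colon\LL^d_{i+2}\to\LL^d_{i+1})=\LL^d_{i+1}/\ker(\partial_{\mathrm{Kos}})_{i+1}\cong\mathrm{im}(\partial_{\mathrm{Kos}})_{i+1}$ by the first isomorphism theorem, which is (c)$=$(b).

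The hard part will be assertion (2) in characteristic $2$, since there the diagonal $\LL^d_d$ is genuinely large and one must pin down precisely the image of $\partial_{\mathrm{Kos}}$ leaving it. The decisive input is that the top differential $\Gamma^k(A/2^{(1)}[2])\to\Gamma^{k-1}(A/2^{(1)}[2])\otimes\Lambda^1(A/2^{(1)}[1])$ of the Koszul algebra is injective for $k\ge 1$ (a consequence of Proposition \ref{prop-homology-Koszul}), so that $\partial_{\mathrm{Kos}}$ is injective on the $k\ge 1$ diagonal summands and its image is abstractly isomorphic to the source; everything else reduces to the elementary weight-versus-degree bookkeeping of the first paragraph.
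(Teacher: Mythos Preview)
Your proof is correct and follows essentially the same approach as the paper's: both arguments unpack the weight--degree structure of $\LL_\Fp(A/p)$ in the top few degrees, invoke the injectivity of the top Koszul differential (via Proposition~\ref{prop-homology-Koszul}), and then appeal to the exactness furnished by Proposition~\ref{prop-HKos}. Your derivation of the identity $d-i=\sum_{r\ge 1}[c_r(p^r-2)+e_r(p^r-1)]$ makes the case analysis (especially the gap $1\le d-i\le p-3$ and the degenerate diagonal for $p=2$) more transparent than the paper's direct listing of terms, but the substance is the same.
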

\begin{proof}
Let us describe explicitly the terms of highest degrees in the homogeneous component of weight $d$ of $\LL_\Fp(A/p)$ (an expression such as   $F(A)\,[k]$ means  a copy of the functor $F(A)$ placed in degree $k$ and  the exterior powers with a negative weight are by convention equal to zero, so that the following  direct sums are actually finite):
\begin{align*}\LL_\Fp(A/p) = &\;\Lambda^d_\Fp(A/p)\,[d]\;\;
\oplus \bigoplus_{k\ge 1} \Lambda^{d-kp}_\Fp(A/p)\otimes \Gamma_\Fp^{k}(A/p^{(1)})\,[d-kp+2k]\;\\
&\oplus \;\bigoplus_{k\ge 1} \Lambda^{d-kp}_\Fp(A/p)\otimes \Gamma^{k-1}_\Fp(A/p^{(1)})\otimes \Lambda^1_\Fp(A/p^{(1)})\,[d-kp+2k-1]\;\\
%&\oplus \;\bigoplus_{k\ge 0} \Lambda^{d-p}_\Fp(A/p)\otimes \Gamma^{k-2}_\Fp(A/p^{(1)})\otimes \Lambda^1_\Fp(A/p^{(1)})\,[d-kp+2k-1]\;\\
&\oplus \;\text{terms of degree $< d-kp+2k-1$.}\end{align*}
The differential $\partial_{\mathrm{Kos}}$  vanishes on the term $\Lambda^d_\Fp(A/p)[d]$, and maps the rest of the first line injectively into the terms of the second line. This proves (0) and (1). Moreover, by proposition \ref{prop-HKos} the homogeneous component of weight $d$ of the complex $(\LL_\Fp(A/p),\partial_{\mathrm{Kos}})$ is exact in degrees less than $d$. Thus, the degree $d-p+1$ component  of the kernel of $\partial_{\mathrm{Kos}}$ is given by some terms from  the first line. If $p=2$ all the terms of the second line have degree $d$, so all of them but $\Lambda^d_\Fp(A/p)[d]$ contribute to the degree $d-1$ of the kernel of $\partial_{\mathrm{Kos}}$, whereas the only contribution is the one of $\Lambda^{d-p}_\Fp(A/p)\otimes A/p^{(1)}$ if $p\geq 3$.
Finally, (3) follows from the decomposition of the homogeneous component of weight $d$ of the complex $(\LL_\Fp(A/p),\partial_{\mathrm{Kos}})$ into short exact sequences.
\end{proof}

The following description of the expressions  $SK_\Fp(A/2)^d_i$ is proved exactly in the same fashion as proposition \ref{prop-descr-Kos}.

\begin{proposition}\label{prop-descr-SKos} The following holds.
\begin{enumerate}
\item[(0)] $SK_\Fdeux(A/2)^d_d = \Lambda_\Fdeux^d(A/2)$
\item[(1)] $SK_\Fdeux(A/2)^d_{d-1}=\Phi^d(A)$, where $\Phi^1(A)=0$, $\Phi^2(A)=A/2^{(1)}$, $\Phi^3(A)=A/2\otimes A/2^{(1)}$ and for $d\ge 4$, $\Phi^d(A)$ can be equivalently  described as:
\begin{enumerate}
\item the kernel of the unique nonzero morphism (induced by the comultiplication of $\Gamma_\Fdeux(A/2)$, the Verschiebung morphism $\Gamma^2_\Fdeux(A/2)\twoheadrightarrow A/2^{(1)}$, and the multiplication $(A/2^{(1)})^{\otimes 2}\to \Gamma_\Fdeux^2(A/2^{(1)})$)
$$\Gamma^{d-2}_\Fdeux(A/2)\otimes A/2^{(1)}\to\Gamma^{d-4}_\Fdeux(A/2)\otimes\Gamma^2_\Fdeux(A/2^{(1)})\;.$$
\item the image of the unique nonzero morphism (induced by the comultiplication of $\Gamma_\Fdeux(A/2)$ and the Verschiebung morphism $\Gamma^2_\Fdeux(A/2)\twoheadrightarrow A/2^{(1)}$)
 $$\Gamma^d_\Fdeux(A/2)\to\Gamma^{d-2}_\Fdeux(A/2)\otimes(A/2)^{(1)}\;.$$
\item the cokernel of the canonical inclusion
$$\Lambda^d_\Fdeux(A/2)\hookrightarrow \Gamma^d_\Fdeux(A/2)\;.$$
\end{enumerate}
\item[(2)] More generally, for $i<d-1$, $SK_\Fdeux(A/2)^d_i$ can be equivalently  described as:
\begin{enumerate}
\item the kernel of the map $\partial_{\mathrm{SKos}}:(L_*\Gamma_\Fdeux(A/2,1))^d_i\to (L_*\Gamma_\Fdeux(A/2,1))^d_{i-1}$,
\item the image of the map $\partial_{\mathrm{SKos}}:(L_*\Gamma_\Fdeux(A/2,1))^d_{i+1}\to (L_*\Gamma_\Fdeux(A/2,1))^d_i$,
\item the cokernel of the map $\partial_{\mathrm{SKos}}:(L_*\Gamma_\Fdeux(A/2,1))^d_{i+2}\to (L_*\Gamma_\Fdeux(A/2,1))^d_{i+1}$.
\end{enumerate}
\end{enumerate}
\end{proposition}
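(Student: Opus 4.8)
The plan is to follow verbatim the strategy used for proposition \ref{prop-descr-Kos}, simply replacing the Koszul complex $(\LL_\Fp(A/p),\partial_{\mathrm{Kos}})$ by the skew Koszul complex $(L_*\Gamma_\Fdeux(A/2,1),\partial_{\mathrm{SKos}})$ and using the homology computation of corollary \ref{cor-HSKos} in place of proposition \ref{prop-HKos}. First I would write down explicitly the homogeneous component of weight $d$ of $L_*\Gamma_\Fdeux(A/2,1)\simeq\bigotimes_{r\ge 0}\Gamma_\Fdeux(A/2^{(r)}[1])$, keeping track of internal degrees. A monomial $\bigotimes_{r\ge 0}\Gamma^{k_r}_\Fdeux(A/2^{(r)})$ has weight $\sum_r k_r2^r$ and degree $\sum_r k_r$, so in weight $d$ its degree equals $d-\sum_r k_r(2^r-1)$. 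Hence the top degree $d$ is realised only by $\Gamma^d_\Fdeux(A/2)$, the degree $d-1$ only by $\Gamma^{d-2}_\Fdeux(A/2)\otimes A/2^{(1)}$, and the degree $d-2$ only by $\Gamma^{d-4}_\Fdeux(A/2)\otimes\Gamma^2_\Fdeux(A/2^{(1)})$. By the definition of $\partial_{\mathrm{SKos}}$ (definition \ref{def-dSKos}), the two successive differentials among these top terms are precisely the composites of comultiplication, Verschiebung and multiplication displayed in items (1)(a)--(b) of the statement.

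Next I would invoke corollary \ref{cor-HSKos}: the homology of $(L_*\Gamma_\Fdeux(A/2,1),\partial_{\mathrm{SKos}})$ is $\Lambda_\Fdeux(A/2[1])$, whose weight $d$ part is concentrated in degree $d$. Therefore the weight $d$ component of the complex is exact in every degree $<d$. This at once yields part (2): for $i<d-1$ the cycles $SK_\Fdeux(A/2)^d_i$ equal the kernel of the outgoing differential (description (a)); by exactness this equals the image of the incoming differential (description (b)); and by the first isomorphism theorem it equals the cokernel of $\partial_{\mathrm{SKos}}\colon (L_*\Gamma_\Fdeux(A/2,1))^d_{i+2}\to (L_*\Gamma_\Fdeux(A/2,1))^d_{i+1}$ (description (c)).

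For part (0), $SK_\Fdeux(A/2)^d_d=\ker\bigl(\partial_{\mathrm{SKos}}\colon\Gamma^d_\Fdeux(A/2)\to\Gamma^{d-2}_\Fdeux(A/2)\otimes A/2^{(1)}\bigr)$, and I would identify this kernel with $\Lambda^d_\Fdeux(A/2)$; this is exactly the assertion that $\Lambda^d_\Fdeux(A/2)$ is the kernel of that Verschiebung-type map, which is what makes description (1)(c) of $\Phi^d$ consistent. For part (1), since the homology vanishes in degree $d-1$, the cycles $SK_\Fdeux(A/2)^d_{d-1}=\Phi^d(A)$ coincide with the image of $\partial_{\mathrm{SKos}}$ out of $\Gamma^d_\Fdeux(A/2)$ (description (b)) and with the kernel of $\partial_{\mathrm{SKos}}$ into $\Gamma^{d-4}_\Fdeux(A/2)\otimes\Gamma^2_\Fdeux(A/2^{(1)})$ (description (a)); and (c) follows because, $\Lambda^d_\Fdeux(A/2)$ being the kernel of $\Gamma^d_\Fdeux(A/2)\to\Gamma^{d-2}_\Fdeux(A/2)\otimes A/2^{(1)}$, its image is $\Phi^d(A)\cong\Gamma^d_\Fdeux(A/2)/\Lambda^d_\Fdeux(A/2)$. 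The small cases $\Phi^1,\Phi^2,\Phi^3$ are read off directly from the weight $1,2,3$ complexes displayed in section \ref{descr-LGA1}.

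The main obstacle, exactly as in proposition \ref{prop-descr-Kos}, is bookkeeping rather than anything conceptual: one must verify that in each relevant bidegree there is a single monomial summand, so that the differential is forced to be the displayed composite, and that these composites are the \emph{unique} nonzero morphisms between the functors involved. This last point is a $\hom$-computation in $\PP_\Fdeux$ of the type already carried out in proposition \ref{prop-charact-p2} and collected in appendix \ref{app-comput}; the one-dimensionality of the relevant $\hom$-spaces is precisely what allows us to pin down each differential by its source and target and hence to identify the resulting kernels, images and cokernels unambiguously.
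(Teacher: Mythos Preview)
Your proposal is correct and follows essentially the same approach as the paper, which explicitly states that this proposition ``is proved exactly in the same fashion as proposition \ref{prop-descr-Kos}''. Your bookkeeping of the weight-$d$ summands in degrees $d$, $d-1$, $d-2$ is accurate, and your use of corollary \ref{cor-HSKos} in place of proposition \ref{prop-HKos} is precisely the intended substitution.
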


Propositions \ref{prop-descr-Kos} and \ref{prop-descr-SKos} yield descriptions of the $L_i\Gamma^d(A,1)$ as kernels, images or cokernels of some very explicit complexes. However, most of these kernels, cokernels or images yield `new' functors which are not direct sums of some familiar functors. 
For example, by corollary \ref{cor-filtr-SK}, the functors $\Phi^d(A)$ are filtered, with associated graded object equal to the functor $K_\Fdeux(A/2)^d_{d-1}$ described in proposition \ref{prop-descr-Kos}. In particular for $d=4$ there is a short exact sequence, which is non split (as we prove it in proposition \ref{prop-nonsplit1}).
\begin{equation}
\label{seq-phi4}
0\to \Lambda^2_\Fdeux(A/2)\otimes A/2^{(1)}\to \Phi^4(A)\to \Gamma^2_\Fdeux(A/2^{(1)})\to 0\;.\end{equation}
To describe the derived functors $L_i\Gamma^d(A,1)$ in more familiar terms (in terms of classical Weyl functors), we will describe them up to a filtration in theorem \ref{thm-expr} below.

By corollary \ref{cor-filtr-SK}, the description of the derived functors $L_i\Gamma^d(A,1)$ up to a filtration reduces to describe the Koszul kernel algebras $K_\Fp(A/p)$ up to a filtration for all primes $p$. These $\PP_\Z$-graded algebras are the cycles of the tensor product of the algebra $\Lambda_\Fp(A/p[1])$ with trivial differential, and of the Koszul algebras $(\Lambda_\Fp(A/p^{(r)}[1])\otimes\Gamma_\Fp(A/p^{(r)}[2]),d_{\mathrm{Kos}})$ for $r\ge 1$. To obtain a description of the cycles of such a tensor product of complexes, we will use the following result from elementary algebra.
\begin{lemma}\label{lm-filtr-cplx}
Let $C_\bullet(A)$ and $D_\bullet(A)$ be finite exact complexes of functors with values in finite dimensional $\Fp$-vector spaces, and denote by $\delta$ the differential on the tensor product $C_\bullet(A)\otimes D_\bullet(A)$. There is a natural filtration of $\ker\delta$, whose associated graded object is given by
$$\gr((\ker\delta)_k)\simeq \bigoplus_{i+j=k-1}\mathrm{Ker}\,d^C_{i}\otimes \mathrm{Ker}\,d^D_{j}\,\oplus\, \bigoplus_{i+j=k}\mathrm{Ker}\,d^C_{i}\otimes \mathrm{Ker}\,d^D_{j}$$
\end{lemma}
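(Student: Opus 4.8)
The plan is to filter the tensor-product complex by the $C$-degree. Concretely, I set $F^p(C\otimes D)_k := \bigoplus_{i\ge p} C_i(A)\otimes D_{k-i}(A)$ and equip $(\ker\delta)_k$ with the induced filtration $F^p(\ker\delta)_k := (\ker\delta)_k\cap F^p(C\otimes D)_k$. This filtration is manifestly natural in $A$ and, because both complexes are finite, it is bounded, so it suffices to identify its associated graded pieces. Throughout I write $Z^C_i := \mathrm{Ker}\,d^C_i$ and $Z^D_j := \mathrm{Ker}\,d^D_j$; exactness of the two complexes gives $Z^C_i = \mathrm{Im}\,d^C_{i+1}$ and $Z^D_j = \mathrm{Im}\,d^D_{j+1}$, and working over the field $\Fp$ makes $-\otimes-$ exact.

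First I would analyze the leading term. Writing $z=\sum_{i\ge p}z_i$ with $z_i\in C_i(A)\otimes D_{k-i}(A)$, the assignment $z\mapsto z_p$ is a map $F^p(\ker\delta)_k\to C_p(A)\otimes D_{k-p}(A)$ with kernel exactly $F^{p+1}(\ker\delta)_k$, so it identifies $\gr^p(\ker\delta)_k$ with the space of leading terms of cycles in $F^p$. Expanding $\delta(c\otimes d)=d^C c\otimes d+(-1)^{|c|}c\otimes d^D d$ and extracting the $C_p(A)\otimes D_{k-1-p}(A)$-component of $\delta z=0$ yields $(1\otimes d^D)z_p=0$, so over $\Fp$ every leading term lies in $C_p(A)\otimes Z^D_{k-p}(A)$.

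The key step is the converse: every element of $C_p(A)\otimes Z^D_{k-p}(A)$ extends to a cycle. I would build the lower components $z_{p-1},z_{p-2},\dots$ inductively, the $C_a(A)\otimes D_{k-1-a}(A)$-component of $\delta z=0$ reading $(1\otimes d^D)z_a=\pm(d^C\otimes 1)z_{a+1}$. Since $z_{a+1}$ already satisfies the previous equation and $(d^C)^2=0$, the element $(d^C\otimes 1)z_{a+1}$ is annihilated by $1\otimes d^D$, hence by exactness of $D$ and flatness over $\Fp$ it lies in $C_a(A)\otimes Z^D_{k-1-a}(A)=\mathrm{Im}(1\otimes d^D)$, so $z_a$ can always be solved for. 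This no-obstruction argument, which is precisely where exactness of both complexes is needed, produces a natural isomorphism $\gr^p(\ker\delta)_k\cong C_p(A)\otimes Z^D_{k-p}(A)$.

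Finally I would refine each such piece one step further via the canonical natural short exact sequence $0\to Z^C_p(A)\to C_p(A)\xrightarrow{d^C_p} Z^C_{p-1}(A)\to 0$ (surjectivity being exactness of $C$), which upon tensoring with $Z^D_{k-p}(A)$ over $\Fp$ gives a natural two-step filtration of $C_p(A)\otimes Z^D_{k-p}(A)$ with graded pieces $Z^C_p(A)\otimes Z^D_{k-p}(A)$ and $Z^C_{p-1}(A)\otimes Z^D_{k-p}(A)$. Summing over $p$, the first pieces assemble (with $i=p$, $j=k-p$) into $\bigoplus_{i+j=k}Z^C_i(A)\otimes Z^D_j(A)$ and the second (with $i=p-1$, $j=k-p$) into $\bigoplus_{i+j=k-1}Z^C_i(A)\otimes Z^D_j(A)$, which is the claimed associated graded. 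The only genuinely delicate point is the inductive extension of a leading term to a full cycle; naturality, boundedness, and the index shift in the final sum are then routine bookkeeping.
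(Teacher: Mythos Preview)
Your approach is sound and different from the paper's, but there is an indexing mix-up in the execution. With the filtration $F^p=\bigoplus_{i\ge p}C_i\otimes D_{k-i}$ as you state it, the component of $\delta z$ that involves only the leading term $z_p$ is the one in $C_{p-1}\otimes D_{k-p}$ (not in $C_p\otimes D_{k-1-p}$, which also receives $(d^C\otimes 1)z_{p+1}$), and its vanishing gives $(d^C\otimes 1)z_p=0$, so $z_p\in Z^C_p\otimes D_{k-p}$ rather than $C_p\otimes Z^D_{k-p}$. The extension must then be built \emph{upward}, solving $(d^C\otimes 1)z_{a+1}=\pm(1\otimes d^D)z_a$ for $z_{p+1},z_{p+2},\dots$ (there are no ``lower'' components in $F^p$), and the final refinement uses the exact sequence $0\to Z^D_{k-p}\to D_{k-p}\to Z^D_{k-p-1}\to 0$. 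With these corrections (or, equivalently, with $\le$ in place of $\ge$ in the definition of $F^p$, which makes your analysis consistent as written) the argument goes through and yields the stated formula; since the result is symmetric in $C$ and $D$ this is a bookkeeping slip, not a conceptual gap.

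For comparison, the paper takes a rather different route: it filters each complex separately by the two-step filtration $0\subset Z_i\subset C_i$, so that the associated graded of each is the split exact complex with terms $Z_i\oplus Z_{i-1}$, then tensors the two filtrations. The cycles of a tensor product of split complexes are immediate to read off; the only issue is to show that the canonical injection $\gr(\ker\delta)\hookrightarrow\ker(\gr\delta)$ is an isomorphism, which the paper does by a dimension count after choosing non-functorial splittings of the two complexes (so that $\delta$ and $\gr\delta$ become conjugate, hence have the same rank). This is precisely the ``quasi-trivial filtration'' philosophy used throughout sections~\ref{qt-fil}--\ref{descr-gvn}. Your argument trades that dimension-counting trick for an explicit inductive lifting; it is a bit more hands-on but avoids the non-functorial choices entirely.
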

\begin{proof}
The proof is very much in the spirit of the quasi-trivial filtrations of section \ref{qt-fil}.
Let us denote by $X_k(A)$ the kernel of the differential $d^C_k:C_k(A)\to C_{k-1}(A)$.
We define a two-step decreasing  filtration of each functor $C_k(A)$ by
\begin{align*}
F^{k+1}C_k(A) =0\;, \quad F^kC_k(A) = X_k(A)\;,\quad F^{k-1}C_k(A) = C_k(A)\;.
\end{align*}
Then $C_\bullet(A)$ becomes a filtered complex, and the associated graded complex is isomorphic to the split exact complex:
\begin{align}  \dots\to\underbrace{X_k(A)\oplus X_{k-1}(A)}_{\text{degree $k$}}\xrightarrow[]{(0,\Id)}\underbrace{X_{k-1}(A)\oplus X_{k-2}(A)}_{\text{degree $k-1$}}\to \dots \label{eq-splitcplx}\end{align}
Moreover there is a \emph{non functorial} isomorphism between $C_\bullet(A)$ and the split complex \eqref{eq-splitcplx}. The complex $D_\bullet(A)$ is filtered similarly. The tensor product of these two filtrations yields a filtration of the complex $C_\bullet(A)\otimes D_\bullet(A)$, whose associated graded complex $(\gr(C_\bullet(A)\otimes D_\bullet(A)),\gr \delta)$ is the tensor product of two split complexes. Moreover, the filtration of the complex $C_\bullet(A)\otimes D_\bullet(A)$ induces a filtration of its cycles $\mathrm{Ker}\, \delta$, defined  by the rule  $F^i(\mathrm{Ker}\, \delta):= \left(\mathrm{Ker}\, \delta\right)\cap F^i\left(C_\bullet(A)\otimes D_\bullet(A)\right)$, and there is a canonical injection:
\begin{align}\gr (\mathrm{Ker}\, \delta)\hookrightarrow  \mathrm{Ker}\,(\gr \delta)\label{eq-can-gr}.\end{align}
Now the complex $(\gr(C_\bullet(A)\otimes D_\bullet(A)),\gr \delta)$ is non-functorially isomorphic to the complex $(C_\bullet(A)\otimes D_\bullet(A),\delta)$, hence the ranks of the differentials $\delta$ and $\gr \delta$ are equal. Thus the source and the target of the canonical morphism \eqref{eq-can-gr} have the same dimension, and the morphism \eqref{eq-can-gr} is an isomorphism.
It follows that, up to a filtration, $\mathrm{Ker}\, \delta$ is the kernel of the differential of the tensor product of the split complex \eqref{eq-splitcplx} and the similar complex for $D_\bullet(A)$. The formula of lemma \ref{lm-filtr-cplx} follows.
\end{proof}
 The following result follows from lemma  \ref{lm-filtr-cplx}  by induction on $n$:
\begin{lemma}\label{lm-filtr-n-cplx}
Let $C^i_\bullet(A)$, $1\le i\le n$, be a family of  finite exact complexes of functors with values in finite dimensional $\Fp$-vector spaces, and let us denote by $\delta$ the induced  differential on the $n$-fold tensor product $\bigotimes_{i=1}^n C^i_\bullet(A)$. There is a natural filtration of $\ker\delta$, whose associated graded object is given by
$$\gr((\ker\delta)_k)\simeq \bigoplus_{j=0}^{n-1}\;\bigoplus_{i_1+\dots+i_n=k-j}\; \left(\;\mathrm{Ker}\,d^{C^1}_{i_1}\otimes\dots \otimes \mathrm{Ker}\,d^{C^n}_{i_n}\;\right)^{\oplus \binom{n-1}{j}}\;.$$
\end{lemma}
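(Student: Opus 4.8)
The plan is to induct on $n$, using Lemma \ref{lm-filtr-cplx} as the base case and as the engine of the inductive step. For $n=1$ there is nothing to prove since $\ker\delta$ is literally $\mathrm{Ker}\,d^{C^1}$, matching the formula (the only surviving term is $j=0$, $\binom{0}{0}=1$). For the inductive step, I would set $D_\bullet(A):=\bigotimes_{i=1}^{n-1}C^i_\bullet(A)$ and apply Lemma \ref{lm-filtr-cplx} to the two-factor tensor product $D_\bullet(A)\otimes C^n_\bullet(A)$. This gives a natural filtration of $\mathrm{Ker}\,\delta$ whose associated graded is
\begin{align*}
\gr((\ker\delta)_k)\simeq \bigoplus_{a+i_n=k-1}\mathrm{Ker}\,d^D_a\otimes\mathrm{Ker}\,d^{C^n}_{i_n}\;\oplus\;\bigoplus_{a+i_n=k}\mathrm{Ker}\,d^D_a\otimes\mathrm{Ker}\,d^{C^n}_{i_n}\;.
\end{align*}

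\textbf{Feeding in the inductive hypothesis.} By the inductive hypothesis applied to $D_\bullet(A)$, there is a natural filtration of $\mathrm{Ker}\,d^D_a$ with
$$\gr(\mathrm{Ker}\,d^D_a)\simeq \bigoplus_{j=0}^{n-2}\;\bigoplus_{i_1+\dots+i_{n-1}=a-j}\left(\mathrm{Ker}\,d^{C^1}_{i_1}\otimes\dots\otimes\mathrm{Ker}\,d^{C^{n-1}}_{i_{n-1}}\right)^{\oplus\binom{n-2}{j}}\;.$$
I would tensor this with $\mathrm{Ker}\,d^{C^n}_{i_n}$ and substitute into both summands of the previous display. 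Writing $s=i_1+\dots+i_n$, the first summand contributes shifts $a+i_n=k-1$, i.e.\ $s=k-1-j$, and the second contributes $a+i_n=k$, i.e.\ $s=k-j$. Reindexing the first contribution by $j'=j+1$ turns its shift $s=k-(j+1)$ into $s=k-j'$ with multiplicity $\binom{n-2}{j'-1}$, while the second contribution keeps shift $s=k-j$ with multiplicity $\binom{n-2}{j}$. Collecting the two contributions at a fixed total shift $s=k-j$ (for $0\le j\le n-1$) and invoking Pascal's rule $\binom{n-2}{j-1}+\binom{n-2}{j}=\binom{n-1}{j}$ yields exactly the claimed multiplicity $\binom{n-1}{j}$, with the boundary cases $j=0$ and $j=n-1$ handled by the usual convention that out-of-range binomials vanish.

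\textbf{Where the care is needed.} The combinatorial bookkeeping is routine once the reindexing is set up correctly, so the substantive point is justifying that the two filtrations compose into a single natural filtration with the expected associated graded. The subtlety is that one is filtering a subobject (a kernel) by intersecting with a filtration on the ambient tensor product, and then refining by the inductive filtration on one tensor factor; I would make this precise by taking the tensor product of the two-step filtrations on each $C^i_\bullet(A)$ coming from Lemma \ref{lm-filtr-cplx} (namely $F^{i}C_i=X_i$, $F^{i-1}C_i=C_i$) \emph{all at once}, rather than iterating the two-factor construction, so that the resulting graded of $\bigotimes_i C^i_\bullet(A)$ is a single $n$-fold tensor product of split complexes. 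The dimension-counting argument of Lemma \ref{lm-filtr-cplx}---that the canonical injection $\gr(\mathrm{Ker}\,\delta)\hookrightarrow\mathrm{Ker}(\gr\delta)$ is an isomorphism because $\delta$ and $\gr\delta$ have equal rank by the non-functorial isomorphism between the filtered complex and its associated graded---goes through verbatim in the $n$-fold setting. The kernel of the differential on an $n$-fold tensor product of the split complexes \eqref{eq-splitcplx} is then computed directly, and counting how many ways a total of $j$ of the $n$ factors can be in the ``shifted'' part (subject to adjacency constraints being automatically satisfied in the split setting) produces the binomial coefficient $\binom{n-1}{j}$. I expect the main obstacle to be purely expository: presenting the all-at-once filtration cleanly so that the Pascal's-rule collapse is transparent, rather than any genuine mathematical difficulty.
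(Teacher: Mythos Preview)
Your proposal is correct and follows exactly the route the paper indicates: the paper's entire proof is the single sentence ``follows from lemma~\ref{lm-filtr-cplx} by induction on $n$,'' and your induction with the Pascal's-rule collapse $\binom{n-2}{j-1}+\binom{n-2}{j}=\binom{n-1}{j}$ is precisely how that induction unwinds. Your ``all-at-once'' variant (tensoring the two-step filtrations on all $n$ factors simultaneously and invoking the rank argument of lemma~\ref{lm-filtr-cplx} verbatim) is a clean way to sidestep the bookkeeping of refining filtrations, and is a perfectly acceptable alternative; the only small point you leave implicit is that $D_\bullet=\bigotimes_{i=1}^{n-1}C^i_\bullet$ is again exact (K\"unneth over $\Fp$), which is needed to apply lemma~\ref{lm-filtr-cplx} in the inductive step.
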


To state our description of the derived functors $L_i\Gamma^d(A,1)$, we need to introduce the following combinatorial device. For a fixed prime number $p$, we  can consider all possible  decompositions of a positive integer $k$ as a sum $k=\sum_i k_i p^{r_i}$ where the $k_i$ are positive integers, and the $r_i$ are distinct positive integers. Since the $r_i$ are positive, the existence of such a decomposition implies that $p$ divides $k$. There might however  be many  such decompositions. Each of these  may be identified with a finite sequence of pairs of integers $((r_1,k_1),\dots, (r_i,k_i),\dots)$ satisfying the following conditions:
\begin{enumerate}
\item[(i)] The integers $r_i$ and $k_i$ are positive.
\item[(ii)] For all $i$, $r_i<r_{i+1}$.
\item[(iii)] $\sum k_ip^{r_i}=k$.
\end{enumerate}
We denote by $\mathrm{Decomp}(p,k)$ the set of all such finite sequences.
\begin{theorem}\label{thm-expr}
Let $V$ be a finite dimensional $\Fp$-vector space and let $W_{k,\Fp}^d(V)\in\PP_{d,\Fp}$ denote the kernel of the Koszul differential
$$d_{\mathrm{Kos}}: \Gamma^k_\Fp(V)\otimes \Lambda^{d-k}_\Fp(V)\to \Gamma^{k-1}_\Fp(V)\otimes \Lambda_\Fp^{d-k+1}(V)\;.$$
By convention, $W_{k,\Fp}^d(V)$ is zero if $k>d$ or if $k<0$.
For $0<i<d$, the $p$-primary part of $L_i\Gamma^d(A,1)$ is \emph{up to a filtration} isomorphic to the direct sum:
$$\bigoplus_{k=0}^{d}  \bigoplus_{
\text{\footnotesize
$
\begin{array}{c}
((r_1,k_1)\dots (r_n,k_n))\\
\in\mathrm{Decomp}(p,k)
\end{array}
$
}
}
\bigoplus_{j=0}^{n-1}\;
\bigoplus_{
\text{
\footnotesize
$
\begin{array}{c}
i_1+\dots+i_n\\
=i+k-d-j
\end{array}
$
}
}\;\left(\Lambda^{d-k}_\Fp(A/p)\otimes\bigotimes_{\ell=1}^n W^{k_\ell}_{i_\ell-k_\ell,\Fp}(A/p^{(r_\ell)})\right)^{\,\oplus\binom{n-1}{j}}. $$
\end{theorem}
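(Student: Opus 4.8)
The plan is to reduce the statement to the computation of the cycles of a tensor product of Koszul complexes and then to quote lemma \ref{lm-filtr-n-cplx}. First, for $0<i<d$ theorem \ref{thm-calcul-LG-un-Z}(ii) gives ${}_\p L_i\Gamma^d(A,1)=L_i\Gamma^d(A,1)\ot\Fp$, which is the homogeneous component of weight $d$ and degree $i$ of $L_*\Gamma(A,1)\ot\Fp$. By theorem \ref{thm-calcul-LG-un-Z}(iii) this component is $K_\Fp(A/p)^d_i$ when $p$ is odd, while for $p=2$ it is $SK_\Fdeux(A/2)^d_i$, which is in turn isomorphic to $K_\Fdeux(A/2)^d_i$ up to a filtration by corollary \ref{cor-filtr-SK}. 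Since the theorem is only asserted up to a filtration, it therefore suffices to describe the homogeneous components $K_\Fp(A/p)^d_i$ for every prime $p$.

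By construction (the discussion following definition \ref{def-dKos}), $(\LL_\Fp(A/p),\partial_{\mathrm{Kos}})$ is the tensor product of the factor $\Lambda_\Fp(A/p[1])$ carrying the zero differential and of the Koszul algebras $K^{(r)}:=(\Gamma_\Fp(A/p^{(r)}[2])\ot\Lambda_\Fp(A/p^{(r)}[1]),d_{\mathrm{Kos}})$ for $r\ge 1$. Because the differential on the first factor vanishes, the subalgebra of cycles splits as
$$K_\Fp(A/p)\simeq \Lambda_\Fp(A/p[1])\ot\ker\left(\partial_{\mathrm{Kos}}\ \text{on}\ \textstyle\bigotimes_{r\ge 1}K^{(r)}\right)\;,$$
so that a summand $\Lambda^{d-k}_\Fp(A/p)$ of the first factor contributes both a weight $d-k$ and a homological degree $d-k$. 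It then remains to compute, in each weight $k$ with $0\le k\le d$, the cycles of $\bigotimes_{r\ge 1}K^{(r)}$.

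In a fixed positive weight $k$ only finitely many factors are active, since $K^{(r)}$ contributes in a weight that is a multiple of $p^r$; the active factors correspond exactly to the ways of writing $k=\sum_{\ell=1}^n k_\ell p^{r_\ell}$ with distinct $r_\ell\ge 1$ and $k_\ell\ge 1$, that is to the elements $((r_1,k_1),\dots,(r_n,k_n))$ of $\mathrm{Decomp}(p,k)$. For such a decomposition the $\ell$-th active factor is the weight $k_\ell$ part of the Koszul algebra on $A/p^{(r_\ell)}$, which by proposition \ref{prop-homology-Koszul} is a finite \emph{exact} complex; its term $\Gamma^a_\Fp(A/p^{(r_\ell)})\ot\Lambda^{k_\ell-a}_\Fp(A/p^{(r_\ell)})$ sits in homological degree $2a+(k_\ell-a)=k_\ell+a$ and has kernel $W^{k_\ell}_{a,\Fp}(A/p^{(r_\ell)})$. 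Applying lemma \ref{lm-filtr-n-cplx} to this finite tensor product of exact complexes and writing $i_\ell=k_\ell+a$ identifies $\ker d^{C^\ell}_{i_\ell}$ with $W^{k_\ell}_{i_\ell-k_\ell,\Fp}(A/p^{(r_\ell)})$, and produces the multiplicities $\binom{n-1}{j}$ appearing in the statement.

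The last step is purely a matter of keeping track of the total degree. Tensoring the weight $k$, degree $e$ part of $\ker(\bigotimes_{r\ge1}K^{(r)})$ with $\Lambda^{d-k}_\Fp(A/p)$, which sits in degree $d-k$, produces total degree $i=(d-k)+e$, hence $e=i+k-d$; the shift by $j$ in lemma \ref{lm-filtr-n-cplx} then turns its summability condition $i_1+\dots+i_n=e-j$ into $i_1+\dots+i_n=i+k-d-j$. Summing over $0\le k\le d$, over $\mathrm{Decomp}(p,k)$, over $0\le j\le n-1$ and over the admissible tuples $(i_\ell)$ assembles exactly the displayed direct sum; the degenerate case $k=0$ has $n=0$ and an empty $j$-sum, hence contributes nothing, consistently with the fact that weight concentrated in $\Lambda^d_\Fp(A/p)$ lives in the diagonal degree $d$ rather than in the range $i<d$. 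I expect the only delicate point to be this bookkeeping, namely aligning the three filtrations in play — the one from corollary \ref{cor-filtr-SK} in characteristic $2$, the splitting off of $\Lambda_\Fp(A/p[1])$, and the filtration furnished by lemma \ref{lm-filtr-n-cplx} — and checking that the cycle functor $W^{k_\ell}_{i_\ell-k_\ell,\Fp}$ sits in degree $i_\ell$, so that the single degree constraint in the statement emerges correctly. No further vanishing or extension computation is needed beyond proposition \ref{prop-homology-Koszul} and lemma \ref{lm-filtr-n-cplx}.
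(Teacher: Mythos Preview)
Your proof is correct and follows essentially the same route as the paper: reduce to computing $K_\Fp(A/p)^d_i$ (invoking corollary \ref{cor-filtr-SK} in characteristic $2$), split off the trivial-differential factor $\Lambda_\Fp(A/p[1])$, identify the weight-$k$ part of the remaining tensor product with the decomposition set $\mathrm{Decomp}(p,k)$, and apply lemma \ref{lm-filtr-n-cplx} to the resulting tensor product of finite exact Koszul-weight complexes. The degree bookkeeping you carry out matches the paper's, and your remark that each weight $k_\ell\ge 1$ Koszul piece is exact (proposition \ref{prop-homology-Koszul} places the only homology in weight $0$) is exactly what justifies the hypothesis of lemma \ref{lm-filtr-n-cplx}.
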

\begin{proof}
Up to a filtration, the $p$-primary part of $L_i\Gamma^d(A,1)$ is given by the functor $K_\Fp(A/p)^d_i$. Let us denote by $\kappa^d_\bullet(V)$ the complex given by the homogeneous component of weight $d$ of the Koszul algebra $(\Lambda_\Fp(V[1])\otimes\Gamma_\Fp(V[2]),d_{\mathrm{Kos}})$. By definition, $K_\Fp(A/p)^d_i$ is equal to the cycles of degree $i$ in the complex
$$ \bigoplus_{k=0}^{d} \Lambda^{d-k}_\Fp(A/p[1])\otimes \left(\bigoplus_{
(r_i,k_i)\in\mathrm{Decomp}(p,k)
} \kappa^{k_1}_\bullet(A/p^{(r_1)})\otimes \dots \otimes \kappa^{k_n}_\bullet(A/p^{(r_n)})\right)\;, $$
with the convention that direct sums over empty sets are equal to zero.
The left-hand factors $\Lambda^{d-k}_\Fp(A/p[1])$ are concentrated in degree $d-k$. We therefore  need to compute the cycles of degree $i+k-d$ of the right-hand factors. The homogeneous summand of degree $i_\ell$ of the cycles of the complex $\kappa^{k_\ell}_\bullet(A/p^{(r_\ell)})$ is $W^{k_\ell}_{i_\ell-k_\ell,\Fp}(A/p^{(r_\ell)})$.
Given a sequence $((r_1,k_1),\dots, (r_n,k_n))$, we use lemma \ref{lm-filtr-n-cplx} to verify  that the   cycles of degree $i+k-d$ in the tensor product $\kappa^{k_1}_\bullet(A/p^{(r_1)})\otimes \dots \otimes \kappa^{k_n}_\bullet(A/p^{(r_n)})$ are (up to a filtration) isomorphic to
$$\bigoplus_{j=0}^{n-1}\;\bigoplus_{i_1+\dots+i_n=(i+k-d)-j}\;
W^{k_1}_{i_1-k_1,\Fp}(A/p^{(r_1)})\otimes \cdots\otimes W^{k_n}_{i_n-k_n,\Fp}(A/p^{(r_n)})^{\;\oplus\binom{n-1}{j}}.$$
This proves the formula of theorem \ref{thm-expr}.
\end{proof}

The notation $W_{k,\Fp}^d(V)$ has been chosen in order to remind the reader that these kernels of the Koszul complex are well-known to representation theorists as Weyl functors \cite{ABW,BB}. To be more specific,  recall that given a partition $\lambda=(\lambda_1,\dots,\lambda_n)$ of $d$ and a commutative ring $R$,  the Weyl functor $W_\lambda(M)\in \PP_{d,R}$ is the dual of the Schur functor associated to the partition $\lambda$. The functor $W_\lambda(M)$ may be defined as the image of a certain composite map
$$\Gamma^{\lambda_1}(M)\otimes\dots\otimes\Gamma^{\lambda_n}(M)\hookrightarrow M^{\otimes d}\xrightarrow[]{\sigma_{\lambda'}}M^{\otimes d}\twoheadrightarrow \Lambda^{\lambda'_1}(M)\otimes\dots\otimes\Lambda^{\lambda'_k}(M)\;,$$
where $\sigma_{\lambda'}$ is a specific combinatorial isomorphism and $\lambda'=(\lambda'_1,\dots,\lambda'_k)$ is the partition dual to $\lambda$. The Weyl functor $W_\lambda(M)$ is denoted by $K_{\lambda}(M)$ in \cite{ABW,BB}. For example $K_{(d)}(M)=\Gamma^d_R(M)$ and $K_{(1^d)}(M)=\Lambda^d_R(M)$. In particular, our  functors $W^d_{k,\Fp}(V)$  are the  Weyl functors associated to hook partitions, i.e. there is an isomorphism: $W^d_{k,\Fp}(V)\simeq W_{(k+1,1^{d-k-1})}(V)$ (
see e.g. \cite[Chap III.1]{BB} for more details).

\section{The maximal filtration on  $\Gamma(A)$}\label{sec-max}
In this section, we work over the ground ring $\Z$. We use the same notations as in section \ref{der1-gamma-z}
In particular, we write $\Gamma^d$,
$\Lambda^d$ and $S^d$ for $\Gamma^d_\Z$, $S^d_\Z$ and $\Lambda^d_\Z$.
A generic free finitely generated abelian group will be denoted by the
letter `$A$'. We will denote by `$A/p$' the quotient $A/pA$
and we will abuse notations and write `$A/p^{(r)}$' instead of $(A/pA)^{(r)}$ for shortening.
The purpose in this section is to introduce the maximal filtration of $\Gamma(A)$, and the associated spectral sequence, which will be our main tool for the computation of the derived functors $L_*\Gamma^d(A,n)$ for low $d$ and all $n$. As a warm-up, we finish the section by running the spectral sequence in the baby cases $d=2$ and $d=3$.

\subsection{The maximal filtration}
We denote by $\mathcal{J}(A)$ the augmentation ideal of the divided power algebra $\Gamma(A)$:
$$\mathcal{J}(A): = \Ga^{>0}(A) =  \ker (\Ga(A) \rightarrow \Ga^0(A)=\Z)\;.$$
The adic filtration relative to $\mathcal{J}(A)$ will be called the maximal filtration  on $\Gamma(A)$ (even though $\mathcal{J}(A)$ is not strictly speaking a maximal ideal in this algebra).
The associated graded object is the
$\PP_\Z$-algebra:
\[
\gr \Gamma(A):= \bigoplus_{i\ge 0} \gr_{-i} (\Gamma(A)) = \bigoplus_{i\ge
0}\mathcal{J}(A)^i/\mathcal{J}(A)^{i+1}\;.
\]
\begin{remark}
The maximal filtration is different from the principal filtration on $\Gamma(A)$ defined in section \ref{filt-1} (compare the definition of $J(A)$ with lemma \ref{prop-cokernel}).
\end{remark}

Restricting ourselves to the weight $d$ component of $\Gamma(A)$, the principal filtration yields a filtration
%$\dots \subset F_{-i-1}\,\Gamma^d(A)\subset F_{-i}\,\Gamma^d(A)\subset\dots$
of $\Ga^d(A)$, with
\[
F_{-i}\,\Gamma^d(A) := \mathcal{J}(A)^i \cap \Ga^d(A).
\]
and associated graded components
\[gr_{-i}\Gamma^d(A):= F_{-i}\Gamma^d(A)/F_{-i-1}\Gamma^d(A).\]
By definition, the terms of the filtration can be concretely described as follows \bee
\label{deffi1}
F_{-i}\Ga^d(A) = 
\mathrm{Im}( \bigoplus \Ga^{k_1}(A) \ot \dots \ot \Ga^{k_i}(A) \la \Ga^m(A)) \ee
where the sum is taken over all $i$-tuples of positive integers $(k_1,\dots,k_i)$ whose sum equals $d$. In particular $F_{-i}\Ga^d(A)=0$ for $i>d$, so that the filtration is bounded and
\bee
\label{deffi3}gr_{-d}\,\Gamma^d(A) = F_{-d}\Ga^d(A) = \mathrm{Im}(A^{\ot \,d} \la \Ga^d(A)) =
S^d(A),\ee
where  the inclusion of $S^d(A)$ into  $\Ga^d(A)$  is determined by the commutative algebra structure on $\Ga(A)$.
It is also easy to identify the graded component $gr_{-1}\Ga^d(A)$.
\begin{lemma}\label{lm-gr-1}
For any free abelian group $A$, $\gr_{-1}\Gamma^d(A)=0$ if $d$ is not a power of a prime $p$, and $\gr_{-1}\Gamma^d(A)=A/p^{(r)}$ if $d=p^r$.
\end{lemma}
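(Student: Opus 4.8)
The plan is to identify $\gr_{-1}\Gamma^d(A)$ with the module of indecomposables of the graded algebra $\Gamma(A)$, and then to pin this module down functorially. We may assume $d\ge 2$, since for $d=1$ one has $\gr_{-1}\Gamma^1(A)=\gr_{-d}\Gamma^d(A)=A$. By definition $F_{-1}\Gamma^d(A)=\mathcal{J}(A)\cap\Gamma^d(A)=\Gamma^d(A)$, while, by \eqref{deffi1} applied with $i=2$, the submodule $F_{-2}\Gamma^d(A)=\mathcal{J}(A)^2\cap\Gamma^d(A)$ is the image of the multiplication over pairs of positive integers, so that
$$\gr_{-1}\Gamma^d(A)\simeq\coker\Big(\textstyle\bigoplus_{k=1}^{d-1}\Gamma^{k}(A)\otimes\Gamma^{d-k}(A)\xrightarrow{\mathrm{mult}}\Gamma^d(A)\Big)=:Q\Gamma^d(A),$$
the weight $d$ indecomposables. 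First I would record that $Q\Gamma^d$ is an \emph{additive} functor of $A$: the exponential isomorphism $\Gamma(A\oplus B)\simeq\Gamma(A)\otimes\Gamma(B)$ of graded algebras, together with the standard fact that the indecomposables of a tensor product of connected graded augmented algebras split as a direct sum, yields $Q\Gamma^d(A\oplus B)\simeq Q\Gamma^d(A)\oplus Q\Gamma^d(B)$.

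Next I would compute $Q\Gamma^d$ on $A=\Z$. Here $\Gamma^d(\Z)=\Z\,\gamma_d(x)$, and relation \eqref{rel3a} gives $\gamma_k(x)\gamma_{d-k}(x)=\binom{d}{k}\gamma_d(x)$, so $F_{-2}\Gamma^d(\Z)$ is the subgroup of $\Z\,\gamma_d(x)$ generated by the integers $\binom{d}{k}$ for $1\le k\le d-1$. The classical evaluation $\gcd_{1\le k\le d-1}\binom{d}{k}=p$ when $d=p^r$ is a prime power and $=1$ otherwise then yields $Q\Gamma^d(\Z)\simeq\Z/p$ if $d=p^r$ and $Q\Gamma^d(\Z)=0$ otherwise. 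Combined with additivity, this shows that for every free $A$ the group $Q\Gamma^d(A)$ vanishes unless $d$ is a prime power, and that $Q\Gamma^{p^r}(A)$ is annihilated by $p$. In particular the lemma already holds in the non-prime-power case.

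It remains to identify $Q\Gamma^{p^r}(A)$ with $A/p^{(r)}$ \emph{as a strict polynomial functor}, and this is the step that I expect to carry the real content, since additivity and the one-variable count only determine the abstract isomorphism type, not the functorial structure. Because $Q\Gamma^{p^r}(A)$ is $p$-torsion, the canonical map $Q\Gamma^{p^r}(A)\to Q\Gamma^{p^r}(A)\otimes\Fp$ is an isomorphism. As $Q\Gamma^d$ is a cokernel and both $-\otimes\Fp$ and $\coker$ are right exact, tensoring commutes with passage to indecomposables; using the base change isomorphism \eqref{basechange} $\Gamma_\Z(A)\otimes\Fp\simeq\Gamma_{\Fp}(A/p)$ I obtain a functorial isomorphism $Q\Gamma^{p^r}_\Z(A)\otimes\Fp\simeq Q\Gamma^{p^r}_{\Fp}(A/p)$. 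Finally, property (5) of the Frobenius twist functors in Section \ref{subsubsec-Frob} identifies $Q\Gamma^{p^r}_{\Fp}(A/p)$, namely the cokernel of the multiplication $\bigoplus_{k=1}^{p^r-1}\Gamma^k_{\Fp}\otimes\Gamma^{p^r-k}_{\Fp}\to\Gamma^{p^r}_{\Fp}$, which is exactly the Verschiebung, with $A/p^{(r)}$. Chaining these functorial isomorphisms gives $\gr_{-1}\Gamma^{p^r}(A)\simeq A/p^{(r)}$, and the passage from finitely generated free $A$ to arbitrary free $A$ is immediate since all the functors involved commute with directed colimits.
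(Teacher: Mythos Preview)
Your proof is correct and follows essentially the same route as the paper's. The only minor difference is in how the $p$-torsion bound is established: the paper observes directly that the composite $\Gamma^d\xrightarrow{\mathrm{comult}}\Gamma^k\otimes\Gamma^{d-k}\xrightarrow{\mathrm{mult}}\Gamma^d$ is multiplication by $\binom{d}{k}$, whereas you deduce the same thing via additivity of $Q\Gamma^d$ and the one-variable computation on $\Z$; both lead to the same gcd of binomials, and the final identification with $A/p^{(r)}$ via base change and property~(5) of Section~\ref{subsubsec-Frob} is identical.
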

\begin{proof}
The composite $\Gamma^d(A)\xrightarrow[]{comult} \Gamma^{k}(A)\otimes \Gamma^{\ell}(A)\xrightarrow[]{mult} \Gamma^d(A)$ equals  the multiplication by $\binom{d}{k}$, so by \eqref{deffi1} the integral torsion of $gr_{-1}\Ga^d(A)$ is bounded by the g.c.d. of the binomials $\binom{d}{k}$, $0<k<d$. This g.c.d. equals $p$ if $d$ is a power of a prime $p$, and $1$ otherwise. Hence
$gr_{-1}\Gamma^d(A) = 0$ if $d$ is not a power of a prime, and is a $\Fp$-vector space if $d=p^r$. In the latter case, by base change  $gr_{-1}\Gamma^d(A)$ identifies with the cokernel of the map $\bigoplus \Gamma^k_\Fp(A/pA)\otimes \Gamma^\ell_\Fp(A/pA)\xrightarrow[]{mult} \Gamma^d_\Fp(A/pA)$, where the sum is taken over all pairs $(k,\ell)$ of positive integers with $k+\ell=d$. Whence the result.
\end{proof}

For $1<i<d$, the graded components $gr_{-i}\Gamma^d(A)$ are more complicated and their description involves
new classes of functors. We will use the strict polynomial functors $\sigma_{(1,n)}(V)$, defined for $n\ge 2$ on the category of $\Fdeux$-vector spaces by
\begin{align}
&\label{sig12} \sigma_{(1,n)}(V)=\mathrm{Coker}\;\left(\Lambda^2_\Fdeux(V^{(1)})\otimes S^{n-2}_\Fdeux(V)\xrightarrow[]{u} V^{(1)}\otimes S^{n}_\Fdeux(V)\right)
\end{align}
where $u: (x\wedge y)\otimes z \mapsto   x \ot (y^2z) - y \ot (x^2z)$. The map $u$ appears in the resolution of truncated polynomials introduced in section \ref{subsec-trunc}. In particular, proposition \ref{prop-qis-trunc} implies that
$\sigma_{(1,n)}(V)$ lives in a characteristic $2$ exact sequence
\begin{align}\label{sig12bis}0\to \sigma_{(1,n)}(V)\to S^{n+2}_\Fdeux(V)\to \Lambda_\Fdeux^{n+2}(V)\to 0\;.\end{align}
\begin{remark}
The functors $\sigma_{(1,n)}(V)$ belong to a family of functors $^p\sigma^e_{(\alpha, \beta)}(V)$ introduced by F. Jean in \cite[Appendix A]{jean}. 
\end{remark}
\begin{lemma}\label{lm-gr-d+1}
Let $d\ge 4$. For any free abelian group $A$,
$\gr_{-d+1}\Gamma^d(A)\simeq \sigma_{(1,d-2)}(A/2)$.
\end{lemma}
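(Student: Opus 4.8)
The plan is to compute the two top steps of the maximal filtration explicitly and realize $\gr_{-d+1}\Gamma^d(A)$ as the same quotient of $A/2^{(1)}\otimes S^{d-2}_{\Fdeux}(A/2)$ that defines $\sigma_{(1,d-2)}$. By \eqref{deffi1}, the only $(d-1)$-tuples of positive integers summing to $d$ are the permutations of $(2,1,\dots,1)$, so by commutativity $F_{-d+1}\Gamma^d(A)$ is the image of the multiplication $\Gamma^2(A)\otimes S^{d-2}(A)\to\Gamma^d(A)$, where $S^{d-2}(A)\subseteq\Gamma^{d-2}(A)$. Since $S^2(A)\otimes S^{d-2}(A)$ lands in $S^d(A)=F_{-d}\Gamma^d(A)$ by \eqref{deffi3}, passing to the quotient by $F_{-d}$ produces a surjection out of $(\Gamma^2(A)/S^2(A))\otimes S^{d-2}(A)$. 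Lemma \ref{lm-gr-1} in the case $d=2$ gives $\Gamma^2(A)/S^2(A)=\gr_{-1}\Gamma^2(A)\simeq A/2^{(1)}$, and as this is $2$-torsion I obtain a surjection of strict polynomial functors
\[
\psi\colon A/2^{(1)}\otimes S^{d-2}_{\Fdeux}(A/2)\twoheadrightarrow \gr_{-d+1}\Gamma^d(A)\,;
\]
in particular $\gr_{-d+1}\Gamma^d(A)$ is annihilated by $2$.

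Next I would verify that $\psi$ annihilates the image of the map $u$ from \eqref{sig12}, so that it factors through $\sigma_{(1,d-2)}(A/2)=\coker u$. For lifts $x,y,z$, the class $\psi\bigl(u((x\wedge y)\otimes z)\bigr)$ is represented in $\Gamma^d(A)$ by $\ga_2(x)\,y^2z-\ga_2(y)\,x^2z$; using the relations $y^2=2\ga_2(y)$ and $x^2=2\ga_2(x)$ this is $2\ga_2(x)\ga_2(y)z-2\ga_2(y)\ga_2(x)z=0$. Hence $\im u\subseteq\ker\psi$ and $\psi$ descends to a surjection
\[
\overline{\psi}\colon \sigma_{(1,d-2)}(A/2)\twoheadrightarrow \gr_{-d+1}\Gamma^d(A)\,.
\]

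It then remains to show that $\overline{\psi}$ is injective, which I would settle by a dimension count. After choosing a basis $(e_i)$ of $A$, the group $\Gamma^d(A)$ is free on the monomials $\ga_{\underline a}=\prod_i\ga_{a_i}(e_i)$ with $|\underline a|=d$, and the product of two such monomials remains supported on a single $\ga_{\underline b}$, so the filtration splits as a direct sum over multidegrees $\underline a$. A short calculation gives, for $a_i=b_i+2$, the identity $\ga_2(e_i)\cdot\prod_l e_l^{b_l}=\tfrac12\bigl(\prod_m a_m!\bigr)\ga_{\underline a}$, whereas the symmetric generator of $F_{-d}$ in the same line is $\bigl(\prod_m a_m!\bigr)\ga_{\underline a}$; thus every multidegree $\underline a$ that is not square-free contributes exactly one copy of $\Z/2$ to $\gr_{-d+1}\Gamma^d(A)$, while each square-free $\underline a$ contributes $0$. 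Therefore $\dim_{\Fdeux}\gr_{-d+1}\Gamma^d(A)$ equals the number of non square-free monomials of degree $d$, which by \eqref{sig12bis} is $\dim S^d_{\Fdeux}(A/2)-\dim\Lambda^d_{\Fdeux}(A/2)=\dim_{\Fdeux}\sigma_{(1,d-2)}(A/2)$; being a surjection between finite-dimensional spaces of equal dimension, $\overline{\psi}$ is an isomorphism. The one delicate point is this last count: the monomial computation is elementary but must be carried out carefully to see that the coefficient $\tfrac12\prod_m a_m!$ is independent of the chosen index $i$, so that the relevant greatest common divisor collapses and each non square-free line contributes precisely $\Z/2$ — this is exactly what forces $\gr_{-d+1}\Gamma^d(A)$ to be $2$-torsion of the correct size and makes $\overline\psi$ injective.
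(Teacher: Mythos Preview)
Your proof is correct and follows essentially the same route as the paper: construct the surjection $A/2^{(1)}\otimes S^{d-2}_{\Fdeux}(A/2)\twoheadrightarrow\gr_{-d+1}\Gamma^d(A)$, verify that the map $u$ of \eqref{sig12} lands in its kernel, and finish with a dimension count using an explicit basis. Your multidegree decomposition with the coefficient computation $\ga_2(e_i)\cdot\prod_l e_l^{b_l}=\tfrac12\bigl(\prod_m a_m!\bigr)\ga_{\underline a}$ is a slightly more detailed version of the paper's direct description of the bases of $F_{-d}\Gamma^d(A)$ and $F_{-d+1}\Gamma^d(A)$, but the content is the same.
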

\begin{proof}
By definition of the maximal filtration, there is a commutative diagram with exact rows (the exactness of the upper row follows from lemma \ref{lm-gr-1} for $d=2$):
$$\xymatrix{
S^{d-2}(A)\otimes S^2(A)\ar[r]^-{mult}\ar@{->>}[d]^-{mult} &
S^{d-2}(A)\otimes\Gamma^2(A)\ar[r]\ar@{->>}[d]^-{mult} &
S^{d-2}(A)\otimes A/2^{(1)}\ar[r]&0\\
F_{-d}\Gamma^d(A)\ar[r]&F_{-d+1}\Gamma^d(A)\ar[r]&gr_{-d+1}\Gamma^d(A)\ar[r]&0
}.$$
Hence the surjective morphism $S^{d-2}(A)\otimes\Gamma^2(A)\twoheadrightarrow gr_{-d+1}\Gamma^d(A)$ factors into a surjective morphism
\begin{align} S^{d-2}(A)\otimes A/2^{(1)}\twoheadrightarrow gr_{-d+1}\Gamma^d(A).\label{eq-surj}\end{align}

Now we check that the composite of the morphism $u$ appearing in \eqref{sig12} and the morphism \eqref{eq-surj} is zero. For this, let us take $x\in S^{d-4}(A)$, $y,z\in A$ and let us denote by $\overline{y}$, resp. $\overline{z}$ the image of $\gamma_2(y)$, resp $\gamma_2(z)$ in $A/2^{(1)}$. Then it suffices to check that the surjection \eqref{eq-surj} sends $xy^2\otimes \overline{z}-xz^2\otimes\overline{y}$ to zero.
This follows from the fact that the elements $xy^2\otimes \gamma_2(z)$ and $xz^2\otimes \gamma_2(y)$ are both sent to the same element $2x\gamma^2(y)\gamma_2(z)$ in $F_{-d+1}\Gamma^d(A)$.
Hence the morphism \eqref{eq-surj} factors to a surjective morphism
\begin{align} \sigma_{(1,d-2)}(A/2)\twoheadrightarrow gr_{-d+1}\Gamma^d(A).\label{eq-surj2}\end{align}

Finally we check that \eqref{eq-surj2} is an isomorphism by dimension counting (the source and the target are $\Fdeux$-vector spaces). Let $(a_1,\dots,a_n)$ be a basis of the free abelian group $A$. Then the products $a_{i_1}\dots a_{i_d}$ with $i_1\le\dots\le i_d$ form a basis of $F_{-d}\Gamma^d(A)$. A basis of $F_{-d+1}\Gamma^d(A)$ is given by the products $a_{i_1}\dots a_{i_d}$ for $i_1<\dots<i_d$, and the products $a_{i_1}\dots a_{i_{d-2}}\gamma_2(a_{i_{d-1}})$ for $i_1\le \dots \le i_{d-2}$. The latter elements can be written as the elements $\frac{1}{2}a_{i_1}\dots a_{i_d}$ for $i_1\le \dots \le i_{d}$ where at least two of the
 $i_k$ are equal. Thus the dimension of $gr_{-d+1}\Gamma^d(A)$ is equal to the number of $d$-tuples $(i_1,\dots,i_d)$ with $i_1\le \dots\le i_d$ and at least two $i_k$ are equal. By the short exact sequence \eqref{sig12bis}, this is exactly the dimension of $\sigma_{(1,n)}(A/2)$.
\end{proof}

We can identify $\gr_{-2}\Gamma^d(A)$ in a similar (and slightly simpler) fashion.

\begin{lemma}
Let $d\ge 3$. For any free abelian group $A$, $\gr_{-2}\Gamma^d(A)$ is a torsion abelian group, whose $p$-primary part $_\p\gr_{-2}\Gamma^d(A)$ is given by:
$$_\p\gr_{-2}\Gamma^d(A)= \begin{cases} 0 & \text{ if $d$ is not a sum of two powers of $p$,}\\ A/p^{(k)}\otimes A/p^{(\ell)} & \text{ if $d=p^k+p^\ell$ with $k\ne \ell$,}\\
S^2_\Fp(A/p^{(k)})& \text{ if $d=2p^k$.}
 \end{cases}$$
\end{lemma}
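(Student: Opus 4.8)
The plan is to mimic closely the strategy already used in Lemma \ref{lm-gr-1} and Lemma \ref{lm-gr-d+1}, reducing everything to a computation with binomial coefficients and a base-change argument. First I would observe that, by the description \eqref{deffi1} of the filtration, $\gr_{-2}\Gamma^d(A)=F_{-2}\Gamma^d(A)/F_{-1}\Gamma^d(A)$ is generated by products $\gamma_a(x)\gamma_b(y)$ with $a+b=d$, $a,b>0$, modulo those lying in the deeper filtration piece $F_{-1}$. The key point is that, just as in Lemma \ref{lm-gr-1}, the integral torsion of $\gr_{-2}\Gamma^d(A)$ is controlled by greatest common divisors of binomial coefficients: the composite of a comultiplication $\Gamma^d\to\Gamma^k\otimes\Gamma^\ell$ followed by multiplication back into $\Gamma^d$ is multiplication by $\binom{d}{k}$, and more refined multinomial relations govern when a length-two product can be rewritten using a longer product, hence killed in the associated graded. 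This forces $\gr_{-2}\Gamma^d(A)$ to be a $p$-torsion vector space for each relevant prime, and to vanish $p$-adically unless $d$ is a sum of two powers of $p$.

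Next, for each prime $p$, I would pass to the mod-$p$ reduction via base change \eqref{basechange}, so that $_\p\gr_{-2}\Gamma^d(A)$ becomes the cokernel of the multiplication map
$$\bigoplus_{\substack{k+\ell=d\\ k,\ell>0}}\Gamma^k_\Fp(A/p)\otimes\Gamma^\ell_\Fp(A/p)\xrightarrow{\mathrm{mult}}\Gamma^d_\Fp(A/p)$$
restricted to the appropriate filtration step, i.e.\ the length-two-versus-length-three indecomposables. Using Lemma \ref{prop-cokernel}, the cokernel of multiplication $V\otimes\Gamma^{\bullet}(V)\to\Gamma^{\bullet}(V)$ is exactly the Verschiebung image $\Gamma(V^{(1)})$; iterating this twice identifies the surviving products $\gamma_{p^k}(x)\,\gamma_{p^\ell}(y)$ (these are precisely the generators of $\I(V)^2$ not already in $\I(V)^3$ in the language of Proposition \ref{prop-filtration-Gamma}) with elements of $A/p^{(k)}\otimes A/p^{(\ell)}$. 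This produces a natural surjection onto $\gr_{-2}\Gamma^d(A)$ from $A/p^{(k)}\otimes A/p^{(\ell)}$ when $d=p^k+p^\ell$ with $k\neq\ell$, and from $S^2_\Fp(A/p^{(k)})$ when $d=2p^k$ — the symmetric power appearing in the equal-exponent case because the two tensor factors coincide and the product $\gamma_{p^k}(x)\gamma_{p^k}(y)$ is symmetric in $x,y$.

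Finally I would upgrade each surjection to an isomorphism by a dimension count, exactly as at the end of Lemma \ref{lm-gr-d+1}: choosing a basis $(a_1,\dots,a_n)$ of $A$, the monomials spanning $F_{-2}\Gamma^d(A)/F_{-1}\Gamma^d(A)$ are in explicit bijection with the basis of the claimed target, the parametrization being forced by the condition that exactly two ``blocks'' of the partition $d=p^k+p^\ell$ or $d=2p^k$ occur. The main obstacle I anticipate is precisely the bookkeeping in this equal-exponent case $d=2p^k$: one must verify that the natural map lands in and surjects onto $S^2_\Fp$ rather than the full tensor square $A/p^{(k)}\otimes A/p^{(k)}$, which amounts to checking that the graded-commutativity of $\Gamma$ (here genuinely symmetric, as all degrees are even) together with the relation $\binom{2p^k}{p^k}\equiv 2\pmod{p^2}$-type congruences identifies $\gamma_{p^k}(x)\gamma_{p^k}(y)$ with $\gamma_{p^k}(y)\gamma_{p^k}(x)$ but imposes no further collapse. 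This symmetry analysis, and the parallel verification that no unexpected binomial divisibilities kill generators in the unequal case, is where the argument requires genuine care rather than formal manipulation.
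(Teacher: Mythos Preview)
Your approach is essentially the paper's: construct a surjection from the claimed functor onto $\gr_{-2}\Gamma^d(A)$ using the length-two/length-three cokernel and Lemma~\ref{lm-gr-1}, factor through $S^2_\Fp$ in the equal-exponent case by commutativity of the product, and finish by a basis-and-dimension count. Two small corrections: first, you wrote $\gr_{-2}=F_{-2}/F_{-1}$, but the filtration is decreasing so it is $F_{-2}/F_{-3}$ (your surrounding prose makes clear you mean this); second, the paper does not need the congruence $\binom{2p^k}{p^k}\equiv 2\pmod{p^2}$ you flag as an obstacle --- commutativity of $\Gamma(A)$ alone forces the factorization through $S^2_\Fp(A/p^{(k)})$, and the dimension count then shows no further collapse occurs, so no delicate binomial analysis is required.
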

\begin{proof}
Let $T^2(A)=\bigoplus_{k+\ell=d}\Gamma^k(A)\otimes \Gamma^\ell(A)$ and let $T^3(A)$ denote the direct sum
$$T^3(A):=\bigoplus_{k+\ell=d}\left(\bigoplus_{k_1+k_2=k} \Gamma^{k_1}(A)\otimes\Gamma^{k_2}(A)\otimes \Gamma^\ell(A)\, \oplus\, \bigoplus_{\ell_1+\ell_2=\ell} \Gamma^{k}(A)\otimes\Gamma^{\ell_1}(A)\otimes \Gamma^{\ell_2}(A)\right).$$
The multiplication of the divided power algebra defines a map $T^3(A)\to T^2(A)$. Let us denote by $C(A)$ its cokernel. By lemma \ref{lm-gr-1}, $C(A)$ is a torsion abelian group, and its $p$-primary part equals $A/p^{(k)}\otimes A/p^{(\ell)}$ if $d=p^k+p^\ell$ for a prime $p$ and zero otherwise.
By definition of the maximal filtration, there is a commutative diagram
$$\xymatrix{
T^3(A)\ar[r]\ar[d]^-{mult}& T^2(A)\ar@{->>}[d]^-{mult}\\
F_{-3}\Gamma^d(A)\ar[r]& F_{-2}(A)
}
$$
so the canonical surjection $T^2(A)\twoheadrightarrow gr_{-2}\Gamma^d(A)$ factors into a surjective morphism
$C(A)\twoheadrightarrow gr_{-2}\Gamma^d(A)$. In particular, $gr_{-2}\Gamma^d(A)$ is a torsion abelian group, and given prime $p$, we have $_\p gr_{-2}\Gamma^d(A)=0$ if $d$ is not of the form $p^k+p^\ell$, and we have a surjective morphism
\begin{align}
A/p^{(k)}\otimes A/p^{(\ell)}\twoheadrightarrow _\p gr_{-2}\Gamma^d(A)\;. \label{eq-surj3}
\end{align}
If $k=\ell$, the commutativity of the product in $\Gamma(A)$ implies that the morphism \eqref{eq-surj3} factors further as a surjective morphism
\begin{align}
S^2_\Fp(A/p^{(k)})\twoheadrightarrow _\p gr_{-2}\Gamma^d(A)\;. \label{eq-surj4}
\end{align}
To finish the proof, it suffices to check that the morphism \eqref{eq-surj3}, resp. \eqref{eq-surj4}, are isomorphisms for $d=p^k+p^\ell$ with $k\ne \ell$, resp. $d=2p^k$. This follows from a dimension counting argument similar to the one in the proof of lemma \ref{lm-gr-d+1}.
\end{proof}

We collect  the descriptions of the graded components $gr_{-i}\Gamma^d(A)$ for low $d$ in the following example. The cases $d\le 3$ already appear in \cite{BM}.

\begin{example}
\label{ga23}
For any free abelian group  $A$, $gr_{-1}\,\Ga^1_\Z(A)$ and
\begin{align}
 &gr_{-i}\,\Ga^2(A)&& = \begin{cases}A/2^{(1)} & i=1
\label{s2}
\\ S^2 (A)   \ \ \ \ \ & i= 2
\end{cases}
\\
&gr_{-i}\,\Ga^3(A) &&= \begin{cases}A/3^{(1)}  & i=1\\ A\ot A/2^{(1)}
 &i=2\hspace{1cm}\\ S^3(A)  &  i=3 \label{s3}\end{cases}\\
&\label{filg4}
gr_{-i} \Ga^4(A) &&= \begin{cases}\ A/2^{(2)}  & i = 1\\
(A  \ot A/3^{(1)}) \oplus S^2_\Fdeux(A/2^{(1)}) & i= 2\\
\;\sigma_{(1,2)}(A/2) & i = 3\\
\;S^4(A) & i=4
\end{cases}
\end{align}
\end{example}

\subsection{The  spectral sequence associated to  the maximal  filtration}

In order to study derived functors of $\Gamma^d(A)$ for $d >2$, we consider the spectral sequence associated to the maximal filtration of $\Gamma^d(A)$. This is obtained by filtering the simplicial abelian group $\Gamma^dK(A[n])$ componentwise according to the maximal filtration. This yields the following  homological spectral sequence:
\bee
\label{filss}
E^1_{p,q} = L_{p+q}(gr_p\Ga^d)(A,n) \Longrightarrow L_{p+q}\Ga^d(A,n)
\ee
with $d^r_{p,q}: E^r_{p,q} \to E^r_{p-r,q+r-1}$ as usual.
The maximal filtration on $\Gamma^d(A)$ is bounded for
any fixed integer  $d$, so that the associated  spectral sequence \eqref{filss} converges to $ L_{\ast}\Ga^d(A,n)$ in the strong sense.

We now use this spectral sequence in order to compute the  most elementary  cases, that is the derived functors $L_i\Gamma^d(A,n)$ for $A$ free and $d=2$ or $3$.

\subsection{The  derived functors of  $\Gamma^2(A)$ for $A$ free}
For $d=2$, there are only two non-trivial terms in the graded object associated to the maximal filtration of $\Gamma^2(A)$ by \eqref{s2}, so that the spectral sequence \eqref{filss} has only two non-zero columns, namely:
\begin{align}
&E^1_{-1,q} = A/2^{(1)}\; \text{ if $q=n+1$, and zero otherwise.}
\\
&E^2_{-2,q}=L_{q-2}S^2(A,n)\;.
\end{align}
If $n=1$, then $L_iS^2(A,1)=\Lambda^2(A)$ if $i=4$ and zero otherwise. The spectral sequence degenerates at $E^1$ for lacunary reasons and we obtain:
\bee
\label{quad1}
 L_*\Ga^2(A,1) = A/2^{(1)}[1]\;\oplus\;\Lam^2(A)[2]\;.
\ee
If $n\ge 2$, $L_{i}S^2(A,n)=L_{i-4}\Gamma^2(A,n-2)$ by double d\'ecalage.
So the spectral sequence  gives a relation between $L_*\Gamma^2(A,n)$ and $L_*\Gamma^2(A,n-2)$. This gives us the following result.
\begin{proposition}\label{prop-calcul-d2Z}
For any $n \geq 0$ and all free abelian group $A$, the only nontrivial values of $ L_i\Ga^2(A,n)$ are:
\bee
\label{quadratic}
 L_i\Ga^2(A,n) = \begin{cases}A/2^{(1)} & i = n, n+2, n+4, \ldots, 2n-2 \qquad n \ \text{even}\\A/2^{(1)} & i = n, n+2, n+4, \ldots, 2n-1 \ \qquad  n\ \text{odd}
 \\ \Ga^2(A) & i= 2n \qquad n \ \text{even}\\
\Lam^2(A)& i= 2n \ \qquad  n\ \text{odd.}\end{cases} \ee
\end{proposition}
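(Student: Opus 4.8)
The plan is to prove the proposition by induction on $n$ in steps of two, feeding the spectral sequence \eqref{filss} of the maximal filtration with the double décalage identity $L_iS^2(A,n)=L_{i-4}\Gamma^2(A,n-2)$. The two base cases are $n=0$, where $L_i\Gamma^2(A,0)=\Gamma^2(A)$ is concentrated in degree $0$ (so the proposition holds with an empty $A/2^{(1)}$-range and top term $\Gamma^2(A)$ at $i=0$), and $n=1$, which has just been settled in \eqref{quad1}. For the inductive step I would fix $n\ge 2$ and read off the two nonzero columns: the column $p=-1$ contributes the single term $E^1_{-1,n+1}=A/2^{(1)}$ in total degree $n$, while the column $p=-2$ contributes $E^1_{-2,q}=L_{q-2}S^2(A,n)=L_{q-6}\Gamma^2(A,n-2)$ in total degree $q-2$.

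The first thing to verify is degeneration. The only a priori nonzero differential between the two columns is $d^1_{-1,n+1}\colon A/2^{(1)}\to E^1_{-2,n+1}=L_{n-1}S^2(A,n)=L_{n-5}\Gamma^2(A,n-2)$; this target vanishes by proposition \ref{lm-free-tors}, since $n-5<n-2$, so $d^1=0$, and every higher differential vanishes for lack of a third nonzero column. Hence $E^1=E^\infty$. I would then check that there is no extension problem: the surviving term of the column $p=-1$ sits only in total degree $n$, whereas the surviving terms of the column $p=-2$ are the functors $L_{m-4}\Gamma^2(A,n-2)$, which by the inductive hypothesis are nonzero only for $m-4\in[n-2,2n-4]$, that is for $n+2\le m\le 2n$. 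These two sets of total degrees are disjoint, so each $L_m\Gamma^2(A,n)$ receives a contribution from at most one column and is thereby identified with the relevant $E^\infty$-term \emph{as a strict polynomial functor}. This yields the clean recursion $L_m\Gamma^2(A,n)\simeq A/2^{(1)}$ for $m=n$, and $L_m\Gamma^2(A,n)\simeq L_{m-4}\Gamma^2(A,n-2)$ for $m\ge n+2$, with everything else zero.

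It then remains to insert the inductive description of $L_\ast\Gamma^2(A,n-2)$ and carry out the bookkeeping of the arithmetic progressions. When $n$ is even, $n-2$ is even and the shifted column $p=-2$ gives $A/2^{(1)}$ for $m\in\{n+2,n+4,\dots,2n-2\}$ together with $\Gamma^2(A)$ at $m=2n$; adjoining the column $p=-1$ term at $m=n$ produces exactly the range $\{n,n+2,\dots,2n-2\}$ for $A/2^{(1)}$ and $\Gamma^2(A)$ at $2n$. When $n$ is odd the identical computation, using the odd-$n$ hypothesis, gives $A/2^{(1)}$ on $\{n,n+2,\dots,2n-1\}$ and $\Lam^2(A)$ at $2n$, which is the asserted formula. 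I expect the genuinely delicate point to be not the algebra but the careful verification that the two columns occupy disjoint total degrees: this is what simultaneously forces the splitting (no extension ambiguity once the sequence degenerates at $E^1$) and correctly locates the top terms $\Gamma^2(A)$, $\Lam^2(A)$ at $m=2n$. Once that index arithmetic is pinned down, the induction closes at once.
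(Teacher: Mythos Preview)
Your proof is correct and follows essentially the same approach as the paper: induction on $n$ in steps of two with base cases $n=0,1$, degeneration of the two-column spectral sequence \eqref{filss} at $E^1$ for lacunary reasons, and triviality of the filtration on the abutment because at most one nonzero term occurs in each total degree. You are simply more explicit than the paper about why the differential vanishes and about the index arithmetic, but the argument is the same.
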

\begin{proof}
The assertion is trivial for $n=0$, and it is known for $n=1$ by \eqref{quad1}. We prove by induction that if it is
true  for  $n-2$, then  it is also valid for $n$.
Indeed, the spectral sequence degenerates at $E^1$ for lacunary reasons. Thus the result for $n$ is valid up to a filtration. And since there is at most one nonzero term in each total degree in the spectral sequence, the filtration on the abutment is trivial. 
\end{proof}

\subsection{The derived functors of $\Gamma^3(A)$ for $A$ free}
\label{subsec:ga3}
By \eqref{s3}, there are only three nontrivial terms in the graded object associated to the maximal filtration of $\Gamma^3(A)$. So for $d=3$, the spectral sequence \eqref{filss}
has only three nonzero columns, namely:
\begin{align}
E^1_{-1,q} &=
 A/3^{(1)} \quad\text{ if  $q =n+1$ and zero if $q\neq n+1$}
\\
E^1_{-2,q} &= 
A\otimes (A/2)^{(1)} \quad\text{ if $q= 4$ and zero if $ q \neq 4$}
\\
E^3_{-3,q}&=L_{q-3}S^3(A,n)
\end{align}

If $n=1$, then by d\'ecalage $L_{*}S^3(A,1)=\Lambda^3(A)[3]$, so the spectral sequence degenerates at $E^1$ for lacunary reasons, and we obtain
\bee
\label{lig3a1} L_*\Ga^3(A,1) = A/3^{(1)}[1]\;\oplus\;A \ot A/2^{(1)}[2]\;\oplus\;
\Lambda^3(A)[3]\;.
\ee
If $n\ge 2$, then $L_{i}S^3(A,n)=L_{i-6}\Gamma^3(A,n-2)$ by  double d\'ecalage, so the spectral sequence essentially gives a relation between $L_*\Gamma^3(A,n)$ and $L_*\Gamma^3(A,n-2)$. The following result is proved exactly in the same way as proposition \ref{prop-calcul-d2Z}.
\begin{proposition}
\label{genliga3} Let $A$ be a free abelian group.

{\it i}) For any odd positive integer $n$, the only non trivial values of $L_i\Gamma^3(A,n)$ are:
\bee
\label{liga3odd}
L_{i}\Ga^3(A,n) = \begin{cases}
A/3^{(1)} & i = n, n+4,n+8, \dots,3n-2\\
A \ot A/2^{(1)}& i = 2n, 2n + 2, 2n+4, \dots, 3n-1\\
\Lambda^3(A) & i=3n.
\end{cases}
\ee

{\it ii}) For any even nonnegative integer $n$, the only non trivial values of $L_i\Gamma^3(A,n)$ are:
\bee
\label{liga3even}
L_{i}\Ga^3(A,n) = \begin{cases}
A/3^{(1)} &    i= n, n+4, n+8, \dots, 3n -4\\
A \ot A/2^{(1)} & i = 2n, 2n+2, 2n+4, \dots,3n -2\\
\Ga^3(A) & i=3n.
\end{cases}
\ee
\end{proposition}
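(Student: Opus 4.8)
The plan is to imitate the proof of proposition \ref{prop-calcul-d2Z}: I would run the maximal-filtration spectral sequence \eqref{filss} for $\Gamma^3$ and induct on $n$ in steps of two, the base cases being $n=0$ (where $L_*\Gamma^3(A,0)=\Gamma^3(A)$ is concentrated in degree $0$) and $n=1$ (given by \eqref{lig3a1}). By \eqref{s3} there are exactly three nonzero columns $p=-1,-2,-3$, and the first task is to identify them. The column $p=-1$ is the derived functor of the additive functor $gr_{-1}\Gamma^3=A/3^{(1)}$; since reduction modulo $3$ followed by a Frobenius twist is additive and $A$ is free, its normalized complex is this functor placed in degree $n$, so the column contributes $A/3^{(1)}$ in total degree $n$ only. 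The column $p=-3$ computes $L_*S^3(A,n)$, which double d\'ecalage (proposition \ref{dec}) identifies with $L_{*-6}\Gamma^3(A,n-2)$, i.e. with the induction hypothesis shifted by $6$.

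The genuinely new point, compared with the $d=2$ case, is the middle column $p=-2$, which I would compute first. Here $gr_{-2}\Gamma^3=A\otimes A/2^{(1)}$ is the product of the identity functor with the additive functor $(A/2)^{(1)}$. Applying the Eilenberg--Zilber theorem to this (diagonal) tensor product and then the K\"unneth formula gives $\NN(gr_{-2}\Gamma^3)(A,n)\simeq \NN\mathrm{Id}(A,n)\otimes \NN((A/2)^{(1)})(A,n)\simeq A[n]\otimes A/2^{(1)}[n]$. As both factors are concentrated in degree $n$ and $A$ is free (so the $\mathrm{Tor}$ term vanishes), this column contributes $A\otimes A/2^{(1)}$ in total degree $2n$ only.

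Feeding the induction hypothesis for $L_*\Gamma^3(A,n-2)$ into the column $p=-3$, I would then simply add the three columns up by total degree. A routine bookkeeping reproduces the asserted lists: the $A/3^{(1)}$ summands arise from column $p=-1$ at degree $n$ together with the shifted $A/3^{(1)}$ summands of column $p=-3$; the $A\otimes A/2^{(1)}$ summands arise from column $p=-2$ at degree $2n$ together with those of column $p=-3$; and the top term $\Lambda^3(A)$ (for $n$ odd) or $\Gamma^3(A)$ (for $n$ even) is the shifted top term of column $p=-3$. One checks directly that no two of these summands ever occupy the same total degree, so the filtration on the abutment is trivial and no extension problems arise.

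The step demanding care --- the main, though mild, obstacle --- is the degeneration of the spectral sequence. Since differentials only lower the filtration degree $p$, there are none inside a single column, and a total-degree count eliminates every cross-column differential by lacunarity except the single potential $d^1\colon E^1_{-2,*}\to E^1_{-3,*}$, whose source is $A\otimes A/2^{(1)}$ and whose target, by the induction hypothesis, may be an $A/3^{(1)}$ summand. This is a natural transformation from a $2$-torsion functor to a $3$-torsion functor and hence vanishes, equivalently $\hom_{\PP_\Z}(A\otimes A/2^{(1)},A/3^{(1)})=0$; for $n$ even this target even lies in a total degree of the wrong parity, so the issue does not even arise. With all differentials zero the spectral sequence collapses at the relevant page, and since the filtration is trivial the stated description of $L_*\Gamma^3(A,n)$ follows, completing the induction.
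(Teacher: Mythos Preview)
Your approach matches the paper's, which simply says this proposition ``is proved exactly in the same way as proposition~\ref{prop-calcul-d2Z}''. Your identification of the three columns and your degeneration argument---including the torsion observation killing the only potential $d^1$ from $A\otimes A/2^{(1)}$ to an $A/3^{(1)}$ summand---are correct and in fact more explicit than what the paper records.

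There is, however, one inaccuracy. Your claim that ``no two of these summands ever occupy the same total degree'' is false when $n$ is even with $n\ge 4$. For instance, for $n=4$ the total degree $8$ receives $A\otimes A/2^{(1)}$ from the column $p=-2$ and $A/3^{(1)}$ from the column $p=-3$ (namely $L_2\Gamma^3(A,2)=A/3^{(1)}$, shifted by $6$). More generally, for $n$ even the set of $A/3^{(1)}$ degrees $\{n,n+4,\dots,3n-4\}$ and the set of $A\otimes A/2^{(1)}$ degrees $\{2n,2n+2,\dots,3n-2\}$ do intersect once $n\ge 4$. So the filtration on the abutment is not literally trivial at those degrees. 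The fix is immediate and in the same spirit as your torsion remark for the differential: at every such overlap one graded piece is $3$-torsion and the other $2$-torsion, so any extension between them (in $\PP_\Z$ or $\FF_\Z$) splits, and the direct-sum description follows. For $n$ odd your parity check does show there is no overlap, so the issue only arises in the even case.
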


\begin{remark}
For a  general abelian group $A$ the  values of the derived functors $L_{*}\Ga^2(A,n)$ and $L_{*}\Ga^3(A,n)$ are more complicated and we  refer to \cite[Prop 4.1, Thm. 5.2]{BM} for a complete discussion. 
The present  discussion of the derived functors $L_*\Gamma^3(A,n)$ is
related to  that in \cite{BM} since the
 since the functor $W_3(A)$ emphasized there is simply the quotient group  $F_{-1}\Ga^3(A)/F_{-3}\Ga^3(A)$ for the maximal filtration.
\end{remark}

\section{ The derived functors $L_*\Gamma^4(A,1)$ and $L_*\Gamma^4(A,2)$ for $A$ free}\label{sec-derG12}
In this section, we keep the notations of section \ref{sec-max}, in particular $A$ denotes a generic free finitely generated abelian group and $\Gamma(A)$ stands for $\Gamma_\Z(A)$. We compute the derived functors $L_*\Gamma^4(A,1)$ and $L_*\Gamma^4(A,2)$, by the same methods as  in propositions \ref{prop-calcul-d2Z} and \ref{genliga3}. The situation is slightly more complicated here. Indeed, although the spectral  sequence \eqref{filss} degenerates at $E^1$ for lacunary reasons as in the computations of the derived functors of $\Gamma^2(A)$ and $\Gamma^3(A)$, we will have to solve nontrivial extension problems both to compute the $E^1$ page of the spectral sequence and  to recover the the derived functors of $\Gamma^4(A)$ from the $E^\infty$-term of the spectral sequence.

The derived functors $L_*\Gamma^4(A,1)$ were already computed in section \ref{der1-gamma-z} and \ref{sec-der1-gamma-z-bis}. The proof given here is more elementary, and independent from the techniques developed there.

\subsection{The derived functor $L_*\Gamma^4(A,1)$ for $A$ free}
\label{derga4-12}
The description of $\gr\Gamma^4(A)$ is given in example \ref{ga23}. Most of the graded terms are elementary, so that the corresponding initial terms of the spectral sequence \eqref{filss} for $m=4$ and $n=1$ are easy to compute:
\begin{align*}
E^1_{-1,q} & =  A/2^{(2)} \qquad  \text{ if $q=2$, and zero if $q\neq 2$,}\\
E^1_{-2,q}& =  A \ot A/3^{(1)}\; \oplus \;\Lam^2_\Fdeux(A/2^{(1)})\qquad\text{if $q=4$, and zero if $q \neq 4$,}\\
E^1_{-4,q}&= \Lambda^4(A) \qquad \text{if $q=8$, and zero if $q\neq 8$.}
\end{align*}
To complete the description of the first page, we have to describe the column $E^1_{-3,*}$, that is the derived functors of  $\gr_{-3}\Gamma^4(A)=\sigma_{(1,2)}(A/2)$.
The presentation \eqref{sig12} of $\sigma_{(1,2)}(A/2)$
determines for each $n$ an  exact sequence of simplicial $\f2$-vector spaces (since for $\sigma_{(1,2)}(A/2)$, the map $u$ is injective):
\begin{equation}\label{swws}
0\to \Lambda^2_\Fdeux K(A/2^{(1)}[n])\to S^2_\Fdeux K(A/2[n])\ot K(A/2^{(1)}[n])
\to \sigma_{(1,2)}K(A/2[n])\to 0 \end{equation}
 For  $n=1$ this induces by d\'ecalage    a short exact sequence
\bee
\label{gr3ga4a1}
0 \to \Lambda^2_\Fdeux( A/2)\ot (A/2)^{(1)} \to \pi_3(\sigma_{1,2}K(A/2[1])) \to \Gamma^2_{\f2}(A/2^{(1)})  \to 0
\ee
 in the category of $\f2$-vector spaces which describes  $E^1_{-3,6} $ as the middle term  in \eqref{gr3ga4a1}. It also follows from \eqref{swws} that the terms $E^1_{-3,q}$ are trivial for $q \neq 3$
 and this also shows that  $E^1_{-3,q}= 0 $ for $q\neq 6$.
The spectral sequence degenerates for lacunary reasons, and since it has only has one non-trivial term in each total degree, there is no filtration issue on the abutment. So we immediately obtain the following result.
\begin{proposition}\label{prop-GA41}
\bee
\label{l4ga4a21}
L_i\Gamma^4(A,1) = \left\{  \begin{array}{ll} A/2^{(2)} & i=1\\
(A \ot A/3^{(1)}) \oplus \Lam^2_\Fdeux(A/2^{(1)}) & i=2\\
\Lam^4(A) & i=4\end{array} \right.
\ee
and there is a short exact sequence
\bee
\label{gr3ga4a11}
0 \to \Lambda^2_\Fdeux (A/2) \ot A/2^{(1)} \to L_3\,\Gamma^4(A,1) \to \Gamma^2_{\f2} (A/2^{(1)}) \to 0
\ee
\end{proposition}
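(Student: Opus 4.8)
The plan is to run the maximal-filtration spectral sequence~\eqref{filss} in the case $d=4$, $n=1$, taking as input the description of $\gr\Gamma^4(A)$ recorded in example~\ref{ga23}. I would compute the four columns $E^1_{-i,\ast}=L_\ast(gr_{-i}\Gamma^4)(A,1)$ for $i=1,\dots,4$, show that the surviving groups lie in pairwise distinct total degrees $p+q$, and then read off $L_\ast\Gamma^4(A,1)$ directly from the $E^1$-page. Because each total degree will contain at most one nonzero group, the spectral sequence degenerates at $E^1$ for lacunary reasons and the induced filtration on the abutment is trivial, so no extension problem arises in passing from $E^\infty$ to $L_\ast\Gamma^4(A,1)$.

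Three of the columns are routine. Since $gr_{-1}\Gamma^4(A)=A/2^{(2)}$ is additive, its only nonzero derived functor at level $1$ is $A/2^{(2)}$ in degree $1$, so $E^1_{-1,2}=A/2^{(2)}$. For $gr_{-2}\Gamma^4(A)=(A\otimes A/3^{(1)})\oplus S^2_{\Fdeux}(A/2^{(1)})$ I treat the summands separately: the Eilenberg--Zilber theorem and the K\"unneth formula applied to the tensor product of the additive functors $A$ and $A/3^{(1)}$ (the Tor terms vanishing as $A$ is free) give $A\otimes A/3^{(1)}$ in degree $2$, while the summand $S^2_{\Fdeux}(A/2^{(1)})$ requires the small fact $L_\ast S^2_{\Fdeux}(V,1)\simeq\Lambda^2_{\Fdeux}(V)[2]$. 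The latter follows from the characteristic $2$ short exact sequence $0\to V^{(1)}\to S^2_{\Fdeux}(V)\to\Lambda^2_{\Fdeux}(V)\to 0$, together with the fact that the additive functor $V^{(1)}$ contributes $V^{(1)}$ in degree $1$, the identity $L_\ast\Lambda^2_{\Fdeux}(V,1)=\Gamma^2_{\Fdeux}(V)[2]$ coming from the d\'ecalage formula~\eqref{dec}, and the observation that the connecting map $\Gamma^2_{\Fdeux}(V)\to V^{(1)}$ is the surjective Verschiebung, with kernel $\Lambda^2_{\Fdeux}(V)$; the harmlessness of Frobenius precomposition (remark~\ref{rk-derfrobtw}) then permits the substitution $V=A/2^{(1)}$. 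This gives $E^1_{-2,4}=A\otimes A/3^{(1)}\oplus\Lambda^2_{\Fdeux}(A/2^{(1)})$. Finally $gr_{-4}\Gamma^4(A)=S^4(A)$ contributes $L_\ast S^4(A,1)\simeq\Lambda^4(A)[4]$, an elementary computation in the spirit of the $S^2$ case above and in any case forced in top degree by the value $L_4\Gamma^4(A,1)=\Lambda^4(A)$ of proposition~\ref{lm-free-tors}; thus $E^1_{-4,8}=\Lambda^4(A)$.

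The delicate column, and the step where I expect the real work, is $E^1_{-3,\ast}$, the derived functors of $gr_{-3}\Gamma^4(A)=\sigma_{(1,2)}(A/2)$. Here I exploit that the map $u$ in the presentation~\eqref{sig12} is injective, so that evaluating the functors degreewise on $K(A[1])$ yields the short exact sequence of simplicial $\Fdeux$-vector spaces~\eqref{swws}. Its two outer terms are handled by the methods above: the derived functors of $\Lambda^2_{\Fdeux}(A/2^{(1)})$ at level $1$ are $\Gamma^2_{\Fdeux}(A/2^{(1)})$ in degree $2$, and, by K\"unneth, those of the middle term $S^2_{\Fdeux}(A/2)\otimes A/2^{(1)}$ are $\Lambda^2_{\Fdeux}(A/2)\otimes A/2^{(1)}$ concentrated in degree $3$. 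Feeding these into the long exact sequence of derived functors attached to~\eqref{swws} shows that $\pi_i(\sigma_{(1,2)}K(A/2[1]))$ vanishes for $i\neq 3$, and that in degree $3$ it is precisely the middle term of the short exact sequence~\eqref{gr3ga4a1}; hence $E^1_{-3,6}$ is that middle term.

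With the four columns in hand, the nonzero groups $E^1_{-1,2}$, $E^1_{-2,4}$, $E^1_{-3,6}$, $E^1_{-4,8}$ occupy the distinct total degrees $1,2,3,4$. All differentials therefore vanish, the spectral sequence collapses at $E^1$, and $L_i\Gamma^4(A,1)$ is the unique nonzero term in total degree $i$. This yields the stated values for $i=1,2,4$, the vanishing of $L_i\Gamma^4(A,1)$ for the remaining $i$, and the identification of $L_3\Gamma^4(A,1)$ with $E^1_{-3,6}$, which is the short exact sequence~\eqref{gr3ga4a11}.
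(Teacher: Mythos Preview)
Your proof is correct and follows essentially the same route as the paper: run the maximal-filtration spectral sequence for $d=4$, $n=1$, compute the four columns from example~\ref{ga23} (using \eqref{swws} and d\'ecalage for the $\sigma_{(1,2)}$ column), and observe that the nonzero terms sit in distinct total degrees so that the spectral sequence collapses with trivial filtration on the abutment. The only cosmetic difference is that your computation of $L_\ast S^2_{\Fdeux}(V,1)$ via the short exact sequence $0\to V^{(1)}\to S^2_{\Fdeux}(V)\to\Lambda^2_{\Fdeux}(V)\to 0$ is a bit roundabout---d\'ecalage applied directly to $S^2$ yields $\Lambda^2_{\Fdeux}(V)[2]$ in one step---but the argument is valid either way.
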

%\begin{remark}
%The functor $ L_3\,\Gamma^4(A,1)$ was denoted by $\Phi^4(A)$ in example \ref{ex-2tG4}  and proposition  \ref{prop-descr-SKos} so that this gives a second description of that functor.
%\end{remark}
To solve  the extension issue involved in this description of $L_3\Gamma^4(A,1)$, we prove the following statement.
\begin{proposition}\label{prop-nonsplit1}
The extension \eqref{gr3ga4a11} is not split. In fact
\begin{equation}\label{extgroup} \Ext^1(\Gamma^2_\Fdeux(A/2^{(1)}), \Lambda^2_\Fdeux(A/2)\otimes A/2^{(1)})\simeq \mathbb Z/2\;,
\end{equation}
where the extension group is computed in the category of strict polynomial functors or in the category of ordinary functors. Thus $L_3\,\Gamma^4(A,1)$ can be characterized as the unique non-trivial extension of $\Gamma^2_{\f2} (A/2^{(1)})$ by $\Lambda^2_\Fdeux (A/2) \ot A/2^{(1)}$.
\end{proposition}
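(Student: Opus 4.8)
The statement has two independent halves: the identification of the extension group \eqref{extgroup} with $\Z/2$, and the verification that the class of \eqref{gr3ga4a11} is its nonzero element. I would treat them in turn, working throughout in the category $\PP_{4,\Fdeux}$ of homogeneous weight $4$ strict polynomial functors over $\Fdeux$, in which both $\Lambda^2_\Fdeux(V)\otimes V^{(1)}$ and $\Gamma^2_\Fdeux(V^{(1)})$ live (here $V=A/2$). Write $T=\Lambda^2_\Fdeux(V)\otimes V^{(1)}$ for the kernel functor and note at the outset that $T$ is \emph{simple} (by the Steinberg tensor product it is the irreducible of highest weight $(3,1)$), while $V^{(2)}$ and $\Lambda^2_\Fdeux(V^{(1)})$ are two further, mutually non-isomorphic, simple functors.

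For the computation of \eqref{extgroup} the plan is to resolve the source by the basic characteristic $2$ Verschiebung sequence
\[
0\to \Lambda^2_\Fdeux(V^{(1)})\to \Gamma^2_\Fdeux(V^{(1)})\xrightarrow{\pi} V^{(2)}\to 0\;,
\]
obtained by substituting the (exact) Frobenius twist functor $V^{(1)}$ into $0\to\Lambda^2\to\Gamma^2\to V^{(1)}\to0$. Applying $\Ext^*_{\PP}(-,T)$ produces a long exact sequence whose first terms are $\hom_\PP(V^{(2)},T)$ and $\hom_\PP(\Lambda^2_\Fdeux(V^{(1)}),T)$; both vanish, since $T$ is simple and not isomorphic to either of these two simple functors. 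The long exact sequence then expresses $\Ext^1_\PP(\Gamma^2_\Fdeux(V^{(1)}),T)$ as an extension of a subspace of $\Ext^1_\PP(\Lambda^2_\Fdeux(V^{(1)}),T)$ by $\Ext^1_\PP(V^{(2)},T)$. Each of these is an extension between explicit twisted functors and is evaluated by the standard techniques of appendix \ref{app-comput}: the fundamental sequence $0\to V^{(1)}\to S^2V\to\Lambda^2V\to0$ tensored with $V^{(1)}$, together with the full faithfulness of the Frobenius twist on $\hom$ groups. The outcome is that exactly one one-dimensional contribution survives, so $\Ext^1_\PP(\Gamma^2_\Fdeux(V^{(1)}),T)\simeq\Fdeux$. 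To obtain the same value in the category $\FF$ of ordinary functors I would invoke the comparison between $\Ext$ in $\PP$ and in $\FF$ recalled in appendix \ref{app-comput}: the relevant class sits in the expected weight, where the forgetful functor induces an isomorphism on $\Ext^1$.

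It remains to show that \eqref{gr3ga4a11} represents the nonzero class, and this is the main obstacle: once $\Ext^1\simeq\Fdeux$ is known, splitness is a single bit that the dimension count of proposition \ref{prop-GA41} cannot see. The clean way is to recall that \eqref{gr3ga4a11} (equivalently \eqref{gr3ga4a1}) is precisely the boundary piece of the homotopy long exact sequence of the short exact sequence of simplicial vector spaces \eqref{swws}, itself the derivation of the presentation \eqref{sig12} of $\sigma_{(1,2)}$. Using $L_*\Lambda^2_\Fdeux(M,1)=\Gamma^2_\Fdeux(M)[2]$ and $L_2S^2_\Fdeux(A/2,1)=\Lambda^2_\Fdeux(A/2)$ (both by the décalage formula \eqref{dec}), the sub $\Lambda^2_\Fdeux(A/2)\otimes A/2^{(1)}$ is the image of $\pi_3$ of the middle term $S^2_\Fdeux K(A/2[1])\otimes K(A/2^{(1)}[1])$, while the quotient $\Gamma^2_\Fdeux(A/2^{(1)})$ is $\pi_2$ of the subobject $\Lambda^2_\Fdeux K(A/2^{(1)}[1])$ along the connecting map $\partial$. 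A natural section of the quotient would thus lift this $\pi_2$-class to a natural cycle in $\pi_3$ of the total complex; I would rule this out by tracing $\partial$ through the explicit Dold--Kan model of \eqref{swws}, that is, by identifying the extension class with the image of the class of \eqref{swws} under the natural connecting maps and checking that it is nonzero.

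Should the connecting-map bookkeeping prove unwieldy, a concrete fallback is to evaluate on $A=\Z^2$: there the sub, the quotient and $L_3\Gamma^4(\Z^2,1)$ have dimensions $2$, $3$ and $5$, and one computes the $5$-dimensional module $L_3\Gamma^4(\Z^2,1)$ directly from the normalized chains $\NN\Gamma^4(\Z^2,1)$, then verifies that the surjection onto $\Gamma^2_\Fdeux(A/2^{(1)})$ admits no equivariant section (working over a large enough field so that naturality is faithfully detected). Either route yields that \eqref{gr3ga4a11} is the unique non-split extension, which is the assertion of the proposition.
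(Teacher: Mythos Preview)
Your route to \eqref{extgroup} via the Verschiebung sequence $0\to\Lambda^2_\Fdeux(V^{(1)})\to\Gamma^2_\Fdeux(V^{(1)})\to V^{(2)}\to 0$ is workable but the sketch hides real work. Pirashvili's vanishing (lemma~\ref{lm-pira}) kills $\Ext^1_\PP(V^{(2)},T)$, so the long exact sequence only gives an \emph{injection} $\Ext^1_\PP(\Gamma^2_\Fdeux(V^{(1)}),T)\hookrightarrow\Ext^1_\PP(\Lambda^2_\Fdeux(V^{(1)}),T)$; you still have to compute the target and show the injection is onto. This can be done with sum-diagonal and K\"unneth, and the paper's own argument (lemma~\ref{lm-calc1}) runs essentially the same techniques in a different order, so here the approaches are comparable.

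The genuine gap is the non-splitness half. ``Tracing $\partial$ through the Dold--Kan model'' is not yet an argument: in \eqref{swws} the connecting map $\partial:\pi_3(Z)\to\pi_2(X)$ is surjective (since $\pi_2$ of the middle term vanishes), so the extension class is not $\partial$ itself but the obstruction to splitting it, and you give no mechanism for computing that obstruction. The fallback of evaluating on $\Z^2$ and checking equivariance would require extracting the Schur-algebra module structure of $L_3\Gamma^4(\Z^2,1)$ from the normalized chains, which is more laborious than the original question; you have not done this computation, only asserted that one could.

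The paper's argument avoids all of this. The universal coefficient theorem for mod~$2$ reduction yields a natural injection of strict polynomial functors
\[
L_3\Gamma^4_\Z(A,1)\otimes\Fdeux\;\hookrightarrow\;L_3\Gamma^4_\Fdeux(A/2,1)\;=\;\Gamma^2_\Fdeux(A/2)\otimes A/2^{(1)},
\]
the right-hand side being read off from example~\ref{ex-G4}. If \eqref{gr3ga4a11} were split, then $\Gamma^2_\Fdeux(A/2^{(1)})$ would be a direct summand of the left-hand side and hence would embed in $\Gamma^2_\Fdeux(A/2)\otimes A/2^{(1)}$; but a sum-diagonal computation (\S\ref{comput-hom}) shows $\hom_{\PP}\bigl(\Gamma^2_\Fdeux(A/2^{(1)}),\,\Gamma^2_\Fdeux(A/2)\otimes A/2^{(1)}\bigr)=0$. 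This two-line contradiction is what you should use in place of your connecting-map bookkeeping.
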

\begin{proof}
Suppose that the exact sequence \eqref{gr3ga4a11} is functorially split, so that the $2$-primary component $_{(2)}L_3\Gamma^4(A,2)$ is  isomorphic to $ \Lambda^2_\Fdeux(A/2)\otimes A/2^{(1)} \,\oplus \,\Gamma^2_{\f2}(A/2^{(1)})$. In that case  the universal coefficient theorem yields a short exact sequence:
\begin{align*}
0 \to & L_3\Gamma^4_\Z(A,1) \ot \Z/2 \to L_3\Gamma^4_{\f2}(A/2,1)
\to \tor(L_2\Gamma^4_\Z(A,1), \Z/2) \to 0\;.
\end{align*}
The term in the middle is the value for  $V:=A/2$ of $L_3\Gamma^4_{\f2}(V,1)$. By
example \ref{ex-G4}, this  is equal to  $\Gamma^2_\Fdeux(A/2)\otimes A/2^{(1)}$. We would therefore have an injective map
$$ \Lambda^2_\Fdeux(A/2)\otimes A/2^{(1)} \,\oplus \,\Gamma^2_{\f2}(A/2^{(1)})\hookrightarrow \Gamma^2_\Fdeux(A/2)\otimes A/2^{(1)}$$
By the methods of paragraph \ref{comput-hom}, we see that there is no non-zero morphism from $\Gamma^2_{\f2}(A/2^{(1)})$ to $\Gamma^2_\Fdeux(A/2)\otimes A/2^{(1)}$, hence
the exact sequence \eqref{gr3ga4a11} cannot be functorially split. We refer to lemma \ref{lm-calc1} for the proof of  formula \eqref{extgroup}.
\end{proof}

\subsection{The derived functor $L_*\Gamma^4(A,2)$ for $A$ free}\label{filga4}
We compute the derived functors of $\gr_{-3}\Gamma^4(A)=\sigma_{(1,2)}(A/2)$ as in section \ref{derga4-12}, that is by the exact sequence (\ref{swws}). If $n\ge 2$, there is no extension problem arising from the induced long exact sequence and we obtain
\bee
\label{lisig12}
L_i\,\sigma_{(1,2)}(A/2,n)=\begin{cases}
\Gamma^2_\Fdeux(A/2)\otimes A/2^{(1)},\ i=3n
\\A/2^{(1)}\otimes A/2^{(1)},\ 2n+2 \leq i \leq 3n-1
\\\Gamma^2_\Fdeux(A/2^{(1)}),\ i=2n+1
 \\A/2^{(2)}, n+2 \leq i \leq  2n
\end{cases}
\ee
So we can compute the $E^1$ page of the spectral sequence \eqref{filss} for $\Gamma^4$ for $n=2$. The result is displayed in table \ref{table-ga4a2} below.
\begin{table}[ht]
 {\small
\renewcommand{\arraystretch}{1.8}
\begin{tabular}{|r||c|c|c|c|c|}
\hline $E^1_{p,q}$ & $p=-4$&-3&-2&-1
\\\hline \hline
$q=12$ & $\Gamma^4(A)$ & 0 & 0 & 0
\\
\hline 11 & 0 & $0$ & $0$ & 0
\\ \hline 10 & 0 & 0 & 0 & 0
\\ \hline 9 & 0 & $\Gamma^2_\Fdeux(A/2)\otimes A/2^{(1)}$ &
0 & 0
\\ \hline 8 & 0 & $\Gamma^2_\Fdeux(A/2^{(1)})$ &
0 & 0
\\ \hline 7 & 0 & $A/2^{(2)}$ &
0 & 0
\\ \hline 6 & 0 & 0 & $\Gamma^2_{\f2}(A/2^{(1)})\oplus (A\otimes A/3^{(1)})$ & 0
\\ \hline 5 & 0 & 0 & 0 & 0
\\ \hline 4 & 0 & 0 & 0 & 0
\\ \hline 3 & 0 & 0 &
0 & $A/2^{(2)}$
\\ \hline
\end{tabular}
}
\vspace{.5cm} \caption{The initial terms of the maximal filtration spectral sequence for $L\Gamma^4(A,2)$}
\label{table-ga4a2}
\end{table}
In particular, the spectral sequence degenerates at $E^1$ for lacunary reasons.
So we  obtain the following result.
\begin{proposition}\label{prop-GA42}
For $A$ free we have:
\bee
\label{liga4table}
L_i\Gamma^4(A,2)=\begin{cases} \Gamma^4(A),\ i=8\\ 0,\ i=3,7\\
\Gamma^2_\Fdeux(A/2)\otimes A/2^{(1)},\ i=6\\
\Gamma^2_{\f2}(A/2^{(1)}),\ i=5\\ A/2^{(2)},\
i=2,\end{cases}
\ee
as well as  a short exact sequence \bee \label{l4ga4a2} 0\to
A/2^{(2)}\to L_4\Gamma^4(A,2)\to \Gamma^2_{\f2}(A/2^{(1)})\oplus (A\otimes A/3^{(1)})\to 0. \ee
\end{proposition}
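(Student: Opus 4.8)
The plan is to run the maximal filtration spectral sequence \eqref{filss} in the case $d=4$, $n=2$, so that the whole proposition reduces to identifying the $E^1$-page (Table \ref{table-ga4a2}), checking that it degenerates, and solving the single extension that survives. First I would assemble the graded pieces $gr_{-i}\Gamma^4(A)$ from \eqref{filg4}. The three outer columns are elementary: the functors $A/2^{(2)}$ (in column $p=-1$), $A\otimes A/3^{(1)}$ together with $S^2_\Fdeux(A/2^{(1)})$ (column $p=-2$), and $S^4(A)$ (column $p=-4$) are either additive, tensor products of additive functors, or a symmetric power, so their derived functors are computed respectively by additivity, the Eilenberg--Zilber/K\"unneth theorem (splitting off the $2$- and $3$-primary parts by base change), and the double d\'ecalage formula of proposition \ref{dec} --- the last of these giving $L_iS^4(A,2)=\Gamma^4(A)$ concentrated in degree $8$. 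For the mod-$2$ symmetric square I would use that precomposition with the Frobenius twist is harmless (remark \ref{rk-derfrobtw}), which is what produces the $\Gamma^2_{\f2}(A/2^{(1)})$ summand that appears in the $p=-2$ column.

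The substance is the column $p=-3$, where $gr_{-3}\Gamma^4(A)=\sigma_{(1,2)}(A/2)$. Here I would derive the short exact sequence \eqref{swws} of simplicial $\f2$-vector spaces for $n=2$ and read off the resulting long exact sequence of homotopy groups; its outer terms, being derived functors of an exterior square of a twist and of a tensor product of the form $S^2_\Fdeux(A/2)\otimes A/2^{(1)}$, are accessible by d\'ecalage and K\"unneth. The crucial point is that for $n\geq 2$ the four graded pieces of $L_*\sigma_{(1,2)}(A/2)(A,n)$ land in distinct homological degrees, so no extension among them can occur and the long exact sequence collapses to the clean identifications \eqref{lisig12}. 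This is the one place where a careful degree count is needed: at $n=1$ two of these degrees collide, which is precisely the source of the non-split extension of proposition \ref{prop-nonsplit1}, so the hypothesis $n\geq 2$ is genuinely used.

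With the $E^1$-page of Table \ref{table-ga4a2} in hand, I would observe that its nonzero entries occupy only the total degrees $\{2,4,5,6,8\}$ and that, for every candidate differential $d^r_{p,q}\colon E^r_{p,q}\to E^r_{p-r,q+r-1}$ with $r\geq 1$, either the source or the target vanishes. This is a direct bidegree check: the nonzero bidegrees are $(-1,3)$, $(-2,6)$, $(-3,7)$, $(-3,8)$, $(-3,9)$, $(-4,12)$, and no nonzero source differs from a nonzero target by $(-r,r-1)$. Hence $E^1=E^\infty$. In every total degree other than $4$ there is at most one surviving term, which yields \eqref{liga4table} together with $L_3\Gamma^4(A,2)=L_7\Gamma^4(A,2)=0$. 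In total degree $4$ the two surviving terms lie in columns $p=-3$ and $p=-2$; since a more negative value of $p$ corresponds to a deeper stage of the maximal filtration, $E^\infty_{-3,7}=A/2^{(2)}$ is the subobject and $E^\infty_{-2,6}=\Gamma^2_{\f2}(A/2^{(1)})\oplus(A\otimes A/3^{(1)})$ the quotient, giving exactly \eqref{l4ga4a2}.

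The main obstacle, as indicated, is the column $p=-3$: obtaining the derived functors of $\sigma_{(1,2)}(A/2)$ correctly, and in particular justifying the vanishing of the connecting homomorphisms in the long exact sequence of \eqref{swws} that makes \eqref{lisig12} hold for $n\geq 2$. A secondary point requiring care is orienting the surviving extension \eqref{l4ga4a2}, i.e. verifying from the filtration convention that the degree-$4$ piece $A/2^{(2)}$ is the sub and $\Gamma^2_{\f2}(A/2^{(1)})\oplus(A\otimes A/3^{(1)})$ the quotient, rather than the reverse.
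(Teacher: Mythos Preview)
Your proposal is correct and follows essentially the same approach as the paper: compute the $E^1$-page of the maximal filtration spectral sequence from \eqref{filg4} (with the column $p=-3$ coming from the long exact sequence associated to \eqref{swws}, which for $n\ge 2$ collapses to \eqref{lisig12}), observe that the spectral sequence degenerates at $E^1$ for lacunary reasons, and read off the abutment. Your explicit bidegree check and your remark on the orientation of the extension in total degree $4$ simply spell out what the paper leaves implicit.
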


To complete the description of $L_*\Gamma^4(A,2)$, we have to solve the extension issue involved in the description of the functor $L_4\Gamma^4(A,2)$. This is the purpose of the following result.
\begin{proposition}\label{prop-nonsplit2}
The extension  \eqref{l4ga4a2} is not split. Actually we have
\begin{equation}\label{extgroup2} \Ext^1_{\PP_\Z}(\Gamma^2_\Fdeux(A/2^{(1)}), A/2^{(2)}\,\oplus\, A\otimes A/3^{(1)})=\mathbb Z/2\;,
\end{equation}
so that $L_4\,\Gamma^4(A,2)$ can be characterized as the unique nontrivial extension of $\Gamma^2_{\f2} (A/2^{(1)})$ by $A/2^{(2)}\oplus A\otimes A/3^{(1)}$. In particular, we obtain
$$L_4\Gamma^4(A,2)\simeq \Gamma^2_\Z(A/2^{(1)})\oplus A\ot A/3^{(1)}\;.$$
\end{proposition}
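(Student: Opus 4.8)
The plan is to reduce everything to one $\Ext^1$-computation and then to pin down the extension class by a dimension count. First I would record that the extension \eqref{l4ga4a2} is classified by an element of $\Ext^1_{\PP_\Z}(\Gamma^2_\Fdeux(A/2^{(1)})\oplus A\otimes A/3^{(1)}, A/2^{(2)})$, which decomposes as $\Ext^1_{\PP_\Z}(\Gamma^2_\Fdeux(A/2^{(1)}), A/2^{(2)})\oplus\Ext^1_{\PP_\Z}(A\otimes A/3^{(1)}, A/2^{(2)})$. The second summand vanishes because its source takes values in $3$-torsion groups and its target is $2$-torsion, so every such extension splits functorially by primary decomposition. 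Thus \eqref{extgroup2} reduces to the claim $\Ext^1_{\PP_\Z}(\Gamma^2_\Fdeux(A/2^{(1)}), A/2^{(2)})\simeq\Z/2$, which I would establish by the methods of appendix \ref{app-comput}, exactly as in lemma \ref{lm-calc1}. The same vanishing shows that $A\otimes A/3^{(1)}$ splits off as the $3$-primary direct summand of $L_4\Gamma^4(A,2)$, so it remains only to identify the $2$-primary part.

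Next I would show \eqref{l4ga4a2} is non-split by a universal coefficient argument. Since $\NN\Gamma^4(A,2)$ is a complex of functors with free abelian values, the universal coefficient theorem yields
$$0\to L_4\Gamma^4_\Z(A,2)\otimes\Fdeux\to L_4\Gamma^4_\Fdeux(A/2,2)\to \tor(L_3\Gamma^4_\Z(A,2),\Fdeux)\to 0\;.$$
By proposition \ref{prop-GA42} we have $L_3\Gamma^4(A,2)=0$, whence $L_4\Gamma^4_\Z(A,2)\otimes\Fdeux\simeq L_4\Gamma^4_\Fdeux(A/2,2)$, and example \ref{ex-G4} with $n=2$ identifies the right-hand side with $\Gamma^2_\Fdeux(A/2^{(1)})$. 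Were \eqref{l4ga4a2} split, $L_4\Gamma^4_\Z(A,2)$ would be $A/2^{(2)}\oplus\Gamma^2_\Fdeux(A/2^{(1)})\oplus A\otimes A/3^{(1)}$, whose reduction mod $2$ is $A/2^{(2)}\oplus\Gamma^2_\Fdeux(A/2^{(1)})$; this is strictly larger than $\Gamma^2_\Fdeux(A/2^{(1)})$ by the nonzero summand $A/2^{(2)}$, contradicting the isomorphism above. Hence \eqref{l4ga4a2} is non-split.

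Finally I would identify this non-split extension explicitly. The commutative algebra structure of $\Gamma_\Z$ applied to the $2$-torsion functor $A/2^{(1)}$ produces a natural short exact sequence
$$0\to A/2^{(2)}\to \Gamma^2_\Z(A/2^{(1)})\to \Gamma^2_\Fdeux(A/2^{(1)})\to 0\;,$$
whose sub-object is the image of $x\mapsto 2\gamma_2(x)$ (well defined and additive since $2\,(x\otimes y)=0$ on a $2$-torsion module) and whose quotient is the mod $2$ reduction of \eqref{basechange}. Evaluated on $A=\Z$ this becomes $0\to\Z/2\to\Z/4\to\Z/2\to0$ (cf. \eqref{gammators}), which is non-split, so the sequence represents the non-zero class of $\Ext^1_{\PP_\Z}(\Gamma^2_\Fdeux(A/2^{(1)}),A/2^{(2)})\simeq\Z/2$. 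As that group has a unique non-zero element, the $2$-primary part of \eqref{l4ga4a2} is equivalent to it, giving ${}_{(2)}L_4\Gamma^4(A,2)\simeq\Gamma^2_\Z(A/2^{(1)})$; adjoining the split-off $3$-primary summand $A\otimes A/3^{(1)}$ yields the stated isomorphism. The main obstacle is the integral $\Ext^1$-computation giving exactly $\Z/2$: producing a non-zero class is cheap (either sequence above), but the upper bound, that there is at most one non-trivial extension, is where the strict-polynomial-functor technology of appendix \ref{app-comput} is genuinely required.
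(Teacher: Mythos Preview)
Your argument is correct and follows the same overall strategy as the paper: universal coefficient theorem plus the $\Ext^1$-computation of appendix \ref{app-comput}. Two remarks. First, the relevant lemma is \ref{lm-calc2}, not \ref{lm-calc1}; lemma \ref{lm-calc2} is precisely the computation $\Ext^1_{\PP_\Z}(\Gamma^2_\Fdeux(A/2^{(1)}),A/2^{(2)})\simeq\Z/2$ together with the identification of the nontrivial middle term as $\Gamma^2_\Z(A/2^{(1)})$, which is exactly the explicit sequence you wrote down. Second, you apply the universal coefficient theorem in degree $4$, using $L_3\Gamma^4(A,2)=0$ to get $L_4\Gamma^4_\Z(A,2)\otimes\Fdeux\simeq L_4\Gamma^4_\Fdeux(A/2,2)=\Gamma^2_\Fdeux(A/2^{(1)})$; the paper instead applies it in degree $5$ and reaches a contradiction by a dimension count on the resulting short exact sequence. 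Your variant is slightly more direct, since the vanishing of $L_3$ makes the degree-$4$ sequence collapse immediately, whereas the degree-$5$ argument requires computing both $L_5\Gamma^4_\Z(A,2)$ and $L_5\Gamma^4_\Fdeux(A/2,2)$ and comparing.
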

\begin{proof}The universal coefficient theorem yields a short exact sequence:
\begin{align}
0 \to L_5\Gamma^4_\Z(A,2) \ot \Z/2 \to L_5\Gamma^4_{\f2}(A/2,2)
\to \tor(L_4\Gamma^4_\Z(A,2), \Z/2) \to 0\;.
\label{kunl5}
\end{align}
The term in the middle was already computed in
example \ref{ex-G4}. If the extension  \eqref{l4ga4a2} was functorially split, then $_{(2)}L_4\Gamma^4(A,2)$ would be isomorphic to $ A/2^{(2)} \,\oplus \,\Gamma^2_{\f2}(A/2^{(1)})$, so that the short exact sequence \eqref{kunl5} could be restated as
\[
\xymatrix{
0 \ar[r] &\Gamma^2_{\f2}(A/2^{(1)}) \ar[r]^(.35)i &  \left(A/2^{(1)} \ot A/2^{(1)} \right)\oplus A/2^{(2)}
\ar[r]^(.55)j &  A/2^{(2)} \,\oplus \,\Gamma^2_{\f2}(A/2^{(1)}) \ar[r] & 0}
\]
A dimension count in the category of $\f2$-vector spaces  makes it clear that such a short exact sequence cannot exist. It follows that  \eqref{l4ga4a2} is not split.
The formula \eqref{extgroup2} and the identification of $L_4\Gamma^4(A,2)$ follow from  lemma \ref{lm-calc2}.
\end{proof}

%\begin{remark}
%Since the 2-torsion subfunctor of $\Gamma^2_\Z(A/2^{(1)})$ is
% $S^2_\Fdeux(A/2^{(1)})$, it follows from  the previous discussion that the exact sequence \eqref{kunl5} is actually of the form 
%$$\xymatrix{
%0 \ar[r] &\Gamma^2_{\f2}(A/2^{(1)}) \ar[r]^(.35)i &  \left(A/2^{(1)} \ot A/2^{(1)} \right)\oplus A/2^{(2)}
%\ar[r]^(.65)j & S^2_\Fdeux(A/2^{(1)}) \ar[r] & 0}
%$$
%and it is easy to verify that the maps which constitute it are defined by
%$i(\gamma_2(u)) := (u\ot u, u)$ and $j(u_1 \ot u_2, v) := u_1u_2 - v^2.$
%This  functorially non split self-dual exact sequence does not seem to be  well known.
%\end{remark}

\section{The derived functors of $\Gamma^4(A)$ for $A$ free}
\label{der-gamma4-sec}
In this section we assume that $A$ is a free abelian group. We will
give a complete description of the derived functors
$L_i\Gamma^4(A,n)$ for all $i \geq 0 $ and  $n\geq 1$.

\subsection{The description of $L_*\Gamma^4(A,n)$ for $A$ free}
The main result of section \ref{der-gamma4-sec} is the following computation.
\begin{theorem}\label{thm-G4Z}
Let $n$ be a positive integer. If $n=2m+1$, then we have an  isomorphism of graded strict polynomial functors (where $F(A)[k]$ denotes a copy of $F(A)$ placed in degree $k$, and sums over empty sets mean zero): 
\begin{align*}
L_*\Gamma^4(A,n) \simeq & \Lambda^4(A)[4n] \oplus \Phi^4(A)[4n-1]\oplus \bigoplus_{i=0}^{m-1}\Gamma^2_\Fdeux(A/2)\otimes A/2^{(1)}[3n+2i]\\
&  \oplus \bigoplus_{i=0}^{m} \Lambda^2_\Fdeux(A/2^{(1)})[2n+4i] \oplus \bigoplus_{i=0}^{m-1}\Gamma^2_\Fdeux(A/2^{(1)})[2n+4i+1]\\
& \oplus \bigoplus_{i=0}^{m-1}\bigoplus_{j=2i}^{n-3}A/2^{(1)}\otimes A/2^{(1)}[2n+2i+j+2]
 \oplus \bigoplus_{i=0}^{m}A/3\otimes A/3^{(1)}[2n+4i]\\
& \oplus \bigoplus_{i=0}^{m} A/2^{(2)}[n+6i]\oplus \bigoplus_{i=0}^{m-1}\bigoplus_{j=2i}^{n-2}A/2^{(2)}[n+4i+j+2]\;,
\end{align*}
Here $\Phi^4(A):=L_3\Gamma^4(A,1)$ is, as shown in propositions \ref{prop-GA41} and \ref{prop-nonsplit1}, the unique nontrivial extension of $\Gamma^2_\Fdeux(A/2^{(1)})$ by $\Lambda^2_\Fdeux(A/2)\otimes A/2^{(1)}$.
 Similarly, if $n=2m$ there is an isomorphism of graded strict polynomial functors:
\begin{align*}
L_*\Gamma^4(A,n) \simeq & \Gamma^4(A)[4n] \oplus \bigoplus_{i=0}^{m-1}\Gamma^2_\Fdeux(A/2)\otimes A/2^{(1)}[3n+2i]\\
&  \oplus \bigoplus_{i=0}^{m-1} \Gamma^2_\Z(A/2^{(1)})[2n+4i] \oplus \bigoplus_{i=0}^{m-1}\Gamma^2_\Fdeux(A/2^{(1)})[2n+4i+1]\\
& \oplus \bigoplus_{i=0}^{m-2}\bigoplus_{j=2i}^{n-3}A/2^{(1)}\otimes A/2^{(1)}[2n+2i+j+2]
 \oplus \bigoplus_{i=0}^{m-1}A/3\otimes A/3^{(1)}[2n+4i]\\
& \oplus \bigoplus_{i=0}^{m-1} A/2^{(2)}[n+6i]\oplus \bigoplus_{i=0}^{m-2}\bigoplus_{j=2i}^{n-3}A/2^{(2)}[n+4i+j+2]\;.
\end{align*}
\end{theorem}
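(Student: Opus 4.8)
The plan is to establish both formulas simultaneously by induction on $n$ in steps of two, the base cases $n\le 2$ being provided by propositions \ref{prop-GA41} ($n=1$) and \ref{prop-GA42} ($n=2$), with $n=0$ trivial. The engine is the maximal filtration spectral sequence \eqref{filss} for $\Gamma^4(A,n)$, whose four nonzero columns come from the graded pieces listed in example \ref{ga23}. The crucial observation is that the bottom column $gr_{-4}\Gamma^4(A)=S^4(A)$ is governed by double d\'ecalage (proposition \ref{dec}): one has $L_jS^4(A,n)\simeq L_{j-8}\Gamma^4(A,n-2)$, so the column $p=-4$ is exactly the level-$(n-2)$ answer translated upward by $8$ in homological degree. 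Since $4n=4(n-2)+8$, $3n+2i=3(n-2)+2(i-1)+8$, and so on, this translation matches precisely the degree shifts relating the formula at level $n$ to the one at level $n-2$; the inductive hypothesis thus supplies the entire column $p=-4$.

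I would then compute the remaining three columns to assemble the $E^1$-page. The column $p=-1$ comes from the additive functor $A/2^{(2)}$, whose derived functors are concentrated in a single degree, contributing one copy of $A/2^{(2)}$ in degree $n$. The column $p=-3$, built from $\sigma_{(1,2)}(A/2)$, is read off from \eqref{lisig12}, itself obtained from the exact sequence \eqref{swws}; it supplies the $\Gamma^2_\Fdeux(A/2)\otimes A/2^{(1)}$, $A/2^{(1)}\otimes A/2^{(1)}$, $\Gamma^2_\Fdeux(A/2^{(1)})$ and $A/2^{(2)}$ contributions. For the column $p=-2$ I would treat its two summands separately: $S^2_\Fdeux(A/2^{(1)})$ factors through mod-$2$ reduction, so its integral Dold--Puppe derived functors agree with the ones computed over $\Fdeux$ and are given by d\'ecalage together with theorem \ref{thm-derived-p2} (yielding $\Lambda^2_\Fdeux(A/2^{(1)})$, $\Gamma^2_\Fdeux(A/2^{(1)})$ and $A/2^{(2)}$), while the biadditive functor $A\otimes A/3^{(1)}$ is handled by a cross-effect computation together with theorem \ref{thm-derived-podd}, producing the $A/3\otimes A/3^{(1)}$ terms.

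The main obstacle is the analysis of the differentials. For $n\le 2$ the spectral sequence collapsed at $E^1$ for lacunary reasons, but for general $n$ the shifted column $p=-4$ spreads over many degrees and meets the contributions of the columns $p=-1,-2,-3$, so the differentials $d^r\colon E^r_{p,q}\to E^r_{p-r,q+r-1}$ need no longer vanish. I would pin them down using the rigidity of $\PP_\Z$: each differential is a morphism of strict polynomial functors of a fixed weight, and the relevant $\hom$- and $\Ext$-groups are controlled by the methods of appendix \ref{app-comput}, so every potential differential is forced to be either zero or the unique available nonzero map. The constraint that selects the nonzero ones is a dimension count: the mod-$2$ answer $L_*\Gamma^4_\Fdeux(A/2,n)$ is known explicitly (example \ref{ex-G4}, corollary \ref{cor-derived-p2-Gd}), and the universal coefficient theorem---exactly as in the proofs of propositions \ref{prop-nonsplit1} and \ref{prop-nonsplit2}---forces the ranks of the integral derived functors, hence the ranks of the differentials. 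This is the step requiring the careful functorial considerations and dimension counts, and is where the genuine work lies.

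Finally I would resolve the extension problems left at $E^\infty$. The only nonsplit extensions are the ones already identified in the base cases and carried up the induction under the degree shift: the functor $\Phi^4(A)$, realized as the nontrivial extension of $\Gamma^2_\Fdeux(A/2^{(1)})$ by $\Lambda^2_\Fdeux(A/2)\otimes A/2^{(1)}$ (propositions \ref{prop-GA41}, \ref{prop-nonsplit1}), and the assembly of $A/2^{(2)}$ with $\Gamma^2_\Fdeux(A/2^{(1)})$ into $\Gamma^2_\Z(A/2^{(1)})$ occurring in the even case (proposition \ref{prop-nonsplit2}). Matching the resulting functors and degrees against the two displayed sums---verifying the index range of each $\bigoplus$ by the bookkeeping ``new contributions from the columns $p=-1,-2,-3$, plus the shifted level-$(n-2)$ answer, minus the cancellations effected by the differentials''---then closes the induction and proves theorem \ref{thm-G4Z}.
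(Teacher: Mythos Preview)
Your proposal is correct and follows essentially the same approach as the paper, which packages the induction via an intermediate recursive formula (theorem \ref{deriveddescr}) and organizes the spectral-sequence analysis by total degree, treating the ranges $i<2n$, $i>2n$, and the delicate case $i=2n$ separately in propositions \ref{prop-lowdeg}, \ref{prop-high-deg}, and \ref{prop-middegneven}--\ref{prop-middegnodd}. One refinement worth noting: the nonsplit extension producing $\Gamma^2_\Z(A/2^{(1)})$ in the even case (and the verification that no such $4$-torsion appears in the odd case) is not merely carried up from the base case by the degree shift but must be freshly established at total degree $2n$ at each inductive step, which is precisely where the dimension count and the functorial argument excluding a spurious $\Gamma^2_\Fdeux(A/2^{(1)})$ summand come into play.
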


Although the formulas describing the derived functors  $L_*\Gamma^4(A,n)$ look very similar for $n$ even and odd, there are at least two major differences for the 
$2$-primary part. First of all, if $n$ is even there is some $4$-torsion in $L_*\Gamma^4(A,n)$, provided by the summands $\Gamma_\Z^2(A/2^{(1)})$. On the contrary, when  $n$ is odd, $L_*\Gamma^4(A,n)$  has only $2$-torsion. Secondly, the functor $\Phi^4(A)$ appears as a direct summand (with multiplicity one) in $L_*\Gamma^4(A,n)$ when  $n$ is odd  and does not appear in the formula when  $n$ is even. We observe that the formula for $L_*\Gamma^4_\Fdeux(V,n)$ in example \ref{ex-G4} did not depend on the parity of $n$.

For $n=1$, theorem \ref{thm-G4Z} is equivalent to proposition \ref{prop-GA41}, and for $n=2$, it is equivalent to proposition \ref{prop-GA42}. Now theorem \ref{thm-G4Z} easily follows, by induction on $n$, from the following statement.
\begin{theorem}\label{deriveddescr}
Let $n\ge 3$. If $n$ is odd, there is an isomorphism of graded strict polynomial functors
\begin{multline}
\label{desg4}
L_\ast \Gamma^4(A,n)=L_\ast\Gamma^4(A,n-2)[8]\oplus\bigoplus_{i=n,\ i\neq
n+1}^{2n}A/2^{(2)}[i]\oplus
\bigoplus_{i=2n+2}^{3n-1}A/2^{(1)}\otimes A/2^{(1)}[i]\oplus\\
\Gamma^2_\Fdeux(A/2)\otimes A/2^{(1)}[3n]\oplus \Lambda^2_\Fdeux(A/2^{(1)})[2n]\oplus \Gamma^2_\Fdeux(A/2^{(1)})[2n+1]\oplus A\otimes
A/3[2n].
\end{multline}
Similarly, if $n$ is even, there is an isomorphism
\begin{multline}
\label{desg41}
L_\ast\Gamma^4(A,n)=L_\ast\Gamma^4(A,n-2)[8]\oplus\bigoplus_{i=n,\ i\neq
n+1}^{2n-1}A/2^{(2)}[i]\oplus
\bigoplus_{i=2n+2}^{3n-1}A/2^{(1)}\otimes A/2^{(1)}[i]\oplus\\
\Gamma^2_\Fdeux(A/2)\otimes A/2^{(1)}[3n]\oplus \Gamma^2_\Z(A/2^{(1)})[2n]\oplus
\Gamma^2_\Fdeux(A/2^{(1)})[2n+1]\oplus A\otimes A/3[2n]
\end{multline}
\end{theorem}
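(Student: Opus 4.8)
The plan is to run the maximal filtration spectral sequence \eqref{filss} for $\Gamma^4(A,n)$ and to identify $L_*\Gamma^4(A,n)$ with its abutment. Since $A$ is free, the graded pieces $gr_{-i}\Gamma^4(A)$ are the explicit functors of example \ref{ga23}, so the sequence has exactly the four columns $p=-1,-2,-3,-4$, and the whole statement reduces to three tasks: computing the $E^1$-page, determining the differentials, and solving the resulting extension problems. The inductive shape of \eqref{desg4}--\eqref{desg41} is forced by the column $p=-4$: one has $gr_{-4}\Gamma^4(A)=S^4(A)$, and the double décalage of proposition \ref{dec} gives $L_jS^4(A,n)\simeq L_{j-8}\Gamma^4(A,n-2)$, so this column contributes precisely the summand $L_*\Gamma^4(A,n-2)[8]$. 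The entire content of the theorem is therefore the assertion that this column survives untouched while the three remaining columns produce exactly the listed new summands.

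First I would compute the columns $p=-1,-2,-3$. The additive functor $gr_{-1}\Gamma^4=A/2^{(2)}$ contributes a single copy in total degree $n$, there being no Tor since $A$ is free. For $gr_{-3}\Gamma^4=\sigma_{(1,2)}(A/2)$ I would feed the short exact sequence \eqref{swws} into the long exact sequence of derived functors and apply décalage, obtaining the four families of summands recorded in \eqref{lisig12}. For $gr_{-2}\Gamma^4=(A\otimes A/3^{(1)})\oplus S^2_\Fdeux(A/2^{(1)})$ the $3$-primary part is handled by the odd-characteristic computation of theorem \ref{thm-derived-podd} (remark \ref{rk-derfrobtw} makes the Frobenius twist harmless), while the $2$-primary part $L_*S^2_\Fdeux(A/2^{(1)},n)$ is computed by décalage from the weight-two part of theorem \ref{thm-derived-p2}. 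The feature to keep track of is that copies of $A/2^{(2)}$ arise from several of these columns over overlapping ranges of degrees, so the naive $E^1$-page is strictly larger than the answer and cancellations are unavoidable.

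The heart of the argument, and the step I expect to be the main obstacle, is the determination of the differentials. For $n=1,2$ the columns occupy disjoint total degrees and the sequence degenerates for lacunary reasons (propositions \ref{prop-GA41}, \ref{prop-GA42}); for $n\ge 3$ these ranges overlap, so genuine cancellations between the overlapping $A/2^{(2)}$ contributions of distinct columns must occur. I would pin these down by two complementary devices. First, functoriality: each differential is a morphism of strict polynomial functors, and characterization results in the spirit of propositions \ref{prop-charact} and \ref{prop-charact-p2}, together with the $\hom$-computations of appendix \ref{app-comput}, typically leave at most one nonzero candidate. Second, a dimension count: the mod $2$ reduction of the abutment is constrained by the universal coefficient theorem and by the known, parity-independent value of $L_*\Gamma^4_\Fdeux(A/2,n)$ from example \ref{ex-G4}, so that comparing total $\Fdeux$-dimensions degree by degree forces each differential to have exactly the rank dictated by the final formula. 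Checking that the column $p=-4$ receives no incoming differential, so that the inductive term survives intact, is part of this same bookkeeping.

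It then remains to solve the extension problems on $E^\infty=E^1$. Because several columns contribute to a common total degree, the associated graded does not by itself determine $L_*\Gamma^4(A,n)$; the decisive case occurs in total degree $2n$, where a copy of $A/2^{(2)}$ and a copy of $\Gamma^2_\Fdeux(A/2^{(1)})$ sit side by side. I would resolve this exactly as in proposition \ref{prop-nonsplit2}: an $\Ext^1$-computation in the category $\PP_\Z$ (appendix \ref{app-comput}) shows the extension \eqref{l4ga4a2} is non-split when $n$ is even, producing the $4$-torsion functor $\Gamma^2_\Z(A/2^{(1)})$ of \eqref{desg41}, whereas for $n$ odd the corresponding pieces fall in distinct degrees $2n$ and $2n+1$ and remain split, giving the separate summands $\Lambda^2_\Fdeux(A/2^{(1)})[2n]$ and $\Gamma^2_\Fdeux(A/2^{(1)})[2n+1]$ of \eqref{desg4}. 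This even/odd dichotomy in the extension, checked once more against the universal coefficient sequence to exclude hidden torsion, is what distinguishes the two formulas and completes the inductive step.
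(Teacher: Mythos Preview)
Your overall framework is the paper's own: run the maximal filtration spectral sequence, use double d\'ecalage for the column $p=-4$, and control everything by the universal coefficient theorem against example~\ref{ex-G4} together with the $\Ext$-calculus of appendix~\ref{app-comput}. But your account of the decisive case, total degree $2n$, contains a real gap.

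The $E^1$-page at total degree $2n$ is \emph{identical} for both parities of $n$: column $p=-2$ contributes $\Gamma^2_{\Fdeux}(A/2^{(1)})\oplus A\otimes A/3^{(1)}$, column $p=-3$ contributes $A/2^{(2)}$, and column $p=-4$ contributes $L_{2n-8}\Gamma^4(A,n-2)$ (the copy of $\Gamma^2_{\Fdeux}(A/2^{(1)})$ appearing at degree $2n+1$ in both formulas is a \emph{different} one, coming from column $p=-3$). So your claim that for $n$ odd ``the corresponding pieces fall in distinct degrees $2n$ and $2n+1$ and remain split'' is simply false. What actually distinguishes the parities is whether a differential with image $A/2^{(2)}$ leaves total degree $2n$; the paper extracts this from the dimension bookkeeping in degrees $<2n$ (proposition~\ref{prop-lowdeg}). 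For $n$ odd this differential does fire, but even then one cannot yet decide whether $L_{2n}\Gamma^4(A,n)$ contains $\Gamma^2_{\Fdeux}(A/2^{(1)})$ as a summand or instead $\Lambda^2_{\Fdeux}(A/2^{(1)})\oplus A/2^{(2)}$: both options are compatible with the $\Fdeux$-dimension. The paper rules out the first (proposition~\ref{prop-middegnodd}) by an argument you did not foresee: the universal-coefficient surjection $L_{2n+1}\Gamma^4_{\Fdeux}(A/2,n)\twoheadrightarrow {_2L_{2n}\Gamma^4(A,n)}$ would have to hit the putative summand $\Gamma^2_{\Fdeux}(A/2^{(1)})$, but by example~\ref{ex-G4} the source is of the form $(A/2^{(1)})^{\otimes 2}\oplus (A/2^{(2)})^{\oplus k}$, and a $\hom$-computation shows no such surjection exists. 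This step is the crux of the whole proof. (Two minor remarks: the phrase ``extension problems on $E^\infty=E^1$'' contradicts what you just said about nontrivial differentials; and propositions~\ref{prop-charact}, \ref{prop-charact-p2} concern the Bockstein/Koszul differentials on $L_*\Gamma_{\Fp}(A/p,1)$, not the differentials of the maximal filtration spectral sequence, so they are not the right tool here.)
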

The remainder of section \ref{der-gamma4-sec} is devoted to the proof of theorem \ref{deriveddescr}. The proof goes along the same lines as the computation of $L_*\Gamma^4(A,1)$ and $L_*\Gamma^4(A,2)$ in section \ref{sec-derG12}, that is the relation between $L_\ast\Gamma^4(A,n)$ and $L_\ast\Gamma^4(A,n-2)$ is provided by the analysis of the spectral sequence \eqref{filss} induced by the maximal filtration of $\Gamma^4(A)$. However, for $n\ge 3$, the analysis of the spectral sequence is more delicate than in section \ref{sec-derG12} as there now are nontrivial differentials in the spectral sequence. Also, we will have to solve extension problems in order to recover $L_\ast\Gamma^4(A,n)$ from the $E^\infty$-page of the spectral sequence.

\subsection{Proof of theorem \ref{deriveddescr}}
From now on $n\ge 3$  and we assume that theorem \ref{thm-G4Z} has been  proved for $n-2$. We are going to prove that theorem \ref{deriveddescr} holds for $n$. The first page of the maximal filtration spectral sequence
(\ref{filss}) for $d=4$ and $n \ge 3$  can be computed by the  methods
introduced in paragraphs \ref{derga4-12} and  \ref{filga4} for $n= 1, 2$. In particular, the terms in the $p= -3$ column follow from  \eqref{lisig12} and those in the $p= -4$ column from the double d\'ecalage formula \eqref{dec}. The $E^1$ page therefore
has the form depicted in table 2.
\begin{table}[H]%[ht]
 {\small
\renewcommand{\arraystretch}{1.8}
\begin{tabular}{|r||c|c|c|c|c|}
\hline $E^1_{p,q}$ & $p=-4$&-3&-2&-1
\\\hline \hline
$q=4n+4$ & $L_{4n-8}\Gamma^4(A,n-2)$ & 0 & 0 & 0
\\
\hline $4n+3$ & $L_{4n-9}\Gamma^4(A,n-2)$ & $0$ & $0$ & 0
\\ \hline \dots & \dots & \dots & \dots & \dots
\\ \hline $3n+4$ & $L_{3n-8}\Gamma^4(A,n-2)$ & 0 &
0 & 0
\\ \hline 3n+3 & $L_{3n-9}\Gamma^4(A,n-2)$ & $\Gamma^2_{\f2}(A/2)\otimes A/2^{(1)}$ &
0 & 0
\\ \hline 3n+2 & $L_{3n-10}\Gamma^4(A,n-2)$ & $A/2^{(1)}\otimes A/2^{(1)} $ &
0 & 0
\\ \hline \dots & \dots & \dots & \dots & \dots
\\ \hline 2n+5 & $L_{2n-7}\Gamma^4(A,n-2)$ & $  A/2^{(1)}\otimes A/2^{(1)} $ & 0 & 0
\\ \hline 2n+4 & $\mathbf{L_{2n-8}\Gamma^4(A,n-2)}$ & $\Gamma^2_{\f2}(A/2^{(1)})$ &
0 & 0
\\ \hline 2n+3 & $L_{2n-9}\Gamma^4(A,n-2)$ & $\mathbf{A/2^{(2)}}$ & 0 &
0
\\ \hline 2n+2 & $L_{2n-10}\Gamma^4(A,n-2)$ & $A/2^{(2)}$ & $\begin{array}{c}\mathbf{\Gamma^2_{\f2}(A/2^{(1)})} \\\oplus \mathbf{A\otimes A/3^{(1)}}\end{array}$ & 0
\\ \hline 2n+1 & $L_{2n-11}\Gamma^4(A,n-2)$ & $A/2^{(2)}$ & $A/2^{(2)}$ & $\mathbf{0}$
\\ \hline \dots & \dots & \dots & \dots & \dots
%\\ \hline n+6 & $L_{n-6}\Gamma^4(A,n-2)=0$ & $A/2^{(2)}$ & $A/2^{(2)}$ & 0
\\ \hline n+5 & $L_{n-7}\Gamma^4(A,n-2)=0$ & $A/2^{(2)}$ & $A/2^{(2)}$ & 0
\\ \hline n+4 & 0 & 0 & $A/2^{(2)}$ & 0
\\ \hline n+3 & 0 & 0 & 0 & 0
\\ \hline n+2 & 0 & 0 & 0 & 0
\\ \hline n+1 & 0 & 0 & 0 & $A/2^{(2)}$
\\ \hline
\end{tabular}
}
\vspace{.5cm} \caption{The initial terms of the maximal filtration spectral sequence for $\Gamma^4(A,n)$ with $n >2$}
\label{table-ga4an}\end{table}
The  boldface expressions  in table 2 are the terms of total degree $2n$. They will play a special role in the proof. We will indeed prove that all the differentials of the spectral sequence are zero, except some of the differentials with terms of total degree $2n$ as source or target.
In order to obtain some  information regarding the differentials of the spectral sequence, we are going to use mod $2$ reduction, in the spirit of the proof of propositions \ref{prop-nonsplit1} and \ref{prop-nonsplit2}. The universal coefficient theorem yields short exact sequences of strict polynomial functors (where $_2G$ denotes the $2$-torsion subgroup of an abelian group $G$):
\begin{align}
&0\to L_i\Gamma^4(A,n)\otimes\Fdeux\to L_i\Gamma^4_\Fdeux(A/2,n)\to {_2L_{i-1}\Gamma^4(A,n)}\to 0\;.\label{uct}\\
&0\to L_i\Gamma^4(A,n-2)\otimes\Fdeux\to L_i\Gamma^4_\Fdeux(A/2,n-2)\to {_2L_{i-1}\Gamma^4(A,n-2)}\to 0\;.\label{uctbis}
\end{align}
Moreover, we have already computed $L_i\Gamma^4_\Fdeux(A/2,n)$ in example \ref{ex-G4}. The following mod $2$ analogue of theorem \ref{thm-G4Z} is a straightforward consequence of example \ref{ex-G4}.
\begin{lemma}\label{lm-m2descr}
There are isomorphisms of strict polynomial functors
$$L_i\Gamma^4_\Fdeux(A/2,n)\simeq L_{i-8}\Gamma^4_\Fdeux(A/2,n-2)\oplus C_i(A,n)$$
where
$$
C_i(A,n)\simeq
\begin{cases}
0 & i>3n+1\\
\Gamma^2_\Fdeux(A/2)\otimes A/2^{(1)} & i=3n+1\\
\Gamma^2_\Fdeux(A/2)\otimes A/2^{(1)}\,\,\oplus \,\,\, A/2^{(1)}\otimes A/2^{(1)}\,\,\, & i=3n\\
A/2^{(1)}\otimes A/2^{(1)}\,\, \oplus \,\,\, A/2^{(1)}\otimes A/2^{(1)} & 2n+2<i<3n \\
A/2^{(1)}\otimes A/2^{(1)}\,\, \oplus \,\,\, \Gamma^2_\Fdeux(A/2^{(1)}) & i=2n+2 \\
A/2^{(1)}\otimes A/2^{(1)}\,\, \oplus \,\,\, A/2^{(2)} & i=2n+1\\
\Gamma^2_\Fdeux(A/2^{(1)}) \oplus A/2^{(2)} & i=2n\\
A/2^{(2)}\oplus A/2^{(2)} & n+2<i<2n\\
A/2^{(2)} & n\le i\le n+2
\end{cases}$$
\end{lemma}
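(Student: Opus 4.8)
The plan is to read off the statement directly from example \ref{ex-G4}, specialized to $V=A/2$. Writing $N(n)$ for the graded functor $L_*\Gamma^4_\Fdeux(A/2,n)$, example \ref{ex-G4} exhibits $N(n)$ as a direct sum of five families: a single copy of $\Gamma^4_\Fdeux(A/2)$ in degree $4n$; one copy of $\Gamma^2_\Fdeux(A/2)\otimes A/2^{(1)}$ in each degree $3n,\dots,4n-1$; one copy of $A/2^{(1)}\otimes A/2^{(1)}$ in each degree $2n+i+j-2$ for every pair $1\le i<j\le n$; one copy of $\Gamma^2_\Fdeux(A/2^{(1)})$ in each degree $2n,2n+2,\dots,4n-2$; and one copy of $A/2^{(2)}$ in each degree $n+2i+j-3$ for every pair $1\le i\le j\le n$. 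The first step is simply to record these five families, together with their degrees and multiplicities, for the parameters $n$ and $n-2$.

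The decisive point is to exhibit the shifted sum $N(n-2)[8]$ as an explicit sub-collection of the summands of $N(n)$. The shift by $8=4\cdot 2$ is dictated by the top summand $\Gamma^4_\Fdeux(A/2)$, which sits in degree $4n$ for the parameter $n$ and in degree $4(n-2)$ for the parameter $n-2$; on the index data the shift is realized by the assignment $(i,j)\mapsto(i+2,j+2)$ for the two pair-indexed families and by the evident shift of the arithmetic progressions for the three remaining families. Under these assignments a summand of $N(n-2)$ indexed by a pair with entries in $\{1,\dots,n-2\}$ lands, after the degree shift by $8$, on the summand of $N(n)$ indexed by the corresponding pair with both entries $\ge 3$. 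Thus $N(n-2)[8]$ is identified with the subsum of $N(n)$ consisting of $\Gamma^4_\Fdeux(A/2)$ in degree $4n$, the copies of $\Gamma^2_\Fdeux(A/2)\otimes A/2^{(1)}$ in degrees $\ge 3n+2$, the copies of $\Gamma^2_\Fdeux(A/2^{(1)})$ in degrees $\ge 2n+4$, and the copies of $A/2^{(1)}\otimes A/2^{(1)}$ and $A/2^{(2)}$ whose index pair has smallest entry $\ge 3$. The complement $C_\bullet(A,n)$ is therefore the sum of all summands whose index pair has smallest entry in $\{1,2\}$, together with the two lowest copies of $\Gamma^2_\Fdeux(A/2)\otimes A/2^{(1)}$ (degrees $3n,3n+1$) and of $\Gamma^2_\Fdeux(A/2^{(1)})$ (degrees $2n,2n+2$).

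It then remains to enumerate this complement degree by degree. For $\Gamma^2_\Fdeux(A/2)\otimes A/2^{(1)}$ and $\Gamma^2_\Fdeux(A/2^{(1)})$ this is immediate. For $A/2^{(1)}\otimes A/2^{(1)}$ one lists the pairs $(1,j)$ with $2\le j\le n$ and $(2,j)$ with $3\le j\le n$, finding one copy in degree $2n+1$, one in $2n+2$, two in each degree from $2n+3$ to $3n-1$, and one in degree $3n$; for $A/2^{(2)}$ one lists the pairs $(1,j)$ with $1\le j\le n$ and $(2,j)$ with $2\le j\le n$, finding one copy in each of $n,n+1,n+2$, two in each degree from $n+3$ to $2n-1$, and one in each of $2n,2n+1$. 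Assembling these counts and reading off the overlaps reproduces the piecewise description of $C_i(A,n)$ in the statement. The only genuine work, and hence the main (though modest) obstacle, is this multiplicity bookkeeping for the two pair-indexed families $A/2^{(1)}\otimes A/2^{(1)}$ and $A/2^{(2)}$, together with the careful handling of the boundary degrees $2n,2n+1,2n+2$, at which these two families and the $\Gamma^2_\Fdeux(A/2^{(1)})$ family all contribute simultaneously and must be separated correctly.
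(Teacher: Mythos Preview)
Your proposal is correct and takes essentially the same approach as the paper, which simply declares the lemma ``a straightforward consequence of example \ref{ex-G4}'' without further detail. You have carried out precisely the bookkeeping the paper leaves implicit: matching $N(n-2)[8]$ with the sub-sum of $N(n)$ via the index shift $(i,j)\mapsto(i+2,j+2)$ on the pair-indexed families, and then enumerating the leftover summands degree by degree; the multiplicity counts you give for the $A/2^{(1)}\otimes A/2^{(1)}$ and $A/2^{(2)}$ families are correct.
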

Let $E_i^r$ denote the part of total degree $i$ of the $r$-th page of the spectral sequence, $E_i^r:=\bigoplus_{j+k=i}E^r_{j,k}$, and let $d^r_i:E^r_i\to E^r_{i-1}$ denote the total differential.
We distinguish three steps in the analysis of the spectral sequence.
\begin{enumerate}
\item[$\bullet$] We first analyze the spectral sequence in total degrees $i<2n$. Formulas \eqref{eq-lown}, \eqref{eq-lown+1} and \eqref{eq-prop-basdeg} show that theorem \ref{deriveddescr} holds in degrees $i<2n$.
\item[$\bullet$] Then we analyze the spectral sequence in total degrees degrees $i>2n$. Formulas \eqref{eq-triviale} and \eqref{eq-suiv} show that theorem \ref{deriveddescr} holds in degrees $i>2n$.
\item[$\bullet$] Finally, we analyze the spectral sequence in total degree $2n$. Formulas \eqref{eq-2neven} and \eqref{eq-2nodd} show that theorem \ref{deriveddescr} holds in degree $2n$.
\end{enumerate}

\subsubsection{The spectral sequence in total degree $i<2n$}
For $i=n$ or $n+1$, we have $E^1_i=E^\infty_i$ for lacunary reasons. Since there is only one nontrivial term in total degree $i$ there is no extension issue to recover the abutment. Hence we obtain:
\begin{align}&L_n\Gamma^4(A,n)=  A/2^{(2)}= A/2^{(2)}\oplus L_{n-8}\Gamma^4(A,n-2)\;,\label{eq-lown}
\\
&L_{n+1}\Gamma^4(A,n)=0=  L_{n+1-8}\Gamma^4(A,n-2)\;.\label{eq-lown+1}
\end{align}
The case $i\ge n+2$ is  slightly more involved.
\begin{proposition}\label{prop-lowdeg}
For $n+2\le i\le 2n-1$ we have
\begin{align}L_i\Gamma^4(A,n)\simeq  L_i\Gamma^4(A,n-2)\oplus A/2^{(2)}\;.\label{eq-prop-basdeg}
\end{align}
Moreover, if $n$ is even, the differentials $d^1_{2n}:E^1_{2n}\to E^1_{2n-1}$ and $d^2_{2n}:E^2_{2n}\to E^2_{2n-1}$ are zero. If $n$ is odd, one of the two  differentials $d^1_{2n}$, $d^2_{2n}$ has  $A/2^{(2)}$ as its image and the other one is the zero map.
\end{proposition}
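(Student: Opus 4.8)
The plan is to read the $E^1$-page off Table \ref{table-ga4an} and then determine every differential touching total degrees $i\le 2n$ by combining a few Hom-computations with the mod $2$ comparison furnished by Lemma \ref{lm-m2descr}. In the range $n+2\le i\le 2n-1$ the only nonzero entries lie in the columns $p=-2,-3,-4$: the column $p=-4$ carries $L_{i-8}\Gamma^4(A,n-2)$ (this is the inductive summand, appearing shifted by $[8]$ in theorem \ref{deriveddescr}), while each of the columns $p=-2$ and $p=-3$ carries a copy of $A/2^{(2)}$. Thus for \eqref{eq-prop-basdeg} to hold exactly one of these two towers of $A/2^{(2)}$ must be killed in each such degree, and the whole problem reduces to controlling the differentials between these towers together with the three boldface terms in total degree $2n$, namely $A/2^{(2)}$ in column $-3$ and $\Gamma^2_\Fdeux(A/2^{(1)})\oplus A\otimes A/3^{(1)}$ in column $-2$.

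First I would record the relevant Hom-groups using the methods of appendix \ref{app-comput}. Since $\hom_{\PP_\Z}(A/2^{(2)},A/2^{(2)})=\Fdeux$, any differential between two copies of $A/2^{(2)}$ is either zero or an isomorphism. Moreover $\hom_{\PP_\Z}(A\otimes A/3^{(1)},A/2^{(2)})=0$, and the only nonzero map $\Gamma^2_\Fdeux(A/2^{(1)})\to A/2^{(2)}$ is the Verschiebung, whose kernel is $\Lambda^2_\Fdeux(A/2^{(1)})$. Consequently the only candidate differentials are the staircase isomorphisms $A/2^{(2)}\to A/2^{(2)}$ from column $-2$ in degree $j$ to column $-3$ in degree $j-1$, and the Verschiebung component of $d^1_{2n}$, namely $\Gamma^2_\Fdeux(A/2^{(1)})\oplus A\otimes A/3^{(1)}\to A/2^{(2)}$ landing in column $-3$; in the latter case the image is precisely $A/2^{(2)}$.

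Next I would feed in the mod $2$ information. The universal coefficient sequence \eqref{uct} together with Lemma \ref{lm-m2descr} expresses, for each $i$, the dimension of $L_i\Gamma^4(A,n)\otimes\Fdeux$ in terms of that of $L_{i-1}\Gamma^4(A,n)$; since $L_i\Gamma^4_\Fdeux(A/2,n)$ contains exactly two copies of $A/2^{(2)}$ in every degree $n+2<i<2n$ (beyond the shifted part, which by \eqref{uctbis} accounts for the column $p=-4$ via induction), this dimension bookkeeping, read degree by degree, forces the two $A/2^{(2)}$-towers to cancel through the staircase isomorphisms in the alternating pattern that leaves a single surviving copy in each total degree $n+2\le i\le 2n-1$; the boundary degrees $i=n,n+1$ are already settled by \eqref{eq-lown}, \eqref{eq-lown+1}. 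The same count shows that, apart from the Verschiebung component, every differential with source or target in total degree $2n$ runs between functors with vanishing Hom-groups and hence is zero, so $d^1_{2n}$ and $d^2_{2n}$ can be nonzero only through that Verschiebung component, whose image is $A/2^{(2)}$.

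The delicate point, which I expect to be the main obstacle, is to decide whether the Verschiebung component actually fires, and this is exactly where the parity of $n$ enters. Because the answer of Lemma \ref{lm-m2descr} is insensitive to the parity of $n$, dimension counting alone cannot separate the two cases: the difference between them is one of torsion structure ($4$-torsion versus $2$-torsion), invisible mod $2$. To settle it I would exploit the multiplicative structure of the maximal-filtration spectral sequence, tracing the class carrying $\Gamma^2_\Fdeux(A/2^{(1)})$ back to the divided square of the fundamental twisted generator and comparing with the quadratic computation of proposition \ref{prop-calcul-d2Z}: applying $L_\ast\Gamma^2$ to such a class yields the integral functor $\Gamma^2_\Z(A/2^{(1)})$ with its $4$-torsion at the top degree when $n$ is even, and the split pattern governed by $\Lambda^2_\Fdeux(A/2^{(1)})$ when $n$ is odd, exactly mirroring the parity dichotomy in \eqref{quadratic}. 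Hence the Verschiebung is forced to vanish for $n$ even, giving $d^1_{2n}=d^2_{2n}=0$, whereas for $n$ odd it must fire, so that one of $d^1_{2n},d^2_{2n}$ has image $A/2^{(2)}$ and the other vanishes, which is the assertion of the proposition.
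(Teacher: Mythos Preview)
Your setup is close to the paper's, but there is a real gap in the parity step. You claim that ``dimension counting alone cannot separate the two cases'' and then turn to a multiplicative argument that is only sketched. In fact the parity is forced purely by the bookkeeping. The paper first observes (using the induction hypothesis, theorem \ref{thm-G4Z} for $n-2$) that every $E^1_i$ with $i<2n$ is a direct sum of copies of $A/2^{(2)}$; since $\Ext^1_{\PP_\Z}(A/2^{(2)},A/2^{(2)})=0$ this gives $E^\infty_i\simeq L_i\Gamma^4(A,n)$ with no extension problem, and the whole question in degrees $<2n$ becomes a count of copies of $A/2^{(2)}$. The universal coefficient sequences \eqref{uct}, \eqref{uctbis} together with lemma \ref{lm-m2descr} then show by induction on $i$ that $E^\infty_i$ has exactly one fewer copy of $A/2^{(2)}$ than $E^1_i$ for each $n+2\le i<2n$. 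This leaves only two possibilities at each such $i$: either $(a_i)$ the maps $d^1_i,d^2_i$ are zero and one of $d^1_{i+1},d^2_{i+1}$ has image $A/2^{(2)}$, or $(b_i)$ the reverse. These alternate, $(a_i)\Rightarrow(b_{i+1})\Rightarrow(a_{i+2})$; since $E^1_{n+1}=0$ forces $(a_{n+2})$, the state at $i=2n-1$ --- and hence the behaviour of $d^1_{2n},d^2_{2n}$ --- depends only on the parity of $(2n-1)-(n+2)=n-3$, that is, on the parity of $n$. No multiplicative input is needed, and your assertion that dimension counting cannot detect the parity is simply wrong: it is the \emph{length} of the cascade, not the mod $2$ data at any single degree, that encodes the parity.

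A smaller point: you restrict attention to the staircase differentials $d^1$ from column $-2$ to column $-3$, but there are also potential differentials $d^1:E^1_{-3,q}\to E^1_{-4,q}$ and $d^2:E^2_{-2,q}\to E^2_{-4,q+1}$ landing in column $p=-4$, which itself consists of copies of $A/2^{(2)}$ in this range. The paper's reduction to a pure dimension count at $A=\Z$ handles all of these uniformly, without needing to identify which individual differential fires; your Hom-constraint approach would have to address them as well.
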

\begin{proof}
First of all, by theorem \ref{thm-G4Z} (which we assume to be proved for $n-2$) we know the terms in the column $E^1_{-4,*}$. In particular all the expressions $E^1_i$ for $i<2n$ are direct sums of terms $A/2^{(2)}$. A subquotient of a direct sum of copies of $A/2^{(2)}$ is once again a direct sum of copies of $A/2^{(2)}$, so that $E_i^\infty$ is also a direct sum of copies of $A/2^{(2)}$ if $i<2n$. Finally the strict polynomial functor $A/2^{(2)}$ has no self-extensions of degree $1$, so that:
$$E^\infty_i\simeq L_i\Gamma^4(A,n)\quad\text{for $i<2n$}\;.$$

Since only functors of the form $A/2^{(2)}$ appear in total degrees $i<2n$ in the spectral sequence, there is no functorial issues involved in these degrees and analyzing this part of the spectral sequence amounts to analyze a spectral sequence of $\Fdeux$-vector spaces.
To be more specific, if $d_i(n)$ denotes the dimension of the $\Fdeux$-vector space $L_i\Gamma^4(\Z,n)$ for $i<2n$, then the formula \eqref{eq-prop-basdeg} is equivalent to the following equality for $n+2\le i\le 2n-1$:
\begin{align} d_i(n)=d_i(n-2)+1\;.\label{eq-interm}\end{align}

We now prove \eqref{eq-interm} by induction on $i$. We have $d_{n+1}(n)=d_{n+1}(n-2)=0$ by \eqref{eq-lown+1}, and if we denote by 
$\delta_i(n)$ the dimension of the $\Fdeux$-vector space $L_i\Gamma^4_\Fdeux(\Z/2,n)$, the exact sequences \eqref{uct} and \eqref{uctbis} and lemma \ref{lm-m2descr} yield equalities:
$$d_{n+2}(n)=\delta_{n+2}(n),\quad d_{n+2}(n-2)=\delta_{n+2}(n-2),\quad \delta_{n+2}(n)=\delta_{n+2}(n-2)+1\;.$$
This shows that \eqref{eq-interm} holds for $i=n+2$. Now assume that $n+2<i<2n$. Then
the exact sequences \eqref{uct} and \eqref{uctbis} and lemma \ref{lm-m2descr} yield equalities:
$$d_{i}(n)=\delta_{i}(n)-d_{i-1}(n),\quad d_{i}(n-2)=\delta_{i}(n-2)-d_{i-1}(n-2),\quad \delta_{n+2}(n)=\delta_{n+2}(n-2)+2\;.$$
Thus, assuming that \eqref{eq-interm} holds for $i-1$, we obtain that \eqref{eq-interm} holds for $i$.

It remains to prove the assertion on the differentials of the spectral sequence. By the formula \eqref{eq-prop-basdeg}, for $n+2\le i<2n$, $E^1_i$ has one more copy of $A/2^{(2)}$ than $E^\infty_i$. Hence there are only two possibilities for each $i$.
\begin{enumerate}
\item[$(a_i)$] The maps $d^1_i$ and $d^2_i$ are zero, one of the maps $d^1_{i+1}$, $d^2_{i+1}$ is zero, and the other one has image $A/2^{(2)}$.
\item[$(b_i)$] One of the maps $d^1_{i}$, $d^2_{i}$ is zero and the other one has image $A/2^{(2)}$, and the maps $d^1_{i+1}$ and $d^2_{i+1}$ are zero.
\end{enumerate}
We observe that $(a_i)\Longrightarrow (b_{i+1})\Longrightarrow (a_{i+2})$.
Since $E_{n+1}^1=0$, $(a_{n+2})$ holds. We can therefore deduce the result by induction on $i$.
\end{proof}

\subsubsection{The spectral sequence in total degree $i>2n$}
If $i>3n$ then $E^1_i=E^\infty_i$ for lacunary reasons, and since there is only one nontrivial term in total degree $i$ we have:
\begin{align}
L_i\Gamma(A,n)\simeq L_{i-8}\Gamma(A,n-2)\;. \label{eq-triviale}
\end{align}

To analyze the spectral sequence in total degree $i$ with $2n<i\le 3n$, we will use mod $2$ reduction. So we first recall basic facts regarding mod $p$ reduction. First of all, for any finite abelian group $G$, the $p$-torsion subgroup $_pG$ has the same dimension (as an $\Fp$-vector space) as the mod $p$ reduction $G\otimes \Fp$. Using this basic fact, one easily proves the following very rough estimation, which will be useful to compare the $E^1$ and the $E^\infty$ pages of the spectral sequence modulo $p$.
\begin{lemma}\label{lm-modp2}
Let $(G_*,\partial_*)$ be a degreewise finite differential graded abelian group with $\partial_i:G_i\to G_{i-1}$. Given a prime $p$, we denote by $(_{p}G_*,{_{p}\partial})$ the subcomplex of $p$-torsion elements of $G_*$. Then we have
\begin{align*}
\dim_{\Fp}(H_i(G)\otimes\Fp)\le \dim_\Fp( G_i\otimes\Fp) - \mathrm{rk}(_p\partial_i)\;.
\end{align*}
\end{lemma}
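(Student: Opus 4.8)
The plan is to reduce this apparently sharp estimate to the elementary right-exactness of mod $p$ reduction, after rewriting its right-hand side. The one input I would isolate first is the following standard fact: for any finite abelian group $H$ one has $\dim_{\Fp}(H\otimes\Fp)=\dim_{\Fp}({}_pH)$. Indeed, decomposing $H$ into cyclic summands $\Z/n_j$, both sides count the number of indices $j$ with $p\mid n_j$, since ${}_p(\Z/n_j)$ and $\Z/n_j\otimes\Fp$ are each a copy of $\Fp$ exactly when $p\mid n_j$. I would state this once and then apply it twice.

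Next I would rewrite the right-hand side of the asserted inequality. Because ${}_p\partial_i\colon {}_pG_i\to {}_pG_{i-1}$ is an $\Fp$-linear map of $\Fp$-vector spaces, rank-nullity gives $\mathrm{rk}({}_p\partial_i)=\dim_{\Fp}({}_pG_i)-\dim_{\Fp}\ker({}_p\partial_i)$. Combining this with the basic fact applied to $G_i$, namely $\dim_{\Fp}(G_i\otimes\Fp)=\dim_{\Fp}({}_pG_i)$, I obtain
\[
\dim_{\Fp}(G_i\otimes\Fp)-\mathrm{rk}({}_p\partial_i)=\dim_{\Fp}\ker({}_p\partial_i).
\]
I would then identify this kernel. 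Writing $Z_i:=\ker\partial_i$ for the subgroup of cycles, one has $\ker({}_p\partial_i)={}_pG_i\cap Z_i={}_pZ_i$, because an element of ${}_pG_i$ is killed by ${}_p\partial_i$ precisely when it already lies in $Z_i$. Applying the basic fact a second time, now to the finite group $Z_i$, gives $\dim_{\Fp}\ker({}_p\partial_i)=\dim_{\Fp}(Z_i\otimes\Fp)$. Thus the claimed bound is equivalent to the single inequality $\dim_{\Fp}(H_i(G)\otimes\Fp)\le\dim_{\Fp}(Z_i\otimes\Fp)$.

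Finally I would close with the right-exactness argument: $H_i(G)=Z_i/\mathrm{Im}\,\partial_{i+1}$ is a quotient of $Z_i$, so the surjection $Z_i\twoheadrightarrow H_i(G)$ remains surjective after applying the right-exact functor $-\otimes\Fp$, whence $\dim_{\Fp}(H_i(G)\otimes\Fp)\le\dim_{\Fp}(Z_i\otimes\Fp)$. Chaining the three identities above with this last inequality yields the lemma. The only genuine subtlety — and the reason the bound is phrased with ${}_p\partial_i$ rather than the mod $p$ reduction $\overline{\partial}_i=\partial_i\otimes\Fp$ — is that these two maps can have different ranks (for instance a map $\Z/p\to\Z/p^2$ sending a generator to $p$ has ${}_p\partial$ of rank $1$ but $\overline{\partial}$ of rank $0$), so a universal-coefficients argument routed through $H_*(G\otimes\Fp)$ would only give the strictly weaker estimate with $\overline{\partial}_i$. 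Keeping track of the $p$-torsion subgroup rather than the reduction is exactly what makes the key identity $\dim_{\Fp}(G_i\otimes\Fp)-\mathrm{rk}({}_p\partial_i)=\dim_{\Fp}(Z_i\otimes\Fp)$ valid, and hence makes the stronger bound essentially free.
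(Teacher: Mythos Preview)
Your proof is correct and follows essentially the same approach as the paper, which does not give a detailed argument but only records the key input that $\dim_{\Fp}({}_pH)=\dim_{\Fp}(H\otimes\Fp)$ for a finite abelian group $H$ and then states that the lemma follows easily. Your write-up is precisely the natural fleshing-out of that hint: rank-nullity on ${}_p\partial_i$, the identification $\ker({}_p\partial_i)={}_pZ_i$, and right-exactness of $-\otimes\Fp$ applied to $Z_i\twoheadrightarrow H_i(G)$.
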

The next elementary lemma is useful for a modulo $p$ comparison of $L_*\Gamma^4(A,n)$ and the $E^\infty$ page of the spectral sequence.
\begin{lemma}\label{lm-modp1}
Let $G$ be a filtered finite abelian group. For all primes $p$ we have
\begin{align*}\dim_\Fp\left( G\otimes\Fp\right)\le \dim_\Fp \left(\gr(G)\otimes\Fp\right)\;.
\end{align*}
Moreover, the equality holds if and only if there exists an isomorphism of groups between the $p$-primary parts $_\p G\simeq {_\p \gr(G)}$.
\end{lemma}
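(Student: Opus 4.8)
The plan is to reduce to a single prime, establish the inequality by a short induction along the filtration, and then handle the two directions of the equivalence separately, the reverse implication being the genuinely delicate one. First I would reduce to the $p$-primary part: since $-\otimes_\Z\Fp$ kills every finite group of order prime to $p$, both $G\otimes\Fp$ and $\gr(G)\otimes\Fp$ depend only on ${}_\p G$ and ${}_\p\gr(G)$; and as the $p$-primary component is an exact functor it commutes with the formation of the associated graded, so ${}_\p\gr(G)\simeq\gr({}_\p G)$. Hence one may assume $G$ is a finite abelian $p$-group, in which case $\dim_\Fp(G\otimes\Fp)=\dim_\Fp(G/pG)$ is the minimal number of generators of $G$, equivalently the number of cyclic summands; I write $r(G)$ for this integer. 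I also record that a finite filtration preserves orders, $|G|=|\gr(G)|$, the order multiplying across the successive quotients.

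Next, the inequality. I would induct on the length of the filtration. Writing $F^1$ for the first proper step, the sequence $0\to F^1\to G\to G/F^1\to 0$ gives, after applying the right exact functor $-\otimes\Fp$, an exact sequence $F^1\otimes\Fp\to G\otimes\Fp\to (G/F^1)\otimes\Fp\to 0$, hence the subadditivity $r(G)\le r(F^1)+r(G/F^1)$. On the other hand $\gr(G)\simeq (G/F^1)\oplus\gr(F^1)$, so $r(\gr(G))=r(G/F^1)+r(\gr(F^1))$, while the inductive hypothesis applied to $F^1$ with its induced filtration gives $r(F^1)\le r(\gr(F^1))$. Assembling these three relations yields $r(G)\le r(\gr(G))$.

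Finally, the equivalence, which is where I expect the real work to lie. The implication ${}_\p G\simeq{}_\p\gr(G)\Rightarrow$ equality is immediate, since isomorphic groups share the invariant $r$. For the converse the two invariants assembled so far are not enough: a finite abelian $p$-group is not determined by its order and its number of generators alone (for example $\Z/p^3\oplus\Z/p$ and $\Z/p^2\oplus\Z/p^2$ agree on both, and the former even admits a two-step filtration with associated graded the latter). The feature that makes the converse valid in every instance where we apply it is that the $p$-primary groups in play have exponent at most $p^2$: at the prime $2$ all the functors entering theorem \ref{thm-G4Z} and lemma \ref{lm-m2descr} are $\Fdeux$-valued apart from the summand $\Gamma^2_\Z(A/2^{(1)})$, whose exponent is $4$ by \eqref{gammators}, while at odd primes the functors are $\Fp$-valued. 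For a finite abelian $p$-group $H$ of exponent dividing $p^2$ one has $H\simeq(\Z/p)^a\oplus(\Z/p^2)^b$ with $b=\log_p|H|-r(H)$ and $a=2r(H)-\log_p|H|$, so that $H$ is determined up to isomorphism by the pair $(|H|,r(H))$. Since $|{}_\p G|=|{}_\p\gr(G)|$ always holds, the hypothesis $r({}_\p G)=r({}_\p\gr(G))$ then forces the two decompositions to coincide, giving ${}_\p G\simeq{}_\p\gr(G)$. The one point demanding care, and which I would verify case by case, is precisely this exponent bound on the groups to which the lemma is applied.
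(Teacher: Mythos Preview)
Your reduction to the $p$-primary part and the inductive proof of the inequality via subadditivity of $r(-)=\dim_{\Fp}(-\otimes\Fp)$ along short exact sequences are both correct, as is the easy direction of the equivalence.

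You are also right that the converse implication is \emph{false} as the lemma is stated. Your example can be made completely explicit: take $G=\Z/p^3\oplus\Z/p$ and $F^1=\langle(p,1)\rangle\simeq\Z/p^2$; then $G/F^1$ is cyclic of order $p^2$, so $\gr(G)\simeq(\Z/p^2)^2$, and $r(G)=r(\gr(G))=2$ while $G\not\simeq\gr(G)$. The paper states this lemma without proof, so you have in fact been more careful than the authors here.

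Your exponent-$\le p^2$ repair is correct as an abstract statement: a finite abelian $p$-group of exponent at most $p^2$ is determined by its order and its rank. One caveat is worth making explicit. The hypothesis must be placed on ${}_\p G$, not on ${}_\p\gr(G)$: in the counterexample above, $\gr(G)$ already has exponent $p^2$. In the applications (propositions \ref{prop-high-deg}--\ref{prop-middegnodd}) the group $G=L_i\Gamma^4(A,n)$ is the unknown, so its exponent is not given a priori; what one knows by induction is only $\gr(G)=E^\infty_i$. The way the argument is actually completed in those propositions is that, once the dimension equality furnishes $r({}_{(2)}G)$, one compares it against the finite list of \emph{candidate} functors for $G$ produced by the $\Ext^1$ computations of appendix \ref{app-comput}. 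Each candidate (split or non-split) is built from the functors you list and hence has $2$-primary exponent at most $4$; the candidates therefore are distinguished by $r$, and only the intended one matches. So your closing remark that the exponent bound ``demands care, case by case'' is exactly right, and the care is to impose it on the candidates for $G$ rather than on $\gr(G)$.
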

\begin{proposition}\label{prop-high-deg}Let $2n<i\le 3n$. There is an isomorphism:
\begin{align}L_i\Gamma^4(A,n)\simeq  L_{i-8}\Gamma^4(A,n-2)\oplus \begin{cases}
 \Gamma^2_\Fdeux(A/2)\otimes A/2^{(1)} & i=3n\\
 A/2^{(1)}\otimes A/2^{(1)} & 2n+1<i<3n\\
 \Gamma^2_\Fdeux(A/2^{(1)}) & i=2n+1\\
\end{cases}\label{eq-suiv}\end{align}
Moreover, the differential $d^1_{-3,2n+4}:\Gamma^2_\Fdeux(A/2^{(1)})\to L_{2n-8}\Gamma^4(A,n-2)$ is zero.
\end{proposition}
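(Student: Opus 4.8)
The plan is to run the maximal filtration spectral sequence \eqref{filss} for $\Gamma^4$ in the range $2n<i\le 3n$, granting Theorem \ref{thm-G4Z} for $n-2$, and to read off $L_i\Gamma^4(A,n)$ from the $E^\infty$-page. The first point is that, by Table \ref{table-ga4an}, only the columns $p=-4$ and $p=-3$ contribute in this range (the column $p=-1$ is concentrated in total degree $n$ and $p=-2$ in total degrees $\le 2n$). Writing $E^1_{p}(i)$ for the total-degree-$i$ entry of column $p$, one has $E^1_{-4}(i)=L_{i-8}\Gamma^4(A,n-2)$ by the double d\'ecalage formula \eqref{dec} and $E^1_{-3}(i)$ is given by \eqref{lisig12}. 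Since the filtration has only four steps, the only differentials acting between these columns here are the maps $f_q:=d^1_{-3,q}\colon E^1_{-3}(q-3)\to E^1_{-4}(q-4)$; every other differential has zero source or zero target. Hence $\gr L_i\Gamma^4(A,n)=\ker f_{i+3}\oplus\coker f_{i+4}$, and the proposition reduces to showing that all $f_q$ with source in degrees $2n<i\le 3n$ vanish and that the resulting two-step extensions split.

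I would split the vanishing of the $f_q$ according to the source. When the source is $A/2^{(1)}\otimes A/2^{(1)}$ or $\Gamma^2_\Fdeux(A/2)\otimes A/2^{(1)}$, I would first try to kill $f_q$ by a $\hom_{\PP_\Z}$-computation via the methods of Appendix \ref{app-comput}. The key input is that $\hom_{\PP_\Z}(A/2^{(1)}\otimes A/2^{(1)},A/2^{(2)})=0$: this prevents any differential from mapping onto the Frobenius part $A/2^{(2)}$ of the $2$-torsion of the $\Z/4$-valued summand $\Gamma^2_\Z(A/2^{(1)})$ occurring in $E^1_{-4}$ for $n$ even, which is precisely the part invisible to a mod $2$ count. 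The differentials not disposed of by $\hom$-vanishing are then handled by dimension counting, the second main ingredient.

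For the count I would use mod $2$ reduction. The universal coefficient sequence \eqref{uct}, the known groups $L_i\Gamma^4_\Fdeux(A/2,n)$ from Lemma \ref{lm-m2descr}, and the inductive values of $L_*\Gamma^4(A,n-2)$ give, in the torsion range, the recursion $\dim_\Fdeux L_i\Gamma^4_\Fdeux(A/2,n)=\dim_\Fdeux(L_i\Gamma^4(A,n)\otimes\Fdeux)+\dim_\Fdeux({}_2L_{i-1}\Gamma^4(A,n))$. Starting from the base case $i>3n$, where $L_i\Gamma^4(A,n)\simeq L_{i-8}\Gamma^4(A,n-2)$ by \eqref{eq-triviale}, this determines $\dim_\Fdeux(L_i\Gamma^4(A,n)\otimes\Fdeux)$ downward throughout the range, and a routine arithmetic check against Lemma \ref{lm-m2descr} shows it equals $\dim_\Fdeux\big((E^1_{-4}(i)\oplus E^1_{-3}(i))\otimes\Fdeux\big)$. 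Comparing with the spectral-sequence upper bound $\dim_\Fdeux(L_i\otimes\Fdeux)\le\dim_\Fdeux(\gr L_i\otimes\Fdeux)\le\dim_\Fdeux(E^1_i\otimes\Fdeux)$ supplied by Lemmas \ref{lm-modp1} and \ref{lm-modp2} forces all inequalities to be equalities, so every $f_q$ in the range has rank zero and, by the equality clause of Lemma \ref{lm-modp1}, the filtration introduces no new $2$-torsion.

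The main obstacle is the single differential $d^1_{-3,2n+4}\colon\Gamma^2_\Fdeux(A/2^{(1)})\to L_{2n-8}\Gamma^4(A,n-2)$ at the bottom of the range, which the statement isolates because it is fed into the total-degree-$2n$ analysis of the next subsection. It cannot be treated by $\hom$-vanishing, since the Verschiebung gives a nonzero map $\Gamma^2_\Fdeux(A/2^{(1)})\to A/2^{(2)}$ and $A/2^{(2)}$ does occur in the target. I would dispose of it by the sharp dimension count at total degree $2n+1$: here the target $L_{2n-8}\Gamma^4(A,n-2)$ contains no $\Gamma^2_\Z(A/2^{(1)})$-summand, hence is $2$-torsion with no $\Z/4$, so the mod $2$ count is exact, and the value of $\dim_\Fdeux(L_{2n+1}\Gamma^4(A,n)\otimes\Fdeux)$ forced by the recursion already accounts for the full $\Gamma^2_\Fdeux(A/2^{(1)})$ together with all of $E^1_{-4}(2n+1)$, leaving no room for a nonzero $d^1_{-3,2n+4}$. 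Finally, the residual two-step extensions $0\to L_{i-8}\Gamma^4(A,n-2)\to L_i\Gamma^4(A,n)\to E^1_{-3}(i)\to 0$ split functorially, which is checked by the $\Ext^1_{\PP_\Z}$-computations of Appendix \ref{app-comput} and is comparatively routine, in contrast to the genuinely non-split low-degree extensions of Propositions \ref{prop-nonsplit1} and \ref{prop-nonsplit2}.
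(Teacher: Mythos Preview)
Your proposal is correct and follows essentially the same route as the paper: isolate columns $p=-3,-4$ in total degrees $2n<i\le 3n$, kill the $d^1$'s by a mod $2$ dimension count against lemma \ref{lm-m2descr} via the inequalities of lemmas \ref{lm-modp1} and \ref{lm-modp2}, and split the remaining two-step extensions using the $\Ext^1$-table of appendix \ref{app-comput} together with the torsion constraint from the equality clause of lemma \ref{lm-modp1}.

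Two places where the paper's execution is tighter than your sketch. First, the $\hom$-vanishing detour you propose for the $\Gamma^2_\Z(A/2^{(1)})$ summand is unnecessary. The paper simply pairs, at each fixed $i$ with $2n+1<i\le 3n$, the inequality $\dim(E^1_i\otimes\Fdeux)-\mathrm{rk}(d^1_i)\ge\dim(L_i\Gamma^4(A,n)\otimes\Fdeux)$ with its companion $\dim({_2}E^1_{i-1})-\mathrm{rk}(d^1_{i-1})\ge\dim({_2}L_{i-1}\Gamma^4(A,n))$. The sum of the right-hand sides is $\dim L_i\Gamma^4_\Fdeux(A/2,n)$ by \eqref{uct}, and the sum of the left-hand sides (before subtracting ranks) equals the same quantity because $E^1_{-3,i+3}\oplus E^1_{-3,i+2}\simeq C_i(A,n)$ and the column $p=-4$ contribution is handled by \eqref{uctbis}. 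So both ranks vanish at once, regardless of any $\Z/4$ in the target. Second, your phrasing of the count as a ``downward recursion determining $\dim(L_i\otimes\Fdeux)$'' is imprecise: the universal coefficient relation only fixes the \emph{sum} $\dim(L_i\otimes\Fdeux)+\dim({_2}L_{i-1})$, not each term. It is the pairing above, applied at $i=2n+2$, that kills $d^1_{-3,2n+4}$ without ever needing $L_{2n}\Gamma^4(A,n)$; a genuine downward induction would get stuck there.

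Finally, be explicit that the $\Ext^1$ computations alone do not split everything: $\Ext^1_{\PP_\Z}(\Gamma^2_\Fdeux(A/2^{(1)}),A/2^{(2)})=\Z/2$ by lemma \ref{lm-calc2}, so at $i=2n+1$ a non-split extension is a priori possible. The paper rules it out because it would produce a $\Gamma^2_\Z(A/2^{(1)})$ with $4$-torsion, contradicting the non-functorial isomorphism ${_{(2)}}L_{2n+1}\Gamma^4(A,n)\simeq {_{(2)}}E^\infty_{2n+1}$ coming from the equality clause of lemma \ref{lm-modp1} (one checks from theorem \ref{thm-G4Z} for $n-2$ that $L_{2n-7}\Gamma^4(A,n-2)$ has no $\Gamma^2_\Z$ summand). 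You have this ingredient in your sketch, but it should be tied directly to the $i=2n+1$ extension rather than left implicit.
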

\begin{proof}Lemmas \ref{lm-modp1} and \ref{lm-modp2} yield inequalities:
\begin{align}&\dim \left(E^1_i\otimes \Fdeux\right) - \mathrm{rk} ( d^1_{i}) \ge   \dim \left(E^\infty_i\otimes \Fdeux\right)\ge \dim \left(L_i\Gamma^4(A,n)\otimes \Fdeux\right),\label{eq-ineqbis}\\
&\dim \left(_2E^1_{i-1}\right) - \mathrm{rk} ( d^1_{i-1}) \ge   \dim \left(_2E^\infty_{i-1}\right)\ge \dim \left(_2L_{i-1}\Gamma^4(A,n)\right).\label{eq-ineqter}
\end{align}
We will now verify that the expressions \eqref{eq-ineqbis}, \eqref{eq-ineqter} are actually equalities for $2n+1< i\le 3n$, thereby proving that the total differential $d^1_i$ is zero in degrees $2n<i\le 3n$. The 
universal coefficient exact sequence \eqref{uct} yields an equality
\begin{align}\dim \left(L_i\Gamma^4(A,n)\otimes\Fdeux\right) +  \dim \left({_2L_{i-1}\Gamma^4(A,n)}\right)= \dim L_i\Gamma^4_\Fdeux(A/2,n)\;.\label{eq-dep1}\end{align}
The short exact sequence \eqref{uctbis} and lemma \ref{lm-m2descr} imply that
\begin{align*}
\dim L_i\Gamma^4_\Fdeux(A/2,n)=\dim \left(E^1_{-4,i+4}\otimes \Fdeux\right)+\dim \left(_2E^1_{-4,i+3}\right)+\dim C_i(A/2,n)\;.
\end{align*}
We observe that $E^1_{-3,i+3}\oplus E^1_{-3,i+2}\simeq C_i(A/2,n)$ for $2n+1<i\le 3n$. It follows that
\begin{align}
\dim L_i\Gamma^4_\Fdeux(A/2,n)=\dim \left(E^1_i\otimes \Fdeux\right)+\dim \left(_2E^1_{i-1}\right).\label{eq-dep2}
\end{align}
By comparing the sum of the inequalities \eqref{eq-ineqbis} and \eqref{eq-ineqter} with the equality provided by \eqref{eq-dep1} and \eqref{eq-dep2}, we can now conclude that the expression \eqref{eq-ineqbis} and \eqref{eq-ineqter} are actually equalities for $2n+1< i\le 3n$.

Since total differential $d^1_i$ is zero in degrees $2n<i\le 3n$ (and $d^1_{3n+1}$ is zero by lacunarity), we have $d^1_{-3,2n+4}=0$ and $E^1_i=E^\infty_i$ for $2n<i\le 3n$. Furthermore, since \eqref{eq-ineqbis} and \eqref{eq-ineqter} are equalities, lemma \ref{lm-modp1} yields a \emph{non functorial} isomorphism
\begin{align}E^\infty_{i-4,4}\oplus E^\infty_{i-3,3}= E^\infty_i\simeq L_i\Gamma^4(A,n)\label{eq-nonfunct}\;.\end{align}
To finish the proof, we have to prove a \emph{functorial} isomorphism
$E^\infty_{i-4,4}\oplus E^\infty_{i-4,3}\simeq L_i\Gamma^4(A,n)$. But the $2$-primary part of $E^\infty_{i-4,4}$ is a direct sum of functors of the following types:
$$A/2^{(2)}\;,\; \Gamma^2_\Fdeux(A/2^{(1)})\;,\; A/2^{(1)}\otimes A/2^{(1)}\;,\;\Gamma^2_\Z(A/2^{(1)})\;,\; \Lambda^2_\Fdeux(A/2^{(1)}) $$
 (as no term of the form $\Gamma^4(A)$, $\Lambda^4(A)$, $\Gamma^2_\Fdeux(A/2)\otimes A/2^{(1)}$ or $\Phi^4(A)$ occurs in the degrees which we are considering here).
In addition,  $E^\infty_{i-4,3}$ is one of the following functors:
$$\Gamma^2_\Fdeux(A/2^{(1)})\;,\; A/2^{(1)}\otimes A/2^{(1)}\;,\; \Gamma^2_\Fdeux(A/2)\otimes A/2^{(1)}\;.$$
It follows from the $\Ext^1$ computations of appendix \ref{app-comput} that there can be no nonsplit extension of $E^\infty_{i-4,3}$ by $E^\infty_{i-4,4}$, except in the case $E^\infty_{i-4,3}=\Gamma^2_\Fdeux(A/2^{(1)})$. In the latter case, the only  possible nontrivial extension is an extension of $\Gamma^2_\Fdeux(A/2^{(1)})$ by a functor of the form $(A/2^{(2)})^{\oplus \,k}$. The middle term of such a nontrivial extension is a functor $\Gamma_\Z^2(A/2^{(1)})\oplus (A/2^{(2)})^{\oplus \, k-1}$, which has $4$-torsion. Such a nontrivial extension is therefore excluded, as this would contradict the isomorphism \eqref{eq-nonfunct}. Since all possible extensions are split, we obtain an isomorphism of functors $E^\infty_i\simeq L_i\Gamma^4(A,n)$. This finishes the proof of proposition \ref{prop-high-deg}.
\end{proof}

\subsubsection{The spectral sequence in total degree $i= 2n$}
The study of the spectral sequence in total degrees $i>2n$ and $i<2n$ has already provided  us with some partial information regarding the situation for $i=2n$.
Let us sum up what we know so far.
\begin{itemize}
\item The differential $d^1_{2n+1}$ is zero, hence $E^\infty_{2n}$ is a subfunctor of $E^1_{2n}$.
\item If $n$ is even, then $d^1_{2n}$ and $d^2_{2n}$ are zero, hence $E^\infty_{2n}=E^1_{2n}$.
\item If $n$ is odd, then one of the differentials $d^1_{2n}$ and $d^2_{2n}$ is zero, and the other has image equal to $A/2^{(2)}$. Hence we have only two possibilities:
\begin{enumerate}
\item[(a)] $E^\infty_{2n} = L_{2n-8}\Gamma^4(A,n-2)\oplus \Gamma^2_\Fdeux(A/2^{(1)})\oplus A\otimes A/3^{(1)}$
\item[(b)] $E^\infty_{2n} = L_{2n-8}\Gamma^4(A,n-2)\oplus A/2^{(1)}\oplus \Lambda_\Fdeux^2(A/2^{(1)})\oplus A\otimes A/3^{(1)}$
\end{enumerate}
\end{itemize}

\begin{proposition}\label{prop-middegneven}
If $n$ is even, then
\begin{align}L_{2n}\Gamma^4(A,n)= L_{2n-8}\Gamma^4(A,n-2)\oplus \Gamma^2_\Z(A/2^{(1)})\oplus A\otimes A/3^{(1)}\;.\label{eq-2neven}\end{align}
\end{proposition}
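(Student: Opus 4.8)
The plan is to pin down $E^\infty_{2n}$ first and then resolve the resulting extension problem by mod $2$ reduction, following the pattern of Proposition~\ref{prop-nonsplit2} (the case $n=2$).

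First I would record that for $n$ even every differential touching total degree $2n$ vanishes. By Proposition~\ref{prop-lowdeg} the differentials $d^1_{2n}$ and $d^2_{2n}$ are zero; every other potential differential out of total degree $2n$ either issues from the zero group $E^1_{-1,2n+1}$ or lands outside the strip $-4\le p\le -1$, and no differential reaches total degree $2n$ since Proposition~\ref{prop-high-deg} gives $E^1_{2n+1}=E^\infty_{2n+1}$. Hence $E^\infty_{2n}=E^1_{2n}$, and reading off the boldface entries of Table~\ref{table-ga4an} exhibits $L_{2n}\Gamma^4(A,n)$ as an iterated extension whose successive quotients are $\gr_{-4}=L_{2n-8}\Gamma^4(A,n-2)$, $\gr_{-3}=A/2^{(2)}$ and $\gr_{-2}=\Gamma^2_\Fdeux(A/2^{(1)})\oplus A\otimes A/3^{(1)}$.

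Next I would peel off the pieces that cannot interact. The odd part $A\otimes A/3^{(1)}$, together with the odd-primary part of $L_{2n-8}\Gamma^4(A,n-2)$, splits off since functors of coprime torsion admit no nontrivial mutual extensions. For the $2$-primary part I would invoke the $\Ext^1$-computations of Appendix~\ref{app-comput}: by the inductive hypothesis (Theorem~\ref{thm-G4Z} for $n-2$) the list of functors occurring in $L_{2n-8}\Gamma^4(A,n-2)$ is known, and none of them admits a nontrivial extension involving $A/2^{(2)}$, $\Gamma^2_\Fdeux(A/2^{(1)})$ or their extension $\Gamma^2_\Z(A/2^{(1)})$, so $L_{2n-8}\Gamma^4(A,n-2)$ splits off as well. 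This reduces the whole problem to the single extension of $\Gamma^2_\Fdeux(A/2^{(1)})$ (the quotient, coming from $\gr_{-2}$) by $A/2^{(2)}$ (the sub, coming from $\gr_{-3}$).

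It remains to show this extension is nontrivial, hence equal to $\Gamma^2_\Z(A/2^{(1)})$. As in Proposition~\ref{prop-nonsplit2}, I would argue by contradiction using the universal coefficient sequence \eqref{uct} in degree $2n+1$,
\[
0\to L_{2n+1}\Gamma^4(A,n)\otimes\Fdeux\to L_{2n+1}\Gamma^4_\Fdeux(A/2,n)\to {_2L_{2n}\Gamma^4(A,n)}\to 0\;,
\]
where the left term is given by Proposition~\ref{prop-high-deg}, the middle term by Lemma~\ref{lm-m2descr}, and the right term is the $2$-torsion we wish to determine. A split extension would make $_2L_{2n}\Gamma^4(A,n)$ carry an extra copy of $A/2^{(2)}$ relative to the nonsplit alternative $\Gamma^2_\Z(A/2^{(1)})$, and the resulting $\F_2$-dimension count is then incompatible with exactness, exactly as for $n=2$. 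Since Lemma~\ref{lm-calc2} yields $\Ext^1_{\PP_\Z}(\Gamma^2_\Fdeux(A/2^{(1)}),A/2^{(2)})=\Z/2$, there is a unique nontrivial extension; the short exact sequence $0\to A/2^{(2)}\to\Gamma^2_\Z(A/2^{(1)})\to\Gamma^2_\Fdeux(A/2^{(1)})\to 0$ realizes $\Gamma^2_\Z(A/2^{(1)})$ as precisely this extension, giving \eqref{eq-2neven}. I expect the main difficulty to lie in the last dimension count: one must check that the inductive summand $L_{2n-8}\Gamma^4(A,n-2)$ and its décalage shift $L_{2n-7}\Gamma^4(A,n-2)$ — all of which are elementary $2$-torsion in this degree range by the inductive hypothesis — appear identically on the integral and the mod $2$ sides and cancel cleanly, so that the comparison collapses to the $n=2$ count and isolates the $A/2^{(2)}$ versus $\Gamma^2_\Z(A/2^{(1)})$ discrepancy.
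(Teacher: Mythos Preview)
Your overall plan matches the paper's, and your final mod~$2$ dimension count is correct, but the ``peel off'' step contains a false claim. You assert that the functors occurring in $L_{2n-8}\Gamma^4(A,n-2)$ admit no nontrivial $\Ext^1$ with $\Gamma^2_\Fdeux(A/2^{(1)})$. By the inductive hypothesis (Theorem~\ref{thm-G4Z} for the even integer $n-2$), the $2$-primary part of $L_{2n-8}\Gamma^4(A,n-2)$ is a direct sum of copies of $A/2^{(2)}$ only, since $2n-8<2(n-2)$ lies strictly below the degree range where any other $2$-primary summand appears in that formula. But Lemma~\ref{lm-calc2} gives
\[
\Ext^1_{\PP_\Z}\bigl(\Gamma^2_\Fdeux(A/2^{(1)}),\,A/2^{(2)}\bigr)=\Z/2\ne 0,
\]
so you cannot split $L_{2n-8}\Gamma^4(A,n-2)$ off from the whole of $L_{2n}\Gamma^4(A,n)$ before you know which extension class is realised.

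The paper's fix is not to attempt this splitting at all. Since the subobject $F_{-3}$ of the abutment filtration is an extension of $A/2^{(2)}$ by $L_{2n-8}\Gamma^4(A,n-2)\simeq (A/2^{(2)})^{\oplus k}$, and $A/2^{(2)}$ has no self-$\Ext^1$ (Lemma~\ref{lm-ApAp}), one has $F_{-3}\simeq (A/2^{(2)})^{\oplus(k+1)}$. The remaining extension of $\Gamma^2_\Fdeux(A/2^{(1)})\oplus A\otimes A/3^{(1)}$ by $(A/2^{(2)})^{\oplus(k+1)}$ then has only two possible outcomes, namely $L_{2n-8}\Gamma^4(A,n-2)\oplus\Gamma^2_\Z(A/2^{(1)})\oplus A\otimes A/3^{(1)}$ or the split version with an extra $A/2^{(2)}$ summand, because the unique nontrivial extension of $\Gamma^2_\Fdeux(A/2^{(1)})$ by $(A/2^{(2)})^{\oplus(k+1)}$ is $\Gamma^2_\Z(A/2^{(1)})\oplus(A/2^{(2)})^{\oplus k}$. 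Your mod~$2$ dimension count (or the paper's variant, carried out in degree $2n$ via \eqref{uct}, Lemma~\ref{lm-m2descr}, Proposition~\ref{prop-lowdeg} and \eqref{uctbis}) then rules out the split case exactly as you describe. In short: the key observation you were missing is that $L_{2n-8}\Gamma^4(A,n-2)$ consists \emph{only} of copies of $A/2^{(2)}$, which lets you absorb it into the extension problem rather than split it off.
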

\begin{proof}
We already know that $E_{2n}^1=E_{2n}^\infty$, so we just have to retrieve $L_{2n}\Gamma^4(A,n)$ from $E_{2n}^\infty$.
By theorem \ref{thm-G4Z} (which we assume to be proved for $n-2$), $L_{2n-8}\Gamma^4(A,n-2)$ is a sum of copies of $A/2^{(2)}$. Since $A/2^{(2)}$ has no self-extension of degree one, there is no extension problem between the columns $p=-4$ and $p=-3$. Hence the extension problem on the abutment can be restated as a short exact sequence:
\begin{align*}0\to L_{2n-8}\Gamma^4(A,n-2)\oplus A/2^{(2)}\to L_{2n}\Gamma^4(A,n) \to \Gamma^2_\Fdeux(A/2^{(1)})\oplus A\otimes A/3^{(1)}\to 0\;. \end{align*}
But the only nontrivial extension of $\Gamma^{2}_\Fdeux(A/2^{(1)})$ by a functor of the form $(A/2^{(2)})^{\oplus \, k}$  is given by a functor of the form $\Gamma^{2}_\Z(A/2^{(1)})\oplus (A/2^{(2)})^{\oplus\, k-1}$. Thus we have only two possibilities for $L_{2n}\Gamma^4(A,n)$, namely:
\begin{align*}
&(i)\quad L_{2n-8}\Gamma^4(A,n-2)\oplus \Gamma_\Z(A/2^{(1)})\oplus A\otimes A/3^{(1)}\;,\\
&(ii)\quad L_{2n-8}\Gamma^4(A,n-2)\oplus A/2^{(2)}\oplus \Gamma^2_\Fdeux(A/2^{(1)})\oplus  A\otimes A/3^{(1)}\;.
\end{align*}

The difference between $(i)$ and $(ii)$ can be seen in the dimension of the $\Fdeux$-vector space $L_{2n}\Gamma^4(A,n)\otimes\Fdeux$. So we can use mod $2$ reduction to determine which of the two possibilities is correct.
Let $d_i(n)$, resp. $\delta_i(n)$, be the dimension of $L_{i}\Gamma^4(A,n)\otimes\Fdeux$, resp. $L_{i}\Gamma^4_\Fdeux(A/2,n)\otimes\Fdeux$. By proposition \ref{prop-lowdeg} $L_{2n-1}\Gamma^4(A,n)$ is an $\Fdeux$-vector space, hence  $d_{2n-1}(n)=\dim ({_2}L_{2n-1}\Gamma^4(A,n))$. Similarly, $d_{2n-9}(n-2)=\dim ({_2}L_{2n-1}\Gamma^4(A,n))$. We have
\begin{align*}
d_{2n}(n)&= \delta_{2n}(n) - d_{2n-1}(n) && \text{ by \eqref{uct}}\\
&= \delta_{2n-8}(n-2)+\dim\Gamma^2_\Fdeux(A/2^{(1)})+\dim A/2^{(2)} - d_{2n-1}(n) && \text{ by lemma \ref{lm-m2descr}}\\
&= \delta_{2n-8}(n-2)+\dim\Gamma^2_\Fdeux(A/2^{(1)})+d_{2n-9}(n-2) && \text{ by prop. \ref{prop-lowdeg}}\\
&= d_{2n-8}(n-2)+\dim\Gamma^2_\Fdeux(A/2)  && \text{ by \eqref{uctbis}}
\end{align*}
Thus the possibility $(ii)$ is excluded for dimension reasons so that $(i)$ holds.
\end{proof}

\begin{proposition}\label{prop-middegnodd}
If $n$ is odd, then
\begin{align}L_{2n}\Gamma^4(A,n)= L_{2n-8}\Gamma^4(A,n-2)\oplus \Lambda^2_\Fdeux(A/2^{(1)})\oplus A/2^{(2)}\oplus A\otimes A/3^{(1)}\;.\label{eq-2nodd}\end{align}
\end{proposition}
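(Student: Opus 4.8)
The plan is to follow the same two-stage strategy as in proposition \ref{prop-middegneven} --- first identify $E^\infty_{2n}$, then reconstruct $L_{2n}\Gamma^4(A,n)$ from its filtration --- but now with one essential new difficulty: the two competing answers are pure $2$-torsion of the same total dimension, so mod $2$ dimension counting alone cannot settle the matter and a \emph{functorial} argument is required. Concretely, the candidate final functors differ only by whether the copy of $A/2^{(2)}$ surviving from the column $p=-3$ and the copy of $\Lambda^2_\Fdeux(A/2^{(1)})$ coming from the column $p=-2$ sit as separate summands or combine into the nonsplit $\Fdeux$-functor $\Gamma^2_\Fdeux(A/2^{(1)})$. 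Both have mod $2$ dimension $\binom{m+1}{2}$ (with $m$ the rank of $A$), in sharp contrast to the even case, where the competing term $\Gamma^2_\Z(A/2^{(1)})$ carries $4$-torsion and is therefore detected by dimensions.

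First I would reduce, exactly as in the preceding list of possibilities, to two tasks: deciding which of the differentials $d^1_{2n},d^2_{2n}$ is the nonzero one supplied by proposition \ref{prop-lowdeg}, and then resolving the extension between the filtration columns in total degree $2n$. The content to be extracted is that the nonzero differential is the Verschiebung $\Gamma^2_\Fdeux(A/2^{(1)})\to A/2^{(2)}$ issuing from the column $p=-2$, so that $E^\infty_{2n}$ acquires the summand $\Lambda^2_\Fdeux(A/2^{(1)})=\ker(\mathrm{Versch})$ together with a surviving $A/2^{(2)}$ from $p=-3$ (case (b)), rather than case (a). Next I would run the universal coefficient sequence \eqref{uct} in degree $2n$, using the explicit form of $L_{2n}\Gamma^4_\Fdeux(A/2,n)$ from lemma \ref{lm-m2descr} and the fact, from proposition \ref{prop-lowdeg}, that $L_{2n-1}\Gamma^4(A,n)$ is a sum of copies of $A/2^{(2)}$, to compute $\dim_\Fdeux\bigl(L_{2n}\Gamma^4(A,n)\otimes\Fdeux\bigr)$. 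Since any $4$-torsion summand would lower this dimension below $\binom{m+1}{2}$, this computation --- the mirror image of the one in proposition \ref{prop-middegneven}, but yielding the opposite conclusion --- rules out the appearance of $4$-torsion and shows that the $2$-primary part of $L_{2n}\Gamma^4(A,n)$ is annihilated by $2$.

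The hard part is then to show that this pure $2$-torsion functor is the split sum $\Lambda^2_\Fdeux(A/2^{(1)})\oplus A/2^{(2)}$ and not the nonsplit extension $\Gamma^2_\Fdeux(A/2^{(1)})$. The cleanest rigorous route uses the filtration structure: the copy $A/2^{(2)}$ (column $p=-3$) is a \emph{subobject} while $\Lambda^2_\Fdeux(A/2^{(1)})$ (column $p=-2$) is a \emph{quotient}, so the only possible extension class lives in $\Ext^1_{\PP_\Z}\bigl(\Lambda^2_\Fdeux(A/2^{(1)}),A/2^{(2)}\bigr)$. Crucially $\Gamma^2_\Fdeux(A/2^{(1)})$ realizes the \emph{opposite} extension, having $\Lambda^2_\Fdeux(A/2^{(1)})$ as its sub and $A/2^{(2)}$ as its Verschiebung quotient, so it simply cannot occur as the middle term here; it then suffices to establish, by the $\hom$- and $\Ext^1$-methods collected in appendix \ref{app-comput}, that the relevant extension class vanishes, forcing the splitting. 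A more conceptual companion argument exploits the graded $\PP_\Z$-algebra structure of $L_*\Gamma(A,n)$: by proposition \ref{prop-calcul-d2Z} the functor $L_n\Gamma^2(A,n)\cong A/2^{(1)}$ furnishes a Verschiebung generator $\nu$ in the \emph{odd} degree $n$, whose square lands in weight $4$ and degree $2n$ and spans an exterior square $\Lambda^2_\Fdeux(A/2^{(1)})$ rather than a divided square --- the same parity mechanism responsible for the $\Lambda$ versus $\Gamma$ dichotomy in proposition \ref{lm-free-tors} --- which exhibits $\Lambda^2_\Fdeux(A/2^{(1)})$ as a genuine direct summand.

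The main obstacle is precisely this last functorial splitting, together with the accompanying determination that we are in case (b): because the two pure $2$-torsion candidates $\Gamma^2_\Fdeux(A/2^{(1)})$ and $\Lambda^2_\Fdeux(A/2^{(1)})\oplus A/2^{(2)}$ share the same dimension and the same mod $2$ reduction, no counting argument can separate them, and the parity of $n$ must enter through the algebra structure or through the directionality of the extension in $\PP_\Z$. Once this is settled, the $3$-primary summand $A\otimes A/3^{(1)}$ survives untouched by all of the above, there are no further extensions against the old summand $L_{2n-8}\Gamma^4(A,n-2)$ (a sum of copies of $A/2^{(2)}$, which has no self-extensions of degree one), and assembling the pieces yields the stated formula \eqref{eq-2nodd}.
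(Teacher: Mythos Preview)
Your overall strategy --- rule out $4$-torsion by a dimension count, then settle the remaining ambiguity by a functorial argument --- matches the paper's, and your exclusion of the $\Gamma^2_\Z(A/2^{(1)})$ possibility (the paper's case (i)) is correct and essentially identical to the paper's. The divergence, and the gap, is entirely in the functorial step.

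Your filtration-direction argument presupposes case (b), but you never establish it; you only assert that ``the content to be extracted is'' that $d^1$ is the Verschiebung from column $p=-2$. In case (a) the column $p=-3$ is empty and $\Gamma^2_\Fdeux(A/2^{(1)})$ sits intact at $p=-2$ as the top graded quotient, so the filtration direction says nothing and $\Gamma^2_\Fdeux$ is precisely the \emph{split} reconstruction. Your argument therefore does not touch case (a). Moreover, even granting case (b), the extension class you must kill lives in $\Ext^1_{\PP_\Z}\bigl(\Lambda^2_\Fdeux(A/2^{(1)}),A/2^{(2)}\bigr)\simeq\Z/2$ (this is computed in the proof of lemma~\ref{lm-calc2}), and its nonzero element is realized by $S^2_\Fdeux(A/2^{(1)})$, not by $\Gamma^2_\Fdeux(A/2^{(1)})$. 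So while your observation that $\Gamma^2_\Fdeux$ has the wrong socle to occur as the middle term in case (b) is correct, you still have to exclude $S^2_\Fdeux$, and ``the $\Ext^1$-methods of appendix~\ref{app-comput}'' do not do this automatically. Your companion algebra-structure argument produces a map $\Lambda^2_\Fdeux(A/2^{(1)})\to L_{2n}\Gamma^4(A,n)$ but does not show its image is a direct summand.

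The paper sidesteps the (a)/(b) dichotomy entirely. It lists three candidate values (i), (ii), (iii) for $L_{2n}\Gamma^4(A,n)$, excludes (i) by the dimension count you have, and excludes (ii) by looking one degree \emph{higher}: the universal-coefficient surjection
\[
L_{2n+1}\Gamma^4_\Fdeux(A/2,n)\;\twoheadrightarrow\;{_2}L_{2n}\Gamma^4(A,n)
\]
would, under (ii), yield a surjection onto the direct summand $\Gamma^2_\Fdeux(A/2^{(1)})$. But from example~\ref{ex-G4} the source has the form $\bigl(A/2^{(1)}\otimes A/2^{(1)}\bigr)\oplus\bigl(A/2^{(2)}\bigr)^{\oplus k}$, and the $\hom$-computations of appendix~\ref{app-comput} show that no morphism from this functor to $\Gamma^2_\Fdeux(A/2^{(1)})$ is surjective (the only nonzero map from $A/2^{(1)}\otimes A/2^{(1)}$ is the multiplication, with image $\Lambda^2_\Fdeux$). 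This kills (ii) regardless of whether (a) or (b) holds, leaving (iii) as the only survivor --- and incidentally forces case (b) a posteriori. The key idea you are missing is to exploit the UCT in degree $2n{+}1$ rather than $2n$.
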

\begin{proof}
Recall that we have only two possibilities for $E_{2n}^\infty$, namely
\begin{enumerate}
\item[(a)] $E^\infty_{2n} = L_{2n-8}\Gamma^4(A,n-2)\oplus \Gamma^2_\Fdeux(A/2^{(1)})\oplus A\otimes A/3^{(1)}$
\item[(b)] $E^\infty_{2n} = L_{2n-8}\Gamma^4(A,n-2)\oplus A/2^{(1)}\oplus \Lambda_\Fdeux^2(A/2^{(1)})\oplus A\otimes A/3^{(1)}$
\end{enumerate}
We now list the possibilities for $L_{2n}\Gamma^4(A,n)$. By the same reasoning as in proposition \ref{prop-middegneven}, one finds the following three possibilities, where $L_{2n-8}\Gamma^4(A,n-2)'$ denotes the expression $L_{2n-8}\Gamma^4(A,n-2)$ with one copy of $A/2^{(2)}$ deleted:
\begin{align*}
&(i)\quad L_{2n-8}\Gamma^4(A,n-2)'\oplus \Gamma_\Z(A/2^{(1)})\oplus A\otimes A/3^{(1)}\;,\\
&(ii)\quad L_{2n-8}\Gamma^4(A,n-2)\oplus \Gamma^2_\Fdeux(A/2^{(1)})\oplus  A\otimes A/3^{(1)}\;,\\
&(iii)\quad L_{2n-8}\Gamma^4(A,n-2)\oplus A/2^{(2)}\oplus \Lambda^2_\Fdeux(A/2^{(1)})\oplus  A\otimes A/3^{(1)}\;.
\end{align*}
To be more specific, $(i)$ and $(ii)$ correspond to the possible reconstructions of $L_{2n}\Gamma^4(A,n)$ if (a) holds, and $(ii)$ and $(iii)$ correspond to the possible reconstructions if (b) holds.

We can exclude $(i)$ for dimension reasons. Indeed, we can compute the dimension of $L_{2n}\Gamma^4_{\Fdeux}(A/2,n)\otimes\Fdeux$ as in the proof of proposition \ref{prop-middegneven}. We find that $\dim L_{2n}\Gamma^4(A,n)\otimes\Fdeux = \dim E^\infty_{2n}\otimes\Fdeux$. By lemma \ref{lm-modp1} this implies that $L_{2n}\Gamma^4(A,n)$ is (non functorially) isomorphic  to $E^\infty_{2n}$, hence it is a $\Fdeux$-vector space. Thus $(i)$ does not hold.

On the other hand, we can exclude $(ii)$ for functoriality reasons. Indeed, the universal coefficient theorem yields a surjective morphism of strict polynomial functors
\begin{align*}
L_{2n+1}\Gamma^4(A/2,n)\twoheadrightarrow {_2}L_{2n}\Gamma^4(A,n)\;.
\end{align*}
If $(ii)$ holds, then we can compose this surjective morphism with the projection onto the summand $\Gamma^2_\Fdeux(A/2^{(1)})$ of $L_{2n}\Gamma^4(A,n)$ to get a surjective morphism
\begin{align}
L_{2n+1}\Gamma^4(A/2,n)\twoheadrightarrow \Gamma^2_\Fdeux(A/2^{(1)})\;.\label{eq-surjection}
\end{align}
But the source of this map was computed in example \ref{ex-G4}. It is of  the form
$$L_{2n+1}\Gamma^4(A/2,n) \simeq (A/2^{(1)}\otimes A/2^{(1)})\, \oplus\, (A/2^{(2)})^{\oplus k}  $$
for some value of $k$. By the $\hom$ computations of appendix \ref{app-comput}, there is no nonzero map from $A/2^{(2)}$ to $\Gamma^2_\Fdeux(A/2^{(1)})$, and the only nonzero morphism from $A/2^{(1)}\otimes A/2^{(1)}$ to $\Gamma^2_\Fdeux(A/2^{(1)})$ is not surjective. This contradicts the existence of the surjective morphism \eqref{eq-surjection}. Thus $(ii)$ does not hold.
It follows that $(iii)$ holds.
\end{proof}

%\subsubsection{A summary  of the proof of theorem \ref{deriveddescr}}
%We have seen in \eqref{eq-lown} and \eqref{eq-lown+1} that theorem \ref{deriveddescr} holds in degrees $n$ and $n+1$. By proposition \ref{prop-lowdeg}, it holds in degrees $i$ for $n+2\le i<2n$. By propositions \ref{prop-middegnodd} and
%\ref{prop-middegneven}, it holds in degree $i=2n$. By proposition \ref{prop-high-deg}, it also holds in degrees $i$ for $2n<i<3n$. Finally, by \eqref{eq-triviale} it holds in degrees $i>3n$. Hence theorem \ref{deriveddescr} has been proved.

\section{A conjectural description of the functors $L_i\Gamma^d_\Z(A,n)$}
\label{conj}

In this section, we return to the study of the derived functors $L_i\Gamma^d(A,n)$ for arbitrary abelian groups $A$. We therefore drop strict polynomial structures, and consider the derived functors as genuine functors of the abelian group $A$. The combinatorics of weights however, will still be present in the picture.
We begin with a new description of the stable homology of Eilenberg-Mac Lane spaces, equivalent to the Cartan's classical one. We then use this
new parametrization of the stable homology groups in order to
formulate a conjectural functorial description of the derived functors $L_*\Gamma^d(A,n)$ for all abelian groups $A$ and positive integers $n,d$.
We finally show that the computations of the present article as well as some computations of \cite{BM} agree with the conjecture.

\subsection{A new description of the stable homology} \label{new-descr}
We begin this section by quoting Cartan's computation of the stable homology: 
$$H^{\mathrm{st}}_i(A):=\lim_n H_{n+i}(K(A,n),\Z)\simeq H_{n+i}(K(A,n),\Z)\text{ if $i<n$}\;.$$
To this purpose, we first recall Cartan's admissible words for the reader's convenience. Fix a prime number $p$. A $p$-admissible word is a non empty word $\alpha$ of finite length, formed with the letters $\phi_p$, $\gamma_p$ and $\sigma$, satisfying the following two conditions:
\begin{enumerate}
\item[(1)] the word $\alpha$ starts with the letter $\sigma$ or $\phi_p$,
\item[(2)] the number of letters $\sigma$ on the right of each letter $\gamma_p$ or $\phi_p$ in $\alpha$ is even.
\end{enumerate}
A $p$-admissible word is of first type (`de premi\`ere esp\`ece') if it ends with a $\sigma$, and of second type (`de deuxi\`eme esp\`ece') if it ends with a $\phi_p$ (words finishing on the right with the letter $\gamma_p$ will not be considered here).
There are two basic integers associated to a $p$-admissible word $\alpha$.
\begin{enumerate}
\item[$\bullet$] The degree of $\alpha$ is the integer $\deg(\alpha)$ defined recursively as follows. The degree of the empty word is zero, and 
$$\qquad\deg(\phi_p\beta)=2+p\deg(\beta)\;,\quad \deg(\sigma\beta)=1+\deg(\beta)\;,\quad \deg(\phi_p\beta)=p\deg(\beta)\;.$$ 
\item[$\bullet$] The height of $\alpha$ is the integer $h(\alpha)$ corresponding to the number of letters $\sigma$ and $\phi_p$ in $\alpha$.
\end{enumerate}
\begin{theorem}[{\cite[Exp. 11, Thm. 2]{cartan}}]\label{thm-cartan-stable}
There is a graded isomorphism, functorial with respect to the abelian group $A$:
$$H^{\mathrm{st}}_*(A)\simeq A[0]\,\oplus\, \bigoplus_{p\text{ prime}}\left(\bigoplus_{\alpha\in X_1(p)} A/p\,[\deg(\alpha)-h(\alpha)]\;\oplus\;\bigoplus_{\alpha\in X_2(p)} {_p A}[\deg(\alpha)-h(\alpha)]\right)\;, $$
where $X_i(p)$ stands for the set of $p$-admissible words of $i$-th type, starting on the left by the letters $\sigma\gamma_p$.
\end{theorem}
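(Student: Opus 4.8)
The statement is Cartan's, and the cleanest self-contained route I can see proceeds by reducing to the case of free and of cyclic groups $A$, and then reassembling by additivity and naturality in the stable window. \textbf{For $A$ free} one already has the functorial splitting \eqref{h-ast2}, which identifies $H_{n+2+i}(K(A,n+2);\Z)$ with $\bigoplus_d L_{n+2+i-2d}\Gamma^d_\Z(A,n)$. By proposition \ref{lm-free-tors} the summand $L_j\Gamma^d(A,n)$ vanishes unless $n\le j\le nd$, so in the stable window $i<n$ the weight of any contributing class is severely constrained: decomposable classes (genuine products in the graded $\PP_\Z$-algebra $L_*\Gamma_\Z(A,n)$) carry higher weight and are pushed into homological degrees $\ge n$, so that only the \emph{additive generators} survive stably. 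The first task is therefore to enumerate these generators.

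\textbf{Enumerating the generators} is where I would feed in the mod $p$ computations. Reducing modulo $p$ and applying theorem \ref{thm-derived-p2} (for $p=2$, all $n$) together with, for odd $p$, the inductive d\'ecalage argument behind theorem \ref{thm-derived-podd} extended to all $n$, exhibits $L_*\Gamma_\k(V,n)$ as an explicit tensor product of divided power and exterior algebras on Frobenius twists $V^{(r_1+\dots+r_n)}$ placed in prescribed degrees; its primitives are precisely the additive functors $V^{(r_1+\dots+r_n)}$. I would then set up a degree-preserving bijection between the index set of these primitives (as $n\to\infty$) and the set $X_1(p)$ of admissible words of first type, matching the three elementary operations — a suspension, a divided $p$th power, and the transpotence — to the letters $\sigma,\gamma_p,\phi_p$, and checking that the recursive degree $\deg(\alpha)$, corrected by the height $h(\alpha)$, reproduces the degree $2^{r_2+\dots+r_n}+\dots+2^{r_n}+1$ (and its odd-characteristic analogue) of the corresponding generator. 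Since a free group has no $p$-torsion, only words of first type occur here, each with coefficient $A/p=A\otimes\Fp$, matching the statement; the integral reassembly of these mod $p$ generators into genuine $A/p$ summands is furnished by the Bockstein spectral sequence exactly as in section \ref{der1-gamma-z} (which must be run here for all $n$, not only $n=1$).

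\textbf{To reach arbitrary $A$}, I would exploit that in the stable range the homology is additive: since $K(A\oplus B,n)=K(A,n)\times K(B,n)$ and all products vanish after sufficiently many suspensions, $H^{\mathrm{st}}_*$ carries direct sums to direct sums. It therefore suffices to treat the cyclic groups $A=\Z/p^k$, where a direct computation via the fibration $K(\Z/p^k,n)\to K(\Z,n+1)\xrightarrow{\,p^k\,} K(\Z,n+1)$ reveals both families: a word of first type, ending in $\sigma$, yields a genuine reduction class depending on the cokernel and contributes $A/p$, whereas a word of second type, ending in the transpotence $\phi_p$, arises as a secondary class attached to the $p^k$-multiplication and contributes the $p$-torsion ${_pA}=\Tor(A,\Fp)$. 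The parity condition (2) in the definition of admissibility is precisely what forbids any differential or extension from mixing the two families in the stable window, and naturality of the constructions upgrades the additive statement to the functorial isomorphism claimed.

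\textbf{The main obstacle} is the stable degeneration itself: one must prove that, uniformly in $A$ and in the range $i<n$, every secondary phenomenon vanishes, so that there are no residual differentials in the bar (or maximal filtration \eqref{filss}) spectral sequence and no hidden extensions among the word generators. The weight bookkeeping of proposition \ref{lm-free-tors} is what makes this plausible, since it confines all decomposables and all higher torsion to degrees outside the stable window; but turning this into a clean degeneration statement valid for every $A$ at once, together with the purely combinatorial verification of the degree-and-height dictionary and the correct assignment of $A/p$ versus ${_pA}$ to the two word types, is the genuinely hard part. This is exactly the content that Cartan's explicit construction-by-construction computation supplies once and for all, which is why it is simplest to invoke his theorem rather than reprove it.
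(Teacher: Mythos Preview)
The paper does not give its own proof of this statement: it is quoted verbatim from Cartan, and the only commentary is the remark immediately following it, which sketches two routes (Cartan's original integral computation, or alternatively his mod $p$ computations together with the universal coefficient theorem and the $p$-torsion bound of \cite[Korollar 10.2]{d-p}). So there is no ``paper's proof'' to compare against; both you and the paper ultimately defer to Cartan, and you say so explicitly in your final paragraph.

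Your sketch is in the spirit of the alternative route indicated in that remark, but recast through the machinery of sections \ref{descr-gvn}--\ref{der1-gamma-z}. That is a reasonable outline, but it has real gaps beyond those you flag. First, theorem \ref{thm-derived-podd} is proved in the paper only for $n=1$ in odd characteristic; the ``inductive d\'ecalage argument extended to all $n$'' you invoke is precisely the content of \cite{antoine} or of Cartan's mod $p$ computation, not something available from the present paper alone. Second, the Bockstein analysis of section \ref{der1-gamma-z} is carried out only for $n=1$, and its extension to all $n$ (in particular the identification of the Bockstein differential and the degeneration at $E^1$) is not automatic. Third, the combinatorial dictionary between the generators $V^{(r_1+\dots+r_n)}$ of theorem \ref{thm-derived-p2} and the admissible words in $X_1(p)$, with the correct degree-minus-height bookkeeping, is asserted but not established; this is exactly the translation the paper undertakes in the \emph{other} direction in the proof of theorem \ref{thm-new-formula}, and it is not trivial. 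Finally, your passage to non-free $A$ via additivity plus cyclic groups is standard in outline, but the identification of second-type words with ${}_pA$ via the fibration you write down requires essentially the full Cartan machinery to justify.

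In short: your proposal is not a proof but an honest roadmap, and your closing sentence is the correct conclusion---this theorem is imported, not reproved, and the paper treats it the same way.
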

\begin{remark}
Cartan's proof of theorem \ref{thm-cartan-stable} is based on the integral computation of \cite[Exp. 11, Thm. 1]{cartan}. However, it can also be deduced from the mod $p$ computations of \cite[Exp. 9 and exp. 10]{cartan} (see also \cite{betley}) and the universal coefficient theorem, if one knows in advance that the stable homology only contains $p$-torsion for primes $p$. A simple proof of the latter fact is contained in \cite[Korollar 10.2]{d-p}.
\end{remark}

We now propose a compact reformulation of theorem \ref{thm-cartan-stable}.
Denote by $\EuScript A$ the set of  sequences
$\alpha=(t_1,\dots, t_m)$ (for some $m\geq 1$) such that $t_1\geq
\dots \geq t_m>0.$ For every $\alpha\in {\EuScript A}$ let $o(\alpha)$ denote the
number of distinct strictly positive integers  $t_j$ in the sequence  $\alpha=(t_1,\dots, t_m)$.
For any abelian group $A$, we define an object $\EuScript B(A)$ of the derived category of abelian groups by the following formula:
\bee
\label{def-ba}
{\EuScript S}\mathrm{t}(A):=\bigoplus_{\alpha=(t_1,\dots, t_m)\in \EuScript A}\,\,
\bigoplus_{p\ \text{prime}}A\lotimes \mathbb Z/p^{\lotimes
o(\alpha)}[2(p^{t_1}+\dots+p^{t_m}-m)]
\ee
Cartan's description of the integeral stable  homology $H^{\rm st}_\ast(A)$ of an Eilenberg-Mac Lane space $K(A,n)$ may  then be rephrased as follows:

\begin{theorem}\label{thm-new-formula}
There exists, functorially in the abelian group $A$,   a graded isomorphism:
\bee
\label{hista}
H_*^{\rm st}(A)\simeq H_*(A\oplus {\EuScript
S}\mathrm{t}(A)).
\ee
\end{theorem}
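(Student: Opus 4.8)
The plan is to reduce the asserted isomorphism to a purely numerical comparison of multiplicities, and then to settle that comparison by a generating function identity. The starting observation is that both sides of \eqref{hista} are, degreewise and functorially in $A$, finite direct sums of copies of the three elementary functors $A$, $A/p$ and ${}_pA:=\{a\in A:pa=0\}$ for the various primes $p$: this is explicit for Cartan's expression in theorem \ref{thm-cartan-stable}, and I would establish it for the right-hand side below. Since these functors are separated by their values on the test groups $\Z$ and $\Z/p^2$ (on $\Z$ only $A$ and the $A/p$ survive, while $\Z/p^2$ distinguishes $A/p$ from ${}_pA$), an isomorphism of such direct sums is equivalent to the equality, in each homological degree and for each prime, of the multiplicities of $A$, of $A/p$ and of ${}_pA$. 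It therefore suffices to compute these multiplicities on each side.

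For the right-hand side I would first record the homology of the derived tensor powers. Resolving all but one factor of $(\Z/p)^{\lotimes k}$ by $[\Z\xrightarrow{p}\Z]$ turns every differential into multiplication by $p$ on an $\F_p$-vector space, hence into zero, and so yields a canonical splitting $(\Z/p)^{\lotimes k}\simeq\bigoplus_{n}(\Z/p)^{\binom{k-1}{n}}[n]$ in the derived category. Applying $A\lotimes(-)$ and using $H_0(A\lotimes\Z/p)=A/p$ and $H_1(A\lotimes\Z/p)={}_pA$ gives a natural isomorphism $H_j(A\lotimes\Z/p^{\lotimes k})\simeq (A/p)^{\binom{k-1}{j}}\oplus({}_pA)^{\binom{k-1}{j-1}}$, with no extension issues since the contributions come from distinct shifts. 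Feeding this into the definition \eqref{def-ba} with $k=o(\alpha)$ and shift $D(\alpha):=2(p^{t_1}+\dots+p^{t_m}-m)$ shows that $H_*(A\oplus{\EuScript S}\mathrm{t}(A))$ contains $A$ exactly once in degree $0$ (all summands of ${\EuScript S}\mathrm{t}(A)$ lie in degrees $\ge 2$), while for each prime $p$ the series counting the copies of $A/p$ is $P(s):=\sum_{\alpha\in{\EuScript A}}s^{D(\alpha)}(1+s)^{o(\alpha)-1}$ and that counting the copies of ${}_pA$ is $s\,P(s)$.

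It then remains to show that Cartan's data give the same series, that is $\sum_{\alpha\in X_1(p)}s^{\deg(\alpha)-h(\alpha)}=P(s)$ and $\sum_{\alpha\in X_2(p)}s^{\deg(\alpha)-h(\alpha)}=s\,P(s)$. Here I would parametrize an admissible word starting with $\sigma\gamma_p$ by its non-$\sigma$ letters $L_1=\gamma_p,L_2,\dots,L_N\in\{\gamma_p,\phi_p\}$ together with the numbers $a_i$ of intervening $\sigma$'s; the admissibility condition forces $a_1,\dots,a_N$ to be even, say $a_i=2b_i$, with $b_N\ge 1$ in the first-type case and $a_N=0$, $L_N=\phi_p$ in the second-type case. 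Unwinding the recursions for $\deg$ and $h$ produces the closed formula $\deg(\alpha)-h(\alpha)=2\sum_{j}(p^{j}-1)b_j+\sum_{j}(2p^{\,j-1}-1)\epsilon_j$, where $\epsilon_j=1$ exactly when $L_j=\phi_p$. Writing $\beta_j:=s^{2(p^{j}-1)}$, so that a $\phi_p$ in position $j$ contributes the factor $1+s\beta_{j-1}$, each Cartan series becomes a sum of products indexed by $N$; the first-type sum collapses to $\sum_{N\ge 1}(G_N-G_{N-1})/(1+s)$ with $G_N:=\prod_{j=1}^{N}\tfrac{1+s\beta_j}{1-\beta_j}$, which telescopes to $(G_\infty-1)/(1+s)=P(s)$, and the second-type sum is handled identically and yields $s\,P(s)$.

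The genuinely delicate point, which I would treat with the most care, is the combinatorial dictionary of the third step: translating Cartan's two admissibility constraints (the word begins with $\sigma\gamma_p$, and each $\gamma_p$ or $\phi_p$ is followed by an even number of $\sigma$'s) and the two word types into the exact ranges of the indices $N$, $b_j$ and $\epsilon_j$, and then checking that the degree statistic is precisely the exponent above, so that the factor $1+s\beta_{j-1}$ attached to each transpotence letter $\phi_p$ matches, after telescoping, the factor $(1+s)^{o(\alpha)-1}$ arising from the derived tensor powers. Once the parametrization is pinned down the remaining manipulation is the elementary telescoping just indicated, so the weight of the argument lies entirely in this bookkeeping, which is exactly what it means to replace Cartan's transpotence-laden words by the transpotence-free labels of ${\EuScript A}$.
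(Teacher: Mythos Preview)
Your proposal is correct and takes a genuinely different route from the paper's proof.

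The paper proceeds bijectively: it first matches $X_2(p)$ with $X_1(p)$ via the substitution $\sigma^2\leftrightarrow\phi_p$ at the rightmost position, then groups $X_1(p)$ into equivalence classes under the global substitution $\sigma^2\gamma_p\leftrightarrow\phi_p$, observes that each class contains a unique \emph{restricted} word (one with no $\phi_p$), and constructs an explicit bijection $R(p)\xrightarrow{\simeq}{\EuScript A}$ preserving $\deg-h$. The binomial factor $(1+s)^{o(\alpha)-1}$ then arises by counting the $2^{\,o(\alpha)-1}$ words in each class, and the derived tensor formula \eqref{eq-pr-6} is invoked exactly as you do. Your argument replaces this combinatorial grouping by a direct generating-function computation: you parametrize every word in $X_i(p)$ by the data $(N,(b_j),(\epsilon_j))$, read off $\deg-h$ in closed form (your formula is correct, as one checks by the recursion), and sum; the telescoping identity $\frac{\beta_N}{1-\beta_N}G_{N-1}=\frac{1}{1+s}(G_N-G_{N-1})$ then collapses the series to $(G_\infty-1)/(1+s)$, which is easily seen to equal $P(s)$ after expanding $\prod_{j\ge 1}\frac{1+s\beta_j}{1-\beta_j}$ as a sum over subsets of positive integers with multiplicities.

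What each approach buys: your route is arguably quicker and avoids setting up the equivalence relation and the bijection $\chi$, but the paper's bijective proof carries extra structure that is reused immediately afterwards. Proposition~\ref{prop-new} (the weight-graded refinement identifying $L^{\mathrm{st}}_*\Gamma^{p^r}$) is proved by observing that $\xi$, the equivalence relation, and $\chi$ all preserve Cartan's weight $w(\alpha)$; your generating-function argument does not keep track of weights, so to recover proposition~\ref{prop-new} you would either have to make $P(s)$ bivariate or fall back on a bijective description anyway. If you only need theorem~\ref{thm-new-formula} itself, your argument is perfectly adequate.
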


\begin{proof}
We observe that there is a bijection $\xi:X_1(p)\xrightarrow[]{\simeq} X_2(p)$ where $\xi(\alpha)$ is obtained from $\alpha$ by replacing the last two letters $\sigma^2$ of $\alpha$ by the single letter $\phi_p$. This bijection preserves the degree, and decreases the height by one, so that the $p$-primary part in the description of $H_*^{\mathrm{st}}(A)$ theorem \ref{thm-cartan-stable} can be rewritten as:
\begin{align}\bigoplus_{\alpha\in X_1(p)} \left(\; A/p\,[\deg(\alpha)-h(\alpha)]\;\oplus\;{_p A}[\deg(\alpha)-h(\alpha)+1]\;\right)\;.\label{eq-pr-1}
\end{align}

We define an equivalence relation $\mathcal{R}$ on $X_1(p)$ in the following way. For a word $\alpha\in X_1(p)$ its \emph{$\sigma^2\gamma_p$-substitution} is the word of $X_1(p)$ obtained by replacing each occurence of the letter $\phi_p$ by the group of letters $\sigma^2\gamma_p$. For example, the $\sigma^2\gamma_p$-substitution of $\sigma\gamma_p\phi_p\sigma^4\phi_p\sigma^2$ is the word $\sigma\gamma_p\sigma^2\gamma_p\sigma^6\gamma_p\sigma^2$. We say that two words of $X_1(p)$ are equivalent if they have the same $\sigma^2\gamma_p$-substitution.
Let $\alpha$ be a $p$-admissible word beginning with the letter $\sigma\gamma_p$. We say that a word  $\alpha$ is \emph{restricted} if it is composed only of the letters $\sigma$ and $\gamma_p$ (i.e. no $\phi_p$ occurs in the word $\alpha$).
The set $R(p)$ of restricted $p$-admissible words form a subset of $X_1(p)$. Each equivalent class of $X_1(p)$ contains exactly one restricted admissible word so that we can rewrite the direct sum \eqref{eq-pr-1} as:
\begin{align}\bigoplus_{\alpha\in R(p)}\, \bigoplus_{\beta\mathcal{R}\alpha} \left(\; A/p\,[\deg(\beta)-h(\beta)]\;\oplus\;{_p A}[\deg(\beta)-h(\beta)+1]\;\right)\;.\label{eq-pr-2}
\end{align}

We can replace  in \eqref{eq-pr-2} the indexing set $R(p)$ by the set   $\EuScript A$. Indeed there is a bijection $\chi:R(p)\xrightarrow[]{\simeq} \EuScript A$ defined as follows. Each restricted $p$-admissible word has the form:
\begin{align}\sigma\gamma_p\underbrace{(\sigma^2)\dots(\sigma^2)}_{\text{$k_1$ terms}}\gamma_p\underbrace{(\sigma^2)\dots(\sigma^2)}_{\text{$k_2$ terms}}\gamma_p\dots \gamma_p\underbrace{(\sigma^2)\dots(\sigma^2)}_{\text{$k_s$ terms}}\;, \label{eq-pr-3}\end{align}
where $s$ is the number of occurences of $\gamma_p$ and the $k_i$ are nonnegative. Our bijection $\chi$ is defined by sending such a restricted $p$-admissible word to the sequence of integers:
\begin{align}(\underbrace{s,\dots,s}_{\text{$k_s$ terms}}, \underbrace{s-1,\dots,s-1}_{\text{$k_{s-1}$ terms}},\dots,\underbrace{1,\dots,1}_{\text{$k_1$ terms}})\;. \label{eq-pr-4}\end{align}
If we define the degree of a sequence $(t_1,\dots,t_n)\in\EuScript A$ as the sum $2(p^{t_1}+\dots+p^{t_n})$, and its height as $2n$, then the bijection $\chi$ preserves the degree and the height. 

Now we describe the $\mathcal{R}$-equivalent classes in $X_1(p)$. Let $C$ be one such equivalent class, containing a restricted admissible word $\alpha$ of the form \eqref{eq-pr-3}. The element in $C$ are all the words which can be obtained from $\alpha$ by substituting some groups of letters $(\sigma^2)\gamma_p$ by the letter $\phi_p$. Observe that the number of groups of letters $(\sigma^2)\gamma_p$ available for substitution is equal to the number of positive $k_i$, which equals $o(\alpha)-1$ (where $o(\alpha)$ is the number of distinct positive integers in the associated sequence \eqref{eq-pr-4}). Thus $C$ contains $2^{o(\alpha)-1}$ elements. Moreover there are $\binom{o(\alpha)-1}{i}$ distinct words $\beta$ obtained from $\alpha$ by exactly $i$ substitutions, and they all satisfy  the conditions $\deg(\beta)=\deg(\alpha)$ and $h(\beta)=h(\alpha)-i$. As a consequence \eqref{eq-pr-2} can be rewritten as a direct sum:
\begin{align}\bigoplus_{\alpha\in \EuScript A}\, \bigoplus_{i=0}^{o(\alpha)-1} \left(\; A/p\,[\deg(\alpha)-h(\alpha)-i]\;\oplus\;{_p A}[\deg(\alpha)-h(\alpha)-i+1]\;\right)^{\oplus\binom{o(\alpha)-1}{i}}\;.\label{eq-pr-5}
\end{align}

Finally the graded abelian group $_pA[1]\oplus A/p[0]$ is the homology of the complex $A\lotimes \Z/p$, and  the object $\Z/p\lotimes \Z/p$ is isomorphic  to $\Z/p[0]\oplus \Z/p[1]$ in the derived category. This determines  a functorial isomorphism of graded abelian groups by induction on $n$:
\begin{align}
H_*(A\lotimes \Z/p^{\lotimes n})\simeq  \bigoplus_{i=0}^{n-1} ({_pA}[1+i]\oplus A/p[i])^{\oplus\binom{n-1}{i}}\label{eq-pr-6}
\end{align}
The statement of theorem \ref{thm-new-formula} follows 
by combining \eqref{eq-pr-5} and \eqref{eq-pr-6}.
\end{proof}

Just as one speaks of stable homology, one defines the stable derived functors \cite[(8.3)]{d-p}:
$$ L^{\rm st}_iF(A)=\lim_n L_{n+i}F(A,n)\simeq L_{n+i}F(A,n) \text{ if $i<n$}\;. $$
As explained in appendix \ref{han}, there exists a graded isomorphism, natural with respect to the abelian group $A$:
\begin{align}
H_*^{\rm st}(A)\simeq \bigoplus_{d\ge 0} L_*^{\rm st}S^d(A)\;.\label{eq-isost}
\end{align}
By d\'ecalage the (stable) derived functors of symmetric powers are isomorphic to the (stable) derived functors of the divided power functors. Hence \eqref{eq-isost} can be used to obtain a description of $L_*^{\rm st}\Gamma^d(A)$. However, to obtain such a description, we have to separate the various summands. In other words, we have to determine which summands $_pA$ and $A/p$ contribute to which stable derived functors $L_*^{\rm st}\Gamma^d(A)$. To this purpose, we define yet another integer associated to $p$-admissible words.
\begin{enumerate}
\item[$\bullet$] Let $r(\alpha)$ be the number of occurences of the letters $\phi_p$ and $\gamma_p$ in a $p$-admissible word $\alpha$. Then the weight $w(\alpha)$ is defined by $w(\alpha)=p^{r(\alpha)}$ if $\alpha$ is of first type, and $w(\alpha)=p^{r(\alpha)-1}$ if $\alpha$ is of second type.
\end{enumerate}
Then as explained e.g. in \cite[section 10.2]{antoine} the direct summand of $H_*^{\rm st}(A)$ corresponding to $L_*^{\rm st}S^d(A)$ is given by the admissible words of weight $d$. We have to translate this in the new indexation provided by theorem \ref{thm-new-formula}. We obtain the following description 
of the stable derived functors of the functor $\Gamma^d(A)$.
\begin{proposition}\label{prop-new} If the integer $d$ is not a prime power, then 
\label{lprop-new}
\[L_i^{\mathrm{st}}\Gamma^d(A)=0\ \  {\it for \ all} \ i\ge 0 .\]
For any prime number $p$ and any integer  $r>0$ the following is true:
$$L_i^{\mathrm{st}}\Gamma^{p^r}(A)=H_i\left(\bigoplus_{\alpha=(t_1,\dots, t_m)\in \EuScript A,\ t_1=r} A\lotimes \mathbb Z/p^{\lotimes
o(\alpha)}[2(p^{t_2}+\dots+p^{t_m}-m)]\right).
$$
\end{proposition}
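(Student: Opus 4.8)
The plan is to combine three inputs already in place: the décalage isomorphism (proposition \ref{dec}), which identifies the stable derived functors of $\Gamma^d$ with those of $S^d$; the isomorphism \eqref{eq-isost}, $H_*^{\mathrm{st}}(A)\simeq\bigoplus_{d\ge 0}L_*^{\mathrm{st}}S^d(A)$, which realises each $L_*^{\mathrm{st}}S^d(A)$ as the weight-$d$ summand of the stable homology; and theorem \ref{thm-new-formula}, which presents $H_*^{\mathrm{st}}(A)$ as the homology of the explicit object $A\oplus\EuScript{S}\mathrm{t}(A)$. Thus the whole proof reduces to determining, for each $p$-admissible word (equivalently, each sequence $\alpha\in\EuScript A$), the weight $w(\alpha)$ of the corresponding summand, and then tracking the homological degree across décalage.

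First I would settle the weight bookkeeping. By definition $w(\alpha)$ is a power of $p$ for every $p$-admissible word $\alpha$, and a $q$-admissible word carries $q$-power weight; hence no summand of $H_*^{\mathrm{st}}(A)$ has weight equal to a non-prime-power $d$. Since $L_*^{\mathrm{st}}S^d(A)$ is the weight-$d$ summand, it vanishes for such $d$, and by décalage so does $L_i^{\mathrm{st}}\Gamma^d(A)$; this yields the first assertion. For the second I need the weight as a function of $\alpha\in\EuScript A$, and here I would reuse the bijections from the proof of theorem \ref{thm-new-formula}: the degree-preserving $\xi\colon X_1(p)\to X_2(p)$, the reduction of each $\mathcal R$-class to its restricted representative, and $\chi\colon R(p)\xrightarrow{\simeq}\EuScript A$. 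A direct check shows $w$ is constant on each pair $\{\alpha,\xi(\alpha)\}$ (replacing the final $\sigma^2$ by $\phi_p$ raises $r(\cdot)$ by one but switches the type, leaving the common value $w=p^{r(\alpha)}$ unchanged) and constant on each $\mathcal R$-class (each $\sigma^2\gamma_p\leftrightarrow\phi_p$ substitution is internal, preserving both the first-type ending and the count $r$ of letters $\gamma_p,\phi_p$). For the restricted representative $\sigma\gamma_p(\sigma^2)^{k_1}\gamma_p\cdots\gamma_p(\sigma^2)^{k_s}$ of the class indexed by $\alpha=(t_1,\dots,t_m)$ one has first type and $r=s=t_1$, so $w(\alpha)=p^{t_1}$. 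Consequently the weight-$p^r$ part of $H_*^{\mathrm{st}}(A)$ is exactly the sub-object of $A\oplus\EuScript{S}\mathrm{t}(A)$ indexed by the sequences with $t_1=r$, namely $\bigoplus_{\alpha\in\EuScript A,\,t_1=r}A\lotimes\Z/p^{\lotimes o(\alpha)}[2(p^{t_1}+\dots+p^{t_m}-m)]$, the prime $p$ being the only one contributing in weight $p^r$.

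It then remains to pass from $S^{p^r}$ to $\Gamma^{p^r}$ by décalage and to read off the homological degree. Décalage shifts the relevant homological index by twice the weight, so the passage from $L_*^{\mathrm{st}}S^{p^r}(A)$ to $L_*^{\mathrm{st}}\Gamma^{p^r}(A)$ removes the leading contribution $2p^{t_1}=2p^r$ from the degree of each summand; subtracting it from $2(p^{t_1}+\dots+p^{t_m}-m)$ leaves the shift $2(p^{t_2}+\dots+p^{t_m}-m)$ in the statement. Taking $H_j$ of the resulting object gives the claimed formula, the absence of negative-degree contributions being guaranteed by $L_kF(A,n)=0$ for $k<n$ (proposition \ref{lm-free-tors}).

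I expect the main obstacle to be precisely this last degree bookkeeping: the décalage, the reindexing $H_*^{\mathrm{st}}(A)\simeq H_*(A\oplus\EuScript{S}\mathrm{t}(A))$, and the shift conventions for $[\,\cdot\,]$ must be aligned so that the summand's own internal degree $2p^{t_1}$ is cancelled and no spurious off-by-one or off-by-two shift survives; a single low-weight check (e.g. $p=2$, $r=1$, comparing against proposition \ref{prop-calcul-d2Z}) is the cleanest way to pin down the normalisation. The weight computation itself, once the bijections $\xi$ and $\chi$ are in hand, is a short and robust combinatorial verification.
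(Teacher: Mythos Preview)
Your proposal is correct and follows essentially the same approach as the paper: identify the weight-$d$ summand of $H_*^{\mathrm{st}}(A)$ via the bijections from the proof of theorem \ref{thm-new-formula}, check that $\xi$ and the $\mathcal R$-equivalence preserve weights, and compute $w(\alpha)=p^{t_1}$ for the restricted representative. The paper's proof records only these three weight checks and declares the proposition proved, leaving the degree shift from $S^{p^r}$ to $\Gamma^{p^r}$ implicit; you are right to flag that bookkeeping as the only place where care is needed, and a single low-weight check is indeed the cleanest way to pin down the normalisation (note that stably $L_i^{\mathrm{st}}\Gamma^d\simeq L_{i+2d-2}^{\mathrm{st}}S^d$, so the shift is $2p^r-2$ rather than $2p^r$---your proposed sanity check against proposition \ref{prop-calcul-d2Z} would have caught this).
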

\begin{proof}
Returning to the proof of theorem \ref{thm-new-formula}, we see that the bijection $\xi$ preserves the weights. Moreover, all the words in the same equivalent class in $X_1(p)$ also have the same weight. Thus all the terms corresponding to the homology of the summand $A\lotimes\Z/p^{\otimes o(\alpha)}$ have the same weight as $\alpha$. Now if $\alpha$ is a restricted $p$-admissible word corresponding to the sequence $\chi(\alpha)=(t_1,\dots,t_n)\in \EuScript A$ then $w(\alpha)=p^{t_1}$. Proposition \ref{lprop-new} follows.
\end{proof}

\subsection{The conjecture}
We  observed in  proposition \ref{lprop-new} that the stable derived functors of $\Gamma^d(A)$ may conveniently be repackaged by appropriately deriving those summands $A/p$ which most obviously occur in Cartan's computation, these being the summands which correspond to {\it restricted} admissible words, that is the words which do not involve the transpotence operation $\phi_p$. 
%We  observed in  proposition \ref{lstgamma} that  the stable derived functors of $\Gamma^d(A)$ may conveniently be repackaged by appropriately deriving those summands $A/p$ which most obviously occur in the  classical computation, these being the summands which correspond to {\it restricted} admissible words, as we call them in the proof of theorem \ref{thm-new-formula}.  
% On the other hand, recall that   the classical description of the unstable homology is obtained by applying to the   stable summands associated to the complete set of admissible words   the exterior  or the divided power functors (depending on the parity of the total  degree $n+q$ in which such stable summands live). 
%Our contention in what follows is that a similar mechanism is at work in the case of  a general abelian group $A$, if  one not only replaces the stable functors $A\ot \Z/p$ associated to all admissible words by their appropriately derived versions $ A\lotimes \mathbb Z/p^{\lotimes
%o(\alpha)}$  as we did  repeatedly in proposition \ref{lstgamma} but also
%replaces  the exterior and  divided power functors of the classical classical description of the unstable homology by their  appropriately  derived versions. In order to state this as a precise conjecture, we will now  introduce some introduce some additional  notation.
Our contention in what follows is that a similar mechanism also works unstably. 
To be more specific, let 
$\EuScript A_{\le m}\subset \EuScript A$ denote the subset containing all the sequences of length less or equal to $m$. We can reformulate the definition of ${\EuScript A}_{\le m}$ as:
\begin{align}{\EuScript A}_{\le m}=\{ (t_1,\dots,t_m)\;:\; t_1\ge t_2\dots\ge t_m\ge 0 \text{ and } t_1\ne 0\}\;.\end{align}
For every $\alpha\in {\EuScript A}_{\le m}$ we still denote by $o(\alpha)$ the
number of distinct strictly positive integers $t_j$ in the sequence  $\alpha=(t_1,\dots, t_m)$. The $\EuScript A_{\le m}$ form an increasing family of subsets which exhaust $\EuScript A$. Moreover $\EuScript A_{\le m}$ can be interpreted as an indexing set for those stable summands $A/p$ corresponding to restricted admissible  words which already appear in the unstable homology of $K(A,2m+1)$, or equivalently in the derived functors $L_*\Gamma(A,2m-1)$.

Our conjecture asserts that the derived functors $L_*\Gamma(A,n)$ are filtered, and that the associated graded pieces have the following description. If $n=2m+1$ is odd there is an isomorphism (functorial with respect to an arbitrary abelian group $A$):
\begin{align}
\gr ( L_*\Gamma(A,n))\simeq \pi_*\left(L\Lambda(A)\otimes \bigotimes_{\text{$p$ prime}}\;\bigotimes_{\alpha\in {\EuScript A}_{\le m+1}} \, L\Lambda(A\lotimes \Z/p^{\lotimes o(\alpha)})\right)\;,\label{conj-u1}
\end{align}
and if $n=2m$ there is an isomorphism (functorial with respect to an arbitrary abelian group $A$):
\begin{align}
\gr ( L_*\Gamma(A,n))\simeq \pi_*\left(L\Gamma(A)\otimes \bigotimes_{\text{$p$ prime}}\;\bigotimes_{\alpha\in {\EuScript A}_{\le m}} \, L\Gamma(A\lotimes \Z/p^{\lotimes o(\alpha)})\right)\;.\label{conj-u2}
\end{align}
The two isomorphisms above do not preserve the homological grading. We will explain below how to introduce appropriate shifts of degrees on the right hand side so as to  obtain graded isomorphisms. For the moment, we keep things simple by discussing the ungraded version of the conjecture.

Both sides of \eqref{conj-u1} and \eqref{conj-u2} are equipped with weights and our conjectural isomorphisms preserve the weights. To be more specific, the weight is defined on the left hand sides of \eqref{conj-u1} and \eqref{conj-u2} by viewing $\gr(L_*\Gamma^d(A,n))$ as the homogeneous summand of weight $d$. The weights on the right hand sides of \eqref{conj-u1} and \eqref{conj-u2} are defined as follows. We view $L_*\Lambda^d(A)$ and $L_*\Gamma^d(A)$ as functors a weight $d$, we wiew the factors $L\Lambda^d(A\lotimes \Z/p^{\lotimes o(\alpha)})$ and $L\Gamma^d(A\lotimes \Z/p^{\lotimes o(\alpha)})$ corresponding to a sequence $\alpha=(t_1,\dots)$ as functors of weight $p^{t_1}$ (compare proposition \ref{prop-new}) and weights are additive with respect to tensor products. 

To describe more concretely the homogeneous component of weight $d$ of the right hand sides of isomorphisms \eqref{conj-u1} and \eqref{conj-u2}, we introduce the following notation.
Given a prime integer $p$, a nonnegative integer $m$, a nonnegative integer $d_0$, and a family of nonnegative integers $(d_\alpha)_{\alpha\in {\EuScript A}_{\le m}}$ which is not identically zero, we denote by $d(d_0,(d_\alpha),m;p)$ the positive integer defined by
\begin{align} d(d_0,(d_\alpha),m;p):= d_0+\sum_{\alpha\in {\EuScript A}_{\le m}} d_\alpha p^{t_1(\alpha)}\;,\end{align}
where $t_1(\alpha)$ denotes the first integer in the sequence $\alpha$, that is $\alpha=(t_1(\alpha),\dots)$. The integer $d(d_0,(d_\alpha),m;p)$ is the weight of the following objects of the derived category
\begin{align}&\mathcal{E}(d_0,(d_\alpha),m;p) := L\Lambda^{d_0}(A)\,\otimes  \,\bigotimes_{\alpha\in{\EuScript A}_{\le m}}\, L\Lambda^{d_\alpha}(A\lotimes \Z/p^{\lotimes o(\alpha)})\;,
\label{conj-termE}
\\&\mathcal{D}(d_0,(d_\alpha),m;p) := L\Gamma^{d_0}(A)\,\otimes  \,\bigotimes_{\alpha\in{\EuScript A}_{\le m}}\, L\Gamma^{d_\alpha}(A\lotimes \Z/p^{\lotimes o(\alpha)})\;.\label{conj-termD}
\end{align}
With these notations, the homogeneous component of weight $d$ of the right hand side of the isomorphism \eqref{conj-u1} is given by the homotopy groups of the direct sum of $L\Lambda^d(A)$ and all the terms $\mathcal{E}(d_0,(d_\alpha),m+1;p)$ such that $d(d_0,(d_\alpha),m+1;p)=d$, for all nonnegative integers $d_0$, all families of integers $(d_\alpha)$ which are not identically zero and all prime integers $p$. The right hand side of the isomorphism \eqref{conj-u2} has an obvious similar description.

Finally, we introduce suitable shifts in order to transform the isomorphisms  \eqref{conj-u1} and \eqref{conj-u2} into graded isomorphisms. 
Given a prime integer $p$, a nonnegative integer $m$ and a nonnegative integer $d_0$, and a family of nonnegative integers $(d_\alpha)_{\alpha\in {\EuScript A}_{\le m}}$ which is not identically zero, we set:
\begin{align}
& \ell(d_0,(d_\alpha),m;p):=(2m+1)d_0+\sum_{\alpha\in {\EuScript A}_{\le m}} \ell(\alpha;p)\,d_\alpha
\end{align}
where the integer $\ell(\alpha;p)$ associated to a sequence $\alpha=(t_1,\dots,t_m)$ is given by 
\begin{align}
& \ell(\alpha;p):=\begin{cases}2p^{t_2}+...+2p^{t_m}+1 & \text{if}\ m>1\;,\\
1 & \text{if} \ m=1.
\end{cases}
\end{align}
We also set:
\begin{align}
e(d_0,(d_\alpha),m;p):=\left(\sum_{\alpha\in {\EuScript A}_{\le m}} d_\alpha\right)-d_0\;.
\end{align}
We may now state the graded version of our conjecture.
\begin{conjecture}
\label{conject}
Let $s$ be a positive integer, an let $n$ be a positive integer.
\begin{enumerate}%[1]
\item Assume that $n=2m+1$ is odd. Then there exists a filtration on $ L_s\Gamma^d(A, n)$ such that
the associated graded functor is isomorphic to 
the direct sum of the term
\begin{align*}
L_{s-nd}\Lambda^d(A)
\end{align*}
together with the terms
\begin{align*}
\pi_s\left(\;\mathcal{E}(d_0,(d_\alpha),m+1;p)\; [\ell(d_0,(d_\alpha),m+1;p)] \;\right)
\end{align*}
for all primes $p$, all nonnegative integers $d_0$ and all famillies of integers $(d_\alpha)_{\alpha\in {\EuScript A}_{\le m+1}}$ which are not identically zero, and satisfying
$d(d_0,(d_\alpha),m;p)=d$.

\item  Similarly, if $n=2m$, there exists a filtration on
$L_s\Gamma^d(A,n)$ such that the associated graded functor is isomorphic to 
  the direct sum of the term 
$$L_{s-nd}\Gamma^d(A)$$ 
together with the terms 
\begin{align*}
\pi_s\left(\;\mathcal{D}(d_0,(d_\alpha),m;p)\; [\ell(d_0,(d_\alpha),m;p)+ e(d_0,(d_\alpha),m;p)] \;\right)
\end{align*}
for all primes $p$, all nonnegative integers $d_0$ and all famillies of integers $(d_\alpha)_{\alpha\in {\EuScript A}_{\le m}}$ which are not identically zero, and satisfying
$d(d_0,(d_\alpha),m;p)=d$.
\end{enumerate}
\end{conjecture}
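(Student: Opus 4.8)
Since \ref{conject} is stated as a conjecture, the realistic goal is not a full proof but a verification that the conjectural formulas are compatible with the complete computations already available in the paper, together with an indication of the structural route one would pursue toward a genuine proof. The cases to be checked are $n=1$ for all $d$ (theorem \ref{thm-calcul-LG-un-Z}), $d\le 4$ for all $n$ (theorem \ref{thm-G4Z}), and $d\le 3$ for all $n$ (the computations of \cite{BM}). Because the conjecture only predicts the associated graded of a filtration, each comparison reduces to matching the two sides as lists of functors, with multiplicities and homological degrees, rather than resolving extension problems; this is the feature that makes the check tractable.

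The first step is to unpack the right-hand sides of \eqref{conj-u1} and \eqref{conj-u2} for $A$ free. Here the Tor-complexes simplify drastically: since $_pA=0$, formula \eqref{eq-pr-6} gives a quasi-isomorphism $A\lotimes \Z/p^{\lotimes o(\alpha)}\simeq \bigoplus_{i=0}^{o(\alpha)-1}(A/p[i])^{\oplus\binom{o(\alpha)-1}{i}}$, a complex of $\Fp$-vector spaces built from shifted copies of $A/p$. Consequently each factor $L\Lambda^{d_\alpha}$ or $L\Gamma^{d_\alpha}$ appearing in $\mathcal{E}$ or $\mathcal{D}$ becomes a derived functor of an exterior or divided power of such a complex, and its homotopy is computed explicitly by theorems \ref{thm-derived-p2} and \ref{thm-derived-podd} (using the d\'ecalage formula \eqref{dec} to pass between the exterior and divided power versions). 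After this reduction both sides of the conjectured isomorphisms are expressed as concrete sums of the elementary functors $A/p^{(r)}$, $\Gamma^k_\Fp$ and $\Lambda^k_\Fp$ together with their twists, so that the comparison becomes a bookkeeping of weights, degrees and multiplicities.

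I would then carry out the matching in increasing order of difficulty. For $n=1$ (so $m=0$), the set ${\EuScript A}_{\le 1}$ consists of the length-one sequences $(t)$ with $t\ge 1$, all of which have $o(\alpha)=1$; hence only the complex $A/p$ in degree $0$ enters, each sequence $(t)$ contributing $L\Lambda^{d_{(t)}}(A/p)$ in weight $p^{t}$ with shift $\ell((t);p)=1$. Assembling these factors prime by prime, one should recognize exactly the cycles of the dual Koszul complexes: the Koszul kernel algebra $K_\Fp(A/p)$ for odd $p$ and its Frobenius-twisted analogue $SK_\Fdeux(A/2)$ for $p=2$, thereby reproducing theorem \ref{thm-calcul-LG-un-Z}(iii); the shift functions $\ell$ and $e$ must be checked to reproduce the degrees recorded in propositions \ref{prop-descr-Kos} and \ref{prop-descr-SKos}. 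The $d\le 3$ check against \cite{BM} and the weight-$2$ and weight-$3$ truncations is entirely analogous and shorter.

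The decisive test is the weight-$4$ comparison against theorem \ref{thm-G4Z} for general $n$. Here one extracts the weight-$4$ part of the conjectural right-hand side, replaces the nonsplit summands of theorem \ref{thm-G4Z} by their associated graded pieces --- using \eqref{seq-phi4} for $\Phi^4(A)$ and the two-step filtration of $\Gamma^2_\Z(A/2^{(1)})$ --- and verifies that the even/odd dichotomy of theorem \ref{thm-G4Z} is produced by the choice of $L\Gamma$ in \eqref{conj-u2} versus $L\Lambda$ in \eqref{conj-u1} and by the correction term $e$ in the degree shift. I expect this to be the main obstacle on the verification side, since the many summands and the delicate degree shifts leave little room for error and the appearance of $4$-torsion for even $n$ must be tracked carefully. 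Toward an actual proof of \ref{conject} for all $d$, the natural strategy is to imitate the inductive scheme of theorem \ref{thm-derived-p2-prime}: build a filtration on $L\Gamma(A,n)$ whose layers are indexed by ${\EuScript A}_{\le m}$ and grow by one admissible layer each time $n$ increases by $2$, lift the characteristic $p$ answers of theorems \ref{thm-derived-p2} and \ref{thm-derived-podd} to $\Z$ by a Bockstein comparison, and prove that the resulting filtration splits on the associated graded. The hardest points will be controlling the integral extensions and Bockstein differentials across the filtration and establishing the splitting --- precisely the difficulties already met, in the single case $d=4$, throughout section \ref{der-gamma4-sec}.
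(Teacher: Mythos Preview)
Your plan is correct and matches the paper's treatment: since \ref{conject} is stated as a conjecture, the paper does not prove it but verifies it in precisely the cases you list --- $d\le 3$ for all $A$ and $n$, $d=4$ for $A$ free and all $n$ (via the inductive comparison $L_*\Gamma^4(A,n)$ versus $L_*\Gamma^4(A,n-2)[8]$ against theorem \ref{deriveddescr}, after replacing $\Phi^4$ and $\Gamma^2_\Z(A/2)$ by their associated graded), and $n=1$ for $A$ free and all $d$.

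One technical point in your $n=1$ sketch is off and worth flagging. You propose to compute the homotopy of $L\Lambda^{d_\alpha}(A\lotimes\Z/p)$ by invoking theorems \ref{thm-derived-p2} and \ref{thm-derived-podd}, but those give the derived functors of $\Gamma^d_\k$ over a field $\k$, not the integral derived functor $L\Lambda^k_\Z$ evaluated at the torsion module $A/p$ (resolved by $A\xrightarrow{p}A$ over $\Z$); these are genuinely different objects. The paper's route avoids this by finding a common model: it shows that the normalized chains of $L\Lambda^k(A/p)$ are quasi-isomorphic to the weight-$k$ summand of the integral Koszul algebra $(\Gamma(A[2])\otimes\Lambda(A[1]),\,p\,d_{\mathrm{Kos}})$, and independently rewrites the Koszul-kernel description of $_\p L_*\Gamma(A,1)$ as the homology of a tensor product of exactly these complexes. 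Your proposed direct matching against $K_\Fp(A/p)$ and $SK_\Fdeux(A/2)$ could be made to work, but it requires an independent computation of $L_*\Lambda^k_\Z(A/p)$ that the theorems you cite do not supply.
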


The remainder of the present section is devoted to proving that the conjecture holds in a certain number of cases.

\subsection{The cases $d=2$, $d=3$, for all $A$ and all $n$.}

For $s,m\geq 1,$ and $d= 2,3$, there is no filtration to consider, and conjecture \ref{conject} reduces
to the  natural
isomorphisms
\begin{equation}\label{gamma2description}
L_s\Gamma^2(A,2m+1)\simeq
\pi_s\left(\bigoplus_{i=0}^{m}\left(A\lotimes
\Z/2[2m+2i+1]\right)\oplus L\Lambda^2(A)[4m+2]\right)
\end{equation}
\begin{equation}
L_s\Gamma^2(A,2m)\simeq
\pi_s\left(\bigoplus_{i=0}^{m-1}\left(A\lotimes
\Z/2[2m+2i]\right)\oplus L\Gamma^2(A)[4m]\right)
\end{equation}
and
\begin{multline}
L_s\Gamma^3(A,2m+1)\simeq\\
\pi_s\left(\bigoplus_{i=0}^{m}\left(A\lotimes \Z/3[2m+4i+1]\oplus
A\lotimes A\lotimes \Z/2[4m+2i+2]\right)\oplus
L\Lambda^3(A)[6m+3]\right),\label{gamma3description}
\end{multline}
\begin{multline}
L_s\Gamma^3(A,2m)\simeq\\
\pi_s\left(\bigoplus_{i=0}^{m-1}\left(A\lotimes \Z/3[2m+4i]\oplus
A\lotimes A\lotimes \Z/2[4m+2i]\right)\oplus
L\Gamma^3(A)[6m]\right).\label{gamma3description-a}
\end{multline}
consistently with the results in paragraphs  4 and 5 of \cite{BM}.

\subsection{The case $d=4$,  for  $A$ free and $n$ odd.}
We now proceed to prove that the conjecture agrees with our previous computations for $d=4$, $A$ free and $n=2m+1$ with $m\ge 0$. In that case the conjecture says that (up to a filtration), $L_*\Gamma^4(A,n)$ is isomorphic to the homology groups of the following complexes \eqref{eq:1A}-\eqref{eq:1G}. 
At  the prime $p=3$,  there is a single sort of complex:
\begin{subequations}\label{eq:1}
\renewcommand\theequation{\roman{equation}}
\begin{align} A\lotimes A\lotimes \mathbb Z/3\ [4m+4k+2] &\hspace{1cm} 0\leq k \leq m. \label{eq:1A}
\intertext{At the prime $p=2$, we have the five following types of complexes:}
L\Lambda^2(A)\lotimes A\lotimes \mathbb
Z/2\ [6m+2k+3] & \hspace{1cm} 0 \leq k \leq m    \label{eq:1B}   \\
 L\Lambda^2(A\lotimes \mathbb Z/2)[4m+4k+2]&\hspace{1cm} 0 \leq k \leq  m
 \label{eq:1C}
\\
 A\lotimes \mathbb Z/2\lotimes \mathbb
Z/2\ [2m+6k+2l+1]& \hspace{1cm}0 \leq  k+l\leq m, \ \ l
\neq 0
 \label{eq:1D}
\\
A\lotimes \mathbb Z/2\ [2m+6k+1] &\hspace{1cm}
0 \leq k \leq m\label{eq:1E}  \\
 A\lotimes A\lotimes \mathbb Z/2\lotimes
\mathbb Z/2\ [4m+2k+2l+2] & \hspace{1cm} 0\leq l < k \leq m \label{eq:1F}
%\\
%\\
%\begin{align*}
\intertext{together with the final term:}
L\Lambda^4(A)[8m+4]. \label{eq:1G} &
\end{align}
\end{subequations}

To verify that these expressions coincide with our  computations from section \ref{der-gamma4-sec}, we proceed by induction on $m$. 
For $m=0$, the formula for $L_*\Gamma^4(A,1)$ was described  in proposition \ref{prop-GA41}.  We  filter the term $L_3\Gamma^4(A,1) =  \Phi^4(A)$  in  \eqref{gr3ga4a11} 
and  replace it here by the direct sum $\Lambda^2(A)\otimes A/2)\oplus\Gamma^2_{\Fdeux}(A/2)$.  The result of proposition \ref{prop-GA41} then coincides with the present   $m=0$ case, once we observe that $L_\ast\Lambda^2(A/2)\simeq \Lambda^2(A/2)[0]\oplus\Gamma^2_{\Fdeux}(A/2)[1]$ (\cite{BM} \S 2.2, 
\cite{baupira}).

\bigskip

To prove that the formulas provided by the conjecture agree  for a general $n=2m+1$ with the computations of theorem \ref{thm-G4Z}, it suffices to show  that the additional summands predicted by the conjecture when passing from $L_*\Gamma^4(A,n-2)[8]$ to $L_*\Gamma^4(A,n)$ agree with those obtained in  \eqref{desg4}. Let us denote by $G(m)$ the sum of all the terms (i)-(vii). One verifies that $ G(m)=G(m-1)[8] \oplus \Delta(m) $, where $\Delta(m)$ is given by:
\begin{align*}\Delta(m)=\,  &\, A\otimes A/3[2n]\,\oplus\, L\Lambda^2(A)\otimes A/2[3n]\,\oplus\, L\Lambda^2(A/2)[2n]\,\\
&\oplus \left(\bigoplus_{\ell=1}^m A\lotimes\Z/2^{\lotimes 2}\right)\,\oplus\, A/2[n]\oplus \left(\bigoplus_{k=1}^m A\lotimes A\lotimes \Z/2^{\lotimes 2}[2n+2k]\right)\;.\end{align*}
In $\Delta(m)$ we replace  the expression $\Z/2^{\lotimes 2}$ by $\Z/2[0]\oplus\Z/2[1]$,  $L\Lambda^2(A/2)$ by $\Lambda^2(A/2)[0]\oplus\Gamma^2_{\Fdeux}(A/2)[1]$. Then $\Delta(m)$ coincides with the additional summands occuring when one passes for $n$ odd  from $L_*\Gamma^4(A,n-2)[8]$ to $L_*\Gamma^4(A,n-2)$ in theorem \ref{deriveddescr}, provided we replace the summand $\Gamma^2_\Fdeux(A/2)\otimes A/2^{(1)}[3n]$ in \eqref{desg4}  by the  direct sum $\Lambda^2(A/2)\otimes A/2[3n]\oplus A/2\otimes A/2[3n]$. This proves the conjecture for $d=4$, $A$ free and $n=2m+1$ odd.

\subsection{The case $d=4$, for  $A$ free and $n$ even.}
We now proceed to prove that the conjecture agrees with our previous computations for $d=4$ and $A$ free and $n=2m$ with $m\ge 1$.
It is straightforward to verify that the conjecture agrees for $m=1$ with the computation of proposition \ref{prop-GA42}, since we know that $L_\ast\Gamma^2(A/2)\simeq \Gamma^2_\Z(A/2)[0]\oplus \Gamma^2_\Fdeux(A/2)[1]$ (\cite{BM}, \cite{baupira} \S 4 ) (there is no filtration to consider in this situation).
 The conjecture says that (up to a filtration) $L_*\Gamma^4(A,n)$ is isomorphic to the homology groups of the following complexes \eqref{eq:2A}-\eqref{eq:2G}. 
At  the prime $p=3$,  there is a single sort of complex:
\begin{subequations}\label{eq:2}
\renewcommand\theequation{\alph{equation}}
\begin{align}
\renewcommand\theequation{\roman{equation}}
 A\lotimes A\lotimes \mathbb Z/3\ [4m+4k] & \hspace{1cm} 0 \leq k \leq  m-1 \label{eq:2A}\\
\intertext{For the prime $p=2$, we have the following complexes:}
 L\Gamma^2(A)\lotimes A\lotimes \mathbb
Z/2\ [6m+2k]& \hspace{1cm} 0 \leq k \leq m-1  \label{eq:2B}\\
L\Gamma^2(A\lotimes \mathbb Z/2)\ [4m+4k]& \hspace{1cm} 0 \leq k \leq  m-1 \label{eq:2C}
\\A\lotimes \mathbb Z/2\lotimes \mathbb
Z/2\ [2m+6k+2l] &   \hspace{1cm}  0 \leq  k+l \leq m-1,
 \  l\neq 0 \label{eq:2D}\\
A\lotimes \mathbb Z/2\ [2m+6k] &\hspace{1cm}
0 \leq k \leq  m-1 \label{eq:2E}\\
A\lotimes A\lotimes \mathbb Z/2\lotimes
\mathbb Z/2\ [4m+2k+2l] & \hspace{1cm} 0\leq l  < k \leq m-1 \label{eq:2F}  \\
\intertext{together with the  final term:}
 L\Gamma^4(A)[8m] & \hspace{1cm}\label{eq:2G}
\end{align}
\label{lm-calc3}
\end{subequations}
To prove that the formulas provided by the conjecture agree  for   $n = 2m$  with the computations of theorem \ref{thm-G4Z}, it suffices   to show  as above that the additional summands predicted by the conjecture when passing from $L_*\Gamma^4(A,n-2)[8]$ to $L_*\Gamma^4(A,n)$ agree with those obtained in  \eqref{desg41}. Let us denote by $J(m)$ the sum of all the terms (a)-(g).  One verifies that $ J(m)=J(m-1)[8] \oplus \Delta'(m)$ where 
\begin{align*}
\Delta'(m)  =\, &\, A \ot A/3[2n] \, \oplus \,  \Gamma^2 A \ot A/2 [3n] \, \oplus \, L\Gamma^2(A/2)[2n] \\
& \oplus \, \left( \bigoplus_{l=1}^{m-1} \, A \lot \Z/2 \lot \Z/2 [2n+2l] \right)  \, \oplus \, A/2 \, \oplus \, \left( \bigoplus_{k=1}^{m-1} A/2 \lot A/2 [4m+2k]\right)   .
\end{align*}
 We replace in   $\Delta'(m)$  the summand  $L\Gamma^2(A/2)$ by its value $\Gamma^2_\Z(A/2)[0] \oplus \Gamma^2_{\f2}(A/2)[1]$  (as in \cite{BM} \S 2.2, 
\cite{baupira} \S 4),   and once more  replace 
 $ \Z/2 \lot \Z/2$ by $\Z/2[0]\oplus\Z/2[1]$. Then $\Delta'(m)$ coincides with the additional summands occuring when one passes for $n$ even  from $L_*\Gamma^4(A,n-2)[8]$ to $L_*\Gamma^4(A,n-2)$ in theorem \ref{deriveddescr}.  Note that in this $n$ even  case, no summand in $\Delta'(m) $   requires any  filtering.

\subsection{The case $n=1$, $A$ free and all $d$}
In the case $n=1$ and $A$ free, the conjecture asserts that, up to a filtration, the $p$-primary part of $L_*\Gamma^d(A,1)$ is isomorphic to
\begin{align}\pi_*\left(\bigoplus_{(k_0,,k_1,\dots,k_d)}\Lambda^{k_0}(A)\lotimes L\Lambda^{k_1}(A/p)\lotimes\dots\lotimes L\Lambda^{k_d}(A/p)\;[k_0+\dots+k_d]\right)\;,\label{eq-conjn1}\end{align}
where the sum runs over all sequences of nonnegative integers $(k_0,k_1,\dots, k_d)$ of length exactly $d+1$, satisfying $\sum k_ip^{i}=d$.

On the other hand, we have explicitly computed the derived functors $L_*\Gamma(A,1)$ in section \ref{der1-gamma-z}. By theorem \ref{thm-calcul-LG-un-Z} and proposition \ref{prop-qt-SK}, the $p$-primary part of $L_i\Gamma^d(A,1)$ is concentrated in degrees $i<d$ and it is isomorphic (up to a filtration) in these degrees to the homogeneous component of degree $i$ and weight $d$ of the cycles of the tensor product of an exterior algebra with trivial differential and a family of Koszul algebras:
\begin{align}\left(\Lambda_\Fp(A/p[1]),0\right)\otimes \bigotimes_{r\ge 1} \left(\Lambda_\Fp(A/p^{(r)}[1])\otimes\Gamma_\Fp(A/p^{(r)}[2]),d_\mathrm{Kos}\right)\;. \label{eq-Kos-conj}\end{align}
We will now  reformulate this result in a different form, closer to \eqref{eq-conjn1}. For this, we consider the following modification of the Koszul algebra over $\Z$, namely the dg-$\PP_\Z$-algebra $(\Gamma(A[2])\otimes \Lambda(A[1]),pd_{\mathrm{Kos}})$ with the same underlying graded $\PP_\Z$-algebra, but whose differential is  the Koszul differential multiplied by $p$.
We denote by $C^k(A)$ the complex of functors given by its homogeneous component of weight $k$.
\begin{lemma}\label{lm-transfo-1}
For $i<d$, the $p$-primary part of the functor $L_i\Gamma^d(A,1)$ is isomorphic to the homology of the complex
$$\bigoplus_{(k_0,\dots,k_d)}\left(\Lambda^{k_0}(A)[k_0],0\right)\otimes C^{k_1}(A)\otimes \dots \otimes C^{k_d}(A) $$
where the sum runs over all sequences of nonnegative integers $(k_0,k_1,\dots, k_d)$ satisfying $\sum k_ip^{i}=d$.
\end{lemma}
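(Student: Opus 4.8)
The plan is to prove the lemma by reducing both sides to the cycles of the mod $p$ Koszul complexes and then matching the combinatorial bookkeeping. Throughout we work with ordinary functors of $A$, as in the rest of this section; recall in particular that the Frobenius twist $A/p^{(r)}$ is isomorphic to $A/p$ as an ordinary functor, and that twisting by Frobenius multiplies weights by $p^r$ while leaving homological degrees unchanged. The starting point is the identification recalled just above the statement: by theorem \ref{thm-calcul-LG-un-Z} and proposition \ref{prop-qt-SK}, for $i<d$ the $p$-primary part $_\p L_i\Gamma^d(A,1)$ is, up to a filtration, the homogeneous component of weight $d$ and degree $i$ of the cycles of the tensor product \eqref{eq-Kos-conj}. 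Expanding this tensor product in weight $d$ according to the weight carried by each factor produces a direct sum indexed exactly by the sequences $(k_0,\dots,k_d)$ with $\sum k_i p^i=d$: the factor $\Lambda_\Fp(A/p[1])$ contributes the exterior power $\Lambda^{k_0}_\Fp(A/p)$ (trivial differential, all cycles), and for $r\ge 1$ the factor $\Lambda_\Fp(A/p^{(r)}[1])\otimes\Gamma_\Fp(A/p^{(r)}[2])$ contributes, in weight $k_r p^r$, the cycles of the weight $k_r$ part of the $\Fp$-Koszul complex on $A/p^{(r)}\cong A/p$.

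The key per-factor computation is the following. The weight $k$ component of the graded $\PP_\Z$-algebra $\Gamma(A[2])\otimes\Lambda(A[1])$ is a complex of free abelian groups which, when equipped with $d_{\mathrm{Kos}}$ and with $k\ge 1$, is acyclic by proposition \ref{prop-homology-Koszul} (it has no degree zero term); let $B_j(A)=\ker d_{\mathrm{Kos}}=\mathrm{im}\,d_{\mathrm{Kos}}$ in degree $j$. The complex $C^k(A)$ is this same graded object equipped with $pd_{\mathrm{Kos}}$, and using torsion-freeness I would compute
\[
H_j(C^k(A))=\ker d_{\mathrm{Kos}}/p\,\mathrm{im}\,d_{\mathrm{Kos}}=B_j(A)/pB_j(A)=B_j(A)\otimes\Fp .
\]
Reducing the short exact sequences $0\to B_j(A)\to (C^k)_j(A)\to B_{j-1}(A)\to 0$ of free functors modulo $p$ shows that $B_j(A)\otimes\Fp$ is precisely the functor of degree $j$ cycles of $C^k(A)\otimes\Fp$, that is of the weight $k$ part of the $\Fp$-Koszul complex. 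Thus, functorially in $A$ and in matching degrees,
\[
H_j(C^k(A))\simeq Z_j\bigl(\Gamma_\Fp(A/p[2])\otimes\Lambda_\Fp(A/p[1])\bigr)^k ,
\]
and via $A/p^{(r)}\cong A/p$ the right-hand side is the degree $j$ cycles of the weight $k p^r$ part of the $r$-twisted factor appearing in \eqref{eq-Kos-conj}.

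It then remains to pass from factors to the full tensor products. Fix a sequence $(k_0,\dots,k_d)$ and let $s$ be the number of indices $r\ge 1$ with $k_r\ne 0$. On the Koszul side, lemma \ref{lm-filtr-n-cplx} applied to the $s$ exact complexes with $k_r\ge 1$ describes the cycles of their tensor product, up to a filtration, as $\binom{s-1}{j}$ copies ($0\le j\le s-1$) of suitably shifted tensor products of the individual cycles, the weight zero factors and the factor $\Lambda^{k_0}_\Fp(A/p)$ being tensored in without shift. On the integral side, I would apply the K\"unneth formula iteratively to $\Lambda^{k_0}(A)[k_0]\otimes\bigotimes_{r\ge 1}C^{k_r}(A)$. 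Since each $H_*(C^{k_r}(A))$ is an $\Fp$-vector space, one has $\mathrm{Tor}_1^\Z(H_a,H_b)\simeq H_a\otimes H_b$, so tensoring two such factors multiplies homology by $\Fp\oplus\Fp[1]$, while the free factor $\Lambda^{k_0}(A)[k_0]$ contributes no Tor term and merely reduces $\Lambda^{k_0}(A)$ modulo $p$ against the surrounding $p$-torsion. Hence the iterated K\"unneth decomposition reproduces exactly the same $\binom{s-1}{j}$-fold multiplicities and shifts, with the individual homologies replaced, by the per-factor computation, by the corresponding Koszul cycles. Summing over all sequences $(k_0,\dots,k_d)$ identifies the homology of the complex in the statement with the weight $d$ cycles of \eqref{eq-Kos-conj}, hence with $_\p L_i\Gamma^d(A,1)$ for $i<d$.

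The main obstacle is this last matching step: one must verify that the honest direct-sum K\"unneth decomposition over $\Z$, with its Tor summands, reorganizes with precisely the multiplicities $\binom{s-1}{j}$ and degree shifts of lemma \ref{lm-filtr-n-cplx}, which itself only holds up to a filtration. Keeping the weight-versus-degree bookkeeping of the Frobenius twists straight, and correctly discarding the weight zero factors so that only the $s$ genuinely nontrivial complexes introduce shifts, is the delicate part of the argument.
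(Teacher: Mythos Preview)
Your per-factor computation is correct and is the heart of the matter: for each $k\ge 1$, the complex $C^k(A)$ is a complex of free abelian groups whose differential is $p$ times an acyclic one, so $H_j(C^k(A))=B_j(A)/pB_j(A)$ with $B_j=\ker d_{\mathrm{Kos}}$, and this coincides with the degree $j$ cycles of the mod $p$ Koszul complex. This is exactly the mechanism the paper uses.

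Where your argument diverges from the paper's, and where the gap lies, is in the assembly step. You compute the homology of the full tensor product via iterated K\"unneth, obtaining it only up to the K\"unneth filtration, and you compute the cycles of \eqref{eq-Kos-conj} via lemma \ref{lm-filtr-n-cplx}, again only up to a filtration. Matching the associated graded pieces with the binomial multiplicities does not yield an isomorphism of the filtered objects themselves; two functors with the same graded pieces need not be isomorphic. You flag this as ``the main obstacle'' but do not close it.

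The paper bypasses this entirely by applying your per-factor argument \emph{globally}. The tensor product $\left(\Lambda_\Z(A[1]),0\right)\otimes \bigotimes_{1\le r\le d}\left(\Lambda_\Z(A[1])\otimes\Gamma_\Z(A[2]),d_{\mathrm{Kos}}\right)$ is itself a complex of free abelian groups whose differential is acyclic in positive degrees (it is a tensor product of acyclic Koszul complexes, tensored with the exterior factor with zero differential). Replacing $d_{\mathrm{Kos}}$ by $pd_{\mathrm{Kos}}$ on each factor multiplies the \emph{total} differential by $p$. Hence, exactly as in your single-factor computation, the homology of the whole thing is the mod $p$ reduction of its integral cycles, which in turn is the subalgebra of cycles of the mod $p$ version \eqref{eq-Kos-sub2}. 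This is a direct functorial isomorphism, with no K\"unneth or filtration bookkeeping required. The decomposition by sequences $(k_0,\dots,k_d)$ is then simply a direct sum decomposition of both sides as subcomplexes, preserved by the identification.

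In short: your key computation is right, but you should apply it once to the whole complex rather than factor-by-factor; the detour through K\"unneth and lemma \ref{lm-filtr-n-cplx} introduces an unnecessary obstruction that the paper's route avoids.
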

\begin{remark}
The isomorphism of lemma \ref{lm-transfo-1} is really an isomorphism of functors, not an isomorphism of strict polynomial functors. For example, this isomorphism does not preserve the weights.
\end{remark}

\begin{proof}[Proof of lemma \ref{lm-transfo-1}]
{\bf Step 1.} Since we are interested in the homogeneous component of weight $d$ of \eqref{eq-Kos-conj}, we can limit ourselves to the differential graded subalgebra of \eqref{eq-Kos-conj}:
\begin{align}\left(\Lambda_\Fp(A/p[1]),0\right)\otimes \bigotimes_{1\le r\le d} \left(\Lambda_\Fp(A/p^{(r)}[1])\otimes\Gamma_\Fp(A/p^{(r)}[2]),d_\mathrm{Kos}\right)\;. \label{eq-Kos-sub}\end{align}
By forgeting the strict polynomial structure, the strict polynomial functors $A/p^{(r)}$ become isomorphic to $A/p$, and the differential graded algebra \eqref{eq-Kos-sub} is functorially isomorphic to the differential graded algebra:
\begin{align}\left(\Lambda_\Fp(A/p[1]),0\right)\otimes \bigotimes_{1\le r\le d} \left(\Lambda_\Fp(A/p[1])\otimes\Gamma_\Fp(A/p[2]),d_\mathrm{Kos}\right)\;. \label{eq-Kos-sub2}\end{align}
Moreover, under this isomorphism, the homogeneous summand of weight $d$ of \eqref{eq-Kos-sub} corresponds to the homogeneous summand of \eqref{eq-Kos-sub2} supported by the subfunctors
\begin{align}
\bigoplus  \Lambda_\Fp^{k_0}(A/p[1])\otimes \Lambda_\Fp^{a_1}(A/p)\otimes \Gamma^{b_1}_\Fp(A/p)\otimes \dots \otimes \Lambda_\Fp^{a_\ell}(A/p)\otimes \Gamma^{b_\ell}_\Fp(A/p)\;.
\label{eq-summand1}
\end{align}
where the sum runs over all sequences $(k_0,a_1,b_1,\dots,a_{\ell},b_{\ell})$ satisfying $k_0+\sum (a_i+b_i)p^i=d$.

{\bf Step 2.} We claim that the subalgebra of cycles of positive degree of functorial graded algebra \eqref{eq-Kos-sub2} is isomorphic to the homology algebra of:
\begin{align}\left(\Lambda_\Z(A[1]),0\right)\otimes \bigotimes_{1\le r\le d} \left(\Lambda_\Z(A[1])\otimes\Gamma_\Z(A[2]),pd_\mathrm{Kos}\right)\;, \label{eq-Kos-sub3}\end{align}
and we claim that the isomorphism sends the kernels of the summand \eqref{eq-summand1} of the differential graded algebra \eqref{eq-Kos-sub2} isomorphically to the homology of the following summand of the differential graded algebra \eqref{eq-Kos-sub3}
$$\bigoplus_{(k_0,\dots,k_d)}\left(\Lambda^{k_0}(A[1]),0\right)\otimes C^{k_1}(A)\otimes \dots \otimes C^{k_d}(A), $$
where the sum runs over all sequences of nonnegative integers $(k_0,k_1,\dots, k_d)$ satisfying $\sum k_ip^{i}=d$.
The statement of lemma \ref{lm-transfo-1} follows from this claim, so to finish the proof of lemma \ref{lm-transfo-1}, we only have to justify our claim.

Koszul algebras are acyclic in positive degrees by proposition \ref{prop-homology-Koszul}. Hence, the positive degree homology of the differential graded algebra \eqref{eq-Kos-sub3} is equal to the mod $p$ reduction of the algebra formed by the cycles of positive degree of
$$\left(\Lambda_\Z(A[1]),0\right)\otimes \bigotimes_{1\le r\le d} \left(\Lambda_\Z(A[1])\otimes\Gamma_\Z(A[2]),d_\mathrm{Kos}\right)\;.$$
The latter is isomorphic to the algebra formed by the cycles of positive degree of the algebra \eqref{eq-Kos-sub2}. This justifies our claim.
\end{proof}

 We are now going to check that the graded functor \eqref{eq-conjn1} can also be rewritten as the homology of the complex of lemma \ref{lm-transfo-1}. This will follow from the following lemma. Recall that $C^k(A)$ is the homogeneous part of weight $k$ of $(\Gamma(A[2])\otimes \Lambda(A[1]),pd_{\mathrm{Kos}})$, hence its desuspension $C^k(A)[-k]$ is the homogeneous part of weight $k$ of $(\Gamma(A[1])\otimes \Lambda(A[0]),pd_{\mathrm{Kos}})$.
\begin{lemma}\label{lm-smallmodel}
Let $A$ be a free abelian group, and let $k$ be a positive integer. The normalized chains of $L\Lambda^k(A/p)$ are naturally isomorphic to the complex $C^k(A)[-k]$.
\end{lemma}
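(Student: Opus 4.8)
The plan is to model $L\Lambda^k(A/p)$ by the simplicial exterior power of the canonical length-one free resolution of $A/p$, and then to collapse this large model down to $C^k(A)[-k]$ by means of a filtration whose first page is forced, for degree reasons, to coincide with $C^k(A)[-k]$. Since $A$ is free, $A/p$ admits the free resolution $P=[\,A\xrightarrow{\times p}A\,]$ with the two copies of $A$ placed in degrees $1$ and $0$, so that $L\Lambda^k(A/p)$ is computed by the normalized chains $\NN\Lambda^k(K(P))$. The degree-zero part $A[0]\subset P$ is a direct summand in each chain degree, hence the inclusion of simplicial abelian groups $K(A[0])\hookrightarrow K(P)$ is split in each simplicial degree, with quotient $K(A[1])$. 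Filtering $\Lambda^k(K(P))$ by the powers of the (levelwise) ideal generated by $K(A[0])$ and using the exponential property of $\Lambda$ on these levelwise-split extensions, the associated graded simplicial abelian group is $\bigoplus_{i+j=k}\Lambda^i(K(A[0]))\otimes\Lambda^j(K(A[1]))$.

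Next I would compute the first page of the resulting spectral sequence. The factor $K(A[0])$ is the constant simplicial group on $A$, so $\Lambda^i(K(A[0]))$ has normalized chains $\Lambda^i(A)$ concentrated in degree $0$. The factor $K(A[1])$ is the standard model of $A$ placed in degree $1$, so $\pi_*\Lambda^j(K(A[1]))=L_*\Lambda^j(A,1)$; the d\'ecalage formula \eqref{dec} applied with $n=0$ gives $L_*\Lambda^j(A,1)\simeq\Gamma^j(A)$ concentrated in degree $j$ (and free, as $A$ is free). By the Eilenberg--Zilber theorem and the K\"unneth formula (no Tor intervenes, all groups being free), the spectral sequence therefore has
$$E^1_{*}\simeq\bigoplus_{i+j=k}\Lambda^i(A)\otimes\Gamma^j(A),\qquad \Lambda^i(A)\otimes\Gamma^j(A)\ \text{in total degree}\ j,$$
which is precisely the underlying graded functor of $C^k(A)[-k]$, the summand $\Gamma^m(A)\otimes\Lambda^{k-m}(A)$ sitting in degree $m$.

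The crucial structural point is the rigidity of this filtration. In $E^1$ the summand $\Lambda^{k-m}(A)\otimes\Gamma^m(A)$ has filtration degree $k-m$ and total degree $m$, so filtration degree equals $k$ minus the total degree. A differential $d^r$ lowers total degree by one and changes the filtration degree by $r$; matching the above constraint for both source and target forces $r=1$, so $d^r=0$ for $r\ge2$, the spectral sequence degenerates at $E^2$, and each total degree of $E^\infty$ is supported in a single filtration step, so there is no extension problem. It then remains to identify $d^1\colon\Lambda^{k-m}(A)\otimes\Gamma^m(A)\to\Lambda^{k-m+1}(A)\otimes\Gamma^{m-1}(A)$. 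This map is induced by the resolution differential $\partial_P=\times p$, which transfers one factor from the $K(A[1])$-part into the $K(A[0])$-part; under the d\'ecalage identifications it comultiplies one divided-power factor and wedges it into the exterior part, that is, it is a scalar multiple of the Koszul differential $d_{\mathrm{Kos}}$ of section \ref{subsec-Koszuldiff}. Since $\partial_P$ enters linearly the scalar is $p$ up to a unit, which can be absorbed; together with the degeneration this exhibits $C^k(A)[-k]$, equipped with the differential $p\,d_{\mathrm{Kos}}$, as a natural model for $L\Lambda^k(A/p)$ in the derived category, which is the assertion of the lemma.

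The step I expect to be the main obstacle is this last identification: checking that the induced first differential is \emph{exactly} $p\,d_{\mathrm{Kos}}$, and not merely some natural transformation of the same bidegree, and doing so functorially in $A$. The cleanest way to pin it down is either by the uniqueness principle of lemma \ref{lm-unique}, which shows such a differential on these exponential algebras is determined by its restriction to the generators, or by reducing to $A=\Z$ --- where $C^k(\Z)[-k]=[\,\Z\xrightarrow{\times p}\Z\,]$ in degrees $k$ and $k-1$ and $L_{k-1}\Lambda^k(\Z/p)=\Z/p$ --- and then propagating to all free $A$ by exponentiality. By contrast the degeneration and the vanishing of extension problems are automatic from the rigidity of the filtration and require no further argument.
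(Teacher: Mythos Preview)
Your spectral sequence computation is correct as far as it goes: the filtration, the identification of $E^1$ via d\'ecalage, and the degeneration at $E^2$ with no extension problem are all sound, and with some care $d^1$ is indeed $p\,d_{\mathrm{Kos}}$. But the inference at the end is too strong. A spectral sequence degenerating at $E^2$ with a single term per total degree yields a natural isomorphism of \emph{homology groups} $H_*(\NN\Lambda^k(A/p))\simeq H_*(C^k(A)[-k])$; it does not, by itself, supply a natural chain map or a natural isomorphism in the derived category. This distinction matters for the application: the conjecture compares $\pi_*$ of a derived tensor product of several factors $L\Lambda^{k_i}(A/p)$, whose homologies are $p$-torsion, so the K\"unneth formula over $\Z$ has nontrivial $\Tor$ terms and the homology of the tensor product is not determined by the homologies of the factors alone. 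To get a derived-category statement out of your argument you would still need a natural chain-level lift of the d\'ecalage isomorphism $\Gamma^j(A)\simeq\pi_j\Lambda^j(K(A[1]))$, and that is precisely the missing ingredient.

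The paper's route supplies this ingredient directly. It first writes down $\NN\Lambda(K(C_1\!\xrightarrow{\partial}\!C_0))$ explicitly as a bar-type complex with degree-$n$ term $\Lambda^{>0}(C_1)^{\otimes n}\otimes\Lambda(C_0)$, and then constructs a natural \emph{inclusion of complexes} $C^k(A)[-k]\hookrightarrow\NN\Lambda^k(A/p)$ by sending $\Gamma^i(A)\otimes\Lambda^{k-i}(A)$ into $(\Lambda^1(A))^{\otimes i}\otimes\Lambda^{k-i}(A)$ via the canonical inclusion of invariants $\Gamma^i(A)\hookrightarrow A^{\otimes i}$; one checks directly that this respects the differentials, the factor $p$ arising from $\partial=\times p$. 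Only then does the paper filter both sides --- by essentially the same filtration you propose, namely by the degree of the $\Lambda(C_0)$-factor --- and reduce the claim that this inclusion is a quasi-isomorphism to the d\'ecalage quasi-isomorphism $\Gamma^i(A)[i]\hookrightarrow\NN\Lambda^i(K(A[1]))$ on the associated graded. So your filtration is the right tool, but in the paper it serves to verify that an already-constructed natural chain map is a quasi-isomorphism, rather than to manufacture the map from the spectral sequence.
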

\begin{proof}
We have $L\Lambda(A/p)=\Lambda(K(A\xrightarrow[]{\times p}A))$. The functor $K$ is explicit, and we readily check that for all complexes $C_1\xrightarrow[]{\partial} C_0$, the normalized chains of the simplicial object $\Lambda(K(C_1\xrightarrow[]{\partial} C_0))$ is the complex whose degree $n$ component is
$$\Lambda^{>0}(C_1)^{\otimes n}\otimes \Lambda(C_0)\;,$$
where $\Lambda^{>0}(C_i)$ stands for the augmentation ideal of the exterior algebra,
and whose differential maps an element $x_1\otimes \dots \otimes x_n\otimes y$ to the sum:
$$\sum_{i=1}^{n-1}(-1)^i x_1\otimes \dots\otimes x_i x_{i+1}\otimes\dots\otimes x_n\otimes y+(-1)^n x_1\otimes\dots\otimes x_{n-1}\otimes \Lambda(\partial)(x_n)y\;. $$

Consider the normalized chains $\NN\Lambda(A/p)$ of $L\Lambda(A/p)$ as a differential graded $\PP_\Z$-algebra. Its homogeneous component of weight $k$ yields the normalized chains $\NN\Lambda^k(A/p)$ of $L\Lambda^k(A/p)$. These normalized chains contain $C^k(A)[-k]$ as a subcomplex. Indeed the inclusion is simply given by seeing the object of degree $i$ of $C^k(A)[-k]$, that is the functor $\Gamma^i(A)\otimes \Lambda^{k-i}(A)$, as a subfunctor of $(\Lambda^1(A))^{\otimes i}\otimes \Lambda^{k-i}(A)$ by the canonical inclusion of invariants into the tensor product.
One readily checks that the differential of $C^k(A)[-k]$ coincides with the restriction of the differential of $\NN\Lambda^k(A/p)$.

To finish the proof, it remains to show that the inclusion of complexes
\begin{align}C^k(A)[-k]\hookrightarrow \NN\Lambda^k(A/p)\label{eq-qios}\end{align}
is a quasi-isomorphism. For this, we filter both complexes by the weight of the exterior power on the right, that is the term
$F_s(C^k(A)[-k])$ of the filtration is the subcomplex of $C_k(A)$ supported by the $\Gamma^i(A)\otimes \Lambda^{k-i}(A)$ for $k-i\ge s$, and the term $F_s(\NN\Lambda^k(A/p))$ is the subcomplex of $\NN\Lambda^k(A/p)$ supported by the $\Lambda^{i_1}(A)\otimes\dots\otimes \Lambda^{i_n}(A)\otimes\Lambda^{k-\sum i_j}(A)$ for $k-\sum i_j\ge s$. Both filtrations have finite length, and the inclusion of complexes preserves the filtrations, whence a morphism:
\begin{align}\gr  (C^k(A)[-k])\to \gr (\NN\Lambda^k(A/p))\;.\label{eq-qiosgr}\end{align}
Since the filtrations are finite, the fact that \eqref{eq-qios} is a quasi-isomorphism will follow from the fact that \eqref{eq-qiosgr} is. But we readily check that $\gr  (C^k(A)[-k])$ is equal to  the complex $(\bigoplus_{i=0}^k\Gamma^i(A)\otimes\Lambda^k(A),0)$ (with zero differential) that $ \gr (\NN\Lambda^k(A/p))$ is equal to  the complex
$\bigoplus_{i=0}^k \NN\Lambda^i(A)\otimes  (\Lambda^k(A),0)$, and that the morphism \eqref{eq-qiosgr} is simply  the morphism constructed from the quasi-isomorphisms $\Gamma^i(A)[i]\hookrightarrow \NN\Lambda^i(A)$. This proves that \eqref{eq-qiosgr} is a quasi-isomorphism, hence that \eqref{eq-qios} is a quasi-isomorphism.
\end{proof}
The results of lemma \ref{lm-transfo-1} and \ref{lm-smallmodel} together prove the following proposition.
\begin{proposition}
The conjecture holds for $n=1$ and $A$  a free abelian group.
\end{proposition}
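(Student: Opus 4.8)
The statement to prove is that Conjecture \ref{conject} holds when $n=1$ and $A$ is free. Since we have already isolated the $p$-primary part of $L_i\Gamma^d(A,1)$ in theorem \ref{thm-calcul-LG-un-Z}, the plan is to match the conjectural formula \eqref{eq-conjn1} against that concrete computation, and both lemma \ref{lm-transfo-1} and lemma \ref{lm-smallmodel} have been set up precisely to effect this comparison. So the work is essentially to assemble these two lemmas into the desired isomorphism.

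Let me sketch the two intermediate identifications. First, I would observe that by theorem \ref{thm-calcul-LG-un-Z} together with proposition \ref{prop-qt-SK}, the $p$-primary part of $L_i\Gamma^d(A,1)$ is, up to a filtration, the weight-$d$ degree-$i$ part of the cycles of the tensor product \eqref{eq-Kos-conj} of the trivial-differential exterior algebra $\Lambda_\Fp(A/p[1])$ with the family of Koszul algebras $(\Lambda_\Fp(A/p^{(r)}[1])\otimes\Gamma_\Fp(A/p^{(r)}[2]),d_{\mathrm{Kos}})$. Lemma \ref{lm-transfo-1} rewrites this cycle-space: after forgetting strict polynomial structure (so every $A/p^{(r)}$ becomes $A/p$) and repackaging via the integral model $(\Gamma(A[2])\otimes\Lambda(A[1]),p\,d_{\mathrm{Kos}})$ whose weight-$k$ piece is $C^k(A)$, the $p$-primary part of $L_i\Gamma^d(A,1)$ is the homology of $\bigoplus_{(k_0,\dots,k_d)}(\Lambda^{k_0}(A)[k_0],0)\otimes C^{k_1}(A)\otimes\cdots\otimes C^{k_d}(A)$, the sum running over sequences with $\sum k_i p^i=d$. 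I would take lemma \ref{lm-transfo-1} as given, as permitted.

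Next, I would invoke lemma \ref{lm-smallmodel}, which identifies the normalized chains of $L\Lambda^k(A/p)$ with the desuspended complex $C^k(A)[-k]$; equivalently $\NN\Lambda^k(A/p)\simeq C^k(A)[-k]$ so that $C^k(A)$ computes $L\Lambda^k(A/p)$ with a degree shift by $k$. Substituting this into the complex from lemma \ref{lm-transfo-1}, and using that the trivial-differential factor $\Lambda^{k_0}(A)[k_0]$ computes $L\Lambda^{k_0}(A)$ (here $A$ free means $L\Lambda^{k_0}(A)=\Lambda^{k_0}(A)$ concentrated in a single degree), the homology becomes the homotopy groups
\begin{align*}
\pi_*\left(\bigoplus_{(k_0,\dots,k_d)}\Lambda^{k_0}(A)\lotimes L\Lambda^{k_1}(A/p)\lotimes\cdots\lotimes L\Lambda^{k_d}(A/p)\,[k_0+\cdots+k_d]\right),
\end{align*}
which is exactly the conjectural expression \eqref{eq-conjn1}, with the degree shifts matching once one tracks that $C^{k_i}(A)$ carries $L\Lambda^{k_i}(A/p)$ shifted up by $k_i$, accounting for the total shift $k_0+\cdots+k_d$. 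The Künneth/Eilenberg--Zilber theorem lets one pass from the tensor product of complexes to the derived tensor product $\lotimes$ of the individual factors. Since both sides are now manifestly the same functor of $A$, the identification follows, completing the proof.

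The genuinely substantive content lies in lemmas \ref{lm-transfo-1} and \ref{lm-smallmodel}, both of which precede this statement and may be assumed; the main obstacle in assembling the proof is purely bookkeeping, namely verifying that the homological degree shifts agree. One must check that the shift $\ell(d_0,(d_\alpha),m+1;p)$ appearing in Conjecture \ref{conject}(1), specialized to $n=1$ (i.e. $m=0$, so each admissible sequence $\alpha$ has $\ell(\alpha;p)=1$), reduces precisely to the total $k_0+\cdots+k_d$ produced by lemma \ref{lm-smallmodel}; and that the conjecture's indexing of summands by families $(d_\alpha)$ over $\EuScript A_{\le 1}$ corresponds bijectively to the sequences $(k_0,k_1,\dots,k_d)$ with $\sum k_i p^i=d$, where $k_i$ records the number of Frobenius twists of level $i$. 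This is where I would be most careful, but it is routine once the dictionary between the two indexings is written out explicitly.
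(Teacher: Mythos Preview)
Your proposal is correct and follows exactly the same approach as the paper: the paper's proof consists of a single sentence stating that lemmas \ref{lm-transfo-1} and \ref{lm-smallmodel} together yield the result, and you have simply spelled out the bookkeeping (the indexing bijection between $\EuScript A_{\le 1}$ and sequences $(k_0,\dots,k_d)$, and the matching of the degree shifts) that the paper leaves implicit.
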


\appendix

\section{Some computations of $\hom$ and $\Ext^1$ in functor categories.}
\label{app-comput}

In this appendix, we review some elementary computations of $\hom$ and $\Ext^1$ in functor categories. The functor categories which we are considering are the following.
\begin{enumerate}
\item The category $\FF_\Z$ of functors from free abelian groups of finite type to abelian groups.
\item The category $\PP_\k$ of strict polynomial functors defined over a field $\k$.
\item The category $\PP_\Z$ of strict polynomial functors defined over $\Z$.
\end{enumerate}
These categories are related by exact faithful functors
$$\PP_\Fp\to \PP_\Z\;,\qquad  \PP_\Z\to \FF_\Z\;.$$
To be more specific, the first one sends a functor $F(V)\in\PP_\Fp$ to $F(A/p)$ considered as a functor of the variable $A$. The second one is the forgetful functor from strict polynomial functors to ordinary functors.
Since they are  exact, these functors induce morphisms on the level of $\Ext$-groups. In particular, the computation of a  $\hom$ or $\Ext^1$ in any of these three categories gives us partial information regarding the $\hom$ or the $\Ext^1$ in the  other two. We will be more precise about this below.

\subsection{General techniques for $\Ext^*$-computations}\label{sec-generaltechniques}
We first recall some techniques from  standard homological algebra  which are efficient when computing   $\Ext$-groups in the functor categories that we are considering. We refer the reader to the articles \cite{FS,FFSS,antoine,piraPS}  for more details and further techniques for computing $\Ext$-groups.

%\bibitem{piraPS} T.~Pirashvili, \emph{Introduction to functor homology}, in Rational representations, the Steenrod algebra and Functor Homology, Panor. Synth\`eses, vol. 16, Soc. Math. France, Paris, 2003, pp. 1--26.
First of all, by the Yoneda lemma, the functors $P^B(A)=\Z\hom_\Z(B,A)$ with $A$ as the variable and a free finitely generated abelian group $B$ as a parameter are projective generators in $\FF_\Z$, and we have for any functor $F(A)$, 
\begin{align}
\Ext^i_{\FF_\Z}(P^B(A),F(A))= F(B) \quad\text{ if $i=0$, and $0$ if $i>0$.}\label{eq-YonF}
\end{align}
The functors $\Gamma^{d,N}_R(M)=\Gamma^d_R(\hom_R(N,M))$ where $N$, $M$ are finitely generated free $R$-modules and $d$ is a nonnegative integer provide a family of projective generators of the categories $\PP_R$ of strict polynomial functors over a commutative ring $R$. For any strict polynomial functor $F(M)$ in $\PP_{R}$ we have a formula analoguous to \eqref{eq-YonF}, where $F^d(M)$ denotes the homogeneous component of weight $d$ of $F(M)$:
\begin{align}
\Ext^i_{\PP_R}(\Gamma^{d,N}_R(M),F(M))= F^d(N) \quad \text{ if $i=0$, and $0$ if $i>0$.}\label{eq-YonP}
\end{align}

Compared to the category $\FF_\Z$, the categories of strict polynomial functors have the pleasant additional feature that functors are equipped with weights and that for any pair of homogeneous strict polynomial functors $F(M)$, $G(M)$ of distinct weights we have
 $$\Ext^*_{\PP_R}(F(M),G(M))=0.$$

Among the classical techniques for computing $\Ext$-groups in our categories, one of the most important ones  is the sum-diagonal adjunction which we will now recall.  For all functors $F(A)$, $G(A)$, $H(A)$ in $\FF_\Z$, there are isomorphisms
\begin{align}
&\Ext^*_{\FF_\Z}(H(A),F(A)\otimes G(A))\simeq  \Ext^*_{\FF_\Z(2)}(H(A\oplus B),F(A)\otimes G(B))\;,\\\label{eq-iso-sum-diag}
&\Ext^*_{\FF_\Z}(F(A)\otimes G(A),H(A))\simeq  \Ext^*_{\FF_\Z(2)}(F(A)\otimes G(B),H(A\oplus B))\;,
\end{align}
where the terms on the right hand side denote $\Ext$-groups computed in the category $\FF_\Z(2)$ of \emph{bifunctors} (whose variables are denoted here by $A$ and $B$). In many cases (for example if $H(A)=\Gamma^2_\Z(A)$) one can express $H(A\oplus B)$ as a direct sum of functors of the form $H_1(A)\otimes H_2(B)$. Such bifunctors are sometimes called \emph{of separable type}. Thus \eqref{eq-iso-sum-diag} leads us to consider extension groups of the form
$$\Ext^*_{\FF_\Z(2)}(H_1(A)\otimes H_2(B),F(A)\otimes G(B)).$$
There are several situations in which  we can actually compute such  $\Ext$-groups. For example these extension groups vanish  if $H_2(B)$ is a constant functor and $G(0)=0$.
Together with the sum-diagonal adjunction, this yields the following  fundamental vanishing lemma.
%\bibitem{Pira} T.~Pirashvili\emph{Higher additivisations} Proc. Math. Inst. Tbilissi 91, 44 54 (1988)
\begin{lemma}[\cite{Pira}]\label{lm-pira}
Let $H(A)$ be an additive functor, and let $F(A),G(A)$ be a pair of functors satisfying $F(0)=G(0)=0$. Then we have
$$\Ext^*_{\FF_\Z}(H(A),F(A)\otimes G(A))=0=\Ext^*_{\FF_\Z}(F(A)\otimes G(A),H(A))\;. $$
\end{lemma}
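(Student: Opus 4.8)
The plan is to prove Lemma \ref{lm-pira} by combining the sum-diagonal adjunction \eqref{eq-iso-sum-diag} with the projectivity of the Yoneda functors $P^B(A)$. The key observation is that an additive functor $H(A)$ behaves particularly well when we decompose $A$ into a direct sum, since additivity forces $H(A\oplus B)\simeq H(A)\oplus H(B)$. First I would treat the covariant case $\Ext^*_{\FF_\Z}(H(A),F(A)\otimes G(A))$, and then indicate how the contravariant case is handled by a dual argument (or by appealing to duality).

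For the covariant case, I would start by applying the sum-diagonal adjunction \eqref{eq-iso-sum-diag}, which identifies
\[
\Ext^*_{\FF_\Z}(H(A),F(A)\otimes G(A))\simeq \Ext^*_{\FF_\Z(2)}(H(A\oplus B),F(A)\otimes G(B))\;.
\]
Now I would use that $H$ is additive to write $H(A\oplus B)\simeq H(A)\oplus H(B)$, so that the $\Ext$-group splits as a direct sum of two contributions, one coming from $H(A)$ and one from $H(B)$. Consider the contribution of $H(B)$: here the bifunctor $H(B)$ is constant in the variable $A$, whereas the target $F(A)\otimes G(B)$ involves $F(A)$ with $F(0)=0$. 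Regarding both sides as functors of the single variable $A$ (with $B$ a parameter), the source $H(B)$ is a constant (in particular projective) functor of $A$, so $\Ext^{>0}$ vanishes and the $\hom$ is computed by \eqref{eq-YonF} applied to the evaluation at $A=0$; but $F(0)\otimes G(B)=0$ since $F(0)=0$. Hence this contribution vanishes in all degrees. Symmetrically, the contribution of $H(A)$ vanishes because evaluating the $B$-variable at $B=0$ gives $G(0)=0$. Therefore the whole $\Ext$-group is zero.

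The main subtlety — and the step I would be most careful about — is the bookkeeping that turns ``$F(0)=0$'' into an honest vanishing of $\Ext$-groups of bifunctors, rather than just of $\hom$. The cleanest way is to note that a bifunctor of the form (constant in $A$)$\otimes$(something in $B$) is, as a functor of the single variable $A$ with the other variable frozen, a direct sum of copies of the constant functor, which is projective in $\FF_\Z$; then \eqref{eq-YonF} gives the vanishing of all higher $\Ext$ and reduces the $\hom$ to the value at $A=0$, which is zero. One must check that freezing a variable is exact and preserves projectives, but this is routine. The contravariant statement $\Ext^*_{\FF_\Z}(F(A)\otimes G(A),H(A))=0$ is proved in exactly the same manner, using the second adjunction isomorphism in \eqref{eq-iso-sum-diag} together with the additivity of $H$, so that $H(A\oplus B)\simeq H(A)\oplus H(B)$ again splits the computation into two pieces, each killed by one of the vanishing conditions $F(0)=0$ or $G(0)=0$.
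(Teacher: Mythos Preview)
Your approach is the same as the paper's sketch: apply the sum-diagonal adjunction, use additivity to split $H(A\oplus B)\simeq H(A)\oplus H(B)$, and kill each summand using $F(0)=0$ or $G(0)=0$. However, your assertion that a constant functor is ``in particular projective'' is false: the constant functor $\underline{M}$ is projective in $\FF_\Z$ if and only if $M$ is a free abelian group, and there is no reason for $H(B)$ to take free values. What \emph{is} true (and suffices) is that $\underline{\Z}=P^0$ is projective by \eqref{eq-YonF}, so any constant functor $\underline{M}$ admits a projective resolution by constant functors (lift a free resolution of $M$); this yields $\Ext^*_{\FF_\Z}(\underline{M},F)\simeq\Ext^*_{\mathrm{Ab}}(M,F(0))=0$ when $F(0)=0$.

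There is a second loose end: you need $\Ext$ in the bifunctor category $\FF_\Z(2)$, and ``freezing $B$'' does not by itself produce a projective resolution there. The clean way to say it is that the bifunctor $(A,B)\mapsto H(B)$ equals $P^0\boxtimes H$; if $Q_\bullet\to H$ is a projective resolution in $\FF_\Z$, then $P^0\boxtimes Q_\bullet$ is a projective resolution in $\FF_\Z(2)$ (each $P^0\boxtimes P^Y$ is the representable $P^{(0,Y)}$, hence projective), and the bifunctor Yoneda lemma gives $\hom_{\FF_\Z(2)}(P^0\boxtimes P^Y,F\boxtimes G)=F(0)\otimes G(Y)=0$. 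The other summand and the contravariant statement are handled symmetrically.
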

The following proposition gives another example for which  we can compute $\Ext$-groups between functors of separable type.

\begin{proposition}[K\"unneth formulas]\label{prop-Kunneth}
Let $F(A),G(A)\in\FF_\Z$ be a pair of  functors with values in $\Fp$-vector spaces, and let $H_1(A),H_2(A)\in\FF_\Z$. Assume that $H_1(A)$ or $H_2(A)$ has values in free abelian groups. Then there is an isomorphism:
\begin{align*}\Ext^*_{\FF_\Z}(H_1,F)\otimes \Ext^*_{\FF_\Z}(H_2,G)\simeq \Ext^*_{\FF_\Z(2)}(H_1\boxtimes H_2,F\boxtimes G)
\end{align*}
To be concise, we have dropped the variables in the previous formula, and denoted by `$F\boxtimes G$' the bifunctor $(F\boxtimes G)(A,B)=F(A)\otimes G(B)$.
Assume instead that $H_1(A)$ and $H_2(A)$ both take values in $\Fp$-vector spaces, and denote by $E^*$ the graded $\Fp$-vector space:
$$E^*:=\Ext^*_{\FF_\Z}(H_1,F)\otimes \Ext^*_{\FF_\Z}(H_2,G)\;.$$
Then there is an isomorphism
$\Ext^0_{\FF_\Z(2)}(H_1\boxtimes H_2,F\boxtimes G)\simeq E^0$,
and a long exact sequence of $\Fp$-vector spaces:
\begin{align*}
0\to &  \Ext^1_{\FF_\Z(2)}(H_1\boxtimes H_2,F\boxtimes G)\to E^1\to \Ext^0_{\FF_\Z(2)}(H_1\boxtimes H_2,F\boxtimes G)\\&
\xrightarrow[]{\partial} \Ext^2_{\FF_\Z(2)}(H_1\boxtimes H_2,F\boxtimes G)\to E^2\to \Ext^1_{\FF_\Z(2)}(H_1\boxtimes H_2,F\boxtimes G)\xrightarrow[]{\partial} \dots
\end{align*}
\end{proposition}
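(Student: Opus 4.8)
The plan is to compute both sides through compatible projective resolutions, applying the algebraic Künneth theorem twice: once in the functor variable, to control a resolution of $H_1\boxtimes H_2$, and once on the level of Hom-complexes, to obtain the tensor product of Ext-groups.

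First I would choose projective resolutions $\epsilon_i\colon P^i_\bullet\to H_i$ in $\FF_\Z$, $i=1,2$, by direct sums of representable functors $P^B(A)=\Z\hom_\Z(B,A)$; these representables take free abelian group values. Form the external tensor product complex $P^1_\bullet\boxtimes P^2_\bullet$ in $\FF_\Z(2)$, with $(P^1_\bullet\boxtimes P^2_\bullet)(A,B)=P^1_\bullet(A)\otimes_\Z P^2_\bullet(B)$. Each of its terms is a direct sum of bifunctors $P^{B_1}\boxtimes P^{B_2}$, which are projective generators of $\FF_\Z(2)$; by the bifunctor analogue of \eqref{eq-YonF} one has $\hom_{\FF_\Z(2)}(P^{B_1}\boxtimes P^{B_2},F\boxtimes G)=F(B_1)\otimes G(B_2)$, so that, writing $C^\bullet=\hom_{\FF_\Z}(P^1_\bullet,F)$ and $D^\bullet=\hom_{\FF_\Z}(P^2_\bullet,G)$, there is an isomorphism of cochain complexes $\hom_{\FF_\Z(2)}(P^1_\bullet\boxtimes P^2_\bullet,F\boxtimes G)\simeq C^\bullet\otimes D^\bullet$. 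Since $F,G$ take values in $\Fp$-vector spaces, $C^\bullet$ and $D^\bullet$ are complexes of $\Fp$-vector spaces, and the algebraic Künneth theorem over the field $\Fp$ yields a graded isomorphism $H^*(C^\bullet\otimes D^\bullet)\simeq H^*(C^\bullet)\otimes H^*(D^\bullet)=\Ext^*_{\FF_\Z}(H_1,F)\otimes\Ext^*_{\FF_\Z}(H_2,G)=E^*$.

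It remains to relate $H^*(C^\bullet\otimes D^\bullet)$ to $\Ext^*_{\FF_\Z(2)}(H_1\boxtimes H_2,F\boxtimes G)$. As $P^1_\bullet\boxtimes P^2_\bullet$ is a complex of projectives, $\hom_{\FF_\Z(2)}(P^1_\bullet\boxtimes P^2_\bullet,-)$ computes its hyperext, so I would analyse its homology. By the algebraic Künneth theorem applied in the variable $(A,B)$, using that $P^i_\bullet(A)$ resolves $H_i(A)$, one finds $H_0(P^1_\bullet\boxtimes P^2_\bullet)\simeq H_1\boxtimes H_2$, $H_1(P^1_\bullet\boxtimes P^2_\bullet)(A,B)\simeq\Tor_1^\Z(H_1(A),H_2(B))$, and $H_q=0$ for $q\ge 2$. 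In the first case, where $H_1$ or $H_2$ has free values, the $\Tor$-term vanishes, so $P^1_\bullet\boxtimes P^2_\bullet\to H_1\boxtimes H_2$ is a genuine projective resolution and $\Ext^*_{\FF_\Z(2)}(H_1\boxtimes H_2,F\boxtimes G)=H^*(C^\bullet\otimes D^\bullet)\simeq E^*$, which is the asserted isomorphism. In the second case, where $H_1,H_2$ both take values in $\Fp$-vector spaces, the natural identification $\Tor_1^\Z(V,W)\simeq V\otimes_{\Fp}W$ for $\Fp$-vector spaces shows $H_1(P^1_\bullet\boxtimes P^2_\bullet)\simeq H_1\boxtimes H_2$ as well. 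Writing $M=H_1\boxtimes H_2$ and $T=F\boxtimes G$, the complex $P^1_\bullet\boxtimes P^2_\bullet$ thus has homology $M$ concentrated in degrees $0$ and $1$, giving a truncation distinguished triangle $M[1]\to P^1_\bullet\boxtimes P^2_\bullet\to M\to M[2]$ in the derived category of $\FF_\Z(2)$. Applying $\hom_{D(\FF_\Z(2))}(-,T[\,\cdot\,])$ and using $\hom_{D(\FF_\Z(2))}(P^1_\bullet\boxtimes P^2_\bullet,T[n])=H^n(C^\bullet\otimes D^\bullet)=E^n$ produces a long exact sequence in which the two homology copies of $M$ contribute $\Ext^n_{\FF_\Z(2)}(M,T)$ and $\Ext^{n-1}_{\FF_\Z(2)}(M,T)$; reading off the low-degree terms and using $\Ext^{<0}=0$ gives $\Ext^0_{\FF_\Z(2)}(M,T)\simeq E^0$ together with exactly the stated long exact sequence.

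The main obstacle I anticipate is bookkeeping rather than conceptual: one must verify that the natural isomorphism $\Tor_1^\Z\simeq\otimes_{\Fp}$ identifies $H_1(P^1_\bullet\boxtimes P^2_\bullet)$ with $H_1\boxtimes H_2$ \emph{functorially} in $(A,B)$, so that the truncation triangle carries $M$, and not merely an abstractly isomorphic bifunctor, in degree $1$; one must then check that the resulting hyperext long exact sequence is indexed precisely as claimed, in particular that the connecting maps land in the asserted $\Ext$-degrees. Confirming the degenerate low end, namely $\Ext^0\simeq E^0$ and the injectivity of $\Ext^1_{(2)}\to E^1$, is then a short diagram chase.
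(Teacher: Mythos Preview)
Your proposal is correct and follows essentially the same route as the paper: both build the isomorphism $\hom_{\FF_\Z(2)}(P^1_\bullet\boxtimes P^2_\bullet,F\boxtimes G)\simeq C^\bullet\otimes D^\bullet$ from Yoneda, apply K\"unneth over $\Fp$ on the right, and then analyse the homology of $P^1_\bullet\boxtimes P^2_\bullet$. The only difference is packaging of the second case: the paper invokes the hypercohomology spectral sequence of a complex of projectives with homology concentrated in two adjacent degrees (citing \cite[5.7.9]{weibel}), while you use the equivalent truncation triangle $M[1]\to P^1_\bullet\boxtimes P^2_\bullet\to M\to M[2]$; both unwind to the same long exact sequence.
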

\begin{proof}
In this proof we use of the concise notation $[H_1(A),F(A)]$ for $\hom$-groups between $H_1(A)$ and $F(A)$.
Let $P^X(A)$ and $P^Y(A)$ be standard projectives of $\FF_\Z$. The bifunctor $P^X(A)\otimes P^Y(B)=\Z(\hom_\Z(X,A)\times \hom_\Z(Y,B))$ is a projective object of $\FF_\Z(2)$ and the Yoneda lemma yields an isomorphism:
$$[P^X(A)\otimes P^Y(B),F(A)\otimes G(B)]\simeq F(X)\otimes G(Y)\;.$$
Moreover, the canonical morphism
\begin{align}[P^X(A),F(A)]\otimes[P^Y(A),G(A)]\to [P^X(A)\otimes P^Y(B),F(A)\otimes G(B)] \label{eq-is}
\end{align}
can be identified via the Yoneda lemma with the identity morphism of $F(X)\otimes G(Y)$. In particular, the map  \eqref{eq-is} is an isomorphism.
Let $P_1(A)_\bullet$, resp. $P_2(A)_\bullet$ be a projective resolution of $H_1(A)$, resp. $H_2(A)$. By \eqref{eq-is}, there is an isomorphism of complexes
\begin{align}[P_1(A)_\bullet,F(A)]\otimes [P_2(B)_\bullet,G(B)]\simeq [P_1(A)_\bullet\otimes P_2(B)_\bullet,F(A)\otimes G(B)]\;.\label{eq-is2}\end{align}
The left hand side of \eqref{eq-is2} is a tensor product of two complexes of $\Fp$-vector spaces. Hence by the K\"unneth formula, its homology is isomorphic to
$$\Ext^*_{\FF_\Z}(H_1(A),F(A))\otimes \Ext^*_{\FF_\Z}(H_2(A),G(A))\;.$$

Assume that the values of one of the functors $H_1(A),H_2(A)$ are free abelian groups. Then the complex $P_1(A)_\bullet\otimes P_2(B)_\bullet$ is a projective resolution of $H_1(A)\otimes H_2(B)$ hence the  homology of the right hand side of \eqref{eq-is2} is isomorphic to $\Ext^*_{\FF_\Z(2)}(H_1(A)\otimes H_2(B),F(A)\otimes G(B))$. We thus  obtain the first assertion of proposition \ref{prop-Kunneth}.

Assume instead that both functors $H_1(A),H_2(A)$ have values in $\Fp$-vector spaces. Then $P_1(A)_\bullet\otimes P_2(B)_\bullet$ is complex of projectives, whose homology is  equal to $H_1(A)\otimes H_2(B)$ in degrees $0$ and $1$, and is trivial elsewhere. It follows that there is a hypercohomology spectral sequence \cite[5.7.9]{weibel} with second page
$$E^{p,q}_2=\Ext^p_{\FF_\Z(2)}(H_1(A)\otimes H_2(B),F(A)\otimes G(B))\quad\text{if $q=1,2$, and $0$ if $q\ne 1,2$}
$$
and  differential $d_2:E^{p,q}_2\to E^{p+2,q-1}_2$, and  which converges to the homology of the right-hand side of \eqref{eq-is2}. This implies the second statement in proposition \ref{prop-Kunneth}.
\end{proof}

The sum-diagonal adjunction works exactly in the same way  for strict polynomial functors over any commutative ring $R$. The isomorphism \eqref{eq-iso-sum-diag} remains valid  when `$\FF_\Z$' is replaced by `$\PP_R$'. For extension groups between strict polynomial bifunctors of separable type, the K\"unneth formulas of proposition \ref{prop-Kunneth} remain valid if `$\FF_\Z$' is replaced by `$\PP_\Z$'. The vanishing lemma \ref{lm-pira} also holds. If we are interested in strict polynomial functors defined over a field $\k$ the situation is even nicer: in that case,  we always have an isomorphism
\begin{align*}
\Ext^*_{\PP_\k}(H_1,F)\otimes \Ext^*_{\PP_\k}(H_2,G)\simeq \Ext^*_{\PP_\k(2)}(H_1\boxtimes H_2,F\boxtimes G)\;.
\end{align*}

%Finally, if we are interested in strict polynomial functors defined over a field $\k$, there is a duality functor $^\sharp:\PP_\k\to \PP_\k$ which sends $F(V)$ to $F(V)^\sharp=\hom_\k(F(\hom_\k(V,\k)),\k)$.
%If $F(V)$ takes values in finite dimensional vector spaces, then $(F(V)^\sharp)^\sharp=F(V)$. Thus, if $F(V)$ and $G(V)$ both take values in finite dimensional vector spaces, the duality functor induces an isomorphism
%\begin{align}\Ext^*_{\PP_\k}(F(V),G(V))\simeq \Ext^*_{\PP_\k}(G(V)^\sharp,F(V)^\sharp)\;.\label{eq-dual}\end{align}
%For example $\Ext^*_{\PP_\k}(F(V),V^{\otimes d})$ is isomorphic to $\Ext^*_{\PP_\k}(V^{\otimes d},F(V)^{\sharp})$ if $F(V)$ has finite dimensional values. But since $V^{\otimes d}$ is a direct summand of $\Gamma^{d,\k^n}_\k(V)$, hence projective, we obtain that $\Ext^*_{\PP_\k}(F(V),V^{\otimes d})$ is zero in positive degrees.
\subsection{Some $\hom$ computations}
\label{comput-hom}
The following elementary lemma allows us to compare  $\hom$s between the functor categories under consideration.

\begin{lemma}\label{lm-hom1}
The functor $\PP_\Fp\to \PP_\Z$ defined by evaluation on $A/p$ is full and faithful. The forgetful functor $\PP_\Z\to \FF_\Z$ is faithful.
\end{lemma}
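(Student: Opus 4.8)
The statement asserts two things: that the evaluation functor $\PP_\Fp\to \PP_\Z$, $F(V)\mapsto F(A/p)$, is full and faithful, and that the forgetful functor $\PP_\Z\to \FF_\Z$ is faithful. The plan is to reduce both assertions to the computation of $\hom$-groups on the standard projective generators, using the Yoneda-type formula \eqref{eq-YonP} recalled in section \ref{sec-generaltechniques}. Since every strict polynomial functor admits a presentation by sums of the projective generators $\Gamma^{d,N}_R(M)=\Gamma^d_R(\hom_R(N,M))$, and both functors in question are exact, it suffices to understand how the two functors act on morphisms out of these projectives.

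For the evaluation functor, I would first record the effect of base change. Over a field $\Fp$ the generator $\Gamma^{d,N}_{\Fp}(V)$ evaluates, by the base-change isomorphism \eqref{basechange}, to the mod-$p$ reduction of the integral generator $\Gamma^{d,N}_\Z(A)$; more precisely $\Gamma^{d,N}_\Z(A)\otimes\Fp\simeq \Gamma^{d,N}_{\Fp}(A/p)$. Using \eqref{eq-YonP} on both sides, one finds that for a functor $F\in\PP_{d,\Fp}$ and the integral functor $\widehat F(A):=F(A/p)$,
\[
\hom_{\PP_\Z}\!\big(\Gamma^{d,N}_\Z(A),\widehat F(A)\big)\simeq \widehat F(N)=F(N/p),
\]
whereas $\hom_{\PP_\Fp}(\Gamma^{d,N}_{\Fp}(V),F(V))\simeq F(N)$. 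Since $N$ ranges over free $\Z$-modules and $N/p$ ranges over all $\Fp$-vector spaces, these $\hom$-groups match up compatibly, and the evaluation map between them is a bijection. Carrying this comparison through a two-step projective presentation of an arbitrary $F\in\PP_\Fp$ (apply $\hom(-,\widehat G)$ to $P_1\to P_0\to F\to 0$ and compare with the integral picture) yields that $\hom_{\PP_\Fp}(F,G)\to \hom_{\PP_\Z}(\widehat F,\widehat G)$ is an isomorphism, which is exactly fullness and faithfulness.

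For the forgetful functor $\PP_\Z\to\FF_\Z$, faithfulness means that a morphism of strict polynomial functors which is zero as an ordinary natural transformation is already zero in $\PP_\Z$. Here I would again test on the projective generators: comparing the Yoneda formula \eqref{eq-YonP} in $\PP_\Z$ with the Yoneda formula \eqref{eq-YonF} in $\FF_\Z$, one sees that $\hom_{\PP_\Z}(\Gamma^{d,N}_\Z(A),F(A))=F^d(N)$ injects into $\hom_{\FF_\Z}(\Gamma^{d,N}_\Z(A),F(A))=F(N)$ as the weight-$d$ summand, because the underlying projective of $\Gamma^{d,N}_\Z$ in $\FF_\Z$ surjects onto $P^N$-related data and the forgetful map is the inclusion of a direct summand. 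Thus a strict polynomial natural transformation is detected on generators, and vanishing in $\FF_\Z$ forces vanishing in $\PP_\Z$. The main obstacle — and the step I would be most careful about — is the bookkeeping in the fullness argument: one must check that the identifications via \eqref{eq-YonP} are \emph{natural} in the parameter $N$ (equivalently that they respect the right $\hom_R$-module, or divided-power-algebra, structure that encodes composition of natural transformations), so that the bijection on $\hom$-groups out of projectives descends correctly to a bijection on $\hom$-groups out of an arbitrary $F$. Faithfulness of the forgetful functor is comparatively soft, following directly from the summand inclusion once the weight decomposition is in place.
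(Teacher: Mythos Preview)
Your approach is essentially the same as the paper's: reduce to the case where the source is a standard projective $\Gamma^{d,N}$, then use the Yoneda formula \eqref{eq-YonP} on both sides together with base change. There is, however, a genuine bookkeeping gap in your fullness argument. You compute $\hom_{\PP_\Z}(\Gamma^{d,N}_\Z(A),\widehat G(A))\simeq G(N/p)$ and then propose to carry this through a projective presentation $P_1\to P_0\to F\to 0$ in $\PP_\Fp$. But after applying the evaluation functor, the terms $\widehat{P_i}$ are sums of $\Gamma^{d,N}_{\Fp}(A/p)=\Gamma^{d,N}_\Z(A)\otimes\Fp$, \emph{not} of $\Gamma^{d,N}_\Z(A)$. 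So the quantity you actually need is $\hom_{\PP_\Z}(\Gamma^{d,N}_\Z(A)\otimes\Fp,\widehat G(A))$, and you have not explained why this coincides with the $\hom$ from the integral projective. The missing step is that $\widehat G$ has values in $\Fp$-vector spaces, so every morphism $\Gamma^{d,N}_\Z(A)\to \widehat G(A)$ factors uniquely through the mod-$p$ reduction; this is precisely the content of the right vertical isomorphism in the paper's commutative square. Once you insert that line, your argument and the paper's coincide (the paper reduces only in the first variable via left-exactness of $\hom$, which is slightly more economical than invoking a two-step presentation, but the substance is identical).

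Your treatment of faithfulness of $\PP_\Z\to\FF_\Z$ is more elaborate than necessary, and the phrase ``the underlying projective of $\Gamma^{d,N}_\Z$ in $\FF_\Z$ surjects onto $P^N$-related data'' is vague. Faithfulness is immediate from the definitions: a morphism in $\PP_\Z$ is in particular a family of $\Z$-linear maps $F(A)\to G(A)$, so if all these vanish the morphism is zero. The paper simply records this as ``clear''. No reduction to projectives is needed here.
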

\begin{proof}
It is clear that both functors are faithful. The only thing that we have to prove is the following isomorphism, for all $F(V),G(V)\in\PP_\Fp$:
\begin{align}\hom_{\PP_\Fp}(F(V),G(V))\simeq \hom_{\PP_\Z}(F(A/p),G(A/p))\;.\label{eq-cequonveut}\end{align}
By left exactness of $\hom$, the proof reduces to the case where $F(V)$ is a standard projective, i.e. $F(V)=\Gamma^{d,U}_\Fp(V)$, with $U=\mathbb{F}_p^n$. Now observe that $\Gamma^{d,U}_\Fp(A/p)=\Gamma^{d,B}_\Z(A)\otimes\Fp$, with $B=\Z^n$. Thus we have a commutative diagram
$$\xymatrix{
\hom_{\PP_\Fp}(\Gamma^{d,U}_\Fp(V),G(V))\ar[d]^-{\simeq}_-{(1)}\ar[rr]&&\hom_{\PP_\Z}(\Gamma^{d,U}_\Fp(A/p),G(A/p))\ar[d]^{\simeq}\\
G^d(U)\ar[r]^{\simeq}\ar[r]^{\simeq} & G^d(B/p) & \hom_{\PP_\Z}(\Gamma^{d,B}_\Z(A/p),G(A/p))\ar[l]_-\simeq^-{(2)}
}\;,$$
where $G^d(V)$ is the homogeneous component of weight $d$ of $G(V)$,  the maps $(1)$ and $(2)$ are provided by the Yoneda lemma, and the vertical map on the right is induced by the canonical projection $\Gamma^{d,B}_\Z(A)\to \Gamma^{d,U}_\Fp(A/p)$. Hence the isomorphism \eqref{eq-cequonveut} holds.
\end{proof}

The forgetful functor $\PP_\Z\to \FF_\Z$ is not full in general. For example, there is no non-zero morphism of strict polynomial functors from $A/p^{(1)}$ to $A/p$, because these strict polynomial functors are homogeneous of different weights. However the forgetful functor sends both of these to the same ordinary functor $A/p$, and the identity morphism is a non-zero morphism $A/p\to A/p$. The following lemma is proved in a similar manner  as  lemma \ref{lm-hom1} (reduce to the case where $F(V)$ is a standard projective and then use the Yoneda lemma).
\begin{lemma}
Let $\k$ be a field. For all strict polynomial functors $F(V),G(V)$, precomposition by the Frobenius twist $V \mapsto V^{(1)}$ induces an isomorphism
\begin{align*}
\hom_{\PP_\k}(F(V),G(V))\xrightarrow[]{\simeq}\hom_{\PP_\k}(F(V^{(1)}),G(V^{(1)}))\;.
\end{align*}
\end{lemma}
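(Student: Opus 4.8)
The plan is to mimic the proof of lemma \ref{lm-hom1}: reduce to the case where $F$ is a standard projective and then invoke the Yoneda formula \eqref{eq-YonP}. The starting observation is that \emph{precomposition by the Frobenius twist}, i.e. the functor $e\colon H(V)\mapsto H(V^{(1)})$, is exact on $\PP_\k$. Indeed the Frobenius twist $V\mapsto V^{(1)}$ is additive (section \ref{subsubsec-Frob}), and exactness of a sequence in $\PP_\k$ is tested by evaluating on each vector space, so a short exact sequence $0\to H'\to H\to H''\to 0$ yields the pointwise exact sequence $0\to H'(V^{(1)})\to H(V^{(1)})\to H''(V^{(1)})\to 0$. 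The map in the statement is precisely the one induced by $e$ on $\hom$-groups.

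First I would choose a presentation $P_1\to P_0\to F\to 0$ by finite direct sums of standard projectives $\Gamma^{d,N}_\k$. Applying the exact functor $e$ gives an exact sequence $P_1(V^{(1)})\to P_0(V^{(1)})\to F(V^{(1)})\to 0$, so that applying $\hom_{\PP_\k}(-,G(V))$ and $\hom_{\PP_\k}(-,G(V^{(1)}))$ produces two left-exact sequences fitting into a commutative ladder whose vertical arrows are the precomposition maps (commutativity being the functoriality of $e$). Since $\hom_{\PP_\k}(F,G)=\ker\bigl(\hom(P_0,G)\to\hom(P_1,G)\bigr)$ and similarly after twisting, it suffices to prove the lemma when $F$ is a single standard projective $\Gamma^{d,N}_\k$. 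In that case the Yoneda formula \eqref{eq-YonP} identifies the source $\hom_{\PP_\k}(\Gamma^{d,N}(V),G(V))$ with $G^d(N)$.

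The crux is to compute the target $\hom_{\PP_\k}(\Gamma^{d,N}(V^{(1)}),G(V^{(1)}))$ and to check that precomposition identifies it with the same group $G^d(N)$. Here one \emph{cannot} apply Yoneda directly, since $\Gamma^{d,N}(V^{(1)})=\Gamma^d(\hom(N,V^{(1)}))$ is homogeneous of weight $dp$ and is not a standard projective — it is not even projective, as already $d=1$ gives a sum of copies of the non-projective functor $V^{(1)}$. I expect this to be the main obstacle. I would attack it in one of two ways. The direct route is to establish that $e$ is fully faithful on $\hom$-groups: injectivity is easy, since a strict polynomial morphism $\phi\colon F\to G$ with $\phi(V^{(1)})=0$ vanishes on every twist $V^{(1)}$, hence on every finite-dimensional vector space (each being isomorphic to some $V^{(1)}$), and so is zero as a natural transformation of the underlying functors; the genuinely scheme-theoretic content is the surjectivity, reflecting that the matrices appearing in the $GL(V^{(1)})$-action are exactly the Frobenius-twisted ($p$-th power) ones. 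The computational route, closer to the hint, is to resolve $\Gamma^{d,N}(V^{(1)})$ by standard projectives built from the Verschiebung presentation $\Gamma^{p}_\k(V)\twoheadrightarrow V^{(1)}$ of the Frobenius twist (a Koszul-type resolution in the spirit of proposition \ref{prop-qis-trunc}), and then to run the Yoneda formula \eqref{eq-YonP} termwise; exactness of $e$ lets the resulting alternating $\hom$-groups reassemble into $G^d(N)$, with the precomposition map realizing the Yoneda identification on the projective cover. Either route yields the desired isomorphism $\hom_{\PP_\k}(F(V),G(V))\simeq\hom_{\PP_\k}(F(V^{(1)}),G(V^{(1)}))$.
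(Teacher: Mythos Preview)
Your reduction to $F=\Gamma^{d,N}$ via a projective presentation and exactness of $e$ is correct and is exactly what the paper's one-line hint intends. You are also right to flag that $\Gamma^{d,N}(V^{(1)})$ is not projective in $\PP_{dp,\k}$, so \eqref{eq-YonP} does not apply directly on the target side; the paper glosses over this.

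Route 2, however, does not work as written. If you resolve $\Gamma^{d,N}(V^{(1)})$ by genuine projectives of $\PP_{dp,\k}$ and apply \eqref{eq-YonP} termwise, you obtain $\hom$ only as the kernel of a map between values of $G((-)^{(1)})$ at various parameter spaces; nothing forces this kernel to be $G^d(N)$. Your appeal to ``exactness of $e$'' is a non sequitur here: the resolution you build lives in $\PP_{dp,\k}$ and is not the image under $e$ of anything, so exactness of $e$ says nothing about it.

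Route 1 is the right idea, and your injectivity argument is fine; only the surjectivity needs to be made precise. The missing ingredient is already in the paper. Applying lemma~\ref{prop-cokernel} to the vector space $\hom_\k(V,W)$ gives a surjection
\[
\Gamma^{dp}\bigl(\hom_\k(V,W)\bigr)\;\twoheadrightarrow\;\Gamma^{d}\bigl(\hom_\k(V,W)^{(1)}\bigr)\;\cong\;\Gamma^{d}\bigl(\hom_\k(V^{(1)},W^{(1)})\bigr),
\]
the last isomorphism being $f^p\mapsto f^{(1)}$. In the description of \cite{krause}, this says that the $\k$-linear functor $t:\Gamma^{dp}\mathcal{V}_\k\to\Gamma^{d}\mathcal{V}_\k$, $V\mapsto V^{(1)}$, is full; it is essentially surjective because $\dim V^{(1)}=\dim V$. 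Since $e$ is precomposition by $t$, and precomposition by a full and essentially surjective functor is always fully faithful, the lemma follows. This surjectivity is the rigorous form of your ``$p$-th power matrices'' intuition: every morphism in $\Gamma^d\mathcal{V}_\k$ between objects of the form $V^{(1)}$ lifts to $\Gamma^{dp}\mathcal{V}_\k$.
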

Let $\k$ be a field. Table \ref{homcomput} gathers some elementary $\hom$ computations in $\PP_\k$ which follow from the left-exactness of $\hom$ and the techniques recalled in section \ref{sec-generaltechniques}. By lemma \ref{lm-hom1}, these also provide $\hom$ computations in $\PP_\Z$. One can also verify by using the techniques recalled in section \ref{sec-generaltechniques} that the computations of $\hom_{\FF_\Z}(F(A/p),G(A/p))$ for the  functors $F(V)$ and $G(V)$ listed  in the following  table give the same result as in $ \PP_\Z$.

\begin{table}[ht]
\renewcommand{\arraystretch}{1.8}
\begin{tabular}{|c||c|c|c|c|c|}
\hline
$\hom_{\PP_\Fdeux}(F(V),G(V))$ & $F(V)=\Gamma_{\k}^{p^r}(V)$ & $V^{\otimes p^r}$ &$V^{(r)}$ & $S^{p^r}(V)$ & $\Lambda^{p^r}(V)$  \\
\hline
\hline
 $G(V)=\Gamma_{\k}^{p^r}(V)$ & $\k$ & $\k$ &  $0$ & $\k$ & {$\begin{cases} 0 \text{ if $p\ne 2$}\\ \k \text{ if $p=2$}\end{cases}$}\\
\hline
 $V^{\otimes p^r}$ &  $\k$ &  $\k^{|\Sigma_{p^r}|}$ & $0$  & $\k$ & $\k$\\
\hline
 $V^{(r)}$ &  $\k$ &  $0$ &  $\k$ & $0$ & $0$ \\
\hline
 $S^{p^r}(V)$ &  $\k$ &  $\k$ &  $\k$ & $\k$ & $0$ \\
\hline
 $\Lambda^{p^r}(V)$ &  $0$ &  $\k$ & $0$ & {$\begin{cases} 0 \text{ if $p\ne 2$}\\ \k \text{ if $p=2$}\end{cases}$} & $\k$\\
\hline
\end{tabular}
\vspace{.5cm}
\caption{Some values  of $\hom_{\PP_\k}(F(V),G(V))$ for functors $F$ and $G$  of weight $p^r$ with $r>0$, where $\k$ is  a field of prime characteristic $p$.}
\label{homcomput}
\end{table}

\subsection{Some $\Ext^1$ computations}

\subsubsection{Computations in $\PP_\k$, with $\k$ a field of positive characteristic.}
The results given here are all well-known as special cases of more general statements, see e.g. \cite{antoine,antoineENS}, but we give here some self-contained and elementary proofs.

\begin{lemma}\label{lm-precomp-ext1}
Let $\k$ be a field of positive characteristic and  consider $F(V),G(V)\in\PP_\k$ with finite-dimensional values. Precomposition by the Frobenius twist induces an isomorphism:
\begin{align*}
\Ext^1_{\PP_\k}(F(V),G(V))\xrightarrow[]{\simeq}\Ext^1_{\PP_\k}(F(V^{(1)}),G(V^{(1)}))\;.
\end{align*}
\end{lemma}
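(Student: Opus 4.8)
The statement to prove is that precomposition by the Frobenius twist induces an isomorphism
$$\Ext^1_{\PP_\k}(F(V),G(V))\xrightarrow[]{\simeq}\Ext^1_{\PP_\k}(F(V^{(1)}),G(V^{(1)}))$$
for strict polynomial functors $F,G$ with finite-dimensional values over a field $\k$ of positive characteristic. My plan is to reduce everything to the case where $F$ is a standard projective generator, exactly as in the $\hom$-computation of lemma \ref{lm-hom1} and the $\hom$-analogue already recorded just before this statement. Since $\Ext^1$ is additive in $F$ and every $F$ admits a projective presentation by finite direct sums of the standard projectives $\Gamma^{d,N}_\k(V)=\Gamma^d_\k(\hom_\k(N,V))$, it suffices to handle the case $F=\Gamma^{d,N}_\k$ together with the naturality of the construction in $F$; a short five-lemma argument applied to the long exact $\Ext$-sequences coming from a presentation $P_1\to P_0\to F\to 0$ will then propagate the isomorphism from the projectives to arbitrary $F$.

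\textbf{Main steps.} First I would record that precomposition by the (additive, exact) Frobenius twist functor $V\mapsto V^{(1)}$ is an exact functor on $\PP_\k$, so it carries a projective resolution of $F(V)$ to a complex computing the right-hand side, and in particular it induces a well-defined natural map on $\Ext^1$. Second, I would verify the key input that Frobenius precomposition sends standard projectives to projectives: by property (4) of section \ref{subsubsec-Frob} the composite $(V^{(0)})$ is the identity and twists compose, and the point is that $\Gamma^{d,N}_\k(V^{(1)})$ is again projective in $\PP_\k$ (it is $\Gamma^d_\k(\hom(N,V^{(1)}))$, a direct summand determined by the twisted parameter). Granting this, the Yoneda-type vanishing \eqref{eq-YonP} gives $\Ext^1_{\PP_\k}(\Gamma^{d,N}_\k(V),G(V))=0=\Ext^1_{\PP_\k}(\Gamma^{d,N}_\k(V^{(1)}),G(V^{(1)}))$, so both sides vanish for projective $F$ and the map is trivially an isomorphism there. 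Third, I would invoke the already-established $\hom$-isomorphism $\hom_{\PP_\k}(F(V),G(V))\simeq\hom_{\PP_\k}(F(V^{(1)}),G(V^{(1)}))$ together with these $\Ext^1$-vanishings to run the five-lemma on the two long exact sequences obtained by applying $\hom(-,G)$ to a presentation of $F$, which forces the induced map on $\Ext^1(F,-)$ to be an isomorphism.

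\textbf{The main obstacle.} The only genuinely non-formal point is the second step: showing that Frobenius precomposition preserves projectivity, equivalently that it commutes suitably with the standard projective generators. One must check that precomposition by $V\mapsto V^{(1)}$ sends the representable object $\Gamma^{d,N}_\k$ to another projective and, more delicately, that under the resulting comparison the $\hom$-level isomorphism assembles into a chain map of the two $\hom$-complexes rather than merely an abstract isomorphism in each degree. I expect to handle this by writing the twist on the parameter side, observing that $\Gamma^{d,N}_\k(V^{(1)})$ represents the weight-$dp$ component evaluated after twist, so that the Yoneda lemma identifies $\hom$-groups into it naturally and functorially in the resolution; the naturality of \eqref{eq-YonP} under the twist is what upgrades the degreewise $\hom$-isomorphism to a genuine isomorphism of $\hom$-complexes. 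Once that chain-level statement is in hand, passing to homology yields the claimed $\Ext^1$-isomorphism immediately, and the same argument in fact gives the isomorphism in all degrees $\Ext^i$, of which $i=1$ is the case needed here.
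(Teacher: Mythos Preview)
Your proposal contains a genuine gap at the step you yourself flag as ``the main obstacle'': the claim that Frobenius precomposition preserves projectivity is false. The simplest counterexample is $\Gamma^{1}_\k(V^{(1)})=V^{(1)}$. In characteristic $2$ one has $\Ext^1_{\PP_\k}(V^{(1)},\Lambda^2_\k(V))\simeq\k$ (the nontrivial class is represented by $0\to\Lambda^2_\k(V)\to\Gamma^2_\k(V)\to V^{(1)}\to 0$; see the remark following proposition~\ref{prop-vanish-odd}), so $V^{(1)}$ is not projective. More generally $\Gamma^{d,N}_\k(V^{(1)})$ lies in weight $dp$ but is \emph{not} a direct summand of any $\Gamma^{dp,M}_\k(V)$; indeed Table~\ref{homcomput} shows $\hom_{\PP_\k}(V^{(r)},\Gamma^{p^r}_\k(V))=0$, so there is no retraction. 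A further symptom that your argument cannot work as stated is your final remark that it would give an isomorphism on all $\Ext^i$: this is false already for $i=2$, since $\Ext^2_{\PP_\k}(V,V)=0$ (as $V=\Gamma^1_\k(V)$ is projective) while $\Ext^2_{\PP_\k}(V^{(1)},V^{(1)})\simeq\k$ by \cite{FS}.

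The paper's proof avoids this obstruction by resolving $F$ projectively \emph{and} coresolving $G$ by symmetric powers (using the duality $^\sharp$). This reduces the statement to the specific vanishing $\Ext^1_{\PP_\k}(\Gamma^{d,U}_\k(V^{(1)}),S^d_{\k,U'}(V^{(1)}))=0$, which is genuinely nontrivial: after the sum--diagonal adjunction and K\"unneth formula it comes down to $\Ext^1_{\PP_\k}(\Gamma^{e}_\k(V^{(1)}),S^e_\k(V^{(1)}))=0$, proved first for $e=p^r$ by dimension-shifting along the presentation of lemma~\ref{prop-cokernel}, and then for general $e$ via the retraction of $\Gamma^e_\k(V^{(1)})$ into a tensor product of $\Gamma^{p^i}_\k(V^{(1)})$'s indexed by the $p$-adic digits of $e$. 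The point is that only the \emph{pairing} of twisted divided powers against twisted symmetric powers has vanishing $\Ext^1$; twisted divided powers alone are not projective.
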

\begin{proof}
There is a short exact sequence $0\to K_F(V)\to P_F(V)\to F(V)\to 0$ where $P_F(V)$ is a finite direct sum of standard projectives (i.e. of functors of the form $\Gamma^{d,U}_\k(V)$). Similarly (applying the  duality $^\sharp$ of section \ref{duality} to the one-step projective resolution of $G(V)^\sharp$), there is a short exact sequence $0\to G(V)\to J_G(V)\to Q_G(V)\to 0$ where $J_G(V)$ is a finite direct sum of functors of the form $S^d_{\k,U'}(V)=S^d_\k(U'\otimes V)$. By considering  the long exact sequences of $\mathrm{Ext}$'s associated to these short exact sequences, we can reduce the proof to showing that $\Ext^{1}_{\PP_\k}(\Gamma^{d,U}_\k(V^{(1)}),S^d_{\k,U'}(V^{(1)}))$ is zero. But we can decompose $\Gamma^{d,U}_\k(V^{(1)})$ as a direct sum of functors of the form $\Gamma^{d_1}_\k(V^{(1)})\otimes\dots\otimes\Gamma^{d_k}_\k(V^{(1)})$, with $\sum d_j=d$, and do the same for $S^d_{\k,U'}(V^{(1)})$. By the sum-diagonal adjunction and the K\"unneth formula, the vanishing of $\Ext^{1}_{\PP_\k}(\Gamma^{d,U}_\k(V^{(1)}),S^d_{\k,U'}(V^{(1)}))$ will then  follow from the vanishing of $\Ext^{1}_{\PP_\k}(\Gamma^{e}_\k(V^{(1)}),S^e_\k(V^{(1)}))$ for all $e\le d$, which we will now prove. For $e=p^r$ with $r\ge 0$,  consider the presentation of  $ \Gamma^{p^{r}}_\k(V^{(1)})$ as in lemma  \ref{prop-cokernel}
\begin{comment}$$ \Gamma^{p^{r+1}-1}_\k(V)\otimes V\to \Gamma^{p^{r+1}}_\k(V)\xrightarrow[]{\pi} \Gamma^{p^{r}}_\k(V^{(1)})\to 0$$ 
 \end{comment}
and let $K(V)$ be the kernel of the corresponding  canonical projection $\Gamma^{p^{r+1}}_\k(V)\xrightarrow[]{\pi} \Gamma^{p^{r}}_\k(V^{(1)}) $. We have a dimension shifting isomorphism
$$\Ext^1_{\PP_\k}(V^{(1)},V^{(1)})\simeq \hom_{\PP_\k}(K(V),\Gamma^{p^{r}}_\k(V^{(1)}))\;.$$
whose right-hand term  embeds into the group  $\hom_{\PP_\k}(\Gamma^{p^{r+1}-1}_\k(V)\otimes V,S^{p^r}_\k(V^{(1)}))$, which is trivial  by sum-diagonal adjunction. Now let $e= e_kp^k+\dots+ e_0$ be the $p$-adic decomposition of $e$. Then the canonical inclusion
$$\Gamma^e_\k(V^{(1)})\hookrightarrow \bigotimes_{i=0}^k\left(\Gamma_\k^{p^i}(V^{(1)})\right)^{\otimes e_i}=: H(V^{(1)}) $$
admits a rectract, provided by the multiplication in the divided power algebra. In particular $\Ext^{1}_{\PP_\k}(\Gamma^{e}_\k(V^{(1)}),S^e_\k(V^{(1)}))$ is a direct summand of $\Ext^{1}_{\PP_\k}(H(V^{(1)}),S^e_\k(V^{(1)}))$. The latter  group is  zero by the sum-diagonal adjunction.
\end{proof}

\begin{proposition}\label{prop-vanish}
Let $\k$ be a field of characteristic $2$, and let $V$ be a finite dimensional
vector space. Any extension of degree one  in $\PP_\k$
between two functors of the form $\bigotimes_{i=1}^n\Gamma^{d_i}_\k(V^{(r_i)})$ is trivial.
\end{proposition}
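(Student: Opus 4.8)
The statement asserts that $\Ext^1_{\PP_\k}(F,G)=0$ over a field $\k$ of characteristic $2$ whenever $F(V)=\bigotimes_i\Gamma^{d_i}_\k(V^{(r_i)})$ and $G(V)=\bigotimes_j\Gamma^{e_j}_\k(V^{(s_j)})$. My approach is to reduce, via the standard machinery recalled in section \ref{sec-generaltechniques}, to a single clean vanishing statement and then dispatch it. The first reduction uses the sum-diagonal adjunction \eqref{eq-iso-sum-diag} together with the K\"unneth formula over a field, which over $\k$ yields a genuine tensor-product isomorphism $\Ext^*_{\PP_\k}(H_1\boxtimes H_2,F\boxtimes G)\simeq \Ext^*_{\PP_\k}(H_1,F)\otimes\Ext^*_{\PP_\k}(H_2,G)$. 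Because $\Gamma^d_\k$ is an exponential functor, $\Gamma^d_\k((U\oplus U')\otimes V)$ decomposes as a direct sum of tensor products $\Gamma^a_\k(U\otimes V)\otimes\Gamma^b_\k(U'\otimes V)$, so both $F(V\oplus W)$ and $G(V\oplus W)$ are of separable type. Applying the adjunction to each tensor factor in turn, I would reduce the vanishing of $\Ext^1_{\PP_\k}(F,G)$ to the vanishing of $\Ext^1_{\PP_\k}$ between \emph{single} divided-power functors, i.e. to showing
\[
\Ext^1_{\PP_\k}\bigl(\Gamma^{d}_\k(V^{(r)}),\Gamma^{e}_\k(V^{(s)})\bigr)=0
\]
for all $d,e,r,s$ (the tensor-product K\"unneth isomorphism guarantees that a nonzero $\Ext^1$ on a product would force a nonzero $\Ext^1$ on some factor, so factor-wise vanishing suffices).

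\textbf{Handling the twists.} For the one-factor problem I would first use weight considerations: $\Gamma^d_\k(V^{(r)})$ is homogeneous of weight $dp^r$ and $\Gamma^e_\k(V^{(s)})$ of weight $ep^s$, so the $\Ext^1$ is automatically zero unless $dp^r=ep^s$, and I may assume these weights agree. The twists are then eliminated by Frobenius untwisting. Lemma \ref{lm-precomp-ext1} shows precomposition with a Frobenius twist induces an isomorphism on $\Ext^1$, so after repeatedly stripping one twist at a time I am reduced to the case where one of the two functors carries no twist; combining this with the weight-matching constraint, the problem becomes computing $\Ext^1_{\PP_\k}(\Gamma^{d}_\k(V),\Gamma^{e}_\k(V^{(t)}))$ for a single nonnegative twist $t$ (with $d=ep^t$ forced by weights). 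Remark \ref{rk-derfrobtw} and lemma \ref{lm-precomp-ext1} make these manipulations routine.

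\textbf{The core vanishing.} The remaining, genuinely computational step is to show $\Ext^1_{\PP_\k}(\Gamma^{d}_\k(V),\Gamma^{e}_\k(V^{(t)}))=0$. The method of choice is dimension shifting against a projective resolution. The functor $\Gamma^d_\k(V)$ is not projective, but I would resolve its \emph{target} instead: dualizing via the functor $^\sharp$ of section \ref{duality} turns a divided power into a symmetric power, $\Gamma^e_\k(W)^\sharp=S^e_\k(W)$, and turns the computation into $\Ext^1_{\PP_\k}(\Gamma^{e}_\k(V^{(t)})^\sharp,\Gamma^d_\k(V)^\sharp)=\Ext^1_{\PP_\k}(S^{e}_\k(V^{(t)}),S^d_\k(V))$ (the duality is exact over a field and reverses arrows, hence preserves $\Ext^1$). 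Here $S^d_\k(V)$ admits an injective coresolution $0\to S^d_\k(V)\to J\to Q\to 0$ by functors $J$ of the form $S^{d}_\k(U\otimes V)$ (these are the injectives dual to the standard projectives $\Gamma^{d,U}_\k$), so the long exact sequence reduces everything to computing $\hom$-groups of the shape $\hom_{\PP_\k}(S^e_\k(V^{(t)}),S^d_\k(U\otimes V))$ and $\hom_{\PP_\k}(S^e_\k(V^{(t)}),Q)$; these $\hom$ computations are precisely of the type recorded in table \ref{homcomput} and are governed by the sum-diagonal adjunction, so they can be read off or computed directly. The main obstacle is bookkeeping: one must track the twists and weights carefully through the dualization and the dimension-shift so that the relevant $\hom$-groups are between functors of matching weight, and verify in the one nontrivial overlap (the $\Gamma^{2}$ vs $\Lambda^{2}$ coincidence peculiar to characteristic $2$, visible in table \ref{homcomput}) that no extra morphism sneaks in to produce a nonzero extension. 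Once the shifted $\hom$-groups are shown to vanish in the appropriate degree, the long exact sequence forces $\Ext^1=0$, completing the reduction and hence the proof.
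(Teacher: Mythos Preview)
Your reduction via the sum--diagonal adjunction, the K\"unneth isomorphism, weight matching, and Frobenius untwisting (lemma \ref{lm-precomp-ext1}) is correct and coincides with the paper's argument. The gap is in the ``core vanishing'' step: you have written down the trivial case and mis-identified it as the hard one.

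Concretely, your claim that ``the functor $\Gamma^d_\k(V)$ is not projective'' is false. As recalled in section \ref{sec-generaltechniques}, the functors $\Gamma^{d,N}_\k(V)=\Gamma^d_\k(\hom_\k(N,V))$ are projective generators of $\PP_\k$, and $\Gamma^d_\k(V)=\Gamma^{d,\k}_\k(V)$ is one of them. Hence $\Ext^1_{\PP_\k}(\Gamma^d_\k(V),\Gamma^e_\k(V^{(t)}))=0$ immediately; dually, $S^d_\k(V)$ is already injective, so your ``injective coresolution $0\to S^d_\k(V)\to J\to Q\to 0$'' has $Q=0$ and the dimension shift is vacuous. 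The untwisting via lemma \ref{lm-precomp-ext1} removes a twist from \emph{both} arguments simultaneously, so when $r\le s$ you reach the case you wrote (and are done by projectivity), but when $r>s$ you land instead in
\[
\Ext^1_{\PP_\k}\bigl(\Gamma^d_\k(V^{(r)}),\Gamma^{dp^r}_\k(V)\bigr),\qquad r>0,
\]
which is the genuinely nontrivial case: here neither argument is projective or injective.

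The paper handles this case by embedding $\Gamma^{dp^r}_\k(V)$ into $V^{\otimes dp^r}$. One first checks $\Ext^*_{\PP_\k}(\Gamma^d_\k(V^{(r)}),V^{\otimes dp^r})=0$ (sum--diagonal and K\"unneth, using that every summand of $\Gamma^d_\k((V_1\oplus\cdots)^{(r)})$ has weight divisible by $p^r>1$ in each variable). The long exact sequence for $0\to\Gamma^{dp^r}_\k(V)\to V^{\otimes dp^r}\to C(V)\to 0$ then gives $\Ext^1\simeq\hom_{\PP_\k}(\Gamma^d_\k(V^{(r)}),C(V))$, and since $C(V)$ embeds into $\Lambda^2_\k(V)\otimes V^{\otimes dp^r-2}$ while $\Gamma^{dp^r}_\k(V)$ surjects onto $\Gamma^d_\k(V^{(r)})$, this $\hom$ embeds into $\hom_{\PP_\k}(\Gamma^{dp^r}_\k(V),\Lambda^2_\k(V)\otimes V^{\otimes dp^r-2})$, which vanishes by Yoneda \eqref{eq-YonP} because $\Lambda^2_\k(\k)=0$. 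Your dualization idea could in principle be adapted to this case, but you would need to coresolve $\Gamma^{dp^r}_\k(V)$ (equivalently, resolve $S^{dp^r}_\k(V)$), not the already-projective/injective object.
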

\begin{proof}
 By iterated use of
the sum-diagonal adjunction and the K\"unneth formula, the proof reduces to showing that 
\begin{align}\Ext^1_{\PP_\k}(F(V),G(V))= 0\label{eqn-aprouver}\end{align}
for $F(V)=\Gamma^{d}_\k(V^{(r)})$ and $G(V)=\Gamma^e_\k(V^{(s)})$. Since there are no non-trivial extensions  between functors with different weights, we can assume
that $d2^r=e2^s$. By lemma \ref{lm-precomp-ext1}, precomposition by the Frobenius twist induces an isomorphism on the level of $\Ext^1$, so the proof reduces to showing \eqref{eqn-aprouver} when one of the integers $r,s$ is equal to zero.
If $r=0$, then \eqref{eqn-aprouver} holds by projectivity of $F(V)$. Hence it suffices to prove \eqref{eqn-aprouver} for  $F(V)=\Gamma^{d}_\k(V^{(r)})$ and  $G(V)=\Gamma^{dp^r}_\k(V)$ with $r>0$.

By sum-diagonal adjunction and the K\"unneth formula $\Ext^*_{\PP_\k}(F(V),V^{\otimes dp^r})=0$. Hence
\begin{align}\Ext^1_{\PP_\k}(F(V),G(V))\simeq \hom_{\PP_\k}(F(V),C(V))\;.\label{eq-shift}\end{align}
where  $C(V)$ is the cokernel of the canonical inclusion $\Gamma^{dp^r}_\k(V)\to V^{\otimes dp^r}$. Since $C(V)$ embeds into $\Lambda^2_\k(V)\otimes V^{\otimes p^r-2}$ and $\Gamma^{dp^r}_\k(V)$ surjects onto $F(V)$, the right hand side of \eqref{eq-shift} embeds into
$\hom_{\PP_\k}(\Gamma^{dp^r}_\k(V),\Lambda^2_\k(V)\otimes V^{\otimes p^r-2})$. The latter  group is trivial by \eqref{eq-YonP}.
\end{proof}

There is a more general statement than proposition \ref{prop-vanish} over fields of odd characteristic, whose proof is completely similar.
\begin{proposition}\label{prop-vanish-odd}
Let $\k$ be a field of odd characteristic $p$, and let $V$ be a finite
dimensional $\k$ vector space. Any degree one extension in  $\PP_\k$
between functors of the form $\bigotimes_{i=1}^n\Gamma^{d_i}(V^{(r_i)})\otimes
\bigotimes_{j=1}^m\Lambda^{e_j}(V^{(s_j)})$ is trivial.
\end{proposition}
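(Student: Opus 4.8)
The plan is to imitate closely the proof of Proposition \ref{prop-vanish}, adapting each step from characteristic $2$ to odd characteristic $p$. As in that proof, the first move is to reduce the general statement to a single pair of functors: by iterated use of the sum-diagonal adjunction \eqref{eq-iso-sum-diag} and the K\"unneth formula (proposition \ref{prop-Kunneth}), an extension between two arbitrary tensor products of functors of the form $\Gamma^{d_i}(V^{(r_i)})$ and $\Lambda^{e_j}(V^{(s_j)})$ decomposes, and its vanishing follows from the vanishing of $\Ext^1_{\PP_\k}(F(V),G(V))$ where $F$ and $G$ are each a single tensor factor of the allowed shape. After this reduction, one may further split each such factor along the sum-diagonal adjunction so that it suffices to treat $F(V)=\Gamma^{d}_\k(V^{(r)})$ or $\Lambda^d_\k(V^{(r)})$ against $G(V)=\Gamma^e_\k(V^{(s)})$ or $\Lambda^e_\k(V^{(s)})$.

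Next I would use the weight grading to discard all pairs of different weight, and then invoke lemma \ref{lm-precomp-ext1}: precomposition by the Frobenius twist induces an isomorphism on $\Ext^1$, so I may assume that one of the twist exponents is zero. If the exponent on the source vanishes, then $F(V)$ is a direct summand of a standard projective $\Gamma^{d,U}_\k(V)$ (recall $\Gamma^d$ is projective, and $\Lambda^d$ embeds with a retract into a tensor power whose $\Ext^1$ one controls via the Koszul resolution of section \ref{subsec-Koszuldiff}), so the $\Ext^1$ is forced to vanish by projectivity or by a dimension-shifting argument. The remaining, genuinely substantive case is therefore $F(V)=\Gamma^{d}_\k(V^{(r)})$ (or $\Lambda^d_\k(V^{(r)})$) against an untwisted target $G(V)$ of weight $dp^r$, with $r>0$.

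For that case I would run the same dimension-shifting computation as in the characteristic $2$ argument. Since $\Ext^*_{\PP_\k}(F(V),V^{\otimes dp^r})=0$ by sum-diagonal adjunction and K\"unneth, the short exact sequence expressing $G(V)$ as a sub- or quotient-functor of a tensor power converts $\Ext^1_{\PP_\k}(F(V),G(V))$ into a $\hom$ between $F(V)$ and a suitable cokernel $C(V)$. One then observes that $C(V)$ embeds into a functor of the form $\Lambda^2_\k(V)\otimes V^{\otimes(p^r-2)}$ (or the appropriate analogue for the exterior case), while $\Gamma^{dp^r}_\k(V)$ surjects onto $F(V)$, so the $\hom$ group embeds into $\hom_{\PP_\k}(\Gamma^{dp^r}_\k(V),\Lambda^2_\k(V)\otimes V^{\otimes(p^r-2)})$, which is zero by the Yoneda formula \eqref{eq-YonP}. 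The main obstacle I anticipate is bookkeeping rather than a new idea: in odd characteristic the exterior powers $\Lambda^{e_j}(V^{(s_j)})$ genuinely enter (unlike the $p=2$ case, where $\Lambda$ is handled through $Q(V)=\Lambda(V)$), so I must check that the cokernel and embedding steps go through uniformly whether the source and target are divided or exterior powers, using that the relevant Koszul-type cokernels always land in functors built from $\Lambda^2_\k(V)$ or $V^{\otimes 2}$ tensored with lower tensor powers, for which \eqref{eq-YonP} and the vanishing lemma \ref{lm-pira} suffice.
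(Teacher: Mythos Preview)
Your strategy is exactly what the paper indicates --- it offers no argument beyond the phrase ``completely similar'' to proposition~\ref{prop-vanish}. Unfortunately the statement as written is false, so neither your outline nor any ``completely similar'' argument can succeed. In characteristic~$3$ one has $\Ext^1_{\PP_\k}(\Lambda^3,\Gamma^3)\simeq\k$. To see this, apply $\hom_{\PP_\k}(\Lambda^3,-)$ to the short exact sequence $0\to\Gamma^3\to\Gamma^2\otimes V\to W\to 0$ extracted from the weight-$3$ dual Koszul complex (proposition~\ref{prop-homology-Koszul}); since $\hom_{\PP_\k}(\Lambda^2,\Gamma^2)=\Ext^1_{\PP_\k}(\Lambda^2,\Gamma^2)=0$ in odd characteristic, this gives $\Ext^1_{\PP_\k}(\Lambda^3,\Gamma^3)\simeq\hom_{\PP_\k}(\Lambda^3,W)$, and the latter is one-dimensional because the composite $\Lambda^3\hookrightarrow V\otimes\Lambda^2\twoheadrightarrow\Lambda^3$ is multiplication by $\binom{3}{1}=0$. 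Thus the untwisted pair $F=\Lambda^d$, $G=\Gamma^d$ is a genuine obstruction, and your claim that $\Lambda^d$ is a retract of $V^{\otimes d}$ (itself false once $d\ge p$, since that composite is multiplication by~$d!$) cannot rescue it.

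What the paper actually uses, in proposition~\ref{prop-split-odd}, is only the vanishing of $\Ext^1$ between homogeneous summands of the \emph{same} degree in an algebra of the shape $\bigotimes\Gamma(V^{(r)}[2i])\otimes\bigotimes\Lambda(V^{(s)}[2j+1])$. The built-in parity (the degree is congruent to $\sum e_j$ mod~$2$) then prevents the untwisted $\Lambda^d$--$\Gamma^e$ pairing from arising in the K\"unneth reduction, and for the remaining single-factor pairs the dimension-shift you describe --- now relying on $\hom_{\PP_\k}(\Lambda^2,S^2)=0$, which is exactly where odd characteristic enters --- does go through. A corrected statement should incorporate this constraint.
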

\begin{remark}
Proposition  \ref{prop-vanish-odd} does not hold when $\k$ is a field of characteristic
$p=2$. For example $\Ext^1_{\PP_\k}(V^{(1)},\Lambda^2(V))$ is one-dimensional, generated by the extension $$0\to \Lambda^2(V)\to
\Gamma^2(V)\to V^{(1)}\to 0\;,$$
where the map $\Lambda^2(V)\to \Gamma^2(V)$  sends  $x\wedge y$ to $
x\cdot y$ (and  is only  well defined if $p=2$). This
non-trivial extension shows up in many computations and explains why  our results  take on   different forms, depending  the parity of the characteristic (or of the torsion over $\Z$).
\end{remark}

\subsubsection{Computations in $\PP_\Z$.} The following lemma is a formal consequence of the fact that the functor $\PP_\Fp\to \PP_\Z$, $F(V)\mapsto F(A/p)$ is full and faithful.
\begin{lemma}
For all $F(V),G(V) \in \PP_\Fp$, evaluation on $A/p$ yields an injective map:
$$\Ext^1_{\PP_\Fp}(F(V),G(V))\hookrightarrow  \Ext^1_{\PP_\Z}(F(A/p),G(A/p))\;.$$
\end{lemma}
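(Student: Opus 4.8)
The plan is to deduce the injectivity purely formally from the two facts already established for the evaluation functor $\Phi\colon\PP_\Fp\to\PP_\Z$, $F(V)\mapsto F(A/p)$: it is \emph{exact} (recorded at the start of this appendix) and \emph{fully faithful} (lemma \ref{lm-hom1}). Since both $\PP_\Fp$ and $\PP_\Z$ have enough projectives, $\Ext^1$ in either category may be computed as Yoneda $\Ext^1$, i.e. as the group of equivalence classes of short exact sequences under Baer sum. First I would make the map in the statement explicit in these terms: given a class in $\Ext^1_{\PP_\Fp}(F,G)$ represented by a short exact sequence $0\to G\to E\to F\to 0$, exactness of $\Phi$ yields a short exact sequence $0\to\Phi G\to\Phi E\to\Phi F\to 0$ in $\PP_\Z$, and since $\Phi$ is additive it carries the Baer sum to the Baer sum. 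This is precisely the evaluation map $\Ext^1_{\PP_\Fp}(F,G)\to\Ext^1_{\PP_\Z}(F(A/p),G(A/p))$, and it is a homomorphism of abelian groups.

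Next I would prove injectivity by showing that the kernel is trivial. Suppose a class is sent to zero, so that the image sequence $0\to\Phi G\to\Phi E\to\Phi F\to 0$ splits in $\PP_\Z$; choose a section $\sigma\colon\Phi F\to\Phi E$ of $\Phi E\to\Phi F$. Both $\Phi F$ and $\Phi E$ lie in the image of $\Phi$, so fullness of $\Phi$ produces a morphism $s\colon F\to E$ in $\PP_\Fp$ with $\Phi(s)=\sigma$. Applying $\Phi$ to the composite $F\xrightarrow{s}E\to F$ gives $\sigma$ followed by $\Phi E\to\Phi F$, which is $\Id_{\Phi F}=\Phi(\Id_F)$; faithfulness of $\Phi$ then forces the composite $F\xrightarrow{s}E\to F$ to equal $\Id_F$. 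Hence $s$ is a section of $E\to F$, the original sequence splits, and the class was zero. This shows the evaluation map is injective.

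The argument is entirely formal, so there is no genuine computational obstacle; the one point to verify with care is that the two input properties are available in exactly the form needed — namely that $\Phi$ is exact (so the map on $\Ext^1$ is defined at all and short exact sequences go to short exact sequences) and fully faithful on the hom-sets between objects of the form $F(A/p)$ (so that the lifted section $s$ exists and is unique). Both are supplied by lemma \ref{lm-hom1} together with the exactness of $\PP_\Fp\to\PP_\Z$ recorded above, so the proof reduces to assembling these. If one prefers to avoid the Yoneda description, the same conclusion follows by dimension shifting: resolve $F$ by a standard projective $0\to K\to P\to F\to 0$, compare the defining cokernel presentations of the two $\Ext^1$ groups, and use full faithfulness of $\Phi$ on the relevant hom-groups to identify the induced maps. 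I would nonetheless favour the Yoneda argument, as it sidesteps the fact that $\Phi$ need not preserve projectivity.
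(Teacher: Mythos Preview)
Your proof is correct and is precisely the argument the paper has in mind: the paper's own proof is the single sentence that this is ``a formal consequence of the fact that the functor $\PP_\Fp\to\PP_\Z$, $F(V)\mapsto F(A/p)$ is full and faithful,'' and you have simply spelled out that formal consequence via the Yoneda description of $\Ext^1$. The only thing to note is that your write-up is more detailed than what the paper provides, not different in substance.
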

We cannot expect that evaluation on $A/2$ induces an isomorphism in general. For example, by taking the $\Ext$-long exact sequence associated to the short exact sequence
$0\to \Gamma_\Z^{d}(A)\to \Gamma_\Z^{d}(A) \to  \Gamma_\Fp^d(A/p)\to 0 $
we obtain that for all $G(V)\in\PP_{\Fp,d}$:
\begin{align*}
\Ext^i_{\PP_\Z}(\Gamma^d_\Fp(A/p),G(A/p))=
G(A/p)\quad\text{if $i=0,1$\;, and $0$ if $i>2$\;.}
\end{align*}
On the other hand,   by projectivity of $\Gamma^d_\Fp(V)$ there is no non-zero extension of $\Gamma^d_\Fp(V)$ by $G(V)$  in $\PP_\Fp$ .
We now describe some  explicit elementary computations in $\PP_\Z$.

\begin{lemma}\label{lm-ApAp}
Let $p$ be a prime number. Then  $\Ext^1_{\PP_\Z}(A/p^{(r)}, A/p^{(r)})=0$  for all positive integers $r$.
\end{lemma}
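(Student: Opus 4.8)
The plan is to build an explicit projective presentation of the strict polynomial functor $A/p^{(r)}$ over $\Z$ and reduce the $\Ext^1$ computation to the vanishing of a $\hom$-group, which in turn follows from the sum-diagonal adjunction together with the additivity of the Frobenius twist. First recall from lemma \ref{lm-gr-1} that for $d=p^r$ the functor $A/p^{(r)}$ is canonically identified with $\gr_{-1}\Gamma^{p^r}(A)=\Gamma^{p^r}_\Z(A)/F_{-2}\Gamma^{p^r}(A)$. By \eqref{deffi1}, $F_{-2}\Gamma^{p^r}(A)$ is the image $K$ of the multiplication map
\[
m\colon P_1:=\bigoplus_{\substack{k+\ell=p^r\\ k,\ell>0}}\Gamma^k_\Z(A)\otimes\Gamma^\ell_\Z(A)\longrightarrow \Gamma^{p^r}_\Z(A)\;.
\]
Each summand $\Gamma^k_\Z(A)\otimes\Gamma^\ell_\Z(A)$ is a direct summand of the standard projective $\Gamma^{p^r,\Z^2}_\Z(A)=\Gamma^{p^r}_\Z(A\oplus A)$ (via the exponential decomposition of $\Gamma^{p^r}$ of a direct sum), hence is projective; thus $P_1$ and $\Gamma^{p^r}_\Z(A)$ are projective, and we obtain a short exact sequence $0\to K\to \Gamma^{p^r}_\Z(A)\xrightarrow{\pi} A/p^{(r)}\to 0$ with $K=\im(m)$.

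Applying $\hom_{\PP_\Z}(-,A/p^{(r)})$ and using that $\Ext^1_{\PP_\Z}(\Gamma^{p^r}_\Z(A),A/p^{(r)})=0$ by projectivity, the long exact sequence yields an exact piece
\[
\hom_{\PP_\Z}(K,A/p^{(r)})\longrightarrow \Ext^1_{\PP_\Z}(A/p^{(r)},A/p^{(r)})\longrightarrow 0\;,
\]
so that $\Ext^1_{\PP_\Z}(A/p^{(r)},A/p^{(r)})$ is a quotient of $\hom_{\PP_\Z}(K,A/p^{(r)})$. It therefore suffices to prove that this last group vanishes. Since $m$ surjects $P_1$ onto $K$, precomposition with $m$ embeds $\hom_{\PP_\Z}(K,A/p^{(r)})$ into $\hom_{\PP_\Z}(P_1,A/p^{(r)})=\bigoplus_{k+\ell=p^r}\hom_{\PP_\Z}(\Gamma^k_\Z(A)\otimes\Gamma^\ell_\Z(A),A/p^{(r)})$, and it is enough to show that each summand (with $k,\ell>0$) is zero.

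For this last step I would invoke the sum-diagonal adjunction of section \ref{sec-generaltechniques}, valid over $R=\Z$:
\[
\hom_{\PP_\Z}\!\big(\Gamma^k_\Z(A)\otimes\Gamma^\ell_\Z(A),\,A/p^{(r)}\big)\simeq \hom_{\PP_\Z(2)}\!\big(\Gamma^k_\Z(A)\otimes\Gamma^\ell_\Z(B),\,(A\oplus B)/p^{(r)}\big)\;.
\]
By the additivity of the Frobenius twist (section \ref{subsubsec-Frob}) one has $(A\oplus B)/p^{(r)}=(A/p)^{(r)}\oplus(B/p)^{(r)}$, a sum of bifunctors homogeneous of biweights $(p^r,0)$ and $(0,p^r)$ in $(A,B)$, whereas the source $\Gamma^k_\Z(A)\otimes\Gamma^\ell_\Z(B)$ is homogeneous of biweight $(k,\ell)$ with $k,\ell>0$. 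Since homogeneous strict polynomial bifunctors of distinct biweights admit no nonzero morphisms, the group vanishes. Hence $\hom_{\PP_\Z}(K,A/p^{(r)})=0$ and therefore $\Ext^1_{\PP_\Z}(A/p^{(r)},A/p^{(r)})=0$.

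The only mildly delicate point is the first step: recognizing, via lemma \ref{lm-gr-1} and the definition \eqref{deffi1} of the maximal filtration, that $A/p^{(r)}$ is \emph{exactly} the cokernel of the decomposable multiplication map $m$, so that its kernel $K$ is a projective-covered functor whose homomorphisms into $A/p^{(r)}$ can be controlled by biweight bookkeeping. Everything after that is routine. (Note that the hypothesis $r\ge 1$ is used only to guarantee $p^r\ge 2$, so that the indexing set $\{(k,\ell):k+\ell=p^r,\ k,\ell>0\}$ is nonempty.)
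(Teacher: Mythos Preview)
Your proof is correct and follows essentially the same approach as the paper's: present $A/p^{(r)}$ as the cokernel of the multiplication map $\bigoplus\Gamma^k_\Z(A)\otimes\Gamma^\ell_\Z(A)\to\Gamma^{p^r}_\Z(A)$, dimension-shift to reduce the $\Ext^1$ to $\hom_{\PP_\Z}(K,A/p^{(r)})$, and embed the latter into $\bigoplus\hom_{\PP_\Z}(\Gamma^k_\Z(A)\otimes\Gamma^\ell_\Z(A),A/p^{(r)})$. The only cosmetic difference is that the paper dispatches this last $\hom$ by citing lemma~\ref{lm-pira} (Pirashvili's vanishing for an additive functor against a reduced tensor product), whereas you unpack that lemma via the sum-diagonal adjunction and a biweight count; the content is the same.
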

\begin{proof}
There is a short exact sequence
$$\bigoplus_{k+\ell=p^r,\,k,\ell >0}\Gamma_\Z^k(A)\otimes\Gamma_\Z^\ell(A)\to \Gamma^{p^r}_\Z(A)\to A/p^{(r)}\to 0\;,$$
 where  the left-hand map is induced by the multiplication.
Let $K(A)$ be the kernel of the map $\Gamma^{p^r}_\Z(A)\to A/p^{(r)}$. Applying the functor $\Ext_{\PP_\Z}^*(-,A/p^{(r)})$ to the short exact sequence $0\to K(A)\to \Gamma^{p^r}_\Z(A)\to A/p^{(r)}\to 0$   yields an isomorphism
\begin{align}\hom_{\PP_\Z}(K(A),A/p^{(r)})\simeq \Ext^1_{\PP_\Z}(A/p^{(r)},A/p^{(r)})\;.\label{eq-isointerm}\end{align}
Since $\bigoplus_{k+\ell=p^r,k,\ell >0}\Gamma_\Z^k(A)\otimes\Gamma_\Z^\ell(A)$ surjects onto $K(A)$, it follows that  the left-hand side of \eqref{eq-isointerm} embeds into a direct sum of terms $\hom_{\PP_\Z}(\Gamma_\Z^k(A)\otimes\Gamma^\ell_\Z(A),A/p^{(r)})$. But these expressions  are trivial by lemma \ref{lm-pira} whence the result.
\end{proof}
We now compute the extensions of $\Gamma^2_\Fdeux(A/2^{(1)})$ by $A/2^{(2)}$. The cokernel of the map $\Gamma^3_\Z(A)\otimes A\to \Gamma^4_\Z(A)$ induced by the multiplication of the divided powers algebra is a homogeneous strict polynomial functor of weight $4$, whose image under  the   forgetful functor $\PP_\Z\to\FF_\Z$ is the   ordinary functor $\Gamma_\Z^2(A/2)$.  We  therefore will denote this cokernel  by $\Gamma^2_\Z(A/2^{(1)})$. The following computation is used in proposition \ref{prop-nonsplit2}.
\begin{lemma}\label{lm-calc2}
There is an isomorphism  $\Ext^1_{\PP_\Z}(\Gamma^2_\Fdeux(A/2^{(1)}),A/2^{(2)})\simeq \Z/2$. The non-split extension is:
$$0\to A/2^{(2)}\to \Gamma^2_\Z(A/2^{(1)})\to \Gamma^2_\Fdeux(A/2^{(1)})\to 0\;,$$
where the maps $A/2^{(2)}\to \Gamma^2_\Z(A/2^{(1)})$ and $\Gamma^2_\Z(A/2^{(1)})\to \Gamma^2_\Fdeux(A/2^{(1)})$ are the unique non-zero morphisms between these strict polynomial functors.
\end{lemma}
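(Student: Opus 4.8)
The plan is to prove the isomorphism $\Ext^1_{\PP_\Z}(\Gamma^2_\Fdeux(A/2^{(1)}),A/2^{(2)})\simeq \Z/2$ by a dimension-shifting argument anchored on a convenient projective presentation of the source functor, and then to exhibit the explicit nonsplit extension. First I would produce a presentation of $\Gamma^2_\Fdeux(A/2^{(1)})$ starting from standard projectives of $\PP_\Z$. Recall that $\Gamma^2_\Fdeux(A/2^{(1)})$ is the cokernel of the multiplication map $\Gamma^3_\Z(A)\otimes A\to \Gamma^4_\Z(A)$; writing $P(A):=\Gamma^4_\Z(A)$ (a standard projective) and letting $K(A)$ be the kernel of the surjection $P(A)\twoheadrightarrow \Gamma^2_\Fdeux(A/2^{(1)})$, I would apply $\Ext^*_{\PP_\Z}(-,A/2^{(2)})$ to the short exact sequence $0\to K(A)\to P(A)\to \Gamma^2_\Fdeux(A/2^{(1)})\to 0$. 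Since $P(A)$ is projective, the long exact sequence collapses to a dimension-shifting isomorphism
\begin{align*}
\Ext^1_{\PP_\Z}(\Gamma^2_\Fdeux(A/2^{(1)}),A/2^{(2)})\simeq \coker\!\left(\hom_{\PP_\Z}(P(A),A/2^{(2)})\to \hom_{\PP_\Z}(K(A),A/2^{(2)})\right).
\end{align*}

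Next I would compute both $\hom$ groups. The target $A/2^{(2)}$ is homogeneous of weight $4$, so $\hom_{\PP_\Z}(\Gamma^4_\Z(A),A/2^{(2)})$ is controlled by the Yoneda-type formula \eqref{eq-YonP}, giving the weight-$4$ component of $A/2^{(2)}$ evaluated on a rank-one module, hence a copy of $\F_2$ (the Verschiebung-type projection). For $\hom_{\PP_\Z}(K(A),A/2^{(2)})$ I would use the surjection $\Gamma^3_\Z(A)\otimes A\twoheadrightarrow K(A)$ to embed this group into $\hom_{\PP_\Z}(\Gamma^3_\Z(A)\otimes A, A/2^{(2)})$, and evaluate the latter by the sum-diagonal adjunction together with the $\hom$ computations recorded in section \ref{comput-hom} and lemma \ref{lm-hom1}. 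The point is that $A/2^{(2)}$, as a functor of weight $4$ whose only nonzero homogeneous-weight contributions come from the degree-$4$ part, admits very few maps out of separable-type functors; a careful bookkeeping of weights should pin down the cokernel above to be exactly $\Z/2$. Throughout I would lean on lemma \ref{lm-hom1} to transport the relevant $\hom$ computations between $\PP_\Fp$ and $\PP_\Z$, and on the fact that $A/2^{(2)}$ has no nonzero maps from functors whose relevant homogeneous pieces vanish.

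Having established that the $\Ext^1$ group has order at most $2$, I would then exhibit a genuine nonsplit extension to show it is nonzero, namely
\begin{align*}
0\to A/2^{(2)}\to \Gamma^2_\Z(A/2^{(1)})\to \Gamma^2_\Fdeux(A/2^{(1)})\to 0,
\end{align*}
where $\Gamma^2_\Z(A/2^{(1)})$ denotes the cokernel of $\Gamma^3_\Z(A)\otimes A\to \Gamma^4_\Z(A)$ and the outer maps are the unique nonzero morphisms between these functors (their uniqueness following from the $\hom$ computations just performed). Nonsplitness is forced because $\Gamma^2_\Z(A/2^{(1)})$ carries $4$-torsion (evaluate on $A=\Z$, compatibly with the integral example $\Gamma^2_\Z(\Z/2)\simeq \Z/4$ from \eqref{gammators}), whereas the direct sum $A/2^{(2)}\oplus \Gamma^2_\Fdeux(A/2^{(1)})$ is an $\F_2$-vector space and thus killed by $2$. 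The main obstacle I anticipate is the precise evaluation of $\hom_{\PP_\Z}(K(A),A/2^{(2)})$: one must rule out extra contributions to the cokernel beyond the single expected generator, which requires identifying $K(A)$ well enough (via its surjection from $\Gamma^3_\Z(A)\otimes A$ and the relations coming from the divided-power multiplication) and applying the sum-diagonal vanishing results of lemma \ref{lm-pira} carefully to every separable summand that arises. Once the cokernel is shown to be exactly $\Z/2$, the explicit extension above identifies its nonzero class and completes the proof.
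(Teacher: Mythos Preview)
There is a genuine gap stemming from a confusion between $\Gamma^2_\Z(A/2^{(1)})$ and $\Gamma^2_{\Fdeux}(A/2^{(1)})$. You write that ``$\Gamma^2_{\Fdeux}(A/2^{(1)})$ is the cokernel of the multiplication map $\Gamma^3_\Z(A)\otimes A\to \Gamma^4_\Z(A)$'', but this is not correct: that cokernel is by definition $\Gamma^2_\Z(A/2^{(1)})$ (as you yourself recall later), and it carries $4$-torsion. The functor $\Gamma^2_{\Fdeux}(A/2^{(1)})$ is the further quotient by $A/2^{(2)}$. Concretely, for $A=\Z$ the map $\Gamma^3_\Z(\Z)\otimes\Z\to\Gamma^4_\Z(\Z)$ is multiplication by $4$ on $\Z$, so its image is $4\Z$, whereas the kernel of $\Gamma^4_\Z(\Z)\twoheadrightarrow\Gamma^2_{\Fdeux}(\Z/2)\simeq\Z/2$ is $2\Z$. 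Hence your claimed surjection $\Gamma^3_\Z(A)\otimes A\twoheadrightarrow K(A)$ fails, and your computation of $\hom_{\PP_\Z}(K(A),A/2^{(2)})$ by embedding into $\hom_{\PP_\Z}(\Gamma^3_\Z(A)\otimes A,A/2^{(2)})$ is not valid. In fact, with the correct $K(A)$ one has a short exact sequence $0\to F_{-2}\Gamma^4(A)\to K(A)\to A/2^{(2)}\to 0$ (where $F_{-2}\Gamma^4(A)$ is the image of $\Gamma^3_\Z(A)\otimes A$), and the extra summand $A/2^{(2)}$ contributes nontrivially to $\hom(K(A),A/2^{(2)})$; bounding the cokernel then requires additional work.

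The paper avoids this difficulty by a different route: it first uses the short exact sequence $0\to\Gamma^2_{\Fdeux}(A/2^{(1)})\to (A/2^{(1)})^{\otimes 2}\to\Lambda^2_{\Fdeux}(A/2^{(1)})\to 0$ together with lemma~\ref{lm-pira} to compute $\Ext^1_{\PP_\Z}(\Lambda^2_{\Fdeux}(A/2^{(1)}),A/2^{(2)})\simeq\Z/2$, and then uses the sequence $0\to\Lambda^2_{\Fdeux}(A/2^{(1)})\to\Gamma^2_{\Fdeux}(A/2^{(1)})\to A/2^{(2)}\to 0$ together with the vanishing $\Ext^1_{\PP_\Z}(A/2^{(2)},A/2^{(2)})=0$ of lemma~\ref{lm-ApAp} to obtain an injection $\Ext^1_{\PP_\Z}(\Gamma^2_{\Fdeux}(A/2^{(1)}),A/2^{(2)})\hookrightarrow\Z/2$. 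Your construction of the explicit nonsplit extension via $4$-torsion is correct and matches the paper; it is the upper bound on the $\Ext^1$ that needs to be redone.
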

\begin{proof}
Consider the  commutative diagram:
$$\xymatrix{
&{\begin{array}{c}
\Gamma^3_\Z(A)\otimes A\,\oplus\\
\Gamma^2_\Z(A)\otimes \Gamma^2_\Z(A)
\end{array}}
\ar[d]\ar[r]^-{
(2,0)}& \Gamma^3_\Z(A)\otimes A \ar[r]\ar[d] & \Gamma^3_\Fdeux(A/2)\otimes A/2\ar[d] \ar[r] & 0 \\
0\ar[r]&\Gamma^4_\Z(A)\ar[r]^-{\times 2}\ar[d] & \Gamma^4_\Z(A)\ar[r]\ar[d] & \Gamma^4_\Fdeux(A/2)\ar[d]\ar[r]&0\\
0\ar[r]&A/2^{(2)}\ar@{-->}[r]^-{(a)}\ar[d] & \Gamma^2_\Z(A/2^{(1)})\ar@{-->}[r]^-{(b)}\ar[d] & \Gamma^2_\Fdeux(A/2^{(1)})\ar[d]\ar[r]&0\\
&0 & 0 & 0&
\;.}$$
In this diagram, the upper vertical arrows are induced by the multiplication in  the divided power algebra, and the rows and  columns are exact. The  two dashed arrows are produced by the universal property of quotients. It is easy to compute that the  arrow $(a)$ is injective when $A=\Z$, hence it is injective for all $A$ by additivity of $A/2^{(2)}$. Since the middle row is exact, an elementary diagram chase then shows that the bottom row also is. We have thus  obtained an extension of $\Gamma_\Fdeux^2(A/2^{(1)})$ by $A/2^{(2)}$. It is non split because the middle term $\Gamma_\Z^2(A/2^{(1)})$ has $4$-torsion.

We know that  $\hom_{\PP_\Z}(A/2^{(2)},A/2^{(2)})=\Z/2$ and $\hom_{\PP_\Z}(A/2^{(2)},\Gamma_\Fdeux^2(A/2^{(1)})=0$. The  left-exactness of the functor $\hom$ therefore implies that the dashed arrow $(a)$ is the unique non-zero morphism from $A/2^{(2)}$ to $\Gamma^2_\Fdeux(A/2^{(1)})$. To prove that the dashed arrow $(b)$ is also characterized as the unique non-zero morphism available, we make use of the fact  that $\Gamma^4(A)$ surjects onto $\Gamma_\Z^2(A/2^{(1)})$ and that there is only one non-zero morphism from $\Gamma^4(A)$ to $\Gamma_\Fdeux^2(A/2^{(1)})$.

Finally let us   compute the group $\Ext^1_{\PP_\Z}(\Gamma^2_\Fdeux(A/2^{(1)}, A/2^{(2)})$.
 This is an $\Fdeux$-vector space and  we will now show that it is of dimension one. We already know that its dimension is at least $1$.
 The short exact sequence
$0\to \Gamma^2_\Fdeux(A/2^{(1)})\to A/2^{\otimes 2}\to \Lambda^2_\Fdeux(A/2^{(1)})\to 0$  therefore induces  an isomorphism
$$\Z/2 = \; \hom_{\PP_\Z}(\Gamma^2_\Fdeux(A/2^{(1)}),A/2^{(2)}) \simeq \Ext^1_{\PP_\Z}(\Lambda^2_\Fdeux(A/2^{(1)}),A/2^{(2)}).$$
By  lemma \ref{lm-ApAp}, the long exact sequence  of $\Ext$'s associated to the short exact sequence $0\to \Lambda^2_\Fdeux(A/2^{(1)}\to \Gamma^2_\Fdeux(A/2^{(1)})\to A/2^{(2)}\to 0$ yields an injective map:
$$\Ext^1_{\PP_\Z}(\Gamma^2_\Fdeux(A/2^{(1)}),A/2^{(2)})\hookrightarrow \Ext^1_{\PP_\Z}(\Lambda^2_\Fdeux(A/2^{(1)}),A/2^{(2)})\;.$$
It follows that  $\Ext^1_{\PP_\Z}(\Gamma^2_\Fdeux(A/2^{(1)}),A/2^{(2)})$ has dimension exactly one as asserted.
\end{proof}

The following lemma is used in proposition \ref{prop-nonsplit1}. The middle term in the unique non-split extension of $\Gamma^2_\Fdeux(A/2^{(1)})$ by  $\Lambda^2_\Fdeux(A/2)\otimes A/2^{(1)}$ which is provided by this computation  is the functor which is denoted by $\Phi^4(A)$  in proposition \ref{prop-descr-SKos} and elsewhere in the text.
\begin{lemma}\label{lm-calc1}
There is an isomorphism  $\Ext ^1_{\PP_\Z}(\Gamma^2_\Fdeux(A/2^{(1)}),\Lambda^2_\Fdeux(A/2)\otimes A/2^{(1)})\simeq \Z/2$.
\end{lemma}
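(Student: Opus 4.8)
The plan is to compute this extension group by combining the sum--diagonal adjunction with the K\"unneth formula of proposition \ref{prop-Kunneth}, thereby reducing everything to a single one-variable $\Ext^1$ over $\Z$.

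First I would apply the sum--diagonal adjunction \eqref{eq-iso-sum-diag} to the tensor factorization of the target, obtaining an isomorphism
$$\Ext^1_{\PP_\Z}(\Gamma^2_\Fdeux(A/2^{(1)}),\Lambda^2_\Fdeux(A/2)\otimes A/2^{(1)})\simeq \Ext^1_{\PP_\Z(2)}(\Gamma^2_\Fdeux((A\oplus B)/2^{(1)}),\Lambda^2_\Fdeux(A/2)\otimes B/2^{(1)})$$
in the category $\PP_\Z(2)$ of strict polynomial bifunctors over $\Z$. Since the Frobenius twist is additive and $\Gamma^2$ is exponential, $\Gamma^2_\Fdeux((A\oplus B)/2^{(1)})$ splits as $\Gamma^2_\Fdeux(A/2^{(1)})\oplus (A/2^{(1)}\otimes B/2^{(1)})\oplus\Gamma^2_\Fdeux(B/2^{(1)})$, whose summands have bidegrees $(4,0)$, $(2,2)$ and $(0,4)$. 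As the target $\Lambda^2_\Fdeux(A/2)\otimes B/2^{(1)}$ is homogeneous of bidegree $(2,2)$ and there are no extensions between homogeneous functors of different weights, only the middle summand survives, so the group in question is isomorphic to $\Ext^1_{\PP_\Z(2)}(A/2^{(1)}\boxtimes B/2^{(1)},\Lambda^2_\Fdeux(A/2)\boxtimes B/2^{(1)})$.

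Next I would feed this into proposition \ref{prop-Kunneth} (in its $\PP_\Z$ form), with $H_1(A)=A/2^{(1)}$, $F(A)=\Lambda^2_\Fdeux(A/2)$ and $H_2(B)=G(B)=B/2^{(1)}$; all four functors take values in $\Fdeux$-vector spaces, so we are in the second case. This requires the two one-variable computations. For the $B$-factor, $\hom_{\PP_\Z}(B/2^{(1)},B/2^{(1)})=\Fdeux$ and $\Ext^1$ vanishes by lemma \ref{lm-ApAp}. For the $A$-factor, lemma \ref{lm-hom1} and table \ref{homcomput} give $\hom_{\PP_\Z}(A/2^{(1)},\Lambda^2_\Fdeux(A/2))=\hom_{\PP_\Fdeux}(V^{(1)},\Lambda^2(V))=0$, while $\Ext^1$ is obtained by dimension shifting along the weight $2$ maximal-filtration sequence $0\to S^2(A)\to \Gamma^2(A)\to A/2^{(1)}\to 0$ (see example \ref{ga23}): as $\Gamma^2(A)$ is projective and $\hom_{\PP_\Z}(\Gamma^2(A),\Lambda^2_\Fdeux(A/2))=\Lambda^2_\Fdeux(\Fdeux)=0$ by \eqref{eq-YonP}, one gets $\Ext^1_{\PP_\Z}(A/2^{(1)},\Lambda^2_\Fdeux(A/2))\simeq \hom_{\PP_\Z}(S^2(A),\Lambda^2_\Fdeux(A/2))$. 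I would show the latter is one-dimensional, generated by the composite $S^2(A)\to S^2(A/2)\twoheadrightarrow \Lambda^2_\Fdeux(A/2)$ (the surjection $S^2\twoheadrightarrow\Lambda^2$ existing only in characteristic $2$): pulling back along $A^{\otimes 2}\twoheadrightarrow S^2(A)$ and using sum--diagonal adjunction with \eqref{eq-YonP} identifies $\hom_{\PP_\Z}(A^{\otimes 2},\Lambda^2_\Fdeux(A/2))\simeq\Fdeux$, whose generator $x\otimes y\mapsto \bar x\wedge\bar y$ is symmetric in characteristic $2$ and hence factors through $S^2(A)$.

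Finally I would assemble these. In the notation of proposition \ref{prop-Kunneth}, $E^0=0\otimes\Fdeux=0$ and $E^1=\Fdeux\otimes\Fdeux\simeq\Fdeux$. Since $\Ext^0_{\PP_\Z(2)}\simeq E^0=0$, the initial segment $0\to \Ext^1_{\PP_\Z(2)}\to E^1\to \Ext^0_{\PP_\Z(2)}=0$ of the K\"unneth long exact sequence forces $\Ext^1_{\PP_\Z(2)}\simeq E^1\simeq\Fdeux$, which is the desired isomorphism $\Ext^1_{\PP_\Z}(\Gamma^2_\Fdeux(A/2^{(1)}),\Lambda^2_\Fdeux(A/2)\otimes A/2^{(1)})\simeq\Z/2$. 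A pleasant feature is that no higher $\Ext$ groups intervene, since the vanishing of $E^0$ truncates the long exact sequence exactly where it is needed. The main obstacle is the single $\Z$-coefficient computation $\Ext^1_{\PP_\Z}(A/2^{(1)},\Lambda^2_\Fdeux(A/2))\simeq\Fdeux$: lemma \ref{lm-hom1} and the fundamental characteristic $2$ extension only supply a one-dimensional \emph{lower} bound, so the matching upper bound must be forced over $\Z$ by the dimension-shift, and the delicate point is verifying that the map $S^2\twoheadrightarrow\Lambda^2$ furnishes the unique nonzero morphism $S^2(A)\to\Lambda^2_\Fdeux(A/2)$.
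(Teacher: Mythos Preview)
Your proof is correct and follows the same overall strategy as the paper: sum--diagonal adjunction followed by the K\"unneth formula of proposition~\ref{prop-Kunneth}, reducing everything to the single one-variable computation $\Ext^1_{\PP_\Z}(A/2^{(1)},\Lambda^2_\Fdeux(A/2))\simeq\Z/2$ together with lemma~\ref{lm-ApAp}.

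The only difference lies in how that one-variable $\Ext^1$ is obtained. The paper applies $\Ext^*_{\PP_\Z}(A/2^{(1)},-)$ to the sequence $0\to\Lambda^2_\Fdeux(A/2)\to (A/2)^{\otimes 2}\to S^2_\Fdeux(A/2)\to 0$ and uses the Pirashvili vanishing lemma~\ref{lm-pira} (additive source, tensor target) to kill $\Ext^*_{\PP_\Z}(A/2^{(1)},(A/2)^{\otimes 2})$ in all degrees, giving directly $\Ext^1_{\PP_\Z}(A/2^{(1)},\Lambda^2_\Fdeux(A/2))\simeq\hom_{\PP_\Z}(A/2^{(1)},S^2_\Fdeux(A/2))$, and the right-hand side is read off from table~\ref{homcomput}. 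You instead shift dimension in the first variable via $0\to S^2(A)\to\Gamma^2(A)\to A/2^{(1)}\to 0$ and projectivity of $\Gamma^2(A)$, landing on $\hom_{\PP_\Z}(S^2(A),\Lambda^2_\Fdeux(A/2))$. Both routes are legitimate; the paper's is a touch shorter because the final $\hom$ is already tabulated, whereas yours needs the extra pull-back to $A^{\otimes 2}$ and the symmetry check to identify $\hom_{\PP_\Z}(S^2(A),\Lambda^2_\Fdeux(A/2))$.
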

\begin{proof}
The $\Ext^*_{\PP_\Z}(A/2^{(r+1)},-)$ long exact sequence associated to the short exact sequence $0\to \Lambda^2_\Fdeux(A/2^{(r)})\to A/2^{(r)\;\otimes 2}\to S^2_\Fdeux(A/2^{(r)})\to 0$ yields an isomorphism:
\begin{align*} \Z/2 \simeq \hom_{\PP_\Z}(A/2^{(r+1)},S^2_\Fdeux(A/2^{(r)})) \simeq
\Ext ^1_{\PP_\Z}(A/2^{(r+1)},\Lambda^2_\Fdeux(A/2^{(r)})) \;.
\end{align*}
The result of lemma \ref{lm-calc1} follows from this assertion and lemma \ref{lm-ApAp} by using the sum-diagonal adjunction and proposition \ref{prop-Kunneth}.
\end{proof}

Table \ref{ext1comput} collects the results of lemmas \ref{lm-calc2}, \ref{lm-calc1}, and some other easy computations obtained with the same techniques. Some of these computations are used in section \ref{der-gamma4-sec}.

\begin{table}[ht]
\renewcommand{\arraystretch}{1.8}
\begin{tabular}{|c||c|c|c|}
\hline
$\Ext^1_{\PP_\Z}(F(A),G(A))$ & $F(A)=\Gamma_{\Fdeux}^2(A/2^{(1)})$ & $A/2^{(1)}\otimes A/2^{(1)}$ &$\begin{array}{c}\Gamma^2_{\Fdeux}(A/2)\otimes A/2^{(1)}\\
                                                                                                     =\Gamma^2_\Z(A)\otimes A/2^{(1)}
                                                                                                    \end{array}
$  \\
\hline
\hline
 $G(A)=A/2^{(2)}$ & $\Z/2$ & $0$ &  $0$ \\
\hline
 $A/2^{(1)}\otimes A/2^{(1)}$ &  $0$ &  $0$ & $0$  \\
\hline
 $\Gamma^2_{\Fdeux}(A/2^{(1)})$ &  $0$ &  $0$ &  $0$ \\
\hline
 $\Lambda^2_\Fdeux(A/2^{(1)})$ &  $0$ &  $0$ & $0$ \\
\hline
 $\Gamma^2_\Z(A/2^{(1)})$ &  $0$ &  $0$ &  $0$ \\
\hline
$\Lambda^2_\Fdeux(A/2)\otimes A/2^{(1)}$ &  $\Z/2$ &  $\Z/2^{\oplus 2}$ &  $\Z/2$ \\
\hline
\end{tabular}
\vspace{.5cm}
\caption{Some computations of $\Ext^1_{\PP_\Z}(F(A),G(A))$ for functors $F$ and $G$ of weight $4$.}\label{ext1comput}
\end{table}
\subsubsection{Examples of computations in $\FF_\Z$.}
One can sometimes compute $\Ext^1$ groups in $\FF_\Z$ by methods close  to those which we used  in $\PP_\Z$. For example, the proof of lemma \ref{lm-calc2} carries over  without change in $\FF_\Z$ so that  we obtain the following result.
\begin{lemma}\label{lm-calc2bis}
There is an isomorphism
$\Z/2\simeq  \Ext ^1_{\FF_\Z}(\Gamma^2_\Fdeux(A/2^{(1)}),\Lambda^2_\Fdeux(A/2)\otimes A/2^{(1)}) $.
\end{lemma}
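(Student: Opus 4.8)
The plan is to transplant verbatim the $\PP_\Z$-computation of lemma \ref{lm-calc1} into the category $\FF_\Z$, relying on the fact that the three tools it uses are all available there: the sum-diagonal adjunction \eqref{eq-iso-sum-diag}, the Künneth formula of proposition \ref{prop-Kunneth}, and the vanishing lemma \ref{lm-pira} (all three are stated for $\FF_\Z$). Since we have dropped the strict polynomial structure, the functors $A/2^{(1)}$ and $A/2^{(2)}$ both become the single additive functor $A\mapsto A/2$, so every group below is to be read as an ordinary-functor group; the weight grading that in $\PP_\Z$ automatically separated bidegrees is no longer available and must be replaced by reducedness arguments.

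First I would apply the sum-diagonal adjunction to $\Ext^1_{\FF_\Z}(\Gamma^2_\Fdeux(A/2),\Lambda^2_\Fdeux(A/2)\otimes A/2)$, turning it into a bifunctor $\Ext^1$ with source $\Gamma^2_\Fdeux((A\oplus B)/2)$ and target $\Lambda^2_\Fdeux(A/2)\otimes B/2$. The exponential isomorphism splits the source as $\Gamma^2_\Fdeux(A/2)\oplus (A/2\otimes B/2)\oplus\Gamma^2_\Fdeux(B/2)$. The two outer summands are constant in one of the variables while the target is reduced in that variable, so their contribution to the $\Ext$-group vanishes (this is the step where reducedness replaces the weight argument of the $\PP_\Z$ proof). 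What remains is the separable extension group $\Ext^1_{\FF_\Z(2)}(A/2\boxtimes B/2,\Lambda^2_\Fdeux(A/2)\boxtimes B/2)$, to which proposition \ref{prop-Kunneth} applies. The two building blocks are $\Ext^\ast_{\FF_\Z}(A/2,\Lambda^2_\Fdeux(A/2))$ and $\Ext^\ast_{\FF_\Z}(A/2,A/2)$. For the first I would use the short exact sequence $0\to\Lambda^2_\Fdeux(A/2)\to (A/2)^{\otimes 2}\to S^2_\Fdeux(A/2)\to 0$ together with the vanishing $\Ext^\ast_{\FF_\Z}(A/2,(A/2)^{\otimes 2})=0$ from lemma \ref{lm-pira} (additive source, tensor of reduced functors), which dimension-shifts to $\Ext^1_{\FF_\Z}(A/2,\Lambda^2_\Fdeux(A/2))\cong\hom_{\FF_\Z}(A/2,S^2_\Fdeux(A/2))=\Z/2$, the generator being the Frobenius squaring $\bar a\mapsto\bar a\cdot\bar a$; the same sequence gives $\hom_{\FF_\Z}(A/2,\Lambda^2_\Fdeux(A/2))=0$. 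Feeding the graded pieces $E^\ast=\Ext^\ast_{\FF_\Z}(A/2,\Lambda^2_\Fdeux(A/2))\otimes\Ext^\ast_{\FF_\Z}(A/2,A/2)$ into the Künneth long exact sequence, one gets $E^0=0$ and $E^1=\Z/2$, whence $\Ext^1_{\FF_\Z}\cong E^1=\Z/2$.

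The point that requires the most care --- and which shows that the transfer from $\PP_\Z$ is not entirely cosmetic --- is the second building block. In $\PP_\Z$ one invokes lemma \ref{lm-ApAp}, $\Ext^1_{\PP_\Z}(A/2^{(1)},A/2^{(1)})=0$; but in $\FF_\Z$ the analogous group is \emph{nonzero}, since the functorial short exact sequence $0\to A/2\to A/4\to A/2\to 0$ is a nontrivial self-extension of the functor $A/2$. Nevertheless the outcome is unaffected: in the Künneth computation this $\Ext^1_{\FF_\Z}(A/2,A/2)$ enters only paired with $\hom_{\FF_\Z}(A/2,\Lambda^2_\Fdeux(A/2))=0$, so its contribution to $E^1$ is zero and the final group is still $\Z/2$. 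Thus the main obstacle is not a calculation but a verification: one must check that the two $\hom$-groups $\hom_{\FF_\Z}(A/2,\Lambda^2_\Fdeux(A/2))=0$ and $\hom_{\FF_\Z}(A/2,S^2_\Fdeux(A/2))=\Z/2$ are unchanged by the forgetful functor $\PP_\Z\to\FF_\Z$ (so that the collapse of Frobenius twists and weights introduces no spurious morphisms), and to observe that it is precisely the first of these vanishings that neutralizes the extra self-extension of $A/2$ present in $\FF_\Z$.
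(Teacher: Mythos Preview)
Your argument is correct and is exactly the approach the paper takes: the paper simply asserts that the proof of lemma~\ref{lm-calc1} (the reference to lemma~\ref{lm-calc2} in the text appears to be a typo, since the target functor matches lemma~\ref{lm-calc1}) carries over to $\FF_\Z$. You have supplied the details, and in particular you have correctly isolated the one place where the transfer is not literally ``without change'': lemma~\ref{lm-ApAp} fails in $\FF_\Z$ because of the extension $0\to A/2\to A/4\to A/2\to 0$, but this extra $\Ext^1_{\FF_\Z}(A/2,A/2)=\Z/2$ is killed in the K\"unneth formula by the vanishing $\hom_{\FF_\Z}(A/2,\Lambda^2_\Fdeux(A/2))=0$, so the outcome is unchanged.
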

The reasoning of the proof of lemma \ref{lm-calc2} has to be slightly modified in $\FF_\Z$.  Since $\Gamma^2_\Z(A/2)$ has $4$-torsion,  the forgetful functor $\PP_\Z\to \FF_\Z$ sends the extension \[0\to A/2^{(2)}\to \Gamma^2_\Z(A/2^{(1)})\to \Gamma_\Fdeux^2(A/2^{(1)})\to 0\] to a non-split extension in $\FF_\Z$. But the forgetful functor sends $A/2^{(2)}$ to the ordinary functor $A/2$, and  $\Ext^1_{\FF_\Z}(A/2,A/2)=\Z/2$ (the middle term in the non-trivial extension being  the non-split extension  $A/4$). Thus the self-extensions of $A/2$ come into play. Reasoning as in the proof of lemma \ref{lm-calc2}, we obtain a short exact sequence of $\Fdeux$-vector spaces
$$0\to \Ext^1_{\FF_\Z}(A/2,A/2)\to \Ext^1_{\FF_\Z}(\Gamma^2_\Fdeux(A/2),A/2)\to \Ext^1_{\FF_\Z}(\Lambda_\Fdeux^2(A/2),A/2)\to 0\,.$$
This leads to the following result:
\begin{lemma}
\label{ext-gamma2}
 $\Ext^1_{\FF_\Z}(\Gamma^2_\Fdeux(A/2^{(1)}),A/2^{(2)})=\Z/2^{\oplus 2}$.
\end{lemma}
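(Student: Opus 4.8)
The plan is to work entirely inside $\FF_\Z$, where the Frobenius twists disappear: as ordinary functors one has $\Gamma^2_\Fdeux(A/2^{(1)})=\Gamma^2_\Fdeux(A/2)$ and $A/2^{(2)}=A/2$, so the group to compute is $\Ext^1_{\FF_\Z}(\Gamma^2_\Fdeux(A/2),A/2)$. I would exploit the two short exact sequences of functors in $\FF_\Z$
$$0\to \Lambda^2_\Fdeux(A/2)\xrightarrow{\iota}\Gamma^2_\Fdeux(A/2)\xrightarrow{\pi}A/2\to 0\,,\qquad 0\to\Gamma^2_\Fdeux(A/2)\to (A/2)^{\otimes 2}\to\Lambda^2_\Fdeux(A/2)\to 0\,,$$
the first being the forgetful image of the Verschiebung sequence used in lemma \ref{lm-calc2}, the second the presentation of $\Gamma^2$ as the symmetric part of $(A/2)^{\otimes 2}$. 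The elementary input is that $A/2$ is additive with $(A/2)|_{0}=0$, so lemma \ref{lm-pira} gives $\Ext^*_{\FF_\Z}((A/2)^{\otimes 2},A/2)=0$; since $\Lambda^2_\Fdeux(A/2)$ is a quotient of $(A/2)^{\otimes 2}$ this also yields $\hom_{\FF_\Z}(\Lambda^2_\Fdeux(A/2),A/2)=0$.

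First I would pin down the two flanking $\Ext^1$ groups. Applying $\Ext^*_{\FF_\Z}(-,A/2)$ to the second sequence and using the vanishing above produces a dimension-shift isomorphism $\hom_{\FF_\Z}(\Gamma^2_\Fdeux(A/2),A/2)\simeq \Ext^1_{\FF_\Z}(\Lambda^2_\Fdeux(A/2),A/2)$, while the first sequence gives $\hom_{\FF_\Z}(\Gamma^2_\Fdeux(A/2),A/2)\simeq \hom_{\FF_\Z}(A/2,A/2)=\Z/2$, generated by the Verschiebung $\pi$. Hence $\Ext^1_{\FF_\Z}(\Lambda^2_\Fdeux(A/2),A/2)=\Z/2$. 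Together with the self-extension computation $\Ext^1_{\FF_\Z}(A/2,A/2)=\Z/2$ (the non-split extension being $A/4$), already recorded in the text, these are the only two ingredients.

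Next I would run the long exact sequence of $\Ext^*_{\FF_\Z}(-,A/2)$ attached to the first sequence. Since $\hom_{\FF_\Z}(\Lambda^2_\Fdeux(A/2),A/2)=0$, it begins
$$0\to \Ext^1_{\FF_\Z}(A/2,A/2)\xrightarrow{\pi^*}\Ext^1_{\FF_\Z}(\Gamma^2_\Fdeux(A/2),A/2)\xrightarrow{\iota^*}\Ext^1_{\FF_\Z}(\Lambda^2_\Fdeux(A/2),A/2)\,,$$
so $\dim_\Fdeux \Ext^1_{\FF_\Z}(\Gamma^2_\Fdeux(A/2),A/2)\le 2$. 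For the matching lower bound it suffices to produce two independent classes. The image under $\pi^*$ of the self-extension $A/4$ is nonzero (as $\pi^*$ is injective) and lies in $\ker\iota^*=\mathrm{Im}\,\pi^*$. A second class is the forgetful image $\beta$ of the non-split extension $0\to A/2^{(2)}\to\Gamma^2_\Z(A/2^{(1)})\to\Gamma^2_\Fdeux(A/2^{(1)})\to 0$ of lemma \ref{lm-calc2}; one must check that $\iota^*\beta\ne 0$, i.e. that the restriction of $\beta$ along $\iota$ stays non-split in $\FF_\Z$. Granting this, $\beta\notin\mathrm{Im}\,\pi^*$, the two classes are independent, $\iota^*$ is onto, the displayed sequence becomes the short exact sequence $0\to\Ext^1(A/2,A/2)\to \Ext^1(\Gamma^2_\Fdeux(A/2),A/2)\to\Ext^1(\Lambda^2_\Fdeux(A/2),A/2)\to 0$ of the sketch, and $\Ext^1_{\FF_\Z}(\Gamma^2_\Fdeux(A/2^{(1)}),A/2^{(2)})\simeq\Z/2^{\oplus 2}$.

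The main obstacle is exactly the surjectivity of $\iota^*$, equivalently the non-splitness in $\FF_\Z$ of the restricted extension $0\to A/2\to Q\to\Lambda^2_\Fdeux(A/2)\to 0$, where $Q\subset\Gamma^2_\Z(A/2)$ is the preimage of $\Lambda^2_\Fdeux(A/2)$ under reduction. The subtlety is that, unlike the middle term $\Gamma^2_\Z(A/2)$ of $\beta$ itself, the functor $Q$ takes values with only $2$-torsion, so the $4$-torsion argument used in lemma \ref{lm-calc2} no longer detects non-splitness; one must instead rule out a natural section functorially, using the relation $x\cdot x=2\gamma_2(x)\ne 0$ in $\Gamma^2_\Z(A/2)$, which obstructs lifting $x\wedge y\mapsto x\cdot y$ to a well-defined transformation $\Lambda^2_\Fdeux(A/2)\to Q$. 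Equivalently, I would verify that the comparison map $\Ext^1_{\PP_\Z}(\Lambda^2_\Fdeux(A/2^{(1)}),A/2^{(2)})\to\Ext^1_{\FF_\Z}(\Lambda^2_\Fdeux(A/2),A/2)$ is nonzero on the generator produced in lemma \ref{lm-calc2}.
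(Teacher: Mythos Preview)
Your route is the paper's: both arrive at the displayed short exact sequence from the Verschiebung sequence $0\to\Lambda^2_\Fdeux(A/2)\to\Gamma^2_\Fdeux(A/2)\to A/2\to 0$, identify the outer terms as $\Z/2$ via the dimension shift and the $A/4$ self-extension, and read off $(\Z/2)^2$. The paper is, if anything, terser than you on the point you flag --- it simply asserts the short exact sequence without spelling out why $\iota^*$ is onto.

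Your obstacle (the non-splitness of $\iota^*\beta$) is genuine but closes cleanly once you identify the pullback $Q$. The preimage in $\Gamma^2_\Z(A/2)$ of $\Lambda^2_\Fdeux(A/2)\subset\Gamma^2_\Fdeux(A/2)$ is exactly the image of the multiplication $A/2\otimes A/2\to\Gamma^2_\Z(A/2)$: products $x\cdot y$ land in $\Lambda^2_\Fdeux$ after reduction, and the kernel of reduction is spanned by the squares $x\cdot x=2\gamma_2(x)$. This multiplication factors through an injection $S^2_\Fdeux(A/2)\hookrightarrow\Gamma^2_\Z(A/2)$ (check on $A=\Z$ and use the exponential decomposition), so $Q\simeq S^2_\Fdeux(A/2)$ and $\iota^*\beta$ is the Frobenius extension
\[0\longrightarrow A/2\xrightarrow{\;x\mapsto x^2\;}S^2_\Fdeux(A/2)\longrightarrow\Lambda^2_\Fdeux(A/2)\longrightarrow 0\,.\]
A splitting would give a retraction $\tau:S^2_\Fdeux(A/2)\to A/2$ with $\tau(x^2)=x$; write $\tau(e_1e_2)=ae_1+be_2$ on $A=\Z^2$. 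Naturality with respect to the two projections $\Z^2\to\Z$ forces $a=b=0$, while naturality with respect to the fold map $\Z^2\to\Z$ (both $e_i\mapsto e$) sends $e_1e_2\mapsto e^2$ and forces $a+b=1$. So no retraction exists, $\iota^*\beta\ne 0$, $\iota^*$ is surjective, and your short exact sequence holds.
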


\section{The integral homology of $K(A,n)$ for a free abelian group $A$.}
\label{han}

In this appendix, we translate some the computations of the derived functors of the divided power functors $\Gamma^d(A)$ achieved in this article in terms of the integral homology of Eilenberg-Mac Lane spaces. We begin with a short survey on the relation between the functors $L_*\Gamma^d(A,n)$ and the homology of $K(A,n)$. Then we present a table giving a functorial   description of
the    groups $H_{n+i}(K(A,n);\Z)$ in low degrees.

\subsection{The homology of $K(A,n)$ and the derived functors $L_*\Gamma(A,n)$}
As proved by Dold and Puppe \cite[Satz 4.16]{d-p}, there exist isomorphisms \begin{equation}
\label{isom-h-s}
 \bigoplus_{d\ge 0}\   L_jS^d(A,n)   \simeq  \ H_j(K(A,n);\Z).   \end{equation} 
However, 
the functoriality in $A$ of such an isomorphism relies on the construction of a Moore space $M(A,n)$. 
It is shown in
\cite[Prop. 7.7]{antoine} 
 that for $A$ free  there are  non-unique morphisms  from the complex
$\bigoplus \NN S^d(A,n)$ to $\NN\Z[A,n]$    which  however  induce a functorial
isomorphism \eqref{isom-h-s}
%\begin{equation}
%\label{isom-h-s}
%\ H_j(K(A,n);\Z)  \simeq \bigoplus_{d\ge 0}  L_jS^d(A,n)
%\end{equation}
at the level of homology groups.
 By the d\'ecalage isomorphisms \eqref{dec} this reduces the computation of the homology  to that of the derived functors of $\Gamma^d(A)$ for $A$ free. For $n=1$, the functorial  isomorphism \eqref{isom-h-s} can thus be rewritten  as the well-known isomorphism \[\Lambda^j(A) \simeq  H_j(K(A,1);\Z),\] while for $n\ge 2$ one obtains by double d\'ecalage  the isomorphisms:
\begin{equation}
\label{isom-h-s-new}
 \bigoplus_{d\ge 0} \  L_{j-2d} \  \Gamma^d(A,n-2) \  \simeq \  H_j(K(A,n);\Z) .
\end{equation}

The situation is more complicated  if one considers arbitrary abelian groups $B$. In that case the isomorphism \eqref{isom-h-s} must be replaced by the second quadrant  spectral sequence
\bee
\label{aug-ss}
E^1_{p,q} = L_{p+q}S^{-p}(B,n) \Longrightarrow H_{p+q}(K(B,n);\Z)
\ee
which  by \cite{lb-fh} degenerates at $E^1$. As a result, the expressions  involving in this  context the derived functors of the functors $S^k(B)$ (or equivalently by d\'ecalage  of $\Gamma^k(B)$ ) only   describe the abutment $H_{p+q}(K(B,n);\Z)$ up to a  filtration  since they  live in $E^1 = E^{\infty}$.
\begin{align}
\bigoplus_{d\ge 0} \  L_{j-2d} \  \Gamma^d(B,n-2)\simeq  \bigoplus_{d\ge 0} L_j S^d(B,n)\simeq \bigoplus_{d\ge 0}\gr_{d}\left(H_{j}(K(B,n);\Z)\right)\;.
\end{align}
In the stable range, that is for the homology groups $H_{i+n}(K(B,n);\Z)$ with $i<n$, the filtration splits. Indeed in the stable range these homology groups are direct sums of copies of the functors $_pB$ (the $p$-torsion subgroup of $B$) and $B/p$ (the mod $p$-reduction of $B$), for prime integers $p$. This follows for example from Cartan's computation of the stable homology of Eilenberg-Mac Lane spaces \cite[Exp 11, Thm 2]{cartan}. Such functors are simple functors, hence there cannot be non-trivially filtered.
The filtration is usually not split in non stable degrees. This is already visible in the simplest case, that of the homology group  $H_3(K(B,1);\Z)$, in other words  the third homology group of the abelian group $B$. In that case, the spectral sequence \eqref{aug-ss}  reduces
to the  functorial short
 exact sequence
\bee
\label{nonsplit}
\xymatrix{
0 \ar[r] & \Lambda^3(B) \ar[r] & H_3(B) \ar[r] &
L_1\Lambda^2(B,0)\ar[r] & 0.
}
\ee
It is shown in \cite[cor 3.1]{roman2} that this exact  sequence  cannot be
functorially split.

\subsection{The homology of $K(A,n)$ in low degrees for $A$ free.}
The  table below
 gives   a functorial   description of
the    groups $H_{n+i}(K(A,n);\Z)$ as a functor of   the free abelian group  $A$, for all values of the integers $n$ and $i$ such that
$1 \leq n \leq 11$ and $ 0 \leq i \leq 10$.  The convention  in lines $n=1$ to $11$ of  the table is that each empty box   is
actually  filled  with a copy of the expression in the lowest
non-empty  box  above it. These are therefore all  filled with additive groups $A/p$, since each of  these terms  arises by suspension from the  one immediately  above it.
Every  expression on the line ``adm'' displays  Cartan's  labelling of the  stable  groups in the corresponding column  in terms of his    admissible sequences,   and its associated prime. In the line  above this one,  labelled $\EuScript T$,  is displayed (for those admissible word  which we called  restricted in section  \ref{new-descr}) the corresponding labelling by decreasing sequences of integers mentioned  in theorem \ref{thm-new-formula}.
 The order in which the items in each box  in  these two lines are listed reflects the order in which the additive functors to which they correspond occured in the lines above it.

\bigskip

The table is obtained from our computations of derived functors of divided powers and the functorial isomorphism \eqref{isom-h-s-new}. Indeed, since the functors $L_{j-2d}\Gamma^d(A,n-2)$ vanish for $j-2d\le n-2$, we  need only  consider these functors for $d \leq 6$. The required values  the derived functors  for $d \leq 4$ were obtained in sections 8-10 above. For $d= 5,6$, it is only necessary to know the values of the derived functors  $L_r\Gamma^d_\Z(A,n-2)$ for $r\le 2$. The latter can  easily  be obtained, either by a partial analysis of the maximal filtration of $\Gamma^d_\Z(A)$ for such values of $d$,  as  provided  in example \ref{ga23}  for  $d \leq 4$, or by  reliance  for those values of $r$ on the
 mod $p$  reduction method described in section \ref{der1-gamma-z}. This second method reduces the problem to that of a
functorial computation of certain derived functors of $L_*\Gamma^d_{\mathbb{F}_p}(V,n)$ for an
$\fp$-vector space $V$, a question  which we discussed in section \ref{descr-gvn}.

\bigskip

The table shows  that
the homology groups  within the range of values of the pair  $(n,i)$ considered can be  expressed functorially  as  direct sums of tensor and
exterior powers  of the elementary functors $A$ and  $A/p$ for  primes  $p
\leq 5$, together with  divided power functors $\Gamma^i_\Z(A)$ and $\Gamma_\Z^2(A/2)$.  The only exceptions to this
rule are the occurrences  of    direct summands  $\Gamma^2_{\mathbb{F}_2}(A/2)$ in  $H_{13}(K(A,3);\Z)$ and $H_{15}(K(A,5);\Z)$, and  of the new functor  $\Phi^4(A)$ in  $H_{11}(K(A,3);\Z)$, for which   we have
provided  a number of descriptions in the main body of our text.
\begin{comment}
 (see example
\eqref{ex-2tG4}, remark \eqref{phi4-descr} and  proposition
\eqref{prop-GA41})\end{comment}
The fact that  the primes $2,3$ and $5 $ seem to play
a special role here  is simply due to our chosen range of values for
the variables $n$ and $i$. As these values increase the   functors
$A/p$ will occur for additional  primes  $p$.

\bigskip

While we obtain in this way a complete  understanding of the homology of $K(A,n)$ for $A$ free in the range mentioned above, we
wish to  draw the reader's attention to the fact  that the
situation is  more complicated when  one no longer restricts oneself to  functors on the category of  free abelian  groups $A$. Additional functors
of $A$ occur in the general case, whose   values are torsion  groups. The simplest of these functors are the additive  functors ${}_pA := \ker A \stackrel{p}{\to} A$.
Immediately  after these, one encounters
the functors $\Omega(A)$ and $R(A)$,  which Eilenberg and Mac Lane   introduced   in their foundational text
 \cite{hpin} II \S \   13,~ \S \ 22. These would now be denoted
$L_1\Lambda^2(A,0)$ and $L_1\Gamma^2_\Z(A,0)$ (or simply  $L_1\Lambda^2(A)$ and $L_1\Gamma^2_\Z(A)\,$) respectively and they would indeed appear in a table similar to  ours for   a general $A$,  as the derived versions of the functors $\Lambda^2(A)$ and $\Gamma^2(A)$ which are to be found  in positions $(n,i) = (1,1)$ and $(n,i) = (2,2)$  of our table   respectively. The
d\'ecalage morphisms  imply  that for $A$ non-free,  these functors
contribute to the groups
$H_\ast(K(A,1);\Z)$ and $H_\ast(K(A,2);\Z)$ respectively, so that the
first two lines of our table below  are already much more complicated in that case.

\bigskip

In this table we  can already observe many of
the phenomena discussed in the text. As we mentioned above, the first of these is
the  lowest occurrence of  the periodic phenomenon represented by the functor $\Phi^4(A)$, which is  the only new functor within our range of values.
The pair of functors $\Gamma^4_\Z(A/2)$ for the values 4 and 6 of $n$ are also noteworthy, as they produce some 4-torsion in the homology.  Finally, it will be seen that  the two  2-torsion expressions $\Gamma^2_{\f2}(A/2)$,  while isomorphic, actually  correspond to two distinct situations. Indeed they  may be thought of in the spirit of  conjecture\ref{conject} as the first derived functors of $\Gamma^2(A/2)$ and $\Lambda^2(A/2)$  respectively. This is reflected in the different behavior of these two  homology groups under suspension. While the functor $\Gamma^2_{\f2}(A/2)$ which lives in bidegree $(n,i) = (4,9)$ suspends to an $A/2$ in $H^{\rm st}_9(K(A)$ as one would expect, this is not the case for the $\Gamma^2_{\f2}(A/2)$ in bidegree $(5,10)$. One verifies by the long exact sequence of \cite{d-p} Korollar 6.11 that the suspension sends this element of $H_{15}(K(A,5);\Z)$ to the summand $\Gamma_\Z(A/2)$ of $H_{16}(K(A,6)$ by the unique non-trivial transformation from $\Gamma^2_{\f2}(A/2)$ to $\Gamma^2_\Z(A/2)$, in other words the composite map
\[ \xymatrix{\Gamma^2_{\f2}(A/2) \ar[r]^(.6){V} & A/2 \ar[r]^(.42)F & S^2(A/2) \ar[r] & \Gamma^2_\Z(A/2) }  \]  
  where the maps $V$ and $F$ are the Verschiebung and the Frobenius maps respectively. Since the image of this map is decomposable in $\Gamma^2_\Z(A/2)$, it follows that an additional suspension sends this image to zero in  $H_{17}(K(A,7)$. In particular, this functor $\Gamma^2_{\f2}(A/2)$ does not produce an additional  stable summand $A/2$ in $H^{\rm st}_{10}(A;\Z)$.  

\newpage

%\vspace{14cm}

%$\ \ $

\bigskip

\begin{turn}{90}
{\tiny
\renewcommand{\arraystretch}{1.8}
\begin{tabular}{|r||c|c|c|c|c|c|c|c|c|c|c|c|c|c|c|}
\hline &$i=0$&1&2&3&4&5&6&7&8&9&10
\\\hline \hline
$n=1$&$A$&$\Lambda^2(A)$& $\Lambda^3(A)$&$ \Lambda^4(
A)$&$\Lambda^5(A)$ & $\Lambda^6(A)$& $\Lambda^7(A)$ &
$\Lambda^8(A)$ & $\Lambda^9(A)$ & $\Lambda^{10}(A)  $ &
$\Lambda^{11}(A)$
\\
\hline 2&& 0 & $\Ga^2(A)$&0 &$\Ga^3(A)$&0 &  $ \Ga^4(A) $&0& $\Gamma^5(A)$ &0 & $\Gamma^6(A)$\\
\hline 3&&&$A/2$& $\Lambda^2(A)$& $ A/3$&$A \ot A/2 $ &
$\Lambda^3(A) \oplus  A/2 $ &$ A \ot A/3$&$\Phi^4(A) \oplus A/5$&
$\La^4(A) \oplus (A \ot A/2) $&$\Lambda^2(A)\ot A/3$\\ &&&&&&&& $ \oplus  \, \Lambda^2(A/2)$
&& &$\oplus\,\,\, A \ot \Lambda^2(A/2) $
\\
  \hline4&&&&0&$\Ga^2(A) \oplus A/3$& 0 & $  (A \ot A/2)  \oplus
 A/2  $ &0&$ \Ga^3(A) \oplus A/5 \,\,\oplus $&
$\Gamma^2_{\f2}(A/2) $&    $\Gamma^2(A)\otimes A/2$   \\
&&&&&&&&&$ A \ot A/3 \oplus \Ga^2(A/2)$&
 &$\oplus A\otimes A/2$
\\
\hline 5&&&&&$A/2 \oplus A/3$ & $ \Lambda^2(A) $ & $ A/2 $ &$ A
\otimes A/2 $&  $A/3 \oplus A/5$&
  $\Lambda^2(A/2) \oplus \, A/2\, \oplus $ &  $\Lambda^3(A)\oplus\Gamma^2_{\f2}(A/2)$
\\&&&&&&&& &$\oplus \,A/2  $ &$ (A\ot A/2) \, \oplus\,( A \ot A/3)$ &
\\
\hline 6&&&&&&0   &$\Ga^2(A) \oplus    A/2$ &0&$ (A\ot A/2) \oplus A/3$
&$A/2$&$\Gamma^2(A/2) \oplus A\, \ot\, A/3$
\\&&&&&&&&&$\oplus \, A/5 \, \oplus\,  A/2$&&
\\  \hline
7&&&&&&&$ A/2 \oplus A/2 $  &$\Lambda^2(A)$ &$A/3 \oplus A/5 \oplus A/2$&$A/2\oplus A\otimes A/2$ &$A/2$
 \\ \hline
8&&&&&&&&0&$\Ga^2(A) \oplus A/3$&$A/2$ &$A/2\oplus A\otimes A/2$\\&&&&&&&&&
$ \oplus A/5 \oplus A/2$& &
 \\ \hline 9 &&&&&&&& &$A/2 \oplus A/3 \oplus A/5 \oplus A/2$&$A/2\oplus
\La^2(A) $ & $A/2$
 \\ \hline
10   &&&&&&&&&& $A/2$ & $\Gamma^2(A)\oplus A/2 $
\\
\hline 11 &&&&&&&&&&& $A/2\oplus A/2$
\\\hline
\hline
$\EuScript T $&&& $\{1;2\}$ & & $\{1,1;2\}, \{1;3\}$ & &\{1,1,1;2\},
\{2;2\}& & $ \{1,1,1,1;2\}, \{1,1;3\}$  &&$\{1,1,1,1,1;2\} $\\&&&&&&&&& $ \{1;5\}, \{2,1;2\}$& & $\{2,1,1;2\}$
 \\
\hline $\text{adm.}$ &&& $(2;2)$ & & $(4;2),(4;3)$& & $(6;2), (4,2;2)$ &&  $(8;2),(8;3),(8;5),(6,2;2)$ & $(6,3;2)$& $(10;2), (8,2;2)$\\
\hline
\end{tabular}
}
\end{turn}

%\pagebreak

%\newpage
\section{ The derived functors of  $\Gamma^4(A,n)$ for  $A$ free and $1 \leq n \leq 4$.}
\label{app:derG4}
\begin{table}[h]
\vspace{.5cm} {\footnotesize
 \renewcommand{\arraystretch}{1.8}
\begin{tabular}{|r||c|c|c|c|c|}
\hline  & $n=1$&2&3&4
\\\hline \hline
$i=0$ & $A/2^{(2)}$ & $A/2^{(2)}$ & $A/2^{(2)}$ & $A/2^{(2)}$
\\
\hline $1$ & $\Lambda^2(A/2^{(1)})\,\oplus \, A\otimes A/3^{(1)}$
& $0$ & $0$ & 0
\\ \hline 2 & $\Phi^4(A)$ & $\Gamma^2(A/2^{(1)})$ &$A/2^{(2)}$& $A/2^{(2)}$ \\ &&  $\,\oplus \, A\otimes A/3^{(1)}$ &
 &
\\ \hline 3 & $\Lambda^4(A)$ & $\Gamma^2_{\f2}(A/2^{(1)})$ &
$\Lambda^2(A/2^{(1)})\oplus A/2^{(2)}$&  $A/2^{(2)}$\\
&&& $\oplus \,  A\otimes A/3^{(1)}$&
\\ \hline 4 & 0 & $\Gamma^2_{\f2}(A/2)\otimes A/2^{(1)}$ &
$\Gamma^2_{\f2}(A/2^{(1)})$ & $\Gamma^2(A/2^{(1)}) \oplus\, A\ot  A/3^{(1)}$
\\ \hline 5 & 0 & 0 & $A/2^{(1)}\otimes A/2^{(1)}$ & $\Gamma^2_{\f2}(A/2^{(1)})$
\\ \hline 6 & 0 & $\Gamma^4(A)$ & $\Gamma^2_{\f2}(A/2)\otimes
A/2^{(1)}$  &
 $A/2^{(1)}\otimes A/2^{(1)}$\\ &&&  $\,\oplus \,A/2^{(2)}$ &
 $\oplus\, A/2^{(2)}$
\\ \hline 7 & 0 & 0 & $\Lambda^2(A/2^{(1)})\,\oplus \, A\otimes A/3^{(1)}$ & $A/2^{(1)} \ot  A/2^{(1)} $
\\ \hline 8 & 0 & 0 & $\Phi^4(A)$ & $\Gamma^2(A/2^{(1)})\oplus
\Gamma^2(A)\otimes A/2^{(1)}$ \\  &&&& $\oplus\, A\otimes A/3^{(1)} $
\\ \hline 9 & 0 & 0 & $\Lambda^4(A)$ & $\Gamma^2_{\f2}(A/2^{(1)})$
\\ \hline 10 & 0 & 0 & 0 & $\Gamma^2_{\f2}(A/2)\ot A/2^{(1)} $
\\ \hline 11 & 0 & 0 & 0 & 0
\\ \hline 12 & 0 & 0 & 0 & $\Gamma^4(A)$
\\ \hline
\end{tabular}
}

% \vspace{.5cm}
\begin{caption}
{The derived functors $L_{n+i}\Gamma^4(A,n),$ for $A$ free and
$1\leq n \leq 4$.}
\end{caption}
\label{lasttatble}
\end{table}

\bigskip

{\small
L. Breen and A. Touz\'e:
LAGA, CNRS (UMR 7539)

Universit\'e Paris 13

93430  Villetaneuse, France.

\bigskip

R. Mikhailov: Chebyshev Laboratory, St. Petersburg State University,

 14th Line, 29b, Saint Petersburg, 199178 Russia

 \medskip

  and

\medskip

 St. Petersburg Department of the Steklov Mathematical Institute,

 Fontanka 27, Saint Petersburg, 191023 Russia.

%\bigskip

%A. Touz\'e:
%LAGA, CNRS (UMR 7539)

%Universit\'e Paris 13

%93430  Villetaneuse, France.

}

\end{document}